\newtheorem{theorem}{Theorem}[section] \newtheorem{lemma}[theorem]{Lemma}     \newtheorem{cor}[theorem]{Corollary}
\newtheorem{prop}[theorem]{Proposition}
\newcommand{\dimr}{\dim_{\mathrm{rep}}}
\newcommand{\End}{\mathop{\mathrm{End}}\nolimits}
\newcommand{\Aut}{\mathop{\mathrm{Aut}}\nolimits}
\newcommand{\core}{\mathop{\mathrm{core}}\nolimits}
\newcommand{\xn}{X_{n}}
\newcommand{\xnp}{X_{n}^{+}}
\newcommand{\xns}{\xn^{\ast}}
\newcommand{\xnz}{X_n^{\mathbb{Z}}}
\newcommand{\xnn}{X_n^{\mathbb{N}}}
\newcommand{\xnN}{\xn^{-\mathbb{N}}}
\newcommand{\wn}{W_{n}}
\newcommand{\wns}{\wn^{\ast}}
\newcommand{\wnl}[1]{\wn^{#1}}
\newcommand{\rwnl}[1]{\mathsf{W}_{n}^{#1}}
\newcommand{\spn}[1]{\widetilde{\mathcal{P}_{#1}}}
\newcommand{\pn}[1]{\mathcal{P}_{#1}}
\newcommand{\shn}[1]{\widetilde{\mathcal{H}_{#1}}}
\newcommand{\hn}[1]{\mathcal{H}_{#1}}
\newcommand{\On}[1]{\mathcal{O}_{#1}}
\newcommand{\SOn}[1]{\widetilde{\mathcal{O}_{#1}}}
\newcommand{\Mn}[1]{\mathcal{M}_{#1}}
\newcommand{\Nn}[1]{\mathcal{N}_{#1}}
\newcommand{\SLn}[1]{\widetilde{\mathcal{L}_{#1}}}
\newcommand{\ALn}[1]{\mathcal{AL}_{#1}}
\newcommand{\ASLn}[1]{\widetilde{\mathcal{ASL}_{#1}}}
\newcommand{\Ln}[1]{\mathcal{L}_{#1}}
\newcommand{\gen}[1]{\left\langle #1 \right \rangle}
\newcommand{\aut}[1]{\mathop{\mathrm{Aut}}({#1})}
\newcommand{\out}[1]{\mathop{\mathrm{Out}}({#1})}
\newcommand{\shift}[1]{\sigma_{#1}}
\newcommand{\Shift}[1]{\mathfrak{S}_{#1}}
\newcommand{\rev}[1]{\overleftarrow{#1}}
\newcommand{\im}[1]{\mathop{\mathrm{Image}}{(#1)}}
\newcommand{\id}{\mathbbm{1}}
\newcommand{\Z}{\mathbb{Z}}
\newcommand{\N}{\mathbb{N}}
\newcommand{\Q}{\mathbb{Q}}
\newcommand{\spnprod}[1]{\ast_{\spn{n}}}
\newcommand{\rsig}{\overline{\mathop{\mathrm{sig}}}}
\newcommand{\rot}{\sim_{\mathrm{rot}}}
\newcommand{\rotc}[1]{[#1]_{\rot}}
\newcommand{\sym}[1]{\mathop{\mathrm{Sym}}(#1)}
\newcommand{\tran}[1]{\mathop{\mathrm{Tr}}(#1)}
\newcommand{\Un}[1]{\mathfrak{U}_{#1}}
\newcommand{\seteq}{:=}
\author{J. Belk, C. Bleak, P. J. Cameron and F. Olukoya}
\date{\today}
\begin{document}
\title[Automorphisms of the two-sided shift]
 {Automorphisms of shift spaces and the Higman--Thompson groups: the two-sided case} 

\maketitle
\begin{abstract}
 In this article, we further explore the nature of a connection between the groups of automorphisms of full shift spaces and the groups of outer automorphisms  of the Higman--Thompson groups $\{G_{n,r}\}$.  

 We show that the quotient of the group of automorphisms of the (two-sided) shift dynamical system $\aut{\xnz, \shift{n}}$ by its centre embeds as a particular subgroup  $\Ln{n}$ of the outer automorphism group $\out{G_{n,n-1}}$ of $G_{n,n-1}$.
It follows by a result of Ryan that we have the following central extension:
$$\gen{\shift{n}} \hookrightarrow \aut{\xnz, \shift{n}} \twoheadrightarrow \Ln{n}$$ where here, $\gen{\shift{n}}\cong\Z$. We prove that this short exact sequence splits if and only if $n$ is not a proper power, and, in all cases, we compute the 2-cocycles and 2-coboundaries for the extension. We also use this central extension to prove that for $1 \le r < n$, the groups $\out{G_{n,r}}$ are centreless and have undecidable order problem. 

Note that the group $\out{G_{n,n-1}}$ consists of finite transducers (combinatorial objects arising in automata theory), and elements of the group $\Ln{n}$ are easily characterised within $\out{G_{n,n-1}}$ by a simple combinatorial property.  In particular, the short exact sequence allows us to determine a new and efficient purely combinatorial representation of elements of $\aut{\xnz, \shift{n}}$, and we demonstrate how to compute products using this new representation.

In previous work, the last three authors show that the group $\aut{\xnn, \shift{n}}$ of automorphisms of the one-sided shift dynamical system  over an $n$-letter alphabet naturally embeds as a subgroup of the group $\out{G_{n,r}}$ of outer-automorphisms of the Higman--Thompson group $G_{n, r}$, $1 \le r < n$. While the current article can be seen as continuing this line of research, the extension to the 2-sided case requires the development of substantial new combinatorial tools.

\end{abstract}

\section{Introduction}
One of the main results of the article \cite{BleakCameronOlukoya1} is  that the group of automorphisms of the one-sided shift dynamical system $\aut{\xnn, \shift{n}}$ over an $n$-letter alphabet naturally embeds as a subgroup $\hn{n}$ of the group $\out{G_{n,r}}$ of outer-automorphisms of the Higman--Thompson group $G_{n, r}$, for $1 \le r < n\in\N$. An application in \cite{BleakCameronOlukoya1} of this embedding  is to give an efficient decomposition of elements of $\aut{\xnn, \shift{n}}$ as products of finite order elements. The situation is more complex for  the group $\aut{\xnz, \shift{n}}$ of automorphisms of the (two-sided) shift dynamical system. However, we are able to prove a similar embedding result here. We show that the quotient of the group $\aut{\xnz, \shift{n}}$  by its centre embeds as a particular subgroup  $\Ln{n}$  of the outer automorphism group $\out{G_{n,n-1}}$. Note here that  $\hn{n} \le \Ln{n} \le \out{G_{n,n-1}}$, and more importantly, that it is easy to independently characterize elements of the subgroup $\Ln{n}$ as an important subgroup of
$\out{G_{n,n-1}}$ (indeed, we named this subgroup and had already been working with it well before we appreciated the connection to the group $\aut{\xnz, \shift{n}}$).

Recall Ryan's Theorem \cite{Ryan2}  which states that the centre of the group $\aut{\xnz, \shift{n}}$ is the group $\gen{\shift{n}}\cong\Z$.  From this and our discussion above we obtain the following theorem.
\begin{theorem}\label{Thm:shortexactaut}
	The group $\aut{\xnz, \shift{n}}$ is a central extension of $\gen{\shift{n}}$ by~$\Ln{n}$.  That is, there is a short exact sequence 
	$$\gen{\shift{n}} \hookrightarrow \aut{\xnz, \shift{n}} \twoheadrightarrow \Ln{n}$$ where $\gen{\shift{n}}\cong\Z$ and embeds as the centre of $\aut{\xnz, \shift{n}}$.
\end{theorem}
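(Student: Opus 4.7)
The plan is to deduce the theorem directly from two inputs: Ryan's Theorem, which identifies the centre of $\aut{\xnz, \shift{n}}$ as $\gen{\shift{n}} \cong \Z$; and the embedding result asserted in the introduction, namely that $\aut{\xnz, \shift{n}}/Z(\aut{\xnz, \shift{n}})$ embeds into $\out{G_{n,n-1}}$ with image $\Ln{n}$. Once both are in hand, the conclusion is formal. Let $\psi: \aut{\xnz, \shift{n}}/Z(\aut{\xnz, \shift{n}}) \to \Ln{n}$ denote the isomorphism provided by the embedding result, and let $q$ be the canonical quotient by the centre. Composing gives a surjection $\pi \seteq \psi \circ q: \aut{\xnz, \shift{n}} \twoheadrightarrow \Ln{n}$ whose kernel is exactly $Z(\aut{\xnz, \shift{n}}) = \gen{\shift{n}}$ by Ryan's Theorem. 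Since this kernel is by construction central, the resulting short exact sequence is a central extension, as required.

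The real mathematical content thus lies in establishing the embedding. I would proceed in the following stages. First, invoke the Curtis--Hedlund--Lyndon theorem to represent every element of $\aut{\xnz, \shift{n}}$ as a sliding block code, and then encode the code as a bi-synchronizing finite transducer reading bi-infinite sequences. Second, pass from this bi-directional transducer to a one-sided machine acting on the Cantor space $\xnn$, and show that the resulting self-homeomorphism normalizes $G_{n,n-1}$ inside the full homeomorphism group of $\xnn$, hence induces an element of $\aut{G_{n,n-1}}$. Third, verify that the several choices made in this process (window size, transducer minimization, compatibility on the rational points of the Cantor space) affect the induced automorphism only by an inner one, so that one obtains a well-defined homomorphism $\phi: \aut{\xnz, \shift{n}} \to \out{G_{n,n-1}}$. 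Fourth, identify $\ker \phi$ with $\gen{\shift{n}}$ and $\mathrm{Image}(\phi)$ with $\Ln{n}$.

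The main obstacle will be this final step. For the kernel, one direction---that each power of $\shift{n}$ lies in $\ker \phi$---should be concrete: exhibit an explicit element of $G_{n,n-1}$ whose conjugation realizes the shift's action on $G_{n,n-1}$, so that $\phi(\shift{n})$ is inner. The reverse inclusion is subtler, as it requires a rigidity statement: any automorphism of $\xnz$ whose associated transducer represents an inner automorphism of $G_{n,n-1}$ must actually \emph{be} a power of $\shift{n}$. This should follow from a careful analysis of bi-synchronizing transducers together with the Curtis--Hedlund--Lyndon correspondence. For the image, one must give an intrinsic combinatorial description of $\Ln{n}$ inside $\out{G_{n,n-1}}$---presumably a two-sided synchronization condition reflecting the bi-infinite nature of $\xnz$---and verify that every transducer satisfying this condition arises from a genuine two-sided shift automorphism, essentially an inverse Curtis--Hedlund--Lyndon construction. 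With these combinatorial facts in hand, the short exact sequence in the theorem follows as a formal consequence.
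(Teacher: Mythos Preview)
Your high-level reduction---Ryan's Theorem plus the quotient isomorphism $\aut{\xnz,\shift{n}}/\gen{\shift{n}} \cong \Ln{n}$---is exactly how the paper derives the theorem. The gap is in your proposed route to that isomorphism. In stage two you want to pass from a sliding block code to ``a one-sided machine acting on the Cantor space $\xnn$'' whose induced ``self-homeomorphism normalizes $G_{n,n-1}$''. But a general element of $\aut{\xnz,\shift{n}}$ does \emph{not} induce a self-homeomorphism of $\xnn$: only the one-sided automorphisms $\aut{\xnN,\shift{n}}$ do, and these form the proper subgroup $\hn{n}$ handled in \cite{BleakCameronOlukoya1}. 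Relatedly, the synchronous transducer $T_f \in \spn{n}$ attached via de Bruijn graphs to a block map $f$ is strongly synchronizing but typically not bi-synchronizing, so your stage-one description is also off; and your guess that $\Ln{n}$ is cut out by a ``two-sided synchronization condition'' is incorrect---the defining property is the Lipschitz condition~\ref{Lipshitzconstraint}, that on every circuit the output length equals the input length.

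The paper never seeks a homeomorphism of $\xnn$. Instead it sends $P \in \pn{n}$ to the minimal non-initial transducer $[P]$ obtained by removing incomplete response (Proposition~\ref{prop:algorithmforremovingincompleteresponse}); Proposition~\ref{prop:frompntoln} and Corollary~\ref{cor:removeincompleteresponsepntoln} show $[P] \in \Ln{n}$, and the shift transducer $\Shift{n}$ itself maps to the identity. Surjectivity onto $\Ln{n}$ and identification of the kernel are handled by the machinery of \emph{annotations} (Definition~\ref{def:annotatingLn}): integer labellings of states encoding the accumulated power of the shift, which set up a bijection between annotated pairs $(L,\alpha)$ with $L \in \Ln{n}$ and elements of $\aut{\xnz,\shift{n}}$ (Theorem~\ref{thm:EndisotoASLn}, Corollary~\ref{cor:mainresult}). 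The containment $\Ln{n} \le \On{n} \cong \out{G_{n,n-1}}$ is then inherited from \cite{AutGnr} without ever revisiting the normalizer picture.
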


The groups $\Ln{n}$ consist of transducers (a transducer is a type of finite automaton which conveniently transforms sequential data) under a natural and easy-to-compute product.  The results here allow us to represent automorphisms of the shift by annotated transducers, which are finite combinatorial objects (combining the transducers in $\Ln{n}$ with some data representing accumulated ``lag'') which in their own turn can be worked with directly and conveniently.

Next, we use the dimension representation of Kreiger (see \cite{KriegerDimension}) to classify exactly when the short exact sequence of Theorem \ref{Thm:shortexactaut} splits.

\begin{theorem}\label{thm:belksplittingintro}
    The group $\aut{\xnz, \shift{n}}$ is isomorphic to $\Z \times \Ln{n}$ if and only if $n$ is not a power of a smaller integer.
\end{theorem}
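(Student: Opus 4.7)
The plan is to exploit the fact that the extension in Theorem~\ref{Thm:shortexactaut} is central, so the sequence splits if and only if $\aut{\xnz, \shift{n}} \cong \Z \times \Ln{n}$; equivalently, a splitting exists iff there is a retraction $\phi: \aut{\xnz, \shift{n}} \to \gen{\shift{n}} \cong \Z$ with $\phi(\shift{n}) = 1$.  Given such a $\phi$, the subgroup $\ker(\phi)$ is a normal complement to $\gen{\shift{n}}$ (normal because the kernel of the extension is central) and projects isomorphically onto $\Ln{n}$, yielding the direct product decomposition.  The task therefore reduces to deciding exactly when such a retraction exists.

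For the ``if'' direction, suppose $n = p_1^{e_1} \cdots p_k^{e_k}$ with $\gcd(e_1, \ldots, e_k) = 1$.  Krieger's dimension representation provides a homomorphism $\pi: \aut{\xnz, \shift{n}} \to U$, where $U$ is the group of positive units of $\Z[1/n]$; this $U$ is the free abelian group generated by the primes $p_1, \ldots, p_k$ dividing $n$, and under $\pi$ the shift $\shift{n}$ is sent to $n = \prod p_i^{e_i}$.  The gcd hypothesis means that $n$ is primitive in $U \cong \Z^k$, hence generates a rank-one direct summand; one can therefore choose a homomorphism $\chi: U \to \Z$ with $\chi(\pi(\shift{n})) = 1$.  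Setting $\phi := \chi \circ \pi$ gives the desired retraction, so the sequence splits.

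For the ``only if'' direction, suppose $n = m^d$ with $d \geq 2$.  A block-recoding which groups each $d$ consecutive letters of an $m$-ary sequence into a single $n$-ary letter gives a topological conjugacy $(\xnz, \shift{n}) \cong (X_m^{\Z}, \shift{m}^d)$.  This induces an isomorphism of automorphism groups; in particular, the shift $\shift{m}$ on $X_m^{\Z}$ corresponds to an element $\tau \in \aut{\xnz, \shift{n}}$ satisfying $\tau^d = \shift{n}$.  If a retraction $\phi: \aut{\xnz, \shift{n}} \to \Z$ with $\phi(\shift{n}) = 1$ existed, then $d \cdot \phi(\tau) = \phi(\tau^d) = \phi(\shift{n}) = 1$, which is impossible in $\Z$; hence no splitting exists.

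The main delicate point is the invocation of Krieger's dimension representation in the ``if'' direction: one must be sure that the image of $\shift{n}$ really is the element $n$ in the stated free abelian group $U$, and that $U$ has the asserted form.  Both facts are standard for full shifts but provide the essential arithmetic input, converting the number-theoretic hypothesis on $n$ into the group-theoretic splitting question.  The ``only if'' direction, by contrast, is purely formal once one has identified $\shift{m}$ as a $d$-th root of the shift inside $\aut{\xnz, \shift{n}}$.
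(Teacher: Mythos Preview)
Your proof is correct and follows essentially the same approach as the paper's: both directions use exactly the ingredients the paper uses, namely Krieger's dimension representation (with $\dimr(\shift{n})=n$ primitive in the free abelian group on the prime divisors of $n$) for the splitting when $n$ is not a proper power, and the existence of a $d$-th root of $\shift{n}$ when $n=m^d$ to obstruct splitting. The only cosmetic differences are that you phrase the ``if'' direction via a retraction $\chi\circ\dimr$ onto $\Z$ whereas the paper pulls back a complementary subgroup $\langle n\rangle^c\le\mathcal{G}(n)$, and you make the root construction explicit via block-recoding whereas the paper simply cites it as folklore.
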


As mentioned above, we can compute products efficiently and concretely in $\aut{\xnz,\shift{n}}$ using our combinatorial representation of elements of $\aut{\xnz,\shift{n}}$, regardless of whether or not the sequence of Theorem \ref{Thm:shortexactaut}
splits.  In Section \ref{Sec:cocyclesAndCoboundaries}, we characterise the 2-cocycles and 2-coboundaries for the extension.
 
 {{The innate connection between the groups $\aut{\xnz,\shift{n}}$ and $\out{G_{n,r}}$ stems from results in the papers \cite{Hedlund69} and \cite{AutGnr}. The group $\aut{\xnz, \shift{n}}$ acts on the Cantor space $\xnz$, while the group $\out{G_{n,r}}$ acts on the set of bi-infinite strings. The seminal paper of Hedlund \cite{Hedlund69} demonstrates that elements of $\aut{\xnz, \shift{n}}$ require only a \textit{finite window} around a given index in order to transform the symbol at that index. These are the \textit{sliding block codes}. In the article \cite{AutGnr}, the authors demonstrate that an  element of $\out{G_{n,r}}$ can be represented by a transducer where the subsequent action (possibly not length-preserving) on a finite string at a given index is determined by a finite window around that index. This is the \textit{strong synchronizing condition} of \cite{AutGnr}.  Formalising this connection between sliding block codes and the strong synchronizing condition almost gives rise to an action of $\out{G_{n,r}}$ on $\xnz$. In order to obtain an action, we restrict to the subgroup $\Ln{n}$ of length-preserving transformations.  Therefore one can think of the groups $\out{G_{n,r}}$ as generalizations of $\aut{\xnz, \shift{n}}$, where the role of $r$ is linked to the divisibility structure of $n$ and where we remove the restriction on $\aut{\xnz, \shift{n}}$ that indices are preserved.
}}

In the paper \cite{AutGnr} it is shown that $\out{G_{n,1}} \le \out{G_{n,r}} \le \out{G_{n,n-1}}$.  The following theorem now follows immediately from results in  \cite{KariOllinger,OlukoyaAutTnr}.

\begin{theorem}
\label{thm:orderProblem}	Let $n > r \ge 1$, then the group $\out{G_{n,r}}$ has unsolvable order problem.
\end{theorem}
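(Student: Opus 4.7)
The plan is to transfer the known undecidability of the order problem for shift automorphisms along the natural embeddings into $\out{G_{n,r}}$. By a theorem of Kari and Ollinger \cite{KariOllinger}, the order (periodicity) problem for $\aut{\xnz,\shift{n}}$ is undecidable: no algorithm decides, on input a sliding-block-code presentation of $g \in \aut{\xnz,\shift{n}}$, whether $g$ has finite order. The analogous one-sided statement for $\aut{\xnn,\shift{n}}$ follows from the techniques in \cite{OlukoyaAutTnr}.

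First I would use Theorem \ref{Thm:shortexactaut} to transfer undecidability to $\Ln{n}$. Given $\bar{g} \in \Ln{n}$, lift to $g \in \aut{\xnz,\shift{n}}$; because $\gen{\shift{n}} \cong \Z$ is central, $\bar{g}$ has finite order $k$ iff $g^{k} \in \gen{\shift{n}}$, and in that case $g$ itself has finite order precisely when $g^{k}$ is the identity. Since the transducer data for $g^{k}$ can be computed and checked against powers of $\shift{n}$, a decision procedure for the order problem in $\Ln{n}$ would yield one for $\aut{\xnz,\shift{n}}$, contradicting Kari--Ollinger. Hence $\Ln{n}$, and therefore $\out{G_{n,n-1}} \ge \Ln{n}$, has undecidable order problem.

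To cover the remaining values of $r$, I would use the embedding $\aut{\xnn,\shift{n}} \cong \hn{n} \hookrightarrow \out{G_{n,1}}$ from \cite{BleakCameronOlukoya1}, combined with the one-sided undecidability noted above, and then push forward via the containment $\out{G_{n,1}} \le \out{G_{n,r}}$ from \cite{AutGnr}. Orders are preserved by any group embedding, so a decision procedure for the order problem in $\out{G_{n,r}}$ would restrict to one in the subgroup; effectiveness of the embedding ensures the restriction is algorithmic, and we conclude $\out{G_{n,r}}$ has undecidable order problem for every $1 \le r < n$.

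The anticipated main obstacle is not mathematical depth but bookkeeping: verifying that the embeddings and the quotient map of Theorem \ref{Thm:shortexactaut} are all \emph{effective}, so that a presentation of an element in the source group can be algorithmically converted into a presentation in the target group. This follows from the explicit transducer constructions in \cite{AutGnr} and \cite{BleakCameronOlukoya1}, but it is the indispensable bridge between the dynamical undecidability of Kari--Ollinger and the group-theoretic conclusion about $\out{G_{n,r}}$.
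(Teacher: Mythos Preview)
Your reduction from $\aut{\xnz,\shift{n}}$ to $\Ln{n}$ via the central extension is essentially the paper's argument, and it is correct: a decision procedure for orders in $\Ln{n}$ combined with the ability to test whether a computed power lies in $\gen{\shift{n}}$ (and which power of $\shift{n}$ it is) yields a decision procedure for orders in $\aut{\xnz,\shift{n}}$. There are, however, two genuine gaps beyond this.

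First, a technical but necessary point: the Kari--Ollinger theorem establishes undecidability of the periodicity problem for $\aut{\xnz,\shift{n}}$ only for a \emph{specific} alphabet size $n$ arising from their Turing-machine encoding, not for every $n\geq 2$. The paper closes this by invoking the embedding theorems of Kim--Roush (or of Salo), which give effective embeddings of $\aut{X_m^{\Z},\shift{m}}$ into $\aut{X_n^{\Z},\shift{n}}$ for all $m,n\geq 2$ and thereby transfer undecidability to every alphabet. Without this step your argument covers only one value of $n$.

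Second, and more seriously, your treatment of general $r$ does not go through. You assert that the order problem for the \emph{one-sided} group $\aut{\xnn,\shift{n}}$ is undecidable and that this ``follows from the techniques in \cite{OlukoyaAutTnr}''. That reference is about the homomorphism $\rsig\colon\On{n}\to\Un{n-1}$ and the lattice of subgroups $\On{n,r}$; it neither states nor provides techniques for any undecidability result about $\aut{\xnn,\shift{n}}$, and no such result is available in the literature. The paper instead takes a different, short route: the results of \cite{OlukoyaAutTnr} show that $\On{n,r}$ is a \emph{normal subgroup of finite index} in $\On{n}=\On{n,n-1}$ (the index divides $|\Un{n-1}|$ via $\rsig$). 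If $d$ is that index, then for any $g\in\On{n}$ one has $g^{d}\in\On{n,r}$, and $g$ has finite order iff $g^{d}$ does; hence an order oracle for $\On{n,r}$ would solve the order problem in $\On{n}$, contradicting what was established for $r=n-1$. This finite-index descent is the correct replacement for your appeal to one-sided undecidability.
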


A consequence of the strong synchronizing condition  is that for any word $w$ over the alphabet there is a unique state of the transducer with a circuit based at that state and labelled by the word $w$. This gives a map $\Pi$ from the set  $\out{G_{n,r}}$ to the transformation monoid on the set on all finite prime words. This map is a faithful representation of $\out{G_{n,r}}$ in the symmetric group on the set of equivalence classes of finite prime words under rotation (prime cyclic words, below), and it is strongly related to the \textit{periodic orbit representation} of \cite{BoyleKrieger}.

\begin{theorem}\label{Thm:symrep}
	The action of $\out{G_{n,r}}$ on prime cyclic  words gives a representation of $\out{G_{n,r}}$ in the symmetric group on the set of all prime cyclic words over an alphabet of size $n$. The restriction of this map to the subgroup $\Ln{n}$ is the periodic orbit representation of the group $\aut{\xnz, \shift{n}}/\gen{\shift{n}}$. 
\end{theorem}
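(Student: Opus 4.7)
The plan is to analyze $\Pi$ in two stages: first establish it as a well-defined homomorphism from $\out{G_{n,r}}$ into the symmetric group on the set of prime cyclic words, then use the identification $\Ln{n} \cong \aut{\xnz, \shift{n}}/\gen{\shift{n}}$ from Theorem~\ref{Thm:shortexactaut} to identify $\Pi|_{\Ln{n}}$ with the Boyle--Krieger periodic orbit representation.

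For the first part, I begin from the fact recalled in the excerpt that, for a transducer $T$ representing an element of $\out{G_{n,r}}$ and any word $w$, there is a unique state $q_w$ carrying a circuit labelled by $w$. Given a prime word $w$, reading $w$ from $q_w$ produces a closed path outputting some word $w'$; I would first show $w'$ is again prime and that the assignment $[w] \mapsto [w']$ on rotation classes is independent of the representative, since rotating $w$ moves the anchor state to the corresponding vertex of the same circuit and rotates the output accordingly. Homomorphy $\Pi(ST) = \Pi(S)\Pi(T)$ then follows from the associativity of transducer composition acting on the cyclically read word, and bijectivity follows by applying the same construction to $T^{-1} \in \out{G_{n,r}}$ and checking that the compositions give the identity permutation.

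The main technical obstacle is faithfulness on all of $\out{G_{n,r}}$. If $\Pi(T)$ is the identity, then for every prime word $w$ and every $k \ge 1$, reading $w^k$ at $q_w$ outputs a rotation of $w^k$. Extending to the bi-infinite periodic string $\ldots www \ldots$ and exploiting density of periodic points in $\xnz$ together with continuity of the induced action on bi-infinite strings from \cite{AutGnr}, this forces $T$ to act trivially on bi-infinite strings. Since elements of $\out{G_{n,r}}$ need not be length-preserving, care is needed to control the variable-length outputs and handle a possible shift-by-finite ambiguity; the strongly synchronizing normal form of \cite{AutGnr} pins this down uniquely, so a trivial action forces $T$ to be the identity element of $\out{G_{n,r}}$.

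For the second assertion I restrict to $T \in \Ln{n}$; by the defining property of $\Ln{n}$ the transducer is length-preserving, so $\Pi(T)$ sends prime cyclic words of length $k$ to prime cyclic words of length $k$. Via Theorem~\ref{Thm:shortexactaut}, any such $T$ is the image of an automorphism $\phi \in \aut{\xnz, \shift{n}}$, unique up to composition with a power of $\shift{n}$. A prime cyclic word of length $k$ is in natural bijection with a $\shift{n}$-orbit on points of least period $k$ in $\xnz$. I would verify that the sliding block code of $\phi$, read cyclically around the period of such a point, produces exactly the output word describing $\Pi(T)$ on the corresponding prime cyclic word. Since $\shift{n}$ acts trivially on each periodic orbit, the induced permutation descends to $\aut{\xnz, \shift{n}}/\gen{\shift{n}}$ and coincides with the periodic orbit representation of \cite{BoyleKrieger}, completing the identification.
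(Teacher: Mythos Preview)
Your overall structure---well-definedness, homomorphism, bijectivity via the inverse, then identification with Boyle--Krieger on $\Ln{n}$---matches the paper, and the second part (the restriction to $\Ln{n}$) is essentially what the paper does. The gap is in your faithfulness argument for all of $\out{G_{n,r}}$.

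You propose to prove injectivity of $\Pi$ by passing to the bi-infinite periodic string, invoking density of periodic points in $\xnz$, and using ``continuity of the induced action on bi-infinite strings''. The paper explicitly flags that this does not work beyond $\Ln{n}$: as noted after Corollary~\ref{prop:perodicperm}, ``there is no apparent well-defined action by homeomorphisms of $\On{n}$ on $\xnz$ or another `nice' space, [so] we are unable to find a similar topological argument for $\On{n}$.'' The introduction says the same thing more cautiously: the strong synchronizing condition \emph{almost} gives an action of $\out{G_{n,r}}$ on $\xnz$, and one must restrict to the length-preserving subgroup $\Ln{n}$ to obtain one. For a general $T \in \On{n,r}$, outputs on circuits need not match input lengths, so there is no canonical way to index the output as a point of $\xnz$; the ``action on bi-infinite strings'' is on the non-Hausdorff quotient $\xnz/\gen{\shift{n}}$, where density and continuity do not give you what you need.

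The paper's substitute is a purely combinatorial argument at the transducer level (the Proposition preceding Corollary~\ref{cor:periodicperm}): given $T,U \in \Mn{n}$ with $(T)\Pi = (U)\Pi$, one first shows the output lengths on any loop word agree, then by a careful analysis of rotations of prime words forces $\lambda_T(a,p_a) = \lambda_U(a,q_a)$ on every loop, and finally applies Lemma~\ref{lem:monolemma} to conclude $T = U$. This combinatorics is the genuine content you are missing; your density sketch is exactly the shortcut the authors say they could not take.
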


 For $1\leq r<n\in\N$ we can also extend the definition of $\Ln{n}$ above by setting $\Ln{n,r}~\seteq~\Ln{n}~\cap~\out{G_{n,r}}$. The above results then imply:
 
 \begin{cor} We have:
 \begin{itemize}
     \item The group $\out{G_{n,r}}$ is centerless.
     \item The index of $\Ln{n,r}$ in the group $\out{G_{n,r}}$ is infinite for all valid $n$ and $r$. 
 \end{itemize}
 \end{cor}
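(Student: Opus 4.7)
My plan is to derive both statements from Theorem~\ref{Thm:symrep}, which provides a faithful permutation representation $\Pi$ of $\out{G_{n,r}}$ on the set of prime cyclic words over the $n$-letter alphabet, and whose restriction to $\Ln{n,r}\le\Ln{n}\cong\aut{\xnz,\shift{n}}/\gen{\shift{n}}$ is (by the same theorem) the periodic orbit representation of the two-sided shift's automorphism quotient.

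\emph{Centrelessness.} Let $g\in Z(\out{G_{n,r}})$.  Since $\Ln{n,r}\le\out{G_{n,r}}$, the permutation $\Pi(g)$ must centralise $\Pi(\Ln{n,r})$ in the ambient symmetric group.  I would show that this centraliser is trivial by invoking the well-known richness of the periodic orbit representation of $\aut{\xnz,\shift{n}}/\gen{\shift{n}}$: for each sufficiently large length $\ell$, the image of $\Ln{n,r}$ on prime cyclic words of length $\ell$ contains the full alternating group (via Hedlund-type marker constructions in the spirit of Boyle--Lind--Rudolph), whose centraliser in the symmetric group on that finite set is trivial.  Hence $\Pi(g)$ acts trivially on prime cyclic words of all sufficiently large lengths; the strong synchronizing condition underlying Theorem~\ref{Thm:symrep} then forces $\Pi(g)$ to be trivial on every prime cyclic word (a transducer is determined by its effect on all sufficiently long cycles), and faithfulness of $\Pi$ yields $g=\id$.

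\emph{Infinite index.} I would construct a group homomorphism $\Phi\colon\out{G_{n,r}}\to\Z$ whose kernel contains $\Ln{n,r}$ and whose image is all of $\Z$.  The natural candidate is the logarithmic \emph{expansion rate}: for a transducer representing an element of $\out{G_{n,r}}$, average the ratio of output word length to input word length around any sufficiently long cycle of states, and take $\log_n$.  This yields a conjugation-invariant integer additive under composition, identically $0$ on $\Ln{n,r}$ by the length-preserving definition of $\Ln{n}$.  Since $\out{G_{n,r}}$ contains natural ``stretching'' elements of arbitrary nonzero expansion rate (for instance those arising from the shift-like automorphisms of the Higman--Thompson group), $\Phi$ is surjective, so $[\out{G_{n,r}}:\Ln{n,r}]=\infty$.

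The chief obstacle is in \emph{Centrelessness}: showing the centraliser of $\Pi(\Ln{n,r})$ in the symmetric group on prime cyclic words is trivial when $\Ln{n,r}$ is only a subgroup of $\Ln{n}$ and not all of it.  One must verify that the periodic orbit action of $\Ln{n,r}$ alone is already rich enough to admit no non-trivial centraliser, which requires a careful re-examination of the marker-construction arguments for the subgroup $\Ln{n,r}$ rather than the whole of $\aut{\xnz,\shift{n}}/\gen{\shift{n}}$.  In \emph{Infinite index}, the subtlety is verifying well-definedness of the expansion-rate homomorphism on equivalence classes of transducers modulo conjugation in $\out{G_{n,r}}$, together with exhibiting concrete elements of every integer rate.
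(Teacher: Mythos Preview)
Your overall strategy of working through the faithful representation $\Pi$ is sound and matches the paper's, but both parts of your argument have genuine gaps.

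\textbf{Centrelessness.} Your centraliser argument implicitly assumes that a central element $g$ preserves each length class $\rwnl{\ell}$, so that $\Pi(g)\!\restriction_{\rwnl{\ell}}$ lies in $\sym{\rwnl{\ell}}$ and the ``trivial centraliser of $A_n$'' argument applies. But an element of $\On{n,r}\setminus\Ln{n}$ does \emph{not} preserve word lengths under $\Pi$; that is precisely what distinguishes $\Ln{n}$ inside $\On{n}$. So the centraliser computation inside a fixed $\sym{\rwnl{\ell}}$ is not available. The paper handles this by splitting into two cases. For $T\in\On{n,r}\setminus\Ln{n}$ it exploits the length-changing directly: one finds distinct $\rotc{\Gamma},\rotc{\Delta}\in\rwnl{M}$ with $|\lambda_T(\Gamma,q_\Gamma)|\ne|\lambda_T(\Delta,q_\Delta)|$, then uses the Boyle--Lind--Rudolph--type result (Theorem~\ref{thm:faithfulrepdense}, plus finite index of $\Ln{n,r}$ in $\Ln{n}$) to produce $L\in\Ln{n,r}$ with $(\rotc{\Gamma})L\Pi=\rotc{\Delta}$ and $(\rotc{\lambda_T(\Gamma,q_\Gamma)})L\Pi=\rotc{\lambda_T(\Gamma,q_\Gamma)}$; comparing lengths of $(\rotc{\Gamma})(LT)\Pi$ and $(\rotc{\Gamma})(TL)\Pi$ shows $LT\ne TL$. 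The case $T\in\Ln{n,r}$ is then handled separately (and there your centraliser idea does go through).

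\textbf{Infinite index.} Your proposed expansion-rate homomorphism $\Phi$ does not exist as described: for a general $T\in\On{n}$ the ratio $|\lambda_T(\Gamma,q_\Gamma)|/|\Gamma|$ genuinely depends on the cycle $\Gamma$ (this is exactly what the paper's Claim~\ref{cla:centerless} exploits), so no single ``average over long cycles'' is well-defined, and there is no evident $\Z$-valued homomorphism on $\On{n,r}$ with kernel $\Ln{n,r}$. The paper instead exhibits infinitely many distinct cosets directly: for each $k$ it constructs an explicit order-$2$ element $T_k\in\On{n,1}\le\On{n,r}$ (Figure~\ref{fig:infiniteindex}) with $(\rotc{1})(T_k)\Pi\in\rwnl{k+1}$. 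Since every $L\in\Ln{n,r}$ preserves word length under $\Pi$, the elements $T_k$ lie in pairwise distinct cosets of $\Ln{n,r}$.
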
 

Viewed as a group of non-initial transducers, the group $\out{G_{n,r}}$  naturally embeds as a subgroup $\On{n,r}$ of a monoid $\Mn{n}$ of non-initial strongly synchronizing transducers (discussed further below).  The monoid $\Mn{n}$ further contains  a submonoid $\SLn{n}$ of which $\Ln{n}$ is a subgroup.  The table in  Figure \ref{tab:monoids} lists monoids of interest we develop here, together with some of their properties.

Although Theorems~\ref{Thm:shortexactaut} and \ref{Thm:symrep} concern the group $\aut{\xnz, \shift{n}}$, we prove our results in the framework of the monoid $\End(\xnz, \shift{n})$ of endomorphisms of the shift dynamical system. Abusing notation slightly, write $\gen{\shift{n}}$ for the congruence on $\End(\xnz, \shift{n})$ which relates two elements $\psi$ and $\varphi$ if one is a power of a shift times the other. Thus we may form the quotient monoid $\End(\xnz, \shift{n})/\gen{\shift{n}}$. The monoid $\Mn{n}$ plays for $\End(\xnz, \shift{n})$ the role  that $\On{n}$ plays for $\aut{\xnz, \shift{n}}$. 

\begin{theorem}
	The quotient $\End(\xnz, \shift{n})/ \gen{\shift{n}}$ is isomorphic to the monoid $\SLn{n}$. The action of the monoid  $\Mn{n}$ on prime cyclic words yields a monomorphism $\Pi$ into the transformation monoid on all finite prime cyclic words on an alphabet of size~$n$. The restriction of $\Pi$ to the monoid $\SLn{n}$ is the  \textit{periodic orbit representation} of $\End(\xnz, \shift{n})/ \gen{\shift{n}}$.
\end{theorem}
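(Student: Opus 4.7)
The plan is to extend the proofs of Theorem~\ref{Thm:shortexactaut} and Theorem~\ref{Thm:symrep} from the group/automorphism setting to the monoid/endomorphism setting. The conceptual point is that the bridge between dynamics and transducers --- the Curtis--Hedlund--Lyndon theorem together with the strong synchronizing condition --- was never really about invertibility; it is about continuity and shift-commutation, and hence transfers essentially verbatim to endomorphisms.

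First I would construct the map $\End(\xnz,\shift{n}) \to \SLn{n}$. Given $\varphi \in \End(\xnz,\shift{n})$, Hedlund's theorem produces a sliding block code, i.e.\ a local rule $f \colon X_n^{a+b+1} \to X_n$ computing $\varphi(x)_0$ from $x_{-a}\cdots x_b$. Out of this data I would build a transducer whose states record a window of read symbols and whose output on reading a new symbol is given by $f$, then minimize by identifying states with identical future behavior. The resulting transducer is length-preserving and strongly synchronizing, so lies in $\SLn{n}$ by the characterization developed earlier in the paper. Two endomorphisms differing by a power of $\shift{n}$ yield the same local rule up to a shift of the window, hence the same transducer up to lag, so the assignment descends to a well-defined map on $\End(\xnz,\shift{n})/\gen{\shift{n}}$.

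Next I would check that this map is a monoid isomorphism onto $\SLn{n}$. Multiplicativity follows by the standard composition of sliding block codes with an enlarged window, which on the transducer side is precisely the product in $\SLn{n}$; any shift that arises during composition commutes past the product and so is absorbed by the quotient by $\gen{\shift{n}}$. Injectivity modulo $\gen{\shift{n}}$: if two sliding block codes determine the same minimized transducer up to lag, realigning the windows shows the codes agree after a shift. Surjectivity: given $T \in \SLn{n}$, the strong synchronizing condition means the $i$-th output symbol produced while reading any bi-infinite input depends only on a bounded window of that input, which defines a continuous shift-commuting (and length-preserving) self-map of $\xnz$ whose associated transducer is~$T$.

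For the second half, given $T \in \Mn{n}$ and a finite word $w$, the strong synchronizing condition supplies a unique state $q_w$ of $T$ with a $w$-labelled circuit; reading $w$ from $q_w$ yields an output word which is itself a circuit at the image state, and passing to rotation classes produces a well-defined element $\Pi(T)$ of the transformation monoid on prime cyclic words. Multiplicativity of $\Pi$ is immediate from composition of circuits. The main obstacle is faithfulness: two distinct elements of $\Mn{n}$ must be separated by their action on \emph{some} prime cyclic word. I would argue this by showing that a minimal strongly synchronizing transducer is determined by its behavior on sufficiently long circuits, because the image of a long input can be read off from the outputs on prime cyclic words built from that input together with synchronizing extensions. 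Finally, under the identification from the first half, a prime cyclic word of length $k$ corresponds to a periodic orbit of $\shift{n}$ of length $k$ in $\xnz$, and $\Pi(T)$ acts on it exactly as the corresponding endomorphism class does modulo $\gen{\shift{n}}$, matching precisely the periodic orbit representation of $\End(\xnz,\shift{n})/\gen{\shift{n}}$.
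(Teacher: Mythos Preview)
Your outline follows the same overall route as the paper, but there are two genuine gaps, one minor and one substantial.

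\textbf{The map into $\SLn{n}$.} Identifying states with identical future behaviour is only \emph{weak} minimisation; it lands you in $\spn{n}$ (synchronous, weakly minimal transducers), not in $\SLn{n}$. Elements of $\SLn{n}$ are required to be minimal in the stronger sense of having no states of incomplete response, and a synchronous transducer coming from a sliding block code will typically have incomplete response (the shift transducer of Figure~\ref{fig:shift2} is the prototype: weakly minimal, synchronous, but with incomplete response at every state). The paper's map is $P \mapsto [P]$, where $[P]$ is obtained by \emph{removing} incomplete response via Proposition~\ref{prop:algorithmforremovingincompleteresponse} and then identifying $\omega$-equivalent states. The ``lag'' you allude to is precisely the extent of incomplete response that this step strips away, and the paper's annotation machinery (Section~4.3--4.4) is what makes ``up to lag'' rigorous and compatible with the product. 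Your sentence ``any shift that arises during composition \ldots\ is absorbed by the quotient'' is the right intuition, but it needs Lemma~\ref{lem:removingincompleteresposeprod} and Proposition~\ref{prop:annotationsrepectsproductwithsomefudge} to become a proof.

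\textbf{Faithfulness of $\Pi$.} This is the real gap. You argue that a minimal strongly synchronizing transducer is determined by its outputs on long circuits; that is essentially Lemma~\ref{lem:monolemma}, and it is correct. But $\Pi$ does \emph{not} record the output on a circuit: it records only the \emph{rotation class} of that output. So what you must show is that if $T, U \in \Mn{n}$ satisfy $\lambda_T(\Gamma,q_\Gamma) \rot \lambda_U(\Gamma,p_\Gamma)$ for every prime word $\Gamma$, then in fact $\lambda_T(\Gamma,q_\Gamma) = \lambda_U(\Gamma,p_\Gamma)$ exactly (after which Lemma~\ref{lem:monolemma} finishes). Upgrading ``equal up to rotation on every circuit'' to ``equal on every circuit'' is the hard combinatorial core of the paper's argument: one first shows the output lengths on any fixed circuit agree, and then runs a delicate contradiction argument (building carefully chosen long prime words of the form $\Theta^j\Delta$ and comparing how $T$ and $U$ rotate them) to rule out a nontrivial rotation offset. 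Your sketch does not address this step, and without it the injectivity claim for $\Pi$ on $\Mn{n}$ is unproved.
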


We further characterise $\On{n}$ as precisely the preimage under $\Pi$ of the symmetric group on all finite prime cyclic words on an alphabet of size $n$.

\begin{prop}
	Let $T \in \Mn{n}$. Then $(T)\Pi$  is a bijection if and only if $T \in \On{n}$.
\end{prop}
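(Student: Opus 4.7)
The forward direction is immediate from the preceding theorem. Since $\On{n}$ is a group, any $T\in\On{n}$ has an inverse $T^{-1}\in\On{n}\subseteq\Mn{n}$; because $\Pi$ is a monoid homomorphism, $(T^{-1})\Pi$ is a two-sided inverse of $(T)\Pi$ in the transformation monoid on prime cyclic words, forcing $(T)\Pi$ to be a bijection.

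For the reverse direction, my plan is to show that if $(T)\Pi$ is bijective then $T$ induces an invertible self-map of $\xnz$, from which invertibility of $T$ in $\Mn{n}$ (equivalently, $T\in\On{n}$) will follow. The non-initial strongly synchronizing transducer $T$ determines, via its sliding-block-style action, a continuous shift-commuting self-map $\bar T$ of $\xnz$ (perhaps up to a power of $\shift{n}$, since $T$ need not be length-preserving). Under the correspondence between prime cyclic words and periodic orbits of $\shift{n}$, the transformation $(T)\Pi$ is precisely the induced action of $\bar T$ on periodic orbits.

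Assuming $(T)\Pi$ is a bijection, I plan to deduce that $\bar T$ is a bijection of $\xnz$ by two density arguments. Surjectivity of $\bar T$ on periodic orbits lifts to global surjectivity because $\bar T(\xnz)$ is closed (the continuous image of a compact space) and contains the dense set of periodic points of $\shift{n}$. Injectivity on periodic orbits lifts to global injectivity by contradiction: if $\bar T(x)=\bar T(y)$ with $x\neq y$, I would fix a long window on which $x$ and $y$ differ and use the finite-lookahead behaviour of the strongly synchronizing transducer to exhibit two distinct periodic points agreeing with $x$ and $y$ respectively on that window and having the same image under $\bar T$, violating injectivity on periodic orbits. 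Hence $\bar T$ is a continuous self-bijection of the compact Hausdorff space $\xnz$, therefore a homeomorphism commuting with $\shift{n}$; its inverse is realised by a transducer $S\in\Mn{n}$ with $TS=ST=\id$, exhibiting $T$ as a unit of $\Mn{n}$ and hence a member of $\On{n}$.

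The principal technical obstacle is the injectivity step: promoting injectivity on the dense but non-open set of periodic points to global injectivity of $\bar T$ requires careful use of the strongly synchronizing property to produce periodic approximants that faithfully retain a chosen finite-window discrepancy between $x$ and $y$, while still mapping to the same point under $\bar T$. Once that step is in place, the compactness-plus-continuity promotion to a homeomorphism and the identification of the inverse with a transducer in $\Mn{n}$ are routine, and the injectivity of $\Pi$ guarantees that this transducer realisation of the set-theoretic inverse genuinely serves as a monoid inverse.
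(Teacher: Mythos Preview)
Your forward direction is fine and matches the paper. The reverse direction, however, rests on an assumption that fails for general $T\in\Mn{n}$: you assert that $T$ ``determines, via its sliding-block-style action, a continuous shift-commuting self-map $\bar T$ of $\xnz$ (perhaps up to a power of $\shift{n}$).'' This is exactly where the argument breaks. The action on $\xnz$ is defined in the paper only for transducers satisfying the Lipschitz condition \ref{Lipshitzconstraint} (the submonoid $\SLn{n}$, or more generally $\Nn{n}$), because the annotation machinery of Definition~\ref{def:annotatingLn} requires that every circuit have output length equal to input length; otherwise the defining equation \eqref{eqn:annotationrule} is inconsistent on any loop. A general $T\in\Mn{n}$ may have loops with output strictly longer or shorter than input, and then no single power of $\shift{n}$ can repair the unbounded drift between input and output indices. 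The paper says this explicitly in the remark following Corollary~\ref{prop:perodicperm}: ``as there is no apparent well-defined action by homeomorphisms of $\On{n}$ on $\xnz$ or another `nice' space, we are unable to find a similar topological argument for $\On{n}$.'' Your density-of-periodic-points argument is precisely the topological argument that works for $\SLn{n}$ but not for $\Mn{n}$.

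The paper's proof is therefore forced to be purely combinatorial. It first proves (Lemma~\ref{lem:injectiveiffstatesinjective}) that injectivity of $(T)\Pi$ forces every state of $T$ to induce an injective map on $\xnn$; this is a delicate argument tracking two distinct infinite inputs with the same output and extracting from them two distinct prime cyclic words with rotationally equivalent outputs. It then argues directly, using bijectivity of $(T)\Pi$ and a careful analysis of how long runs of a periodic output word must arise from runs of the corresponding input word, that the image of each state is open (closedness follows from injectivity, continuity, and compactness). These two facts together put $T$ in $\On{n}$. If you want to salvage your approach, you would first need to reduce to the Lipschitz case, but there is no obvious way to do that without already knowing something close to the conclusion.
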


\subsection*{Some words on the flow of ideas and connections}
The monoid $\Mn{n}$ consists of all non-initial strongly synchronizing transducers over the alphabet $\xn$ under a suitable equivalence relation called \emph{$\omega$-equivalence}. That is, the elements of $\Mn{n}$  are $\omega$-equivalence classes of strongly synchronizing transducers. The paper \cite{GNSenglish} gives a procedure for reducing a transducer $T$ in a class of $\Mn{n}$ to a unique minimal representative. We therefore conflate elements of $\Mn{n}$ with their minimal representatives. The elements of the submonoid $\SLn{n}$ then correspond to those elements of $\Mn{n}$ which  satisfy the following \textit{Lipschitz condition}:  on any circuit, the length of the output word coincides with the length of the circuit. 

The strong synchronizing condition together with the Lipschitz condition means that we may associate to an element $T$ of $\SLn{n}$ a map $\alpha$, called an \emph{annotation}, from the states of $T$ to $\Z$.   The pair $(T, \alpha)$ then induces in a natural way an endomorphism of the shift dynamical system $(\xnz, \shift{n})$.  It turns out that if $\beta$ is another annotation of $T$ then $(T, \alpha)$ and $(T,\beta)$ represent the same element of $\End(\xnz, \shift{n})/ \gen{\shift{n}}$.  Moreover for two elements $T,U \in \SLn{n}$ and annotations $\alpha$ of $T$ and $\beta$ of $U$, $(T,\alpha)$ and $(U, \beta)$ induce the same map on $\xnz$ if and only if  $T$ and $U$ represent the same element of $\SLn{n}$ and $\alpha = \beta$.  The group $\Ln{n}$ consists precisely of those elements $T \in \SLn{n}$ for which there is an annotation $\alpha$ such that $(T,\alpha)$ is an automorphism of the shift dynamical system.

To show that $\SLn{n}$ is in bijective correspondence with elements of $\End(\xnz, \shift{n})/\gen{\shift{n}}$, we prove that any element of $\End(\xnz, \shift{n})$  is induced by a pair $(T, \alpha)$ for $T \in \Mn{n}$ and an annotation   $\alpha$ of  $T$. By the previous paragraph,  it thus follows that the map $T \mapsto (T,\alpha)\gen{\shift{n}}$ is an isomorphism from $\SLn{n}$ to $\End(\xnz, \shift{n})/\gen{\shift{n}}$.

Write $\ASLn{n}$ for the set of pairs $(T, \alpha)$ where $T \in \SLn{n}$ and $\alpha$ is an annotation of~$T$. We define an  associative binary operation on this set which takes a pair  $(T,\alpha)$, $(U, \beta)$ of elements of $\ASLn{n}$ and returns an element $(TU, \overline{\alpha + \beta}) \in \ASLn{n}$. The element $TU$ is simply the product of $T$ and $U$ in $\Mn{n}$, and, $\overline{\alpha + \beta}$ is an annotation obtained from the annotation $\alpha + \beta : Q_{T} \times Q_{U} \to \Z$, $(t,u) \mapsto (t)\alpha + (u)\beta$,  together with certain combinatorial data arising from the \textit{transducer product} of $T$ with $U$. The set $\ASLn{n}$ together with the binary operation is a monoid isomorphic to $\End{(\xnz, \shift{n})}$. More specifically, for $(T,\alpha)$, $(U, \beta)$ of elements of $\ASLn{n}$, the element  $(TU, \overline{\alpha + \beta})$ corresponds to the composition of the induced maps $(T,\alpha)$, $(U, \beta)$ of $\xnz$.

In connection to the embedding in \cite{BleakCameronOlukoya1}, for an element $T$ of $\hn{n}$ the map $\bm{0}$ from $Q_{T}$ to $\Z$  which assigns $0$ to every state is a valid annotation. Moreover, for two elements $(T, \bm{0}), (U, \bm{0})$, $T,U \in \hn{n}$, the element $(TU, \bm{0})$ is precisely the product of $(T, \bm{0})$ with $(U, \bm{0})$ in $\ASLn{n}$.

To prove Theorem~\ref{thm:belksplittingintro} characterising exactly when the short exact sequence $$1  \to \gen{\shift{n}} \to  \aut{\xnz, \shift{n}} \to\Ln{n}\to 1$$  splits, we make use of the well-known \textit{dimension representation} $\dimr$ of Kreiger (\cite{KriegerDimension}). This is a map from $\aut{\xnz, \shift{n}}$ to the subgroup $\mathcal{G}(n)$ of $\Q$ generated by prime divisors of $n$. The key facts we need are: when $n$ is not a power of a smaller integer $\gen{n}$ has a complementary subgroup $\gen{n}^{c}$ in $\mathcal{G}(n)$, and, if $n$ is power of a smaller integer $\shift{n}$ has proper roots in $\aut{\xnz, \shift{n}}$. The former fact follows since $\mathcal{G}(n)$ is a free abelian group while the latter is a well-known fact about $\aut{\xnz, \shift{n}}$ when $n$ is a proper power. The result is an immediate corollary of these two facts.
\vspace{.1 in}

{\flushleft{\it Acknowledgements:}}
The authors are grateful to Elliot Cawtheray for numerous suggested improvements to the text after he carried out a careful reading of a draft of this article. The fourth author gratefully acknowledges the hospitality of the University of Aberdeen where some of this research was conducted.

\section{The Curtis, Hedlund, Lyndon Theorem} \label{section:preliminaries}

In this paper, as in the paper \cite{BleakCameronOlukoya1},  operators will be on the right of their arguments and sequences will be indexed from left to
right in the usual way. Our notation and definitions will also be mostly consistent with the paper \cite{BleakCameronOlukoya1}. We begin by laying down this ground-work.

We denote by $X_n$ the $n$-element set $\{0,1,\ldots,n-1\}$. Then $X_n^*$
denotes the set of all finite strings (including the empty string
$\varepsilon$) consisting of elements of $X_n$. For an element $w\in X_n^*$,
we let $|w|$ denote the length of $w$ (so that $|\varepsilon|=0$). We further
define
\[X_n^+=X_n^*\setminus\{\varepsilon\},\quad
X_n^k=\{w\in X_n^*:|W|=k\},\quad X_n^{\le k}=\bigcup_{0\le i\le k}X_n^i.\]
We denote the concatenation of strings $x,y\in X_n^*$ by $xy$; in this notation
we do not distinguish between an element of $X_n$ and the corresponding
element of $X_n^1$.

For $x,x_1,x_2\in X_n^*$, if $x$ is the concatenation $x_1x_2$ of $x_1$ and
$x_2$, we write $x_2=x-x_1$.  

A \textit{bi-infinite} sequence is a map $x:\mathbb{Z}\to X_n$. We sometimes write this
sequence as $\dots x_{-1}x_0x_1x_2\dots$, where $x_i=x(i)\in X_n$ (note that we use maps on the left for sequences). We denote
the set of such sequences by $\xnz$. In a similar way, a \emph{(positive) singly infinite
sequence} is a map $x:\mathbb{N}\to X_n$ (where, by convention,
$0\in\mathbb{N}$); we write such a sequence by $x_0x_1x_2\dots$, where
$x_i=x(i)$, and denote the set of such sequences by $\xnn$. Finally  we also set $\xn^{-\N}$, the \emph{(negative) singly infinite sequences} for the set of all maps from ${-\N} \to \xn$, we write $\ldots x_{-2}x_{-1}x_0$ for such a map.

We can concatenate a string $x\in X_n^*$ with an singly infinite string
$y\in \xnn$, by prepending $x$ to $y$. We may can also subtract a string $x$ from a singly infinite string $y$ which has $x$ has a prefix by deleting the prefix $x$.

For a string $\nu \in \xns$ we write $U_{\nu}$ for the set of all elements of $\xn^{-\N}$ with $\nu$ as a suffix. For example $U_{\varepsilon} = \xnN$.

Let $F(X_n,m)$ denote the set of functions from $X_n^m$ to $X_n$. Then, for
all $m,r>0$, and all $f\in F(X_n,m)$, we define a map $f_r:X_n^{m+r-1}\to X_n^{r}$
as follows.
\begin{quote}
Let $x=x_{-m-r+2}\ldots x_0$. For $-r+1 \le i\le 0$, set
$y_i=(x_{i-m+1}x_{i-m+2}\ldots x_{i})f$. Then $xf_r=y$, where $y=y_{-r+1}\ldots y_{0}$.
\end{quote}

Thus, as in the paper  \cite{BleakCameronOlukoya1}, we have a ``window'' of length $m$ which slides along the sequence $x$ and at the $i$\textsuperscript{th} step we apply the map $f$ to the symbols visible in the window. We think of the map $f$ as acting on the rightmost letter in the viewing window according to the $m-1$ digits of history.  This procedure can
be extended to define a map $f_\infty:\xnz\to\xnz$, by setting $xf_\infty=y$
where $y_i=(x_{i-m+1}\ldots x_i)f$ for all $i\in\mathbb{Z}$; and similarly for
$\xnN$.

A function $f\in F(X_n,m)$ is called \emph{right permutive} if, for distinct
$x,y\in X_n$ and any fixed block $a\in X_n^{m-1}$, we have $(ax)f\ne(ay)f$. Analogously, a function $f \in  F(X_n, m)$ is called \emph{left permutive} if  the map from $X_n$ to itself given by $x\mapsto(xa)f$ is a permutation for all $a\in X_n^{m-1}$. If $f$ is not right permutive, then
the induced map $f_\infty$ from $\xnN$ to itself is not injective. Moreover, it is not always the case that a right permutive map $f \in F(X_n, m)$ induces a bijective map $f_{\infty}: \xnN\to \xnN$. However, a right permutive map always induces a surjective map from $\xnN$ to itself.

\begin{prerk}
Observe that, if $f\in F(X_n,m)$ and $k\ge1$, then the map $g\in F(X_n,m+k)$
given by $(x_{-m-k+1}\ldots x_0)g=(x_{-m+1}\ldots x_0 )f$, satisfies
$g_\infty=f_\infty$.
\label{F(X_n,m)containedinF(X_n,m+1)}
\end{prerk}

\medskip

The sets $\xnz$ and $\xnN$ are topological spaces, equipped with the Tychonoff
product topology derived from the discrete topology on $X_n$. Each is
homeomorphic to Cantor space. The set $\{ U_{\nu} \mid \nu \in \xns\}$ is a basis of clopen sets for the topology on $\xnN$.

The \emph{shift map} $\shift{n}$ is the map which sends a sequence $x$ in
$\xnz$ or $\xnN$ to the sequence $y$ given by $y(i)=x(i-1)$ for all
$i$ in $\mathbb{Z}$ or $-\mathbb{N}$ respectively.

\medskip

The following result is due to Curtis, Hedlund and Lyndon~\cite[Theorem 3.1]{Hedlund69}: 

\begin{theorem}
Let $f\in F(X_n,m)$. Then $f_\infty$ is continuous and commutes with the
shift map on $\xnz$.
\label{t:hed1}
\end{theorem}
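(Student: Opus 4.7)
The plan is to verify the two claims (continuity of $f_\infty$ and commutation with $\shift{n}$) directly from the definitions, using the description of the topology on $\xnz$ via finite cylinders and the pointwise formula $(xf_\infty)(i) = (x_{i-m+1}\ldots x_i)f$.

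For continuity, I would use the fact that a subbasis for the Tychonoff topology on $\xnz$ is given by cylinders which fix the value of a sequence at a single coordinate, so a basic open neighbourhood of a point $y \in \xnz$ consists of all sequences agreeing with $y$ on a fixed finite window $[-N,N]$. Hence it suffices to show that for every $x\in\xnz$ and every $N\ge 0$ there exists $N' \ge 0$ such that whenever $x'\in\xnz$ agrees with $x$ on $[-N',N']$, the image $x'f_\infty$ agrees with $xf_\infty$ on $[-N,N]$. Taking $N' = N+m-1$ works: for each $i \in [-N,N]$, the value $(x'f_\infty)(i) = (x'_{i-m+1}\ldots x'_i)f$ depends only on the entries of $x'$ in $[-N-m+1, N] \subseteq [-N', N']$, which by hypothesis coincide with those of $x$. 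This immediately gives continuity of $f_\infty$.

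For the commutation with the shift, I would just compute both sides coordinate-by-coordinate and check equality. Using the paper's convention $(x\shift{n})(i) = x(i-1)$ and the definition of $f_\infty$, for any $x \in \xnz$ and $i \in \Z$ we have
\[
((x\shift{n})f_\infty)(i) \;=\; \bigl((x\shift{n})_{i-m+1}\ldots (x\shift{n})_i\bigr)f \;=\; (x_{i-m}\ldots x_{i-1})f,
\]
while on the other hand
\[
((xf_\infty)\shift{n})(i) \;=\; (xf_\infty)(i-1) \;=\; (x_{i-m}\ldots x_{i-1})f.
\]
Since the two sides agree at every coordinate $i$, we conclude $\shift{n} f_\infty = f_\infty \shift{n}$.

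There is no real obstacle here: the proof is an index-tracking exercise once one has pinned down the conventions (sequences indexed left-to-right, maps acting on the right, and the shift shifting indices by $-1$). The only place one could slip is in the continuity argument if one picked the window $N'$ too small; padding by $m-1$ on each side (rather than just on one side) handles both the left and right endpoints of $[-N,N]$ uniformly. Both statements extend verbatim to the one-sided space $\xnN$ with the same argument, which is what the paper will need later.
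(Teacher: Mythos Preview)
Your proof is correct and is the standard direct verification of this classical result. Note, however, that the paper does not actually prove this theorem: it is simply stated with attribution to Curtis, Hedlund and Lyndon and a citation to \cite[Theorem 3.1]{Hedlund69}. So there is no paper proof to compare against; your argument is exactly the elementary index-chase one would expect, and it matches the paper's conventions for $f_\infty$ and $\shift{n}$.
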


A continuous function from $\xnz$ to itself which commutes with the shift map
is called an \emph{endomorphism} of the shift dynamical system
$(\xnz,\shift{n})$. If the function is invertible, since $\xnz$ is compact and Hausdorff, its inverse is continuous:
it is an \emph{automorphism} of the shift system. The sets of endomorphisms and
of automorphisms are denoted by $\End(\xnz,\shift{n})$ and $\Aut(\xnz,\shift{n})$
respectively. Under composition, the first is a monoid, and the second a group.

Analogously, a continuous
function from $\xnN$ to itself which commutes with the shift map on this
space is an \emph{endomorphism of the one-sided shift} $(\xnN,\shift{n})$; if it is invertible, it is an \emph{automorphism} of this shift system. The sets of
such maps are denoted by $\End(\xnN,\shift{n})$ and $\Aut(\xnN,\shift{n})$; again
the first is a monoid and the second a group.

Note that $\shift{n}\in\Aut(\xnz,\shift{n})$, whereas
$\shift{n}\in\End(\xnN,\shift{n})\setminus\Aut(\xnN,\shift{n})$.

Define
\begin{eqnarray*}
F_\infty(X_n) &=& \bigcup_{m\ge0}\{f_\infty:f\in F(X_n,m)\},\\
RF_\infty(X_n) &=& \bigcup_{m\ge0}\{f_\infty:f\in F(X_n,m), f
\mbox{ is right permutive}\}.
\end{eqnarray*}

Theorem~\ref{t:hed1} shows that $F_\infty(X_n)\subseteq\End(\xnz)$. In fact $F_{\infty}(X_{n})$ and $RF_{\infty}(X_n)$ are submonoids of $\End(\xnz)$. Note that $\shift{n}\in F_\infty(X_n)$, but $\shift{n}^{-1}$ is not an element of $F_{\infty}(X_{n})$. Now, \cite[Theorem 3.4]{Hedlund69} shows:

\begin{theorem}
$\End(\xnz,\shift{n}) = \{ \shift{n}^{i} \phi \mid i \in \mathbb{Z}, \phi \in F_{\infty}(X_{n}) \}$.
\label{t:hed2}
\end{theorem}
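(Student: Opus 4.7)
The plan is to establish the two inclusions separately. The reverse containment is essentially immediate from what is already set up: Theorem~\ref{t:hed1} guarantees $\phi \in \End(\xnz,\shift{n})$ for every $\phi \in F_\infty(X_n)$, while $\shift{n}^i$ lies in $\Aut(\xnz,\shift{n}) \subseteq \End(\xnz,\shift{n})$ for each $i \in \Z$ because $\shift{n}$ is invertible on the two-sided space. Since $\End(\xnz,\shift{n})$ is closed under composition, every product $\shift{n}^i \phi$ belongs to it.

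For the forward direction, I would fix an arbitrary $\psi \in \End(\xnz,\shift{n})$ and first extract a finite window description from $\psi$. The coordinate-zero map $\xnz \to X_n$ sending $x$ to $(x\psi)_0$ is continuous, and $X_n$ is finite and discrete, so its point-preimages form a finite clopen partition of $\xnz$. Each block is open, hence a union of basic cylinder sets, and is closed in the compact space $\xnz$, hence in fact a \emph{finite} union of such cylinders. Only finitely many coordinate indices appear across this finite union, so there exist $N \le M$ in $\Z$ and a function $g \in F(X_n,\, M-N+1)$ such that $(x\psi)_0 = (x_{N}x_{N+1}\cdots x_{M})g$ for every $x \in \xnz$.

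Next I would use shift-equivariance to promote this description from coordinate $0$ to every coordinate. Since $\psi\shift{n}^{-i} = \shift{n}^{-i}\psi$, applying both sides to $x$ and reading off the entry at coordinate $0$ gives
\[
(x\psi)_i \;=\; \bigl((x\shift{n}^{-i})\psi\bigr)_0 \;=\; (x_{N+i}x_{N+i+1}\cdots x_{M+i})g.
\]
Setting $m = M - N + 1$, $k = -M$, and $f := g \in F(X_n, m)$, a direct comparison with the definition $(yf_\infty)_j = (y_{j-m+1}\cdots y_j)f$ together with the shift convention $(x\shift{n}^k)_j = x_{j-k}$ identifies $\psi$ with $\shift{n}^k f_\infty$, completing the proof.

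The main substantive step is the compactness-based reduction from continuity of $\psi$ to a finite coordinate window; the rest is just bookkeeping with shift indices. One minor edge case is a constant endomorphism, handled by choosing any $m \ge 1$ together with the corresponding constant $f \in F(X_n, m)$; one may also freely enlarge the window using Remark~\ref{F(X_n,m)containedinF(X_n,m+1)} to normalise the presentation.
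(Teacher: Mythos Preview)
Your proof is correct and is the standard Curtis--Hedlund--Lyndon argument. The paper itself does not prove this theorem: it simply attributes the result to Hedlund~\cite[Theorem 3.4]{Hedlund69} and states it without argument. So there is nothing to compare against beyond noting that your compactness-plus-equivariance derivation is exactly the classical route, and your index bookkeeping (setting $m=M-N+1$, $k=-M$) checks out against the paper's conventions for $\shift{n}$ and $f_\infty$.
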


The following result is a corollary:
\begin{theorem}
$RF_\infty(X_n)$ is a submonoid of $\End(\xnz, \shift{n})$ and $\Aut(\xnN,\shift{n})$ is the largest inverse closed subset of $RF_\infty(X_n)$.
\end{theorem}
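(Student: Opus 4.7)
The plan is to handle the two assertions sequentially, since both rest on how right permutivity interacts with composition and with restriction to one-sided sequences.

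For the submonoid claim, the identity of $F(X_n, 1)$ is trivially right permutive, and closure under composition reduces to an unpacking of definitions: given right-permutive $f \in F(X_n, m)$ and $g \in F(X_n, k)$, one checks that $f_\infty g_\infty = h_\infty$ where $h \in F(X_n, m+k-1)$ is defined by
\[
h(x_{-m-k+2}\ldots x_0) = g\bigl(f(x_{-m-k+2}\ldots x_{-k+1}),\, \ldots,\, f(x_{-m+1}\ldots x_0)\bigr).
\]
Fixing a prefix in $X_n^{m+k-2}$ and varying the final symbol $x_0$, only the last $f$-value in this expression depends on $x_0$, and it actually changes by right permutivity of $f$; then right permutivity of $g$ applied in its final coordinate produces distinct outputs, so $h$ is right permutive.

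For the characterization of $\aut{\xnN, \shift{n}}$ as the largest inverse-closed subset of $RF_\infty(X_n)$, I use the one-sided analogue of Theorem~\ref{t:hed1} (proved in exactly the same way) to encode each $\phi \in \aut{\xnN, \shift{n}}$ by some $f \in F(X_n, m)$, which I then identify with $f_\infty \in F_\infty(X_n)$ acting on $\xnz$. Since $\phi$ is injective on $\xnN$, the remark in the excerpt forces $f$ to be right permutive, so $\aut{\xnN, \shift{n}} \subseteq RF_\infty(X_n)$; as a group it is clearly inverse closed. Conversely, if $S \subseteq RF_\infty(X_n)$ is inverse closed and $f_\infty \in S$, then its inverse $g_\infty$ also belongs to $RF_\infty(X_n) \subseteq F_\infty(X_n)$. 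Being past-looking, both codes restrict to self-maps of $\xnN$, and the identity $f_\infty g_\infty = g_\infty f_\infty = \mathrm{id}_{\xnz}$ restricts there, so $f_\infty|_{\xnN}$ is a bijection, hence an element of $\aut{\xnN, \shift{n}}$.

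The main subtlety worth flagging is that ``inverse closed subset of $RF_\infty(X_n)$'' is strictly stronger than ``invertible in $\aut{\xnz, \shift{n}}$'': for instance $\shift{n} \in RF_\infty(X_n)$, but $\shift{n}^{-1}$ is not in $F_\infty(X_n)$ because realizing it requires reading a symbol from the future. The requirement that both $f_\infty$ and its inverse be past-looking right-permutive codes is precisely the content of being a one-sided automorphism, which is why the identification with $\aut{\xnN, \shift{n}}$ is exact.
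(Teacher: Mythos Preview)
Your argument is correct. The paper does not actually prove this theorem; it simply records it as a corollary of Hedlund's results (in particular Theorem~\ref{t:hed2}), treating the content as standard. Your direct verification---closure of right permutivity under block-code composition, together with the one-sided Curtis--Hedlund--Lyndon theorem and the observation that past-looking codes and their compositions restrict compatibly to $\xnN$---fills in precisely the details one would expect to supply, and your closing remark about $\shift{n}$ versus $\shift{n}^{-1}$ correctly isolates why ``inverse closed inside $RF_\infty(X_n)$'' is the operative hypothesis rather than mere invertibility in $\aut{\xnz,\shift{n}}$.
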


\section{Connections to transducers}

In this section we state a result of \cite{BleakCameronOlukoya1} which shows how the elements of $F_\infty(X_n)$ can be
described by a certain class of finite \emph{synchronous} transducers. We shall later build on this result to show that elements of $\End(\xnz, \shift{n})$ can be described by a pair consisting of a possibly \emph{asynchronous} transducer, and combinatorial data called an \emph{annotation}. We therefore begin with a general definition of automata and transducers. 

\subsection{Automata and transducers}

An \emph{automaton}, in our context, is a triple $A=(X_A,Q_A,\pi_A)$, where
\begin{enumerate}
\item $X_A$ is a finite set called the \emph{alphabet} of $A$ (we assume that
this has cardinality $n$, and identify it with $X_n$, for some $n$);
\item $Q_A$ is a finite set called the \emph{set of states} of $A$;
\item $\pi_A$ is a function $X_A\times Q_A\to Q_A$, called the \emph{transition
function}.
\end{enumerate}
We regard an automaton $A$ as operating as follows. If it is in state $q$ and
reads symbol $a$ (which we suppose to be written on an input tape), it moves
into state $\pi_A(a,q)$ before reading the next symbol. As this suggests, we
can imagine that the inputs to $A$ form a string in $X^\mathbb{N}$; after
reading a symbol, the read head moves one place to the right before the next
operation.

We can extend the notation as follows. For $w\in X_n^m$, let $\pi_A(w,q)$ be
the final state of the automaton with initial state $q$ after successively
reading the symbols in $w$. Thus, if $w=x_0x_1\ldots x_{m-1}$, then
\[\pi_A(w,q)=\pi_A(x_{m-1},\pi_A(x_{m-2},\ldots,\pi_A(x_0,q)\ldots)).\]
By convention, we take $\pi_A(\varepsilon,q)=q$.

For a given state $q\in Q_A$, we call the automaton $A$ which starts in
state $q$ an \emph{initial automaton}, denoted by $A_q$, and say that it is
\emph{initialised} at $q$.

An automaton $A$ can be represented by a labelled directed graph, whose
vertex set is $Q_A$; there is a directed edge labelled by $a\in X_n$ from
$q$ to $r$ if $\pi_A(a,q)=r$.

A \emph{transducer} is a quadruple $T=(X_T,Q_T,\pi_T,\lambda_T)$, where
\begin{enumerate}
\item $(X_T,Q_T,\pi_T)$ is an automaton;
\item $\lambda_T:X_T\times Q_T\to X_T^*$ is the \emph{output function}.
\end{enumerate}
Such a transducer is an automaton which can write as well as read; after
reading symbol $a$ in state $q$, it writes the string $\lambda_T(a,q)$ on an
output tape, and makes a transition into state $\pi_T(a,q)$. An \emph{initial
transducer} $T_q$ is simply a transducer which starts in state $q$.

In the same manner as for automata, we can extend the notation to allow
the transducer to act on finite strings: let $\pi_T(w,q)$ and $\lambda_T(w,q)$
be, respectively, the final state and the concatenation of all the outputs
obtained when the transducer reads the string $w$ from state $q$.

A transducer $T$ can also be represented as an edge-labelled directed graph.
Again the vertex set is $Q_T$; now, if $\pi_T(a,q)=r$, we put an edge with
label $a|\lambda_T(a,q)$ from $q$ to $r$. In other words, the edge label
describes both the input and the output associated with that edge.

For example, Figure~\ref{fig:shift2} describes a transducer over the alphabet
$X_2$.

\begin{figure}[htbp]
\begin{center}
 \begin{tikzpicture}[shorten >=0.5pt,node distance=3cm,on grid,auto]
 \tikzstyle{every state}=[fill=none,draw=black,text=black]
    \node[state] (q_0)   {$a_1$};
    \node[state] (q_1) [right=of q_0] {$a_2$};
     \path[->]
     (q_0) edge [loop left] node [swap] {$0|0$} ()
           edge [bend left]  node  {$1|0$} (q_1)
     (q_1) edge [loop right]  node [swap]  {$1|1$} ()
           edge [bend left]  node {$0|1$} (q_0);
 \end{tikzpicture}
 \end{center}
 \caption{A transducer over $X_2$ \label{fig:shift2}}
\end{figure}

A transducer $T$ is said to be \emph{synchronous} if $|\lambda_T(a,q)|=1$
for all $a\in X_T$, $q\in Q_T$; in other words, when it reads a symbol, it
writes a single symbol. It is \emph{asynchronous} otherwise. Thus, an
asynchronous transducer may write several symbols, or none at all, at a given
step. Note that this usage of the word differs from that of Grigorchuk
\emph{et al.}~\cite{GNSenglish}, for whom ``asynchronous'' includes ``synchronous''.
The transducer of Figure~\ref{fig:shift2} is synchronous.

We can regard an automaton, or a transducer, as acting on an infinite string
in $\xnn$, where $X_n$ is the alphabet. This action is given by iterating
the action on a single symbol; so the output string is given by
\[\lambda_T(xw,q) = \lambda_T(x,q)\lambda_T(w,\pi_T(x,q)).\]

Throughout this paper, we will (as in \cite{GNSenglish}) make the following
assumption:

\paragraph{Assumption} A transducer $T$ has the property that, when it reads
an infinite input string starting from any state, it writes an infinite  output string.

\medskip

The property above is equivalent to the property that any circuit in the underlying automaton has non-empty concatenated output. 

From the assumption, it follows that the transducer writes an infinite output string on reading any infinite input string from any state.
Thus $T_q$ induces a map $w\mapsto\lambda_T(w,q)$ from $\xnn$ to itself; it is
easy to see that this map is continuous. If it is a
homeomorphism, then we call the state $q$ a \emph{homeomorphism state}. We write $\im{q}$ for the image of the map induced by $T_{q}$.

Two states $q_1$ and $q_2$ are said to be \emph{$\omega$-equivalent} if the
transducers $T_{q_1}$ and $T_{q_2}$ induce the same continuous map. (This can
be checked in finite time, see~\cite{GNSenglish}.)  More generally, we say that two
initial transducers $T_q$ and $T'_{q'}$ are \emph{$\omega$-equivalent} if they
induce the same continuous map on $\xnn$. 

A transducer is said to be \emph{weakly minimal} if no two states are
$\omega$-equivalent. For a synchronous transducer $T$, two states $q_1$ and $q_2$ are equivalent if $\lambda_T(a, q_1) = \lambda_T(a,q_2)$ for any finite word $a \in X_n^{*}$.
For a state $q$ of $T$ and a word $w\in X_n^*$, we let $\Lambda(w,q)$ be the
greatest common prefix of the set $\{\lambda(wx,q):x\in\xnn\}$. The state
$q$ is called a \emph{state of incomplete response} if
$\Lambda(\varepsilon,q)\ne\varepsilon$; the length $|\Lambda(\varepsilon,q)|$ of the string $\Lambda(\varepsilon,q)$ is the \emph{extent of incomplete response} of the state $q$. Note that for a state $q \in  Q_{T}$, if the initial transducer $T_{q}$  induces a map from $\xnn$ to itself with image size at least $2$, then $|\Lambda(\varepsilon, q)| < \infty$. 

Let  $T$ be a transducer and $q \in Q_{T}$ a state. Suppose $T_{q}$ has image size exactly one. By definition the number of states of $T_{q}$ is finite, therefore there is a state $p \in Q_{T}$, accessible from $q$ and a word $\Gamma \in \xnp$ such that $\pi_{T}(\Gamma, p)= p$. Let $x = \lambda_{T}(\Gamma, p) \in \xnp$ and let $Z_{x} = (\xn, \{x\}, \pi_{Z_{x}}, \lambda_{Z_{x}})$ be the transducer such that $\lambda_{Z_{x}}(i, x) = x$ for all $i \in \xn$. Then $T_{q}$ is $\omega$-equivalent to a transducer $T'_{q'}$ which contains $Z_{x}$ as a subtransducer and satisfies the following condition: there  is a $k \in \N$ so that, for any word $y \in \xns$, $\pi_{T'}(y,q')= x$.

We say that an initial transducer $T_{q}$ is \emph{minimal} if it is weakly minimal, has no states of
incomplete response and every state is accessible from the initial state $q$. A non-initial transducer $T$  is called minimal if for any state $q \in Q_{T}$ the initial transducer $T_{q}$ is minimal.  Therefore a non-initial transducer $T$ is minimal if it is weakly minimal, has no states of incomplete response and is strongly connected as a directed graph. 

Observe that the transducer $Z_{x}$, for any $x \in \xnp$, is not minimal as  its only state is, by definition, a state of incomplete response. However, it will be useful when stating some results later on to have a notion of minimality for the single state transducers $Z_{x}$ considered as initial and non-initial transducers. We shall use the word minimal for this new meaning of minimality appealing to the context to clarify any confusion. The definition below, which perhaps appears somewhat contrived, is a consequence of this.  

Let $x= x_1 x_2 \ldots x_{r} \in \xns$ be a word. The transducer $Z_{x}$ is called \emph{minimal as an initial transducer} if $x$ is a prime word; $Z_{x}$ is called \emph{minimal as a non-initial transducer} if  $Z_{x}$ is minimal as an initial transducer and $x$ is the minimal  element, with respect to the lexicographic ordering, of the set $\{ x_{i} \ldots x_{r}x_{1}\ldots x_{i-1} \mid 1 \le i \le r \}$. Note that if $|x| = 1$ then $Z_{x}$ is minimal as an initial transducer if and only if it is minimal as a non-initial transducer. In most applications, we consider the transducers $Z_{x}$ as non-initial transducers and  in this case we will omit the phrase `as a non-initial transducer'.

 Two (weakly) minimal non-initial transducers $T$ and $U$  are said to be  \emph{$\omega$-equal} if there is a bijection $f: Q_{T} \to Q_{U}$, such that for any $q \in Q_{T}$, $T_{q}$ is $\omega$-equivalent to $U_{(q)f}$. Two (weakly) minimal initial transducers $T_{p}$ and $U_{q}$ are said to be $\omega$-equal if there is a bijection  $f: Q_{T} \to Q_{U}$, such that $(p)f = q$ and for any $t \in Q_{T}$,  $T_{t}$ is $\omega$-equivalent to $U_{(t)f}$. We shall use the symbol `$=$' to represent $\omega$-equality of initial and non-initial transducers. Two non-initial transducers are said to be $\omega$-equivalent if they have $\omega$-equal minimal representatives.

In the class of synchronous transducers, the  $\omega$-equivalence class of  any transducer has a unique weakly minimal representative.
Grigorchuk \textit{et al.}~\cite{GNSenglish} prove that the \mbox{$\omega$-equivalence}
class of an initialised transducer $T_q$ has a unique minimal representative, if one permits infinite outputs from finite inputs,
and give an algorithm for computing this representative.  The first step of this algorithm is to create a new transducer $T_{q_{-1}}$ which is $\omega$-equivalent to $T_{q}$ and has no states of incomplete response. As we do not allow our transducers to write infinite strings on finite inputs,  we impose an additional condition: we apply the process for removing incomplete response only in instances where all states accessible from the initial one have finite extent of incomplete response.  

\begin{prop}\label{prop:algorithmforremovingincompleteresponse}
	Let $A_{q_0} = (X_n, Q_A, \pi_A, \lambda_A)$ be a finite initial transducer such that $|\Lambda(\varepsilon, q)|<\infty$ for all $q \in  Q_{A}$. Set $Q'_A:= Q \cup\{q_{-1}\}$, where $q_{-1}$ is a symbol not in $Q_{A}$.  Define:
	\begin{IEEEeqnarray}{rCl}
		\pi'_A(x,q_{-1}) &=& \pi_{A}(x,q_0), \label{transition same}\\
	    \pi'_A(x,q) &=& \pi_{A}(x,q),\\
		\lambda'_A(x, q_{-1}) &= & \Lambda(x,q_0),\\
		\lambda'_A(x,q) &=& \Lambda(x, q) - \Lambda(\varepsilon,q), \label{redifining output}
	\end{IEEEeqnarray}
	for all $x \in X_n$ and $q \in Q_{A}$.
	
	Then, the transducer $A' = (X_n, Q'_A, \pi'_A, \lambda'_A)$ with the initial state $q_{-1}$ is a transducer which is $\omega$-equivalent to the transducer $A_{q_0}$ and has no states of incomplete response.
	
	In  addition, for every $w \in X_n^{\ast}$, the following inequality holds:
	\begin{equation}
	\lambda'_A(w, q_{-1}) = \Lambda(w, q_0). \label{resulthasnoincompleteresponse}
	\end{equation}
\end{prop}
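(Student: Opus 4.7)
The plan is to first establish a prefix-decomposition identity for $\Lambda(\cdot, q)$, then prove equation~(\ref{resulthasnoincompleteresponse}) by induction on $|w|$, and finally deduce $\omega$-equivalence and the absence of incomplete response at the states in $Q_A$ as consequences. The preliminary identity I would record is that, for every $q \in Q_A$ and every $w, x \in X_n^*$,
$$\Lambda(wx, q) \;=\; \lambda_A(w, q) \cdot \Lambda(x, \pi_A(w, q)).$$
This follows immediately from $\lambda_A(wxy, q) = \lambda_A(w, q) \cdot \lambda_A(xy, \pi_A(w, q))$ by taking the greatest common prefix over $y \in \xnn$. A direct corollary (setting $w=\varepsilon$ and then varying $w$) is that $\Lambda(\varepsilon, q)$ is always a prefix of $\Lambda(w, q)$, which is precisely what is needed to make the difference in (\ref{redifining output}) a well-defined string.

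The centrepiece is the induction on $|w|$ proving (\ref{resulthasnoincompleteresponse}). The base case $|w|=1$ is literally the definition (\ref{redifining output}) applied to $q_{-1}$. For the inductive step with $w = w'x$ and $x \in X_n$, I would factor
$$\lambda'_A(w'x, q_{-1}) \;=\; \lambda'_A(w', q_{-1}) \cdot \lambda'_A(x, \pi'_A(w', q_{-1}));$$
the inductive hypothesis rewrites the first factor as $\Lambda(w', q_0)$, while (\ref{transition same}) together with (\ref{redifining output}) and the identification $\pi'_A(w', q_{-1}) = \pi_A(w', q_0) =: q$ rewrites the second as $\Lambda(x, q) - \Lambda(\varepsilon, q)$. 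The main obstacle is the bookkeeping at this point: using the preliminary identity to expand $\Lambda(w', q_0) = \lambda_A(w', q_0) \cdot \Lambda(\varepsilon, q)$, the $\Lambda(\varepsilon, q)$ cancels the subtracted term, leaving $\lambda_A(w', q_0) \cdot \Lambda(x, q)$, which is $\Lambda(w'x, q_0)$ by one further application of the preliminary identity.

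With (\ref{resulthasnoincompleteresponse}) in hand, the remaining claims are routine. For $\omega$-equivalence, I would take $z \in \xnn$ with finite prefixes $z_n$ and observe that $\lambda'_A(z_n, q_{-1}) = \Lambda(z_n, q_0)$ is a prefix of $\lambda_A(z, q_0)$ of length at least $|\lambda_A(z_n, q_0)|$, which tends to infinity by the standing assumption that infinite inputs yield infinite outputs; hence the limits agree and $A'_{q_{-1}}$ induces the same continuous map on $\xnn$ as $A_{q_0}$. For the absence of incomplete response at a state $q \in Q_A$, an analogous induction starting from (\ref{redifining output}) gives $\lambda'_A(w, q) = \Lambda(w, q) - \Lambda(\varepsilon, q)$ for all $w \in X_n^+$; passing to infinite inputs yields $\lambda'_A(y, q) = \lambda_A(y, q) - \Lambda(\varepsilon, q)$ for $y \in \xnn$, and the greatest common prefix of these strings over $y$ is $\Lambda(\varepsilon, q) - \Lambda(\varepsilon, q) = \varepsilon$, so $q$ is not a state of incomplete response in $A'$.
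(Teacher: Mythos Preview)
The paper states this proposition without proof, attributing the construction to Grigorchuk \emph{et al.}, so there is no argument in the paper to compare against. Your proof is correct and is the natural one: the factorisation $\Lambda(uv,q)=\lambda_A(u,q)\,\Lambda(v,\pi_A(u,q))$ is exactly the tool needed, and the inductive step is the cancellation
\[
\Lambda(w',q_0)\cdot\bigl(\Lambda(x,q)-\Lambda(\varepsilon,q)\bigr)
=\lambda_A(w',q_0)\,\Lambda(\varepsilon,q)\cdot\bigl(\Lambda(x,q)-\Lambda(\varepsilon,q)\bigr)
=\lambda_A(w',q_0)\,\Lambda(x,q)
=\Lambda(w'x,q_0).
\]

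Two small remarks. First, your base case appeals to ``the definition (\ref{redifining output}) applied to $q_{-1}$'', but (\ref{redifining output}) concerns $q\in Q_A$; the relevant clause for $q_{-1}$ is the line $\lambda'_A(x,q_{-1})=\Lambda(x,q_0)$ just above it. Second, you prove the absence of incomplete response only for states $q\in Q_A$, not for $q_{-1}$. That restraint is appropriate: since $A'_{q_{-1}}$ and $A_{q_0}$ induce the same map on $\xnn$, one has $\Lambda'(\varepsilon,q_{-1})=\Lambda(\varepsilon,q_0)$, which need not be empty. The proposition's phrase ``has no states of incomplete response'' is therefore a slight overstatement when $q_0$ itself has incomplete response; however, $q_{-1}$ lies outside the core and every subsequent use in the paper relies only on \eqref{resulthasnoincompleteresponse} and on the states of $Q_A$ being free of incomplete response, so nothing is affected.
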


\medskip

Throughout this article, as a matter of convenience, we shall not distinguish between \mbox{$\omega$-equivalent} transducers. Thus, for example, we introduce various groups as if the elements of those groups are transducers and not $\omega$-equivalence classes of transducers. 

Given two transducers $T=(X_n,Q_T,\pi_T,\lambda_T)$ and
$U=(X_n,Q_U,\pi_U,\lambda_U)$ with the same alphabet $X_n$, we define their
product $T*U$. The intuition is that the output for $T$ will become the input
for $U$. Thus we take the alphabet of $T*U$ to be $X_n$, the set of states
to be $Q_{T*U}=Q_T\times Q_U$, and define the transition and rewrite functions
by the rules
\begin{eqnarray*}
\pi_{T*U}(x,(p,q)) &=& (\pi_T(x,p),\pi_U(\lambda_T(x,p),q)),\\
\lambda_{T*U}(x,(p,q)) &=& \lambda_U(\lambda_T(x,p),q),
\end{eqnarray*}
for $x\in X_n$, $p\in Q_T$ and $q\in Q_U$. Here we use the earlier 
convention about extending $\lambda$ and $\pi$ to the case when the transducer
reads a finite string.  If $T$ and $U$ are initial with initial states $q$ and $p$ respectively then the state $(q,p)$ is considered the initial state of the product transducer $T*U$.

We say that an initial transducer $T_q$ is \emph{invertible} if there is an initial transducer $U_p$ such that $T_q*U_p$ and $U_p*T_q$ each induce the identity map
on $\xnn$. We call $U_p$ an \emph{inverse} of $T_q$.  When this occurs we will denote $U_p$ as $T_q^{-1}$.

In automata theory a synchronous (not necessarily initial) transducer $$T = (X_n, Q_{T}, \pi_{T}, \lambda_T)$$ is \emph{invertible} if for any state $q$ of $T$, the map $\rho_q:=\pi_{T}(\centerdot, q): X_{n} \to X_{n}$ is a bijection. In this case the inverse of $T$ is the transducer $T^{-1}$ with state set $Q_{T^{-1}}:= \{ q^{-1} \mid q \in Q_{T}\}$, transition function $\pi_{T^{-1}}: X_{n} \times Q_{T^{-1}} \to Q_{T^{-1}}$ defined by $(x,p^{-1}) \mapsto q^{-1}$ if and only if $\pi_{T}((x)\rho_{p}^{-1}, p) =q$, and output function  $\lambda_{T^{-1}}: X_{n} \times Q_{T^{-1}} \to X_{n}$ defined by  $(x,p) \mapsto (x)\rho_{p}^{-1}$. 

In this article, we will come across synchronous transducers which are not invertible in the automata theoretic sense but which nevertheless induce self-homeomorphims of the space $\xnz$ and so are invertible in a different sense.

\subsection{Synchronizing automata and bisynchronizing transducers}

Given a natural number $k$, we say that an automaton $A$ with alphabet $X_n$
is \emph{synchronizing at level $k$} if there is a map
$\mathfrak{s}_{k}:X_n^k\mapsto Q_A$ such that, for all $q$ and any word
$w\in X_n^k$, we have $\pi_A(w,q)=\mathfrak{s}_{k}(w)$; in other words, if the
automaton reads the word $w$ of length $k$, the final state depends only on
$w$ and not on the initial state. (Again we use the extension of $\pi_A$ to
allow the reading of an input string rather than a single symbol.) We
call $\mathfrak{s}_{k}(w)$ the state of $A$ \emph{forced} by $w$; the map $\mathfrak{s}_{k}$ is called the \emph{synchronizing map at level $k$}. An automaton $A$ is called \emph{strongly synchronizing} if it is synchronizing at level $k$ for some $k$.

We remark here that the notion of synchronization occurs in automata theory
in considerations around the \emph{\v{C}ern\'y conjecture}, in a weaker sense.
A word $w$ is said to be a \emph{reset word} for $A$ if $\pi_A(w,q)$ is
independent of $q$; an automaton is called \emph{synchronizing} if it has
a reset word~\cite{Volkov2008,ACS}. Our definition of ``synchonizing at level $k$''/``strongly synchronizing"
requires every word of length $k$ to be a reset word for the automaton.

If the automaton $A$ is synchronizing at level $k$, we define the
\emph{core} of $A$ to be the set of states forming the image of the map
$\mathfrak{s}_{k}$. It is an easy observation that, if $A$ is synchronizing at
level $k$, then its core is an automaton in its own right, and is also 
synchronizing at level $k$. We denote this automaton by $\core(A)$. Moreover,
if $T$ is a transducer which is synchronous and which (regarded as an
automaton) is synchronizing at level $k$, then the core of $T$ (similarly denoted 
 $\core(T)$) induces a continuous map $f_T:\xnz\to\xnz$. We say that an
automaton or transducer is \emph{core} if it is equal to its core.

Clearly, if $A$ is synchronizing at level $k$, then it is synchronizing at
level~$l$ for all $l\ge k$; but the map $f_T$ is independent of the level
chosen to define it.

Let $T_q$ be an initial transducer which is invertible with inverse $T_q^{-1}$. If $T_q$ is synchronizing at level $k$, and $T_q^{-1}$ is synchronizing at level $l$, 
we say that $T_q$ is \emph{bisynchronizing} at level $(k,l)$. If $T_q$ is
invertible and is synchronizing at level~$k$ but not bisynchronizing, we say
that it is \emph{one-way synchronizing} at level~$k$.

For a non-initial synchronous and invertible transducer  $T$ we also say $T$ is bi-synchronizing (at level $(k,l)$) if both $T$ and its inverse $T^{-1}$ are synchronizing at levels $k$ and $l$ respectively.

\begin{ntn}
	Let $T$ be a transducer which is synchronizing at level $k$ and Let $l \ge k$ be any natural number. Then for any word $\Gamma \in X_{n}^{l}$, we write $q_{\Gamma}$ for the state $\mathfrak{s}_{l}(\Gamma)$, where $\mathfrak{s}_{l}: X_{n}^{l} \to Q_{T}$ is the synchronizing map at level $l$.
\end{ntn}

The following result was proved in Bleak \textit{et al.}~\cite{AutGnr} .

\begin{prop}
If $T_q$ and $U_p$ are synchronous initial transducers which (as automata) are synchronizing at levels $j$ and $k$ respectively then $T*U$ is synchronizing at level at most $j+k$.
\label{p:synchlengthsadd}
\end{prop}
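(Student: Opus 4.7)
The plan is to show that for every word $w$ of length $j+k$, the state $\pi_{T*U}(w,(p_0,q_0))$ is independent of the chosen starting state $(p_0,q_0)\in Q_T\times Q_U$, which is exactly what is required for $T*U$ to be synchronizing at level at most $j+k$.

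First I would record, by an immediate induction on $|w|$, the extension of the product transition rule to strings:
$$\pi_{T*U}(w,(p,q)) \;=\; \bigl(\pi_T(w,p),\;\pi_U(\lambda_T(w,p),q)\bigr).$$
Since $T$ is synchronous, $|\lambda_T(w,p)|=|w|$ for every $w$ and $p$. Next I would decompose $w=w_1w_2$ with $|w_1|=j$ and $|w_2|=k$ and analyse the two coordinates of the terminal state separately.

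For the first coordinate: because $T$ is synchronizing at level $j$, reading $w_1$ from any $p_0$ lands in the forced state $\mathfrak{s}^T_j(w_1)$, so $\pi_T(w,p_0)=\pi_T(w_2,\mathfrak{s}^T_j(w_1))$ depends only on $w$. For the second coordinate, I would factor
$$\lambda_T(w,p_0) \;=\; \lambda_T(w_1,p_0)\cdot v, \qquad v \;:=\; \lambda_T\!\left(w_2,\mathfrak{s}^T_j(w_1)\right),$$
and observe that by the argument just given $v$ depends only on $w$, while synchrony of $T$ yields $|v|=k$. Hence
$$\pi_U(\lambda_T(w,p_0),q_0) \;=\; \pi_U\bigl(v,\;\pi_U(\lambda_T(w_1,p_0),q_0)\bigr),$$
and since $U$ is synchronizing at level $k$ and $|v|=k$, this equals $\mathfrak{s}^U_k(v)$, independent of both $p_0$ and $q_0$.

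Combining the two coordinates, $\pi_{T*U}(w,(p_0,q_0))$ depends only on $w$, with forced value $\bigl(\pi_T(w_2,\mathfrak{s}^T_j(w_1)),\,\mathfrak{s}^U_k(v)\bigr)$, which is the desired conclusion. I do not anticipate a real obstacle here; the only point worth flagging is that synchrony of $T$ is exactly what guarantees that the final length-$k$ block of $\lambda_T(w,p_0)$ is independent of $p_0$, and this is precisely what allows the synchronizing bound for $U$ to take effect at its own level $k$ rather than at some larger level dictated by the total length of the $T$-output on $w$.
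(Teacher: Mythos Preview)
Your proof is correct. The paper does not actually supply its own proof of this proposition; it simply attributes the result to \cite{AutGnr} and moves on. Your argument is the standard one: split a length-$(j+k)$ word as $w_1w_2$, use level-$j$ synchronization of $T$ to make both $\pi_T(w,p_0)$ and the tail $v=\lambda_T(w_2,\mathfrak{s}^T_j(w_1))$ independent of $p_0$, then use synchrony of $T$ to guarantee $|v|=k$ so that level-$k$ synchronization of $U$ forces the second coordinate. Your closing remark is apt: synchrony of $T$ is exactly what is needed, and in fact your argument never uses synchrony of $U$, so the bound $j+k$ holds under the weaker hypothesis that only $T$ is synchronous.
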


Before proceeding, we  explicitly define the function $f_{T}: \xnz \to \xnz$ induced by a strongly synchronizing transducer $T$. 

Let $T$ be a transducer which is core, synchronous and which is synchronizing at level $k$. The map $f_{T}$ maps an element $x \in \xnz$ to the sequence $y$ defined by $y_{i} = \lambda_T(x_i, q_{x_{i-k}x_{i-k+1}\ldots x_{i-1}})$.  We observe that for an element  $x \in  \xnz$, and $i \in \Z$, if $ (x)f_{T} = y$, then, by definition,  $y_{i}y_{i+1}\ldots = \lambda_{T}(x_i x_{i+1}\ldots, q_{x_{i-k}x_{i-k+1}\ldots x_{i-1}})$.

Now strongly synchronizing transducers may induce endomorphisms of the shift \cite{BleakCameronOlukoya1}:

\begin{prop}\label{prop:pntildeisinendo}
Let $T$ be a minimal synchronous transducer which is  synchronizing at 
level $k$ and which is core. Then $f_T\in\End(\xnz,\shift{n})$.
\end{prop}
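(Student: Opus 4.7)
The plan is to reduce the proposition to an application of the Curtis--Hedlund--Lyndon Theorem (Theorem~\ref{t:hed1}) by exhibiting $f_T$ as $g_\infty$ for a suitable block map $g$. Since $T$ is core and synchronizing at level $k$, the synchronizing map $\mathfrak{s}_k : X_n^k \to Q_T$ is totally defined, and for every $w \in X_n^k$ the state $q_w := \mathfrak{s}_k(w)$ is unambiguous. Define $g \in F(X_n, k+1)$ by
\[
  g(w_0 w_1 \ldots w_{k-1} w_k) \;=\; \lambda_T\bigl(w_k,\, q_{w_0 w_1 \ldots w_{k-1}}\bigr).
\]
This is well-defined because $T$ is synchronous (so the output has length one) and because $\mathfrak{s}_k$ is a function on all words of length $k$.

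The second step is to check that $g_\infty = f_T$ on $\xnz$, which is a direct unpacking of definitions: for $x \in \xnz$, the $i$th coordinate of $x g_\infty$ is $g(x_{i-k} \ldots x_i) = \lambda_T(x_i, q_{x_{i-k} \ldots x_{i-1}})$, and this is exactly the $i$th coordinate of $(x)f_T$ as defined just before the statement of Proposition~\ref{prop:pntildeisinendo}. Once this identification is established, Theorem~\ref{t:hed1} supplies both that $f_T$ is continuous and that $f_T$ commutes with $\shift{n}$, which is precisely the content of membership in $\End(\xnz, \shift{n})$.

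There is essentially no obstacle beyond bookkeeping with indices: the only subtlety is ensuring that the window size $k+1$ and the alignment of the output symbol with the rightmost position of the window (the convention in the excerpt's definition of $f_\infty$) match the way $f_T$ reads its input. Minimality of $T$ is not needed for this particular proposition; it enters only to guarantee uniqueness of the representative and will be used in later statements. The core hypothesis ensures that every state of $T$ is hit by some word of length $k$, so that the formula above consistently refers to states actually present in $T$, which is the only place the core condition is invoked.
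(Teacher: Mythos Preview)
Your proof is correct. The paper itself does not prove this proposition in-text (it is cited from \cite{BleakCameronOlukoya1}), but the argument sketched there is a direct verification: continuity is asserted to be clear, and commutation with $\shift{n}$ is checked by hand by comparing $(x)f_T\shift{n}$ with $(x)\shift{n}f_T$ coordinate-by-coordinate.

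Your route is slightly different and arguably cleaner: rather than verifying continuity and shift-commutation separately, you construct an explicit block map $g \in F(X_n,k+1)$ with $g_\infty = f_T$ and invoke Theorem~\ref{t:hed1} once to get both properties. This is precisely the inverse of the construction the paper gives a few paragraphs later (building $T_f$ from $f$ via the de Bruijn graph), so your argument is entirely in the spirit of the surrounding material. Your observations that minimality plays no role here and that the core hypothesis is only used to ensure $\mathfrak{s}_k$ lands in $Q_T$ are also accurate.
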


The transducer in Figure~\ref{fig:shift2} induces the shift map on $X_{2}^{\Z}$. More generally, let  $\Shift{n} = (\xn, Q_{\Shift{n}}, \pi_{\Shift{n}}, \lambda_{\Shift{n}})$ be the transducer defined as follows. Let $$Q_{\Shift{n}}:= \{0,1,2,\ldots, n-1\},$$ and let $\pi_{\Shift{n}}: \xn \times Q_{\Shift{n}}\to Q_{\Shift{n}}$  and $\lambda_{\Shift{n}}: \xn \times Q_{\Shift{n}}\to \xn$ be defined by $\pi_{\Shift{n}}(x, i) = x$ and $\lambda_{\Shift{n}}(x, i) = i$ for all $x \in X_{n}$ $i \in Q_{\Shift{n}}$. Then $f_{\Shift{n}} = \shift{n}$.

In \cite{AutGnr}, the authors show that the set $\spn{n}$ of weakly
minimal finite synchronous core transducers is a monoid; the monoid operation
consists of taking the product of transducers and reducing it by removing non-core states and identifying $\omega$-equivalent states to obtain a weakly minimal and synchronous representative. Let $\mathcal{P}_n$ be the subset
of $\spn{n}$ consisting of transducers which induce automorphisms
of the shift. (Note that these may not be minimal.) Clearly $\Shift{n} \in \pn{n}$.

\subsection{De Bruijn graphs and $\End(\xnz, \shift{n})$}

The \emph{de Bruijn graph} $G(n,m)$ can be defined as follows, for integers
$m\ge1$ and $n\ge2$. The vertex set is $X_n^m$, where $X_n$ is the alphabet
$\{0,\ldots,n-1\}$ of cardinality $n$. There is a directed arc from
$a_{0}a_1\ldots a_{m-1}$ to $a_1\ldots a_{m-1}a_{m}$, with label $a_0$. Note that, in the literature, this directed edge is labelled by the
$(m+1)$-tuple $a_0a_1\ldots a_{m-1}a_m$. 

Figure~\ref{fig-DB-3-2-straight} shows the de Bruijn graph $G(3,2)$.

\begin{figure}[htbp]
\begin{center}
\begin{tikzpicture}[->,>=stealth',shorten >=1pt,auto,node distance=2.3cm,on grid,semithick,every state/.style={draw=black,text=black}]
   \node[at={(0,2.9)},state] (a) {$00$}; 
   \node[at={(-3.3,-3.3)},state] (b)  {$11$}; 
   \node[at={(3.3,-3.3)},state] (c) {$22$}; 
   \node[at={(-1.2,-0.5)},state] (d)   {$01$}; 
   \node[at={(-3.0,0.6)},state] (f)  {$10$}; 
   \node[at={(3.0,0.6)},state] (e) {$02$}; 
   \node[at={(1.2,-0.5)},state] (h)  {$20$}; 
   \node[at={(0,-2.8)},state] (g)  {$12$}; 
   \node[at={(0,-4.7)},state] (i){$21$}; 

    \path[use as bounding box] (-3.3,-6.0) rectangle (3.3,4.5);
    \path (a) edge [out=70,in=110,loop,min distance=0.5cm]node [swap]{$0$} (a);
    \path (a) edge node [swap]{$1$} (d);
    \path (a) edge node {$2$} (e);
    \path (b) edge [out=205,in=245,loop,min distance=0.5cm]node [swap]{$1$} (b);
    \path (b) edge node {$0$} (f);
    \path (b) edge node [swap] {$2$} (g);
    \path (c) edge [out=-65,in=-25,loop,min distance=0.5cm]node [swap]{$2$} (c);
    \path (c) edge node {$1$} (i);
    \path (c) edge node [swap]{$0$} (h);
    \path (d) edge node [swap] {$1$} (b);
    \path (d) edge [bend right=15] node [swap]{$0$} (f);
    \path (d) edge node[swap] {$2$} (g);
    \path (e) edge [bend left=100,min distance=4.45cm] node {$1$} (i);
    \path (e) edge [bend right=15] node [swap] {$0$} (h);
    \path (e) edge  node {$2$} (c);
    \path (f) edge node {$0$} (a);
    \path (f) edge [bend right=15] node [swap]{$1$} (d);
    \path (f) edge [bend left=100,min distance=4.45cm] node {$2$} (e);
    \path (g) edge node[swap] {$0$} (h);
    \path (g) edge node [swap] {$2$} (c);
    \path (g) edge [bend right=15] node [swap] {$1$} (i);
    \path (h) edge [bend right=15] node [swap] {$2$} (e);
    \path (h) edge node [swap] {$0$} (a);
    \path (h) edge node [swap] {$1$} (d);
    \path (i) edge [bend left=100,min distance=4.45cm] node {$0$} (f);
    \path (i) edge [bend right=15] node [swap] {$2$} (g);
    \path (i) edge node {$1$} (b);
\end{tikzpicture}
\end{center}
\caption{The de Bruijn graph $G(3,2)$.\label{fig-DB-3-2-straight}}
\end{figure}

Observe that the de Bruijn graph $G(n,m)$ describes an automaton over the
alphabet~$X_n$. Moreover, this automaton is synchronizing at level $m$: when
it reads the string $b_0b_1\ldots b_{m-1}$ from any initial state, it moves
into the state labelled ${b_0b_1\ldots b_{m-1}}$.

We now describe how to make the de Bruijn automaton into a transducer by
specifying outputs. Let $f\in F(X_n,m+1)$ be a function from $X_n^{m+1}$ to
$X_n$. The output function of the transducer $T_f$ will be given by 
\[\lambda_T(x,a_0a_1\ldots a_{m-1})=(a_0a_1\ldots a_{m-1}x)f.\]

In other words, if the transducer reads $m+1$ symbols, then its output is
obtained by applying $f$ to the sequence of symbols read. Note that this
transducer is synchronous; it writes one symbol for each symbol read. When
applied to $x\in\xnz$, it produces $y=(x)f_\infty\in\xnz$. Recall that the function $f \in F(X_{n}, 2)$ given by $(x_1x_2)f = x_1$ for all $x_1,x_2 \in X_n$, induces the shift map $\shift{n}$ on $\xnz$. For this map we have $T_{f} = \Shift{n}$.

\begin{prerk}\label{bijectionfromFinftytoPn}
Given any de Bruijn graph $G(n,m)$, and any transducer $T$ with underlying
directed graph $G(n,m)$ there is a function $f \in F(X_n, m+1)$ such that
$T_f = T$.
\end{prerk}

Clearly the transducer $T_f$ is synchronizing at level $m$. This remains true 
if we minimise it or identify its $\omega$-equivalent states. 
Let $T \in \spn{n}$ be the weakly-minimal representative of $T_{f}$, then $f_{T} = f_{T_f} = f_{\infty}$ holds since identifying $\omega$-equivalent states does not affect the map $f_{T}$. 

\begin{prerk}
The preceding paragraph together with  Remarks \ref{F(X_n,m)containedinF(X_n,m+1)} and \ref{bijectionfromFinftytoPn} show that there is a bijection from $F_{\infty}(X_n)$ to $\spn{n}$. The result below from \cite{BleakCameronOlukoya1} states that this bijection is a monoid homomorphism.
\end{prerk}
 
\begin{prop}\label{prop:homomorphismfromPntoFinfinitiy}
Let $A,B\in\spn{n}$. Then $ f_A \circ f_B = f_{A\ast B}$.
\end{prop}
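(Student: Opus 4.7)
The plan is to verify the identity pointwise on bi-infinite sequences, exploiting the synchronous structure of $A$ and $B$ and the fact that the reduction steps (removing non-core states and identifying $\omega$-equivalent states) in the monoid operation do not alter the induced map on $\xnz$. So it suffices to show that for every $x \in \xnz$, the map $f_{A \ast B}$ defined directly from the unreduced product transducer agrees with $((x)f_A)f_B$ coordinatewise.

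First, I would set notation: suppose $A$ is synchronizing at level $j$ and $B$ at level $k$, and let $y = (x)f_A$. By the definition of $f_A$, we have $y_i = \lambda_A(x_i, q^A_{x_{i-j}\ldots x_{i-1}})$ for every $i \in \Z$, where $q^A_w$ denotes the state of $A$ forced by $w$. Consequently $(y)f_B$ at coordinate $i$ equals $\lambda_B(y_i, q^B_{y_{i-k}\ldots y_{i-1}})$. Next, by Proposition \ref{p:synchlengthsadd}, the product $A \ast B$ (before reduction) is synchronizing at level at most $j+k$, and unpacking the defining rule for $\pi_{A \ast B}$ one sees that the state forced in $A \ast B$ by the word $x_{i-j-k}\ldots x_{i-1}$ is exactly the pair $(q^A_{x_{i-j}\ldots x_{i-1}}, q^B_{y_{i-k}\ldots y_{i-1}})$; this is a direct induction on $k$ using that after reading the first $k$ letters of $x_{i-j-k}\ldots x_{i-1}$, the $A$-coordinate is synchronized to some state of $A$ whose subsequent outputs on $x_{i-j}\ldots x_{i-1}$ are exactly $y_{i-k}\ldots y_{i-1}$.

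Using the definition of $\lambda_{A \ast B}$ then gives the $i$-th output of $f_{A \ast B}(x)$ as $\lambda_B(\lambda_A(x_i, q^A_{x_{i-j}\ldots x_{i-1}}), q^B_{y_{i-k}\ldots y_{i-1}}) = \lambda_B(y_i, q^B_{y_{i-k}\ldots y_{i-1}})$, which matches $((x)f_A f_B)_i$. Finally, one invokes the standard fact that restricting to the core, and further identifying $\omega$-equivalent states in a synchronous core transducer, preserves the induced continuous map on $\xnz$ (the defining outputs on sufficiently long input windows are unchanged), so the reduced representative $A \ast B \in \spn{n}$ induces the same map as its unreduced counterpart.

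The only genuinely delicate step is the state-matching induction in the second paragraph: one must track simultaneously how the $A$-coordinate synchronizes after $j$ letters and how the $B$-coordinate then synchronizes after $B$ reads the subsequent $k$ outputs produced by $A$. Everything else is bookkeeping using the explicit formulas for $\pi_{A \ast B}$ and $\lambda_{A \ast B}$, together with the easy observation that reduction to a weakly minimal core representative is an identity on the induced map.
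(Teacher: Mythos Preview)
Your argument is correct and follows essentially the same route as the paper's (commented-out) proof: fix synchronizing levels $j$ and $k$, invoke Proposition~\ref{p:synchlengthsadd}, and verify coordinatewise that the state of $A\ast B$ forced by $x_{i-j-k}\ldots x_{i-1}$ is the pair $(q^A_{x_{i-j}\ldots x_{i-1}}, q^B_{y_{i-k}\ldots y_{i-1}})$, after which the output formulas match directly. One minor slip: in your second paragraph you write ``after reading the first $k$ letters \ldots\ the $A$-coordinate is synchronized \ldots\ whose subsequent outputs on $x_{i-j}\ldots x_{i-1}$ are exactly $y_{i-k}\ldots y_{i-1}$'' --- the roles of $j$ and $k$ are swapped here (you need the first $j$ letters to synchronize $A$, leaving $k$ letters whose outputs are $y_{i-k}\ldots y_{i-1}$), but this is a typo rather than a gap in the reasoning.
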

\begin{cor}\label{cor:FinftyisoPn}\cite{BleakCameronOlukoya1}
	The monoid $F_{\infty}(X_{n})$ is isomorphic to $\spn{n}$.
\end{cor}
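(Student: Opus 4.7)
The plan is simply to combine the two immediately preceding results. The remark before Proposition \ref{prop:homomorphismfromPntoFinfinitiy} already asserts the existence of a bijection $\Psi \colon \spn{n} \to F_\infty(X_n)$ given by $T \mapsto f_T$, and Proposition \ref{prop:homomorphismfromPntoFinfinitiy} says this map is a monoid homomorphism. Unpacking this is the entire content of the proof.

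First I would spell out the bijection $\Psi$. For surjectivity, given $f_\infty \in F_\infty(X_n)$ coming from some $f \in F(X_n,m+1)$, the transducer $T_f$ built on the de Bruijn graph $G(n,m)$ (as described just before Remark \ref{bijectionfromFinftytoPn}) is synchronous and synchronizing at level $m$; passing to its core and then to a weakly minimal representative yields an element $T \in \spn{n}$ with $f_T = f_\infty$, since neither restriction to the core nor identification of $\omega$-equivalent states changes the induced map on $\xnz$. For injectivity, two elements of $\spn{n}$ are by definition weakly minimal synchronous core transducers, and if their induced maps on $\xnz$ agree, then they induce the same action at every state of $\core$, forcing them to be $\omega$-equal; since we conflate $\omega$-equivalent transducers, they represent the same element of $\spn{n}$. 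The independence of the choice of window $m$ (so that $\Psi$ is well-defined on the level of $F_\infty(X_n)$ rather than on an individual $F(X_n,m)$) is exactly Remark \ref{F(X_n,m)containedinF(X_n,m+1)}.

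Second, Proposition \ref{prop:homomorphismfromPntoFinfinitiy} gives $\Psi(A*B) = f_{A*B} = f_A \circ f_B = \Psi(A) \circ \Psi(B)$ for all $A,B \in \spn{n}$, so $\Psi$ respects the binary operations. The identity of $\spn{n}$ is the single-state transducer that reads each letter of $X_n$ and outputs the same letter; this clearly induces the identity of $\xnz$, which is the identity of $F_\infty(X_n)$. Hence $\Psi$ is a bijective monoid homomorphism, i.e.\ a monoid isomorphism.

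There is no substantive obstacle here: the combinatorial work (verifying the bijection, and in particular that products $T_f * T_g$ in $\spn{n}$ realise composition of sliding block codes) was already carried out in Proposition \ref{prop:homomorphismfromPntoFinfinitiy}. The only point that deserves a careful sentence is that passing to the weakly minimal core representative does not alter $f_T$, which is a direct consequence of the definition of $\omega$-equivalence and the fact that inaccessible states from the core contribute nothing to the induced map on $\xnz$.
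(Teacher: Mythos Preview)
Your proposal is correct and follows exactly the approach the paper intends: the paper gives no proof for this corollary, simply citing \cite{BleakCameronOlukoya1} and relying on the preceding remark (bijection) together with Proposition~\ref{prop:homomorphismfromPntoFinfinitiy} (homomorphism), which is precisely what you unpack. Your only addition is spelling out injectivity of $T\mapsto f_T$ via weak minimality, which is implicit in the paper's remark but worth stating.
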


\section{Automorphisms of the two-sided shift dynamical system and outer-automorphisms of the Higman--Thompson groups.} \label{Section:main}

The automorphisms of the shift dynamical system can be given by the following
extension:
\begin{equation*}
1 \rightarrowtail \gen{\shift{n}} \rightarrowtail \aut{\xnz, \shift{n}}
\twoheadrightarrow \aut{\xnz, \shift{n}}/\gen{\shift{n}} \twoheadrightarrow 1
\end{equation*}

In the what follows we shall show
that the group $\aut{\xnz,\shift{n}}/ \gen{\shift{n}}$ is isomorphic to a
subgroup of  $\out{G_{n,n-1}}$. In fact, we prove something a bit more general. Observe that the equivalence relation on $\End(\xnz, \shift{n})$  which relates two elements when the first is a power of the shift times the second, is the semigroup congruence generated by the set of pairs $\{ (\shift{n}^{i}, 1) \mid i \in \Z \}$. Abusing notation, write $\gen{ \shift{n}}$ for this congruence, and denote by $\End(\xnz, \shift{n})/ \gen{\shift{n}}$ the quotient semigroup obtained by factoring out this congruence.  We show that $\End(\xnz,\shift{n})/\gen{\shift{n}}$ is isomorphic as a monoid to a monoid containing $\out{G_{n,n-1}}$.
It follows from Theorem \ref{t:hed2} that each element of
$\aut{\xnz, \shift{n}}/\gen{\shift{n}}$ has a unique representative in
$F_{\infty}(X_n)$, and so a unique representative in $\mathcal{P}_n$ by
Proposition~\ref{bijectionfromFinftytoPn}. Our  strategy, thus, will be to correct the
incomplete response in the states of elements of~$\mathcal{P}_n$. This will, for instance,
transform the transducer, an element of $\pn{n}$, depicted in  Figure
\ref{fig:shift2}, into the identity transducer. We then use results of \cite{AutGnr} showing that $\out{G_{n,n-1}}$ is isomorphic to a group $\On{n}$ of transducers which is contained in a monoid $\Mn{n}$ of transducers. The guiding philosophy in our approach is that powers of the shift are encoded as incomplete response in the states of elements of $\spn{n}$. Since the inverse of an element of $\mathcal{P}_n$, as a homeomorphism of $\xnz$, is a negative power of the shift times another element of $\pn{n}$, we expect that  by correcting the incomplete response in the states of elements of $\pn{n}$, the resulting transducers should be `invertible' as non-initial transducers. The rest of this work is devoted to formalising this principle.  

\subsection{The groups $\On{n}$ and $\Ln{n}$}
The paper \cite{AutGnr} shows that the outer-automorphism group $\out{G_{n,n-1}}$ of the Higman--Thompson group $G_{n,n-1}$ is isomorphic to a group  $\On{n}$ of non-initial transducers and contains an isomorphic copy of $\out{G_{n,r}}$ for all $1 \le r < n-1$. The remainder of this paper shall be devoted to showing that the quotient of $\aut{\xnz, \shift{n}}$ by its centre is isomorphic to a subgroup $\Ln{n}$ of $\On{n}$. We begin by defining, as in \cite{AutGnr}, a monoid containing~$\On{n}$.

Let $\Mn{n}$ be the set of all {\bfseries{non-initial}}, minimal, strongly synchronizing and core transducers. Recall that for $x \in \xnp$, $Z_{x}= (\xn, \{x\}, \pi_{Z_{x}}, \lambda_{Z_{x}})$ is the transducer such that $\lambda_{Z_{x}}(i,x) = x$ for all $i \in \xn$. Moreover $Z_{x} \in \Mn{n}$ for a prime word $x = x_0 x_1 \ldots x_r \in \xnp$ if and only if $x$ is the minimal, with respect to the lexicographic ordering, element of the set $\{ x_{i}\ldots x_{r}x_1 \ldots x_{i-1} \mid 1 \le i \le r \}$.  If $T \in \Mn{n}$ is a strongly synchronizing and core transducer, such that some state (and so all states by connectivity) has infinite extent of incomplete response, then $T = Z_{x}$ for some minimal non-initial transducer $Z_{x}$.   Let  $\SOn{n}$ be the subset of $\Mn{n}$ consisting of  all  transducers $T$ satisfying the following conditions:

\begin{enumerate}[label = {\bfseries S\arabic*}]
	\item for any state $t \in Q_{T}$, the initial transducer $T_{t}$ induces an injective map $h_{t}: \xnn \to \xnn$ and,
	\item for any state $t \in Q_{T}$ the map $h_{t}$ has clopen image which we denote by $\im{t}$.
\end{enumerate}
Note that an element of $\Mn{n}$ with more than one state has no states of incomplete response. The single state identity transducer is an element of $\Mn{n}$ and we denote it by   $\id$. 

We define a product, extending a definition given in \cite{AutGnr}, on the set $\Mn{n}$ as follows. For two transducers $T,U \in \Mn{n}$, the product transducer $T \ast U$ is strongly synchronizing. Let $TU$ be the transducer obtained from $\core(T\ast U)$ as follows. Fix a state $(t,u)$ of $\core(T\ast U)$. Then $TU$ is the core of the minimal representative of the initial transducer $\core(T\ast U)_{(t,u)}$. That is, if any state (and so all states by connectivity) of $\core(T\ast U)$ has finite extent of incomplete response  we remove the states of incomplete response from $\core(T\ast U)_{(t,u)}$ to get a new transducer initial $(TU)'_{q_{-1}}$ which is still strongly synchronizing and has all states accessible from the initial state. Then we identify the $\omega$-equivalent states of $(TU)'_{q_{-1}}$ to get a minimal, strongly synchronizing initial transducer $(TU)_{p}$. The transducer $TU$ is the core of $(TU)_{p}$. Otherwise, for any state $(t,u) \in \core(T,U)$, $\core(T,U)_{(t,u)}$ is $\omega$-equivalent to a transducer of the form $Z_{x}$ for some prime word $x = x_1 \ldots x_r \in \xnp$ and we set $TU = Z_{\overline{x}}$ where $\overline{x}$ is the minimal element, with respect to the lexicographic ordering, of the set $\{ x_i \ldots x_{r}x_1 \ldots _{i-1}\mid 1 \le i \le r \}$.

It is a result of \cite{AutGnr} that $\SOn{n}$ together with the binary operation  $\SOn{n} \times \SOn{n} \to \SOn{n}$ given by $(T,U) \mapsto TU$ is a monoid. (The single state transducer $\id$ which induces the identity permutation on $X_n$ is the identity element.) We prove below that this result extends to $\Mn{n}$ as well.

\begin{prop}
	The set $\Mn{n}$ together with the binary operation $(T,U) \mapsto TU$ is a monoid.
\end{prop}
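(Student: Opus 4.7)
The plan is to verify the three monoid axioms — closure, identity, and associativity — for the operation $(T,U) \mapsto TU$ on $\Mn{n}$, extending the analogous result for the submonoid $\SOn{n}$ established in \cite{AutGnr}.

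For closure, I first argue that $T \ast U$ is strongly synchronizing at some level. If $T$ is synchronizing at level $k$ and $U$ at level $\ell$, the standing assumption that circuits in $T$ produce non-empty output guarantees some $m$ such that $T$ emits at least $\ell$ symbols on any input of length at least $m$; hence after $k+m$ input symbols both coordinates of a state of $T \ast U$ are forced, and $T \ast U$ is synchronizing at level $k+m$. Therefore $\core(T \ast U)$ is strongly synchronizing and core. The construction of $TU$ then branches on whether some state (equivalently, by strong connectivity of the core, every state) of $\core(T \ast U)$ has finite or infinite extent of incomplete response. In the finite case, Proposition~\ref{prop:algorithmforremovingincompleteresponse} followed by identification of $\omega$-equivalent states produces a minimal, strongly synchronizing, core representative $TU \in \Mn{n}$. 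In the infinite case, $\core(T \ast U)_{(t,u)}$ is $\omega$-equivalent to $Z_x$ for some prime word $x$, and $TU = Z_{\bar x}$ for the lex-minimal rotation $\bar x$, which lies in $\Mn{n}$ by definition.

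For the identity, the one-state transducer $\id$ satisfies $T \ast \id \cong T \cong \id \ast T$ via the canonical state bijection, with matching transitions and outputs; since $T \in \Mn{n}$ is already minimal, strongly synchronizing, and core, the subsequent reduction leaves $T$ unchanged, so $T \cdot \id = \id \cdot T = T$.

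For associativity, the central observation is that the unreduced product $\ast$ is associative up to the canonical bijection $(Q_T \times Q_U) \times Q_V \leftrightarrow Q_T \times (Q_U \times Q_V)$ that matches transitions and outputs — a direct check from the defining formulas. Moreover, the reduction $T \ast U \leadsto TU$ preserves the action of every initial variant on $\xnn$: for each state $(t,u)$, the initial transducer $(T \ast U)_{(t,u)}$ induces the composition of the maps induced by $T_t$ and $U_u$, and the reduction produces an $\omega$-equivalent initial transducer. Since composition of continuous maps is associative, the state-indexed families of induced maps corresponding to $(TU)V$ and $T(UV)$ agree under the canonical bijection of state triples, forcing $\omega$-equality in $\Mn{n}$. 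For $T,U,V \in \SOn{n}$ this reproduces the argument of \cite{AutGnr}. The main obstacle will be the mixed case in which some intermediate product collapses to a $Z_x$ transducer; there I rely on the explicit semantics that $Z_{\bar x}$ induces the constant map on $\xnn$ with value $\bar x^\infty$, so each of $T \cdot Z_{\bar x}$ and $Z_{\bar x} \cdot V$ again reduces to a $Z$-transducer, and both $(TU)V$ and $T(UV)$ collapse to $Z_{\bar y}$ for the same prime cyclic class $\bar y$, determined semantically by chasing any input through the threefold composition and rotating the resulting periodic output to its lex-minimal representative.
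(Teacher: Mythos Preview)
Your overall strategy matches the paper's: both reduce associativity to the associativity of composition of the induced maps $h_t h_u h_v$ on $\xnn$, and both split off the degenerate case where some product collapses to a $Z_x$. The paper treats the collapse case explicitly (checking that $(AB)C$ and $A(BC)$ both reduce to $Z_{\bar y}$ for the same rotation class, with a subcase according to whether the intermediate product $AB$ is already a $Z_x$) and defers the non-degenerate case to Proposition~9.4 of \cite{AutGnr}.

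There is one imprecision in your associativity argument worth flagging. You write that ``the reduction $T \ast U \leadsto TU$ preserves the action of every initial variant on $\xnn$.'' Taken literally this is false: the algorithm of Proposition~\ref{prop:algorithmforremovingincompleteresponse} produces an $\omega$-equivalent \emph{initial} transducer at the new state $q_{-1}$, but at each \emph{core} state $q$ the action changes by stripping the prefix $\Lambda(\varepsilon,q)$. Since $TU$ is the core (so $q_{-1}$ is discarded), the states of $TU$ do \emph{not} induce the maps $h_t h_u$; they induce prefix-shifted versions. Consequently your sentence ``the state-indexed families of induced maps corresponding to $(TU)V$ and $T(UV)$ agree under the canonical bijection of state triples'' is not justified as stated: you must still argue that reducing at an intermediate stage and then multiplying by $V$ yields the same minimal representative as reducing the full triple product at once. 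This is exactly the work the paper either carries out (in the collapse case) or cites from \cite{AutGnr} (in the finite-incomplete-response case); see also the mechanism in Lemma~\ref{lem:removingincompleteresponsepntoln}. Your semantic idea is right, but that commutation of reduction with subsequent products is the actual content and should not be elided.
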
 
\begin{proof}
	It suffices to show that the binary operation defines an associative product.  We consider two cases.
	
	Let $A, B, C \in \Mn{n}$. First consider the case that the product $\core((A \ast B)\ast C)$ has a state with infinite extent of incomplete response.  
	
	Let $a, b, c$ be states of $A$, $B$ and $C$ respectively. We may choose $a \in Q_{A}$, $b \in Q_{B}$ and $c \in Q_{C}$ such that $(a,b) \in \core(A \ast B)$ and $(b,c) \in \core(B \ast C)$. By definition of the transducer product, the initial transducers $((A \ast B) \ast C)_{((a,b),c)}$ and $(A \ast (B \ast C))_{(a,(b,c))}$ are $\omega$-equivalent since they both induce the map $h_{a}h_{b}h_{c}$ on $\xn^{\omega}$. Let $y \in \xnp$ be such that $(A \ast B)\ast C)_{((a,b),c)}$ and $(A \ast (B \ast C))_{(a,(b,c))}$  are $\omega$-equivalent to $Z_{y}$.  
	
	Now suppose that $\core(A \ast B)_{(a,b)}$ is $\omega$-equivalent to $Z_{x}$ for some $x \in \xnp$ so that $AB = Z_{\overline{x}}$ for $\overline{x}$ a rotation of $x$ so that $Z_{\overline{x}}$ is minimal. As  $((A \ast B) \ast C)_{(a,b,c)}$ is $\omega$-equivalent to $Z_{y}$ and since $\core(\core(A \ast B) \ast C)$ is equal to  $\core((A \ast B) \ast C)$, it follows that $Z_{\overline{x}}C = (AB)C = Z_{\overline{y}}$ for $\overline{y}$ a rotation of $y$ so that $Z_{\overline{y}}$ is minimal. 
	
	Therefore, we may assume that all states of $\core(A \ast B)$ have finite extent of incomplete response. Let $(AB)_{e}$ be the transducer obtained by applying the algorithm of Proposition~\ref{prop:algorithmforremovingincompleteresponse} to the initial transducer $(A \ast B)_{(a,b)}$ and identifying $\omega$-equivalent states of the result. Recall that $AB = \core((AB)_{e})$.  Let $c' \in Q_{C}$ be such that $((a,b),c')$ is a state of $\core(\core(A \ast B) \ast C)$. Since $h_{e}$ is $\omega$-equivalent to $h_{(a,b)}$, we have, as $((A\ast B) \ast C)_{((a,b),c)}$ is $\omega$-equivalent to $Z_{y}$, that $h_{e}h_{c'} = h_{((a,b),c')}$ has precisely the element $(y')^{\omega}$ in its image for some rotation $y'$ of $y$. From this we deduce, since $\core (AB \ast C) = \core(((AB) \ast C)_{(e,c')})$, that, $(AB)C = Z_{\overline{y}}$ in this case also.
	
	In a similar way, making use of the fact that the transducer $(A \ast (B \ast C)))_{(a,(b,c))}$ is $\omega$-equivalent to $Z_{y}$ as well, we also deduce that $A(BC) = Z_{\overline{y}}$.
	
	The case where $\core((A \ast B) \ast C)$ (and so $\core(A \ast (B \ast C))$) has no state with infinite extent of incomplete response is dealt with as in the proof of Proposition 9.4 of the paper \cite{AutGnr} and so we omit it.  
\end{proof}

A consequence of a  construction in Section 9 of  \cite{AutGnr} is that an element  $T \in \Mn{n}$ is an element of $\SOn{n}$ if and only if  for any state $p \in Q_T$ there is a minimal, initial transducer $U_{q}$ such that the minimal representative of the product $(T\ast U)_{(p,q)}$ is strongly synchronizing and has trivial core.  If the transducer $U_{q}$ is strongly synchronizing as well then $\core(U_{q}) \in \SOn{n}$ and satisfies $T\core(U_q) = \core(U_q)T =\id$. 

The group $\On{n}$ is the largest inverse closed subset of $\SOn{n}$. 

Let $\SLn{n}$ be those elements $T \in \Mn{n}$ which satisfy the following additional Lipschitz constraint:

\begin{enumerate}[label= {\bfseries SL\arabic*}]
	\setcounter{enumi}{2}
	\item for all strings $a \in X_n^{\ast}$ and any state $q \in Q_{T}$ such that $\pi_{T}(a,q) = q$, $|\lambda_{T}(a,q)| = |a|$.
	\label{Lipshitzconstraint}
\end{enumerate}

The set $\SLn{n}$ is a submonoid of $\Mn{n}$. Set $\Ln{n} := \On{n} \cap \SLn{n}$, the largest inverse closed subset of $\SLn{n}$.

For each $1 \le r \le n-1$ the group $\On{n}$ has a subgroup $\On{n,r}$ where $\On{n,n-1} = \On{n}$; we define $\Ln{n,r}:= \On{n,r} \cap \Ln{n}$. The following theorem is from \cite{AutGnr}.
\begin{theorem}
	For $1 \le r \le n-1$, $\On{n,r} \cong \out{G_{n,r}}$.
\end{theorem}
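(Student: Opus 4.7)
The plan is to exploit the identification of $G_{n,r}$ with a group of piecewise-prefix-replacement homeomorphisms on a Cantor-space boundary, and then pin down exactly which homeomorphisms normalise it.

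First, I would realise $G_{n,r}$ as a subgroup of $\mathop{\mathrm{Homeo}}(\Omega)$, where $\Omega$ is the Cantor space formed from $r$ disjoint copies of $\xnn$ with $G_{n,r}$ acting by prefix replacement on cones. By a Rubin-type rigidity theorem, every $\phi \in \aut{G_{n,r}}$ is implemented by conjugation by a unique $\Phi_{\phi} \in \mathop{\mathrm{Homeo}}(\Omega)$; since $G_{n,r}$ is centreless, inner automorphisms are precisely conjugations by elements of $G_{n,r}$, so $\out{G_{n,r}} \cong N/G_{n,r}$, where $N$ is the normaliser of $G_{n,r}$ in $\mathop{\mathrm{Homeo}}(\Omega)$.

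Second, I would characterise $N$ combinatorially. Because $G_{n,r}$ acts by prefix replacement, any $\Phi \in N$ must map each cone $U_{\nu}$ to a finite disjoint union of cones in a uniformly finite way. A compactness-plus-continuity argument then produces a uniform synchronising level $k$: the local action of $\Phi$ on the suffix of an input depends only on the last $k$ symbols read. This encodes $\Phi$ as an initial, core, invertible, strongly synchronising transducer, and applying the same argument to $\Phi^{-1}$ yields bisynchronising. Hence $\Phi$ corresponds to an initial transducer $T_{q}$ whose underlying non-initial transducer lies in $\On{n,r}$.

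Third, I would pass from initial to non-initial data. Two initial transducers $T_{q}, T_{q'}$ with common underlying core $T \in \On{n,r}$ induce homeomorphisms that differ by pre-composition with an element of $G_{n,r}$, hence by an inner automorphism; conversely, inner automorphisms are absorbed by reselecting an initial state. This yields a well-defined injection $\out{G_{n,r}} \hookrightarrow \On{n,r}$, and surjectivity follows from the previous step. That the $\On{n,r}$ product (transducer product followed by $\core$ and $\omega$-reduction) matches composition of homeomorphisms follows from Proposition~\ref{p:synchlengthsadd} together with the definition of $\ast$.

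The principal obstacle is the second step: extracting a \emph{uniform} synchronising level $k$ from the qualitative normalising hypothesis. This requires combining topological compactness on $\Omega$ with the finite-state tree description of elements of $G_{n,r}$, and it is here that the identification stops being formal and genuinely uses the structure of the Higman--Thompson groups; once this uniformity is established, the remaining identification is essentially bookkeeping.
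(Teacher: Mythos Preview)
The paper does not prove this theorem at all: it is quoted verbatim as a result from \cite{AutGnr} (``The following theorem is from \cite{AutGnr}''), so there is no in-paper proof to compare against. Your outline is in fact a reasonable high-level sketch of the argument carried out in \cite{AutGnr}: Rubin-type rigidity to identify $\aut{G_{n,r}}$ with the normaliser $N$ of $G_{n,r}$ in $\mathrm{Homeo}(\Omega)$, a combinatorial analysis showing elements of $N$ are encoded by bisynchronising transducers, and a passage from initial to non-initial transducers to kill inner automorphisms. You have also correctly isolated the genuine work --- obtaining a \emph{uniform} synchronising level from the normalising hypothesis --- which in \cite{AutGnr} occupies substantial space and uses the local structure of $G_{n,r}$ (transitivity on cones, the form of germs) rather than compactness alone. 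As a sketch this is fine; as a proof it would need those details filled in, and the claim in your third step that changing the initial state of a core transducer amounts exactly to an inner automorphism also requires a careful argument (this is where the parameter $r$ enters non-trivially and is handled in \cite{AutGnr} via the structure of $\On{n,r}$ inside $\On{n}$).
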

In subsequent sections we establish a map from the monoid $\pn{n}$ to the group $\On{n}$. Theorem~\ref{t:hed2} and Corollary~\ref{cor:FinftyisoPn} show that elements of the quotient $\aut{\xnz, \shift{n}}/\gen{\shift{n}}$ can be represented by elements of $\pn{n}$, therefore the above mentioned map will be crucial in demonstrating that the group $\aut{\xnz, \shift{n}}/ \gen{\shift{n}}$ is isomorphic to $\Ln{n}$. 

In what follows it will be necessary to distinguish between the binary operation in $\spn{n}$ and the operation in $\SLn{n}$. For two elements $P, R \in \spn{n}$, we write $P \spnprod{n} R$ for the element of $\spn{n}$ which is obtained by taking the full transducer product $P * R$, identifying $\omega$-equivalent states and taking the core of the resulting weakly minimal transducer.

Table~\ref{tab:monoids} summarises the groups and monoids defined so far. The checkmark means that the requirement stipulated in that column is an enforced condition for membership in the corresponding monoid. A ``\_'' symbol means the requirement in a column is not enforced, i.e., some elements of the monoid may fail to have the given property. We note, that as an action on $\xnz$ is defined, so far, only for elements of $\spn{n}$, then, the ``induce homeomorphism of $\xnz$'' condition only applies to the monoids $\spn{n}$ and $\pn{n}$. Note also that elements of the monoids $\Mn{n}$, $\SOn{n}$, $\On{n}$, $\SLn{n}$, $\Ln{n}$ are represented by minimal transducers, whereas, elements of the monoids $\spn{n}$ and $\pn{n}$ are represented by weakly minimal transducers.

\begin{center}
\begin{figure}[t] 	\begin{TAB}(@,0.2cm,0.2cm)[1pt,9cm,9cm]{|c|c|c|c|c|c|c|}{|c|c|c|c|c|c|c|c|} 
		
		{} & \makecell{Strongly \\ Synchronizing} & \makecell{Bi-synchronizing \\ or induce  \\ homeomorphism \\ of $\xnz$} & \makecell{Injective \\ states with \\  clopen image} & Synchronous & Lipschitz & Bi-lipschitz \\
		$\Mn{n}$ &\checkmark &\_ &\_ & \_ & \_  &\_ \\
		$\SOn{n}$ & \checkmark & \_& \checkmark & \_ & \_ & \_ \\
		$\On{n}$ & \checkmark & \checkmark & \checkmark & \_& \_  & \_ \\
		$\SLn{n}$ & \checkmark & \_ & \checkmark & \_ & \checkmark  & \_ \\
		$\Ln{n}$ & \checkmark & \checkmark & \checkmark & \_ & \checkmark  & \checkmark  \\
		$\spn{n}$ & \checkmark & \_ & \_ & \checkmark & \checkmark  & \_ \\
		$\pn{n}$  & \checkmark & \checkmark & \checkmark & \checkmark & \checkmark & \checkmark 
	\end{TAB}
	\caption{Monoids and properties}
	\label{tab:monoids}
	\end{figure}
\end{center}

\subsection{From $\spn{n}$ to $\SLn{n}$}

Let $P \in \spn{n}$ and let $p \in Q_{P}$ be any state. Note that if $|\Lambda(\varepsilon,p')| = \infty$ for any (and so all by connectivity) states of $P$ then, as $\spn{n}$ consists of weakly minimal transducers, $|P| = 1$, $P$ is minimal and is in  $\SLn{n}$. If $|P|>1$, denote by $[P]$ the transducer obtained from $P$ by performing the process in Proposition~\ref{prop:algorithmforremovingincompleteresponse} to the initial transducer $P_{p}$ and taking the core of the resulting transducer $P_{p_{-1}}$ and identifying equivalent states.   By construction, the set of states of the transducer $P_{p_{-1}}$ is precisely the set $Q_{P} \sqcup Q_{p_{-1}}$. As $P$ is strongly synchronizing, it follows by equation \eqref{transition same} of Proposition~\ref{prop:algorithmforremovingincompleteresponse}, that every state of $P_{p_{-1}}$ is accessible from the state $p_{-1}$ and that $P_{p_{-1}}$ is strongly synchronizing as well. (In fact the set of states of $[P]$ is precisely the set $Q_{P}$.) However, the  output function $\lambda'_{P}$ of $P_{p_{-1}}$ restricted to $[P]$ is now defined by the rule given in equation \eqref{redifining output} of Proposition~\ref{prop:algorithmforremovingincompleteresponse}. It is the case that $[P]$ is minimal and so an element of $\Mn{n}$. We show below that $[P]$ is in fact an element of $\SLn{n}$ and if   $P \in \pn{n}$ then $[P] \in \Ln{n}$. If $|P| = 1$ then set $[P]:= P$.

We begin with the first statement.

\begin{prop}
	Let $P \in \spn{n}$. Then $[P] \in \SLn{n}$.\label{prop:frompntoln}
\end{prop}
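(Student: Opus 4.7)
The argument splits on $|P|$.  If $|P|=1$, the preceding paragraph already handles it: either $|\Lambda(\varepsilon,p)|=\infty$, so $P=Z_x$ for a suitably minimal prime word $x$ and lies in $\SLn{n}$ by definition; or $|\Lambda(\varepsilon,p)|<\infty$, and $P$ is a single self-looping state whose Lipschitz condition is immediate from synchronicity.  Assume now $|P|>1$, so every state has finite extent of incomplete response and Proposition~\ref{prop:algorithmforremovingincompleteresponse} applies.  My first observation is that $[P]\in\Mn{n}$: by~\eqref{transition same} the transitions among the original states are preserved in $P_{p_{-1}}$, so strong synchronization is inherited from $P$; taking the core and then identifying $\omega$-equivalent states produces a minimal, core, strongly synchronizing non-initial transducer.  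All that remains is the Lipschitz condition~\ref{Lipshitzconstraint}.

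I would check the latter in three steps.  Step~1 extends~\eqref{resulthasnoincompleteresponse} from the added state $q_{-1}$ to every state, showing
\[
\lambda'(w,q) \;=\; \Lambda(w,q) - \Lambda(\varepsilon,q) \qquad\text{for all } w\in\xns,\ q\in Q_P,
\]
by a routine induction on $|w|$ using the single-symbol base case of Proposition~\ref{prop:algorithmforremovingincompleteresponse} and the factorisation $\Lambda(xw,q) = \lambda(x,q)\,\Lambda(w,\pi(x,q))$.  Step~2 (the crux) establishes that if $q_1,q_2\in Q_P$ are $\omega$-equivalent in the intermediate no-incomplete-response transducer $P'$, then $|\Lambda(\varepsilon,q_1)|=|\Lambda(\varepsilon,q_2)|$.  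Because $P'$ has no incomplete response, such $\omega$-equivalence upgrades to the pointwise identity $\lambda'(w,q_1)=\lambda'(w,q_2)$ for every $w\in\xns$ (otherwise the string-difference would propagate as nontrivial incomplete response at a common successor state of $P'$).  Taking lengths, substituting Step~1, and using synchronicity of $P$ gives
\[
|\Lambda(\varepsilon,\pi(w,q_1))| - |\Lambda(\varepsilon,q_1)| \;=\; |\Lambda(\varepsilon,\pi(w,q_2))| - |\Lambda(\varepsilon,q_2)|;
\]
choosing $|w|$ at least the synchronization level $k$ of $P$ forces $\pi(w,q_1)=\pi(w,q_2)$, and the claim follows.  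Step~3 concludes: for a circuit $a$ at a state $[q]$ of $[P]$, pick $q_1\in[q]$ and set $q_2:=\pi(a,q_1)\in[q]$; then by Step~1 and $\Lambda(a,q_1) = \lambda(a,q_1)\,\Lambda(\varepsilon,q_2)$,
\[
\lambda_{[P]}(a,[q]) \;=\; \lambda'(a,q_1) \;=\; \lambda(a,q_1)\,\Lambda(\varepsilon,q_2) - \Lambda(\varepsilon,q_1),
\]
whose length is $|a| + |\Lambda(\varepsilon,q_2)| - |\Lambda(\varepsilon,q_1)| = |a|$, by synchronicity of $P$ and Step~2.  This is exactly~\ref{Lipshitzconstraint}.

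The main obstacle is Step~2: one must transfer $\omega$-equivalence data \emph{after} removal of incomplete response into length data about $\Lambda(\varepsilon,\cdot)$ in the \emph{original} transducer $P$, and this transfer succeeds only because $P$ is simultaneously synchronous and strongly synchronizing.  The other steps are formal manipulations of the formulas from Proposition~\ref{prop:algorithmforremovingincompleteresponse}; once Step~2 is in hand, the Lipschitz bound essentially falls out of the fact that $P$ is synchronous.
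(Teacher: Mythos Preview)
Your proof is correct, but it follows a different line from the paper's.  The paper does not pass to $[P]$ directly: it argues entirely on $\core(P'_{q_{-1}})$, using a ``read $\Gamma$ twice from $q_{-1}$'' identity.  For a sufficiently long word $\Gamma$ with $\pi_P(\Gamma,p)=p$, equation~\eqref{resulthasnoincompleteresponse} gives $\lambda'_P(\Gamma\Gamma,q_{-1})=\Theta\Delta\Lambda(\varepsilon,p)$ (with $\Theta=\lambda_P(\Gamma,q)$ and $\Delta=\lambda_P(\Gamma,p)$), while the cocycle decomposition gives $\lambda'_P(\Gamma\Gamma,q_{-1})=\lambda'_P(\Gamma,q_{-1})\lambda'_P(\Gamma,p)$ with $\lambda'_P(\Gamma,q_{-1})=\Theta\Lambda(\varepsilon,p)$.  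Cancelling yields $|\lambda'_P(\Gamma,p)|=|\Delta|=|\Gamma|$, which is \ref{Lipshitzconstraint} on $\core(P')$; the passage to $[P]$ is then asserted as immediate.

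Your route instead establishes, as its Step~2, that $\omega$-equivalent states of $P'$ have equal $|\Lambda(\varepsilon,\cdot)|$ in $P$.  This is exactly (the synchronous special case of) the paper's later Lemma~\ref{lem:lagrespectsequivalence}, so you have effectively anticipated and front-loaded that lemma.  The payoff is that your Step~3 handles circuits in $[P]$ of \emph{all} lengths directly and makes the transfer from $\core(P')$ to $[P]$ completely explicit, whereas the paper treats only long circuits and leaves the quotient step (which, strictly, needs an argument like yours or a short pigeonhole) implicit.  Conversely, the paper's $\Gamma\Gamma$ trick is a one-shot computation that avoids needing Step~2 at this stage.  Both arguments are valid; yours is a little more structural, the paper's a little more computational.
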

\begin{proof}
	We may assume that $|P| >1$ since otherwise $P \in \SLn{n}$. Thus all states of $P$ have finite extent of incomplete response and Proposition~\ref{prop:algorithmforremovingincompleteresponse} applies. 
	
	It suffices to show that there is some $k \in \N$ so that for any pair $\Gamma \in \xn^{\ast}$ of length at least $k$,  and  $t \in Q_{[P]}$ for which $\pi_{[P]}(\Gamma, t) = t$, then it is also the case that $|\lambda_{[P]}(\Gamma, t)| = |\Gamma|$. 
	
	We make a further reduction. Let $q \in Q_{P}$ be any state of $P$ and form $P'_{q_{-1}}$ as in Proposition \ref{prop:algorithmforremovingincompleteresponse}. Note that as $[P]$ is equal to the transducer obtained from  $\core(P'_{q_{-1}})$ by identifying $\omega$-equivalent states, then if $\core{P'_{q_{-1}}}$ satisfies condition \ref{Lipshitzconstraint}, then $[P] \in \SLn{n}$.
	
	Let $k \in \mathbb{N}$ be large enough, and larger than the synchronizing level of $P$, such that for any word $\Gamma \in \xns$ of length at least $k$, and any state $q \in Q_{P}$, $\lambda_{P}(\Gamma, q)$ is longer than the extent of incomplete response of any state of $P$. Since $P$ has finitely many states, $k$ exists.  Let $\Gamma \in \xns$ a word of length at least $k$ and let $p \in Q_{P}$ be such that $\pi_{P}(\Gamma, p) = p$. Set $\Delta := \lambda_{P}(\Gamma, p)$, noting that $|\Gamma| = |\Delta|$ and set  $\Theta:= \lambda_{P}(\Gamma, q)$. Observe that there is a word $\overline{\Delta} \in X_{n}^{*}$ such that  $\Delta = \Lambda(\varepsilon,p)\overline{\Delta}$.
	
	By equation  \eqref{resulthasnoincompleteresponse} of Proposition~\ref{prop:algorithmforremovingincompleteresponse} we have
	
	\begin{equation}
	  \lambda'_{P}(\Gamma\Gamma,q_{-1}) = \Theta \Delta \Lambda(\varepsilon, p). \label{prop:pntoln 1}
	\end{equation}
	
	However, by equation \eqref{transition same} of Proposition~\ref{prop:algorithmforremovingincompleteresponse} we also have:
	\begin{equation}
	\lambda'_{P}(\Gamma\Gamma,q_{-1}) =   \lambda'_{P}(\Gamma, q_{-1})\lambda'_{P}(\Gamma, p) \label{prop:pntoln 2}
	\end{equation}
	
	Now, as $\lambda'_{P}(\Gamma, q_{-1}) = \Theta\Lambda(\varepsilon,p)$, equations~\eqref{prop:pntoln 1} and \eqref{prop:pntoln 2} imply that $\lambda'_{P}(\Gamma, p) = \overline{\Delta}\Lambda(\varepsilon,p)$. Therefore, in $P'_{q_{-1}}$ we have, $|\lambda_{P'}(\Gamma, p)| = |\overline{\Delta}\Lambda(\varepsilon,p)| = |\Delta| = |\Gamma|$, and $\pi_{P'}(\Gamma, p) = p$. This means that $\core(P_{q_{-1}})$ satisfies condition \ref{Lipshitzconstraint} above and so $[P]$ does as well.  
\end{proof}

Observe that for $\Shift{n} \in \pn{n}$, the transducer $[\Shift{n}]$ is precisely the identity element of $\Ln{n}$. For an element $P \in \pn{n}$ there is a negative integer $i \in \mathbb{Z}$ and  $R \in \pn{n}$ such that $ \shift{n}^{i}f_{P}f_{R}$ is the identity element in $\aut{\xnz, \shift{n}}$. Thus, by Proposition~\ref{prop:homomorphismfromPntoFinfinitiy},  $P \spnprod R =  \Shift{n}^{-i}$. Therefore, in order to show that $[P]$ has in inverse in $\SLn{n}$ (and so is an element of $\Ln{n}$) it suffices to demonstrate the following result.

\begin{lemma}\label{lem:removingincompleteresposeprod}
	Let $P, R \in \spn{n}$. Then $ [P][R] = [P \spnprod{n} R]$
\end{lemma}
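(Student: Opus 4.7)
The plan is to verify that both $[P][R]$ and $[P \spnprod{n} R]$ are minimal representatives in $\SLn{n}$ of the same underlying object: the transducer obtained from $\core(P \ast R)$ by applying the incomplete-response-removal algorithm of Proposition~\ref{prop:algorithmforremovingincompleteresponse} and then identifying $\omega$-equivalent states. Since a minimal representative in $\SLn{n}$ is unique up to $\omega$-equality, equality of both sides will follow. Denote by $\mathcal{R}$ the composite procedure ``take core, remove incomplete response, $\omega$-minimise'' that sends a weakly-minimal, synchronous, strongly-synchronising transducer to its representative in $\SLn{n}$.

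The easier direction concerns $[P \spnprod{n} R]$. By construction, $P \spnprod{n} R$ is obtained from $\core(P \ast R)$ by identifying $\omega$-equivalent states within $\spn{n}$, and then $[\cdot]$ applies the incomplete-response-removal algorithm and further $\omega$-minimises. Since two states are $\omega$-equivalent if and only if they induce the same continuous map on $\xnn$, and since removing incomplete response modifies output functions only by finite prefixes determined by the induced maps, the order of these reduction steps cannot affect the final minimal representative. Hence $[P \spnprod{n} R] = \mathcal{R}(\core(P \ast R))$.

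For the other side, $[P][R] = \mathcal{R}(\core([P] \ast [R]))$, the key identity is that for every state $q \in Q_P$ and every finite word $x \in \xns$,
\[ \lambda_P(x, q) \cdot \Lambda(\varepsilon, \pi_P(x,q)) = \Lambda(\varepsilon, q) \cdot \lambda_{[P]}(x, q), \]
which, since the right-hand correction $\Lambda(\varepsilon, \pi_P(x,q))$ has bounded length as $|x| \to \infty$, extends to infinite inputs as $\lambda_P(\cdot, q) = \Lambda(\varepsilon, q) \cdot \lambda_{[P]}(\cdot, q)$ as maps $\xnn \to \xnn$. Define $\Phi : Q_P \times Q_R \to Q_P \times Q_R$ by $\Phi(q, r) = (q, \pi_R(\Lambda(\varepsilon, q), r))$. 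Using the identity above, I would check that $\Phi$ intertwines the transition functions of $P \ast R$ and $[P] \ast [R]$, and that for corresponding states $(q, r)$ of $P \ast R$ and $\Phi(q, r)$ of $[P] \ast [R]$, the induced maps on $\xnn$ differ by precisely the finite prefix $\lambda_R(\Lambda(\varepsilon, q), r) \cdot \Lambda(\varepsilon, \pi_R(\Lambda(\varepsilon, q), r))$. This is exactly the prefix that incomplete-response-removal absorbs at $(q, r)$, so $\mathcal{R}(\core(P \ast R))$ and $\mathcal{R}(\core([P] \ast [R]))$ produce $\omega$-equal, hence equal, elements of $\SLn{n}$.

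The main obstacle I anticipate is verifying that $\Phi$ restricts to a bijection between the accessible (core) states of $P \ast R$ and of $[P] \ast [R]$, and that after the identifications forced by $\omega$-minimisation, the state correspondence descends correctly to the two minimal representatives. Well-definedness of $\phi_q : r \mapsto \pi_R(\Lambda(\varepsilon, q), r)$ on all of $Q_R$ together with strong synchronisation of both factors should give the bijection on core states; compatibility with minimisation then follows since $\omega$-equivalence is detected by the induced maps, and these agree on corresponding states by the prefix calculation above.
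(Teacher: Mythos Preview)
Your approach is genuinely different from the paper's and would work, but there are two gaps and one misstatement to flag.

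\textbf{Comparison with the paper.} The paper avoids any explicit state correspondence. It picks states $p\in Q_P$, $r\in Q_R$ with $(p,r)\in\core(P\ast R)$, forms the initial transducers $P'_{p_{-1}}$ and $R'_{r_{-1}}$ via Proposition~\ref{prop:algorithmforremovingincompleteresponse}, and observes that $(P'\ast R')_{(p_{-1},r_{-1})}$ is $\omega$-equivalent to $(P\ast R)_{(p,r)}$ simply because the initial factors are. Hence the two products have $\omega$-equal minimal representatives, and the core of that common minimal representative is simultaneously $[P\spnprod{n}R]$ and $[P][R]$. This sidesteps both the state-matching and the ``what happens to the core'' questions entirely. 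Your route, building $\Phi$ explicitly and checking intertwining, is more hands-on; it gives a concrete picture of how the states line up, at the cost of more bookkeeping.

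\textbf{The misstatement.} You assert that the prefix $w=\lambda_R(\Lambda(\varepsilon,q),r)\cdot\Lambda(\varepsilon,r')$ is ``exactly the prefix that incomplete-response-removal absorbs at $(q,r)$''. This is false: $[P]\ast[R]$ can itself have incomplete response at $\Phi(q,r)$, and in fact $\Lambda_{P\ast R}(\varepsilon,(q,r))=w\cdot\Lambda_{[P]\ast[R]}(\varepsilon,\Phi(q,r))$. The fix is easy: what you actually need is that the induced maps $h_{(q,r)}^{P\ast R}$ and $h_{\Phi(q,r)}^{[P]\ast[R]}$ differ by \emph{some} finite prefix, so that after removing incomplete response on both sides the resulting states become $\omega$-equivalent. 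Your computation establishes exactly this.

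\textbf{The unresolved obstacle.} You correctly identify the core-to-core issue but do not resolve it. Note that $\Phi$ need not be injective on core states (the map $r\mapsto\pi_R(\Lambda(\varepsilon,q),r)$ can collapse states), so aiming for a bijection is the wrong target. What you actually need is only surjectivity onto $\core([P]\ast[R])$, and this follows from your transition-intertwining: if $(q,r)$ is the state of $\core(P\ast R)$ forced by a long word $w$, then $\Phi(q,r)$ is the state of $\core([P]\ast[R])$ forced by $w$. Even less is really required: a single pair of corresponding core states with the same induced map after $\mathcal R$ forces $\omega$-equality of the minimal representatives, by strong connectivity and minimality. Finally, you also omit the degenerate case where some state has infinite incomplete response (so $[P]$ or the product collapses to some $Z_x$); the paper handles this separately at the start.
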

\begin{proof}
	Note that for any $x \in \xn$ and any $P' \in \spn{n}$ there is a $y \in \xn$ such that $P \spnprod{n} Z_{x} = Z_{x}$ and $Z_{x} \spnprod{n} P = Z_{y}$. Therefore we may assume that neither  $P$ nor $R$ has infinite extent of incomplete response. 
	
	Let $p$ and $r$ be states of $P$ and $R$ respectively. We may choose $p$ and $r$ so that $(p,r)$ is a state of $\core(P \ast R)$. Form initial transducers $P'_{p_{-1}}$ and $R'_{r_{-1}}$ by applying the algorithm in  Proposition \ref{prop:algorithmforremovingincompleteresponse}. Now observe that since  $P'_{p_{-1}}$ and  $R'_{r_{-1}}$ are $\omega$-equivalent to $P_{p}$ and $P_{r}$ respectively, then the initial transducer $(P'\ast R')_{(p_{-1},r_{-1})}$ is $\omega$-equivalent to the initial transducer $(P\ast R)_{(p,r)}$. If the extent of incomplete response of $(p,r)$ is infinite, then both $(P\ast R)_{(p,r)}$ and $(P'\ast R')_{(p_{-1},r_{-1})}$ are have minimal representative $Z_{x}$ for some $x \in \xn$. Since $\core(P'_{p_{-1}})  = [P]$ and $\core(R'_{r_{-1}})  = [R]$, it follows that $[P][R] = [P \spnprod{n} R] = Z_{x}$. Thus, we may assume that the extent of incomplete response of $(p,r)$ is finite. Since $(p,r) \in \core(P\ast R)$ this means that the extent of incomplete response  of any state of $\core(P\ast R)$ is finite. Also, as $(P'\ast R')_{(p_{-1},r_{-1})}$ is $\omega$-equivalent to  $(P\ast R)_{(p,r)}$ it follows that any state of $(P' \ast R)_{(p_{-1},r_{-1})}$ has finite extent of incomplete response.
	
	 Let $(P'R')_{s}$ be the unique minimal transducer representing $(P'\ast R')_{(p_{-1},r_{-1})}$ and $(PR)_{t}$ be the unique  minimal transducer representing $(P \ast R)_{(p,r)}$, noting that $(P'R')_{s}$ and $(PR)_{t}$ are $\omega$-equal since they are minimal and $\omega$-equivalent. 
	
	 Observe that $\core((PR)_{t})$ is  $\omega$-equal to $[P\spnprod{n} R]$. For since $p$ and $r$ were chosen so that $(p,r) \in \core(P \ast R)$, $(PR)_{t}$ is $\omega$-equal to the minimal representative of an initial transducer $(P\spnprod{n}R)_{t'}$ for some state $t'$ of $P\spnprod{n}R$.  Furthermore, as $(PR)_{t}$ is $\omega$-equal to $(P'R')_{s}$, we have that $\core((P'R')_{s})$ is $\omega$-equivalent to $\core((PR)_{t})$. 
	 
	 Now, as $\core(P'_{p-1})$ is $\omega$-equivalent to $[P]$ and $\core(R'_{r-1})$ is $\omega$-equivalent to $[R]$ by definition, we have  $\core((P'\ast R')_{(p_{-},r_{-1})})$ is $\omega$-equivalent to $\core([P] \ast [R])$.  Observe that if $T$ and $U$ are two strongly synchronizing transducers with $\omega$-equivalent cores, then after applying  the algorithm 
	in Proposition~\ref{prop:algorithmforremovingincompleteresponse} to $T_{q}$ and $U_{q'}$ for any states $q \in Q_{T}$ and $q' \in Q_{U}$ the resulting transducers have $\omega$-equivalent cores. This is because the effect of the algorithm in Proposition~\ref{prop:algorithmforremovingincompleteresponse} on the states in the core depends only on the set of states in the core. Therefore we conclude that $\core((P'R')_{t})$ is $\omega$-equal to $[P][R]$. This yields the result.   
\end{proof}

\begin{cor}\label{cor:removeincompleteresponsepntoln}
	Let $P \in \pn{n}$, then $[P] \in \Ln{n}$.
\end{cor}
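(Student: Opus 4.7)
The plan is to combine Proposition~\ref{prop:frompntoln}, which already yields $[P]\in\SLn{n}$, with Lemma~\ref{lem:removingincompleteresposeprod} in order to exhibit a two-sided inverse of $[P]$ inside the monoid $\SLn{n}$. Since $\Ln{n}$ is by definition the largest inverse-closed subset of $\SLn{n}$, producing such an inverse suffices.

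First I would unpack invertibility of $P$. Because $P\in\pn{n}$, the induced map $f_P$ lies in $\aut{\xnz,\shift{n}}$, and so admits an inverse in this group. By Theorem~\ref{t:hed2} and Corollary~\ref{cor:FinftyisoPn}, $f_P^{-1}$ can be written as $\shift{n}^{-k}f_R$ for some integer $k$ and some $R\in\spn{n}$. After replacing $R$ by $\Shift{n}^{j}\spnprod{n} R$ for a sufficiently large $j$ (which only modifies $k$ by $j$), I may assume $k\ge 0$, so that $f_Pf_R=\shift{n}^k=f_{\Shift{n}^k}$. Because $\shift{n}$ is central in $\aut{\xnz,\shift{n}}$, one also gets $f_Rf_P=\shift{n}^k$; moreover $f_R=f_P^{-1}\shift{n}^k$ is a composition of automorphisms, hence $R\in\pn{n}$. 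Translating through Proposition~\ref{prop:homomorphismfromPntoFinfinitiy} gives the identities $P\spnprod{n} R=R\spnprod{n} P=\Shift{n}^k$ in $\spn{n}$.

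Next I would apply Lemma~\ref{lem:removingincompleteresposeprod} to both equations, obtaining $[P][R]=[R][P]=[\Shift{n}^k]$. Using that $[\Shift{n}]$ is the identity element of $\Ln{n}$ (as noted just before Proposition~\ref{prop:frompntoln}) together with a straightforward induction on $k$ via the same lemma, one sees that $[\Shift{n}^k]=[\Shift{n}]^k=\id$. Thus $[R]$ is a two-sided inverse of $[P]$ inside $\SLn{n}$, so $[P]$ belongs to the largest inverse-closed subset of $\SLn{n}$, namely $\Ln{n}$.

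The only real obstacle here is bookkeeping: ensuring that the exponent of the shift appearing is nonnegative (so that it is genuinely realised by an element of $\spn{n}$, since $\spn{n}$ has no literal $\Shift{n}^{-1}$), and verifying that the $R$ constructed really lies in $\pn{n}$ and not merely in $\spn{n}$. Both issues are dispatched by the centrality of the shift and the remark that pre- or post-composing with nonnegative powers of $\Shift{n}$ keeps us inside $\pn{n}$.
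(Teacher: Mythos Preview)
Your proposal is correct and follows essentially the same route as the paper: the argument in the paragraph preceding Lemma~\ref{lem:removingincompleteresposeprod} already sketches exactly this, finding $R\in\pn{n}$ with $P\spnprod{n} R=\Shift{n}^{-i}$ for $-i\ge 0$ and then invoking the lemma together with $[\Shift{n}]=\id$. Your version is slightly more careful in explicitly verifying the two-sided identity $[R][P]=\id$ via centrality of the shift, which the paper leaves implicit.
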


We shall later see that an element $P \in \spn{n}$ yields an element $[P] \in \Ln{n}$ if and only if $P \in \pn{n}$.

\subsection{An action of $\SLn{n}$ on $X_n^{\Z}$}

Let $\iota: \aut{\xnz, \shift{n}} \to \Ln{n}$ be defined by $\shift{n}^{i}f \mapsto [P]$ where $f \in F_{\infty}(\xn)$ and $P \in \pn{n}$ such that $f_{P} = f$. In what follows we shall show that $\iota$  is an epimorphism. We do this by establishing an action on $\xnz$ of any strongly synchronizing and core transducer which satisfies the condition \ref{Lipshitzconstraint}. In order to avoid repeating these conditions, denote by  $\Nn{n}$ the set of all strongly synchronizing and core transducers which satisfy the condition \ref{Lipshitzconstraint}. Note that not all elements of $\Nn{n}$ are minimal or even weakly minimal, however all  elements have minimal representatives in $\SLn{n}$.

\begin{predef}[Annotations]\label{def:annotatingLn}
	Let $L \in \Nn{n}$. An \emph{annotation} of $L$ is a map $\alpha: Q_{L} \to \Z$ which obeys the following rule: for any state $q \in Q_{L}$ and any word $\Gamma \in \xns$
	
	\begin{equation}
	  (\pi_{L}(\Gamma, q))\alpha = (q)\alpha + |\lambda_{L}(\Gamma, q)| - |\Gamma|. \label{eqn:annotationrule}
	\end{equation}
\end{predef}

It is easy to construct annotations for an element $L \in \Nn{n}$. We construct examples below which are canonical in the sense that all annotations of a given element of $\Nn{n}$ arise as one of these examples. Note that by \eqref{eqn:annotationrule}, an annotation is uniquely determined by the value it assigns a given fixed state of an element of $\Nn{n}$.

Suppose we have  an ordering $q_1, \ldots, q_{|L|}$  of the states of $L$. For each $1 \le i \le |L|$ fix a word  $\Gamma_{q_i} \in X_n^{\ast}$ such that $\pi_{L}(\Gamma_{q_i},q_{1}) = q_{i}$. Fix a number $j \in \mathbb{Z}$ and  define a map $\alpha_{q_1,j}: Q_{L} \to \Z$ according to the following rules:
	\begin{enumerate}[label = (\roman{*})]
		\item $(q_1)\alpha_{q_1,j} = j$, 
		\item  for $2 \le i \le |L|$, set $(q_i)\alpha_{q_1,j} = j + |\lambda_{L}(\Gamma_{q_j},q_1)| - |\Gamma_{q_j}|$ for $1 \le j \le |L|$.
	\end{enumerate}
	
We have the following lemma.

\begin{lemma}\label{lem:annotationsdonotdependonchoiceofsynchwords}
	Let $L \in \Nn{n}$. Fix an ordering $q_1, \ldots, q_{|L|}$ of the states of $L$, and fix a number $i \in \Z$. Let $\alpha_{q_{1}, i}$ be an annotation of $L$. Then $\alpha_{q_{1}, i}$ is independent of the choice of $\Gamma_{q_j} \in X_n^{k}$ for $1 \le j \le |L|$.
\end{lemma}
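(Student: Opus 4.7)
The goal is to show that the quantity $(q_j)\alpha_{q_1,j} = i + |\lambda_L(\Gamma_{q_j}, q_1)| - |\Gamma_{q_j}|$ depends only on $q_j$ (and $q_1$ and $i$), not on the particular representative $\Gamma_{q_j}$ chosen among words satisfying $\pi_L(\Gamma_{q_j}, q_1) = q_j$. The plan is to reduce this to the Lipschitz condition \ref{Lipshitzconstraint} on circuits, using strong connectivity of the underlying graph of $L$.

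First I would fix an index $j$ and two words $\Gamma, \Gamma' \in X_n^{\ast}$ with $\pi_L(\Gamma, q_1) = \pi_L(\Gamma', q_1) = q_j$, and reduce the lemma to the identity
\[
|\lambda_L(\Gamma, q_1)| - |\Gamma| \;=\; |\lambda_L(\Gamma', q_1)| - |\Gamma'|.
\]
To produce a useful circuit based at $q_1$, I would invoke the fact that $L$, being core and synchronizing at some level $k$, is strongly connected as a directed graph: if $w \in X_n^k$ is any word with $\mathfrak{s}_k(w) = q_1$, then $\pi_L(w, q) = q_1$ for \emph{every} state $q$ of $L$, by the synchronizing property. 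In particular there exists $\Delta \in X_n^{\ast}$ with $\pi_L(\Delta, q_j) = q_1$, and hence $\pi_L(\Gamma\Delta, q_1) = \pi_L(\Gamma'\Delta, q_1) = q_1$, so both $\Gamma\Delta$ and $\Gamma'\Delta$ are circuits based at $q_1$.

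Next I would apply condition \ref{Lipshitzconstraint} to these two circuits to get
\[
|\lambda_L(\Gamma\Delta, q_1)| = |\Gamma| + |\Delta|, \qquad |\lambda_L(\Gamma'\Delta, q_1)| = |\Gamma'| + |\Delta|.
\]
Using the standard decomposition $\lambda_L(\Gamma\Delta, q_1) = \lambda_L(\Gamma, q_1)\,\lambda_L(\Delta, q_j)$ (and analogously for $\Gamma'$), each side rearranges to
\[
|\lambda_L(\Gamma, q_1)| - |\Gamma| \;=\; |\Delta| - |\lambda_L(\Delta, q_j)| \;=\; |\lambda_L(\Gamma', q_1)| - |\Gamma'|,
\]
which is precisely what was required. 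Thus $(q_j)\alpha_{q_1,i}$ is independent of the choice of $\Gamma_{q_j}$.

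There is no substantial obstacle; the only subtle point is the existence of the return word $\Delta$, which follows at once from the strong synchronization of $L$ combined with the fact that $L$ is its own core. Once strong connectivity is in hand, the lemma reduces to a one-line cancellation driven by \ref{Lipshitzconstraint}.
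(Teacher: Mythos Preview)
Your proof is correct and follows essentially the same approach as the paper's: both arguments form circuits through $q_1$ and $q_j$ and apply condition \ref{Lipshitzconstraint} to cancel the common ``return leg'' contribution. The only cosmetic difference is that the paper phrases the computation as a proof by contradiction using the synchronizing word $\Gamma_{q_1}$ as the return path, whereas you pick an arbitrary return word $\Delta$ and argue directly; the underlying identity $|\lambda_L(\Gamma,q_1)|-|\Gamma| = |\Delta|-|\lambda_L(\Delta,q_j)|$ is the same in both.
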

\begin{proof}
	  Let $l = |L|$, $\Gamma:= \{ \Gamma_{q_i} \mid 1 \le i \le l\}$ and suppose $\Delta:=\{\Delta_{q_j}| \ 1 \le j \le |L| \} \subseteq \xns$ is distinct from the set  $\Gamma$ and satisfies  $\pi_{A}(\Delta_{q_j},q_1) = q_{j}$ for all $1 \le j \le l$. Let $\alpha'_{q_{1}, i}: Q_{L} \to \Z$ be the map  arising from $\Delta$ as above.
	
	If $\alpha'_{q_{1}, i}$ is not equal to $\alpha_{q_{1}, i}$, then there is a state $q_j$, $1\le j \le l$ such that $r_1:=(q_j)\alpha'_{q_{1}, i} \ne (q_j)\alpha_{q_{1}, i}=:r_2$. By Definition \ref{def:annotatingLn} we have, $r_2=|\lambda_{L}(\Gamma_{q_j}, q_1)| - |\Gamma_{q_j}| \ne |\lambda_{L}(\Delta_{q_j}, q_1)|- |\Delta_{q_j}| = r_1$. Now constraint \ref{Lipshitzconstraint} implies that:
	\begin{equation}\label{usinglipschitzccond1}
	|\lambda_{L}(\Gamma_{q_1}, q_j)|+|\lambda_{L}(\Gamma_{q_j}, q_1)| = |\Gamma_{q_1}|+|\Gamma_{q_j}|.
	\end{equation}
	On the other hand we must also have:
	\begin{equation}\label{usinglipshitzcond2}
	|\lambda_{L}(\Gamma_{q_1}, q_j)|+|\lambda_{L}(\Delta_{q_j}, q_1)| = |\Gamma_{q_1}|+|\Delta_{q_j}|.
	\end{equation} 
	
	Now notice that $|\lambda(\Delta_{q_j}, q_1)| = |\Delta_{q_j}| + r_1$. Therefore $|\lambda_{L}(\Gamma_{q_1}, q_j)| = |\Gamma_{q_1}| - r_1$ from \eqref{usinglipshitzcond2}. Substituting this into equation \eqref{usinglipschitzccond1} we have:
	\[
	|\lambda_{L}(\Gamma_{q_j, q_1})| = r_1 - |\Gamma_{q_1}| + |\Gamma_{q_1}| + |\Gamma_{q_j}|
	\]
	so we conclude that:
	\[
	|\lambda_{l}(\Gamma_{q_j, q_1})| - |\Gamma_{q_j}| = r_1
	\]
which is a contradiction.  
\end{proof}

\begin{cor}
	Let $L \in \Nn{n}$ and $q_1, \ldots, q_{|L|}$ be an ordering of the states  of $L$. Then for any $i \in \Z$, $\alpha_{q_{1}, i}$ is an annotation of $L$.
\end{cor}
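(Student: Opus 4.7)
The plan is to verify the annotation rule (\ref{eqn:annotationrule}) directly by exploiting the flexibility granted by Lemma~\ref{lem:annotationsdonotdependonchoiceofsynchwords}. Fix a state $q \in Q_{L}$ and a word $\Gamma \in \xns$, and set $p := \pi_{L}(\Gamma, q)$. What we must show is
\[
  (p)\alpha_{q_{1},i} \;=\; (q)\alpha_{q_{1},i} + |\lambda_{L}(\Gamma, q)| - |\Gamma|.
\]

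First I would handle the base case $q = q_{1}$ by taking the ``witnessing word'' to be the empty word $\varepsilon$, which is consistent with $(q_{1})\alpha_{q_{1},i} = i = i + |\lambda_{L}(\varepsilon, q_{1})| - |\varepsilon|$. For general $q$, since $L \in \Nn{n}$ is core, there exists some word $\Gamma_{q} \in \xns$ with $\pi_{L}(\Gamma_{q}, q_{1}) = q$. By Lemma~\ref{lem:annotationsdonotdependonchoiceofsynchwords} the value $(q)\alpha_{q_{1},i}$ may be computed from this $\Gamma_{q}$, so
\[
  (q)\alpha_{q_{1},i} = i + |\lambda_{L}(\Gamma_{q}, q_{1})| - |\Gamma_{q}|.
\]
Now observe that $\Gamma_{q}\Gamma$ is a word satisfying $\pi_{L}(\Gamma_{q}\Gamma, q_{1}) = p$, so by Lemma~\ref{lem:annotationsdonotdependonchoiceofsynchwords} again we may compute $(p)\alpha_{q_{1},i}$ using $\Gamma_{q}\Gamma$ as the chosen witness.

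The key step is then the cocycle-like identity
\[
  |\lambda_{L}(\Gamma_{q}\Gamma, q_{1})| \;=\; |\lambda_{L}(\Gamma_{q}, q_{1})| + |\lambda_{L}(\Gamma, q)|,
\]
which is immediate from the concatenation formula for the output function together with $\pi_{L}(\Gamma_{q}, q_{1}) = q$. Substituting this into the formula for $(p)\alpha_{q_{1},i}$ gives
\[
  (p)\alpha_{q_{1},i} = i + |\lambda_{L}(\Gamma_{q}, q_{1})| + |\lambda_{L}(\Gamma, q)| - |\Gamma_{q}| - |\Gamma|,
\]
and rearranging using the expression for $(q)\alpha_{q_{1},i}$ yields exactly the required identity.

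There is no real obstacle here; the proof is essentially a bookkeeping exercise once one observes that the lemma lets us choose any witnessing path from $q_{1}$ to $p$, and that passing through $q$ en route gives the desired additive decomposition. The only minor subtlety to flag is that the construction of $\alpha_{q_{1}, i}$ as stated in the text separates the case $i = 1$ from $2 \le i \le |L|$; one should record that these are consistent under the convention $\Gamma_{q_{1}} = \varepsilon$, and that the core property of $L$ guarantees the existence of the needed words $\Gamma_{q}$ throughout.
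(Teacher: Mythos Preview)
Your proof is correct and takes essentially the same approach as the paper, which simply states that the corollary ``is essentially a restatement of Lemma~\ref{lem:annotationsdonotdependonchoiceofsynchwords}.'' You have written out in full the argument the paper leaves implicit: using the lemma to compute $(p)\alpha_{q_1,i}$ via the concatenated path $\Gamma_q\Gamma$ and then invoking the output-concatenation identity to recover the annotation rule.
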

\begin{proof}
	This is essentially a restatement of Lemma~\ref{lem:annotationsdonotdependonchoiceofsynchwords}.  
\end{proof}

\begin{lemma}
	Let $L \in \Nn{n}$, then any annotation $\alpha$ of $L$ is equal to $\alpha_{q_1, i}$ for some ordering $q_1, \ldots, q_{|L|}$ of the states of $L$ and $i \in \Z$.
\end{lemma}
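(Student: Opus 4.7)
The plan is to observe that an annotation is forced by the value it takes on any single state, together with the annotation rule, and then to simply match this with the construction of $\alpha_{q_1,i}$. So the proof reduces to a bookkeeping exercise rather than a genuinely difficult argument.

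First I would fix any ordering $q_1, \ldots, q_{|L|}$ of the states of $L$ (any ordering works) and set $i \seteq (q_1)\alpha$. Because $L$ is core, it is strongly connected as a directed graph: for every state $q_j$ in the core there is a word of length $\ge k$ that forces $q_j$, so starting from $q_1$ and reading that word lands at $q_j$. Hence for each $1 \le j \le |L|$ we may choose a word $\Gamma_{q_j} \in \xns$ with $\pi_L(\Gamma_{q_j}, q_1) = q_j$. Applying the annotation rule \eqref{eqn:annotationrule} to $\alpha$ with base state $q_1$ and input $\Gamma_{q_j}$ gives
\[
(q_j)\alpha \;=\; (\pi_L(\Gamma_{q_j}, q_1))\alpha \;=\; (q_1)\alpha + |\lambda_L(\Gamma_{q_j}, q_1)| - |\Gamma_{q_j}| \;=\; i + |\lambda_L(\Gamma_{q_j}, q_1)| - |\Gamma_{q_j}|,
\]
which is precisely the defining value of $(q_j)\alpha_{q_1, i}$ with respect to the system of synchronizing words $\{\Gamma_{q_j}\}$. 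Thus $\alpha$ and $\alpha_{q_1, i}$ agree on every state, so $\alpha = \alpha_{q_1, i}$.

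The only potentially delicate point is that $\alpha_{q_1, i}$ was defined using a particular choice of connecting words $\Gamma_{q_j}$, whereas the annotation $\alpha$ carries no such data. This is handled by the preceding Lemma \ref{lem:annotationsdonotdependonchoiceofsynchwords}, which asserts that $\alpha_{q_1, i}$ is independent of those choices; consequently the identification $\alpha = \alpha_{q_1, i}$ is unambiguous. There is no serious obstacle: the strong connectivity supplied by the core condition gives the required words, the Lipschitz condition \ref{Lipshitzconstraint} (used already in the previous lemma) guarantees well-definedness, and the rest is a direct application of \eqref{eqn:annotationrule}.
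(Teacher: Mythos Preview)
Your proof is correct and follows essentially the same approach as the paper: fix a base state, set $i$ to be the value of $\alpha$ there, and use the annotation rule \eqref{eqn:annotationrule} together with strong connectivity to see that $\alpha$ agrees with $\alpha_{q_1,i}$ on every state. The paper's version is more terse and pins down a specific ordering via short-lex minimal connecting words, but as you observe, any ordering works and the argument is the same bookkeeping.
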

\begin{proof}
	Fix a state $q \in Q_{L}$. For each state $p \in Q_{L}\backslash \{q\}$ let $\Gamma_p$ be the minimal word in the short-lex ordering for which $\pi_{L}(\Gamma_p, q) = p$. Set $\Gamma_{q} = \varepsilon$. Order the states of $Q_{L}$ so that $p_1 \le p_2$ if $\Gamma_{p_1}$ is less than $\Gamma_{p_2}$ in the short-lex ordering. Let $q:=q_1, q_2, \ldots, q_{L}$ be the states of $\Gamma$ so ordered. Then the equality $\alpha = \alpha_{q_1, (q_1)\alpha}$  is immediate. 
\end{proof}
\begin{prerk}\label{rem:annotatingstepbystep}
	Let $L \in \Nn{n}$. Fix an ordering $q_1, \ldots, q_{|L|}$ of the states of $L$, and fix a number $i \in \Z$. Let $\alpha_{q_{1}, i}$ be an annotation of $L$. Let $q_j$ and $q_{k}$ be states of $L$ such that there is an $x \in \xns$ and $\pi(x, q_j) = q_k$, then by Lemma~\ref{lem:annotationsdonotdependonchoiceofsynchwords}  we have:
	\[
	(q_k)\alpha_{q_{1}, i} = (q_j)\alpha_{q_{1}, i} + |\lambda_{L}(x,q_j)| -|x|.
	\]
\end{prerk}
\begin{lemma}\label{lem:orderingdoesnotmatter}
	Let $L \in \Nn{n}$, fix a labelling $q_1, q_2 \ldots q_{|L|}$ of the states of $Q_L$ and let $i \in \mathbb{Z}$. Let $\alpha_{q_1, i}$ be an annotation of $L$ and $2 \le j \le |L|$ be arbitrary. Then $\alpha_{q_1,i} = \alpha_{q_j,(q_j)\alpha_{q_1,i}}$.
\end{lemma}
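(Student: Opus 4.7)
The plan is to exploit the fact that an annotation of $L$ is uniquely determined by the value it assigns to any single state, so that two annotations which agree at one state must agree everywhere. This follows from the defining equation~\eqref{eqn:annotationrule} together with the fact that $L$ is core (hence its underlying automaton is strongly connected as a directed graph, so every state is reachable from every other by some finite input word).

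First, I would observe that $\alpha_{q_j,(q_j)\alpha_{q_1,i}}$ is by construction an annotation of $L$, and that by definition it satisfies $(q_j)\alpha_{q_j,(q_j)\alpha_{q_1,i}} = (q_j)\alpha_{q_1,i}$. So both annotations agree on the state $q_j$. It therefore suffices to prove the general statement: \emph{if $\alpha$ and $\beta$ are two annotations of $L$ with $(q)\alpha = (q)\beta$ for some state $q \in Q_L$, then $\alpha = \beta$.}

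To prove this general fact, let $p \in Q_L$ be an arbitrary state. Since $L$ is core and so strongly connected, there is a word $\Gamma \in \xns$ with $\pi_L(\Gamma, q) = p$. Applying the annotation rule~\eqref{eqn:annotationrule} to each of $\alpha$ and $\beta$ gives
\[
(p)\alpha = (q)\alpha + |\lambda_L(\Gamma,q)| - |\Gamma|, \qquad
(p)\beta = (q)\beta + |\lambda_L(\Gamma,q)| - |\Gamma|.
\]
Subtracting and using $(q)\alpha = (q)\beta$ yields $(p)\alpha = (p)\beta$. Since $p$ was arbitrary, $\alpha = \beta$.

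Applying this to $\alpha = \alpha_{q_1,i}$ and $\beta = \alpha_{q_j,(q_j)\alpha_{q_1,i}}$ (which agree at $q_j$) gives the lemma. I expect no serious obstacle here: the only delicate point is invoking that $L$ is core to guarantee reachability of $p$ from $q$, and invoking Lemma~\ref{lem:annotationsdonotdependonchoiceofsynchwords} (equivalently Remark~\ref{rem:annotatingstepbystep}) to justify that the formula for $(p)\alpha$ in terms of $(q)\alpha$ is independent of the particular connecting word $\Gamma$.
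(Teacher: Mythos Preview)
Your proof is correct and rests on the same core principle as the paper's: an annotation is determined by its value at any single state, via the defining rule~\eqref{eqn:annotationrule} and strong connectivity of the core automaton. The paper, however, chooses to verify that the two annotations agree at $q_1$, which requires an explicit computation with specific synchronizing words $\Delta_{q_k}=\Gamma_{q_1}\Gamma_{q_k}$ and an appeal to the Lipschitz condition~\ref{Lipshitzconstraint} to show $(q_1)\alpha_{q_j,(q_j)\alpha_{q_1,i}}=i$. You instead observe that agreement at $q_j$ is immediate from the definition $(q_j)\alpha_{q_j,m}=m$, which is a genuine (if small) streamlining: it avoids the Lipschitz computation entirely. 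Your closing remark about needing Lemma~\ref{lem:annotationsdonotdependonchoiceofsynchwords} for path-independence is unnecessary, since once both maps are known to be annotations the rule~\eqref{eqn:annotationrule} already gives a fixed value $(p)\alpha$ regardless of which $\Gamma$ you pick.
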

\begin{proof}
	Let $l = |L|$ and fix $1\le j \le l$. Let $\Delta_{q_k} := \Gamma_{q_1}\Gamma_{q_k}$ for $1 \le k \le l$, where the state of $L$ forced by $\Gamma_{q_k}$ is $q_k$. Define the annotation $\alpha_{q_j,\alpha_{q_1,i}(q_j)}$ with the set $\{\Delta_{q_k} | 1 \le k \le l\}$. The result is now a consequence of Remark~\ref{rem:annotatingstepbystep}, since  by  condition \ref{Lipshitzconstraint} we have $$(q_j)\alpha_{q_1,i} + |\lambda_{L}(\Gamma_1\Gamma_1, q_{j})| - |\Gamma_1\Gamma_1| = (q_j)\alpha_{q_1,i} + |\lambda_{L}(\Gamma_1 , q_{j})| - |\Gamma_1| = i.$$  
\end{proof}
Remark~\ref{rem:annotatingstepbystep} prompts the following \textit{ad hoc} method for creating an annotation $\alpha_{q_1,i}$ for a given $L \in \Nn{n}$, $i \in \mathbb{Z}$ and a fixed ordering of the states of $L$. 
\begin{enumerate}[label=(\alph*)]
	\item Set $ (q_1)\alpha_{q_1, i} = i$ and let $\mathscr{A} = \{q_1\}$.
	\item For all states $q_j$ such that there is an $x \in X_n$ for which $\pi_{L}(x, q_1) = q_j$ set $(q_j)\alpha_{q_1,i}= i + |\lambda_{L}(x,q_1)| -1$. Add these states to $\mathscr{A}$.
	\item Now consider the set of states $q_k$ in $Q_L \backslash \mathscr{A}$ such  that there is an $x \in X_n$ and $q_j \in \mathscr{A}$ for which $\pi_{L}(x, q_j) = q_k$, set  $(q_k)\alpha_{q_{1},i} = (q_j)\alpha_{q_{1},i} + |\lambda_{L}(x, q_j)| -1$. Add these new set of states to $\mathscr{A}$
	\item Repeat the previous step until $\alpha_{q_{1},i}$ is defined for all the states of $L$.
\end{enumerate}

By Lemma~\ref{lem:annotationsdonotdependonchoiceofsynchwords} and Remark~\ref{rem:annotatingstepbystep} this process will yield a well-defined function $\alpha_{q_1,i}: Q_{L} \to \mathbb{Z}$.

Figure~\ref{fig:annotationexample} depicts an element of $\pn{2}$; the numbers in red  indicate the image of  a state under the canonical annotation.

\begin{figure}[t]
	\begin{center} 
		\begin{tikzpicture}[shorten >=0.5pt,node distance=3cm,on grid,auto] 
		\tikzstyle{every label}=[red]
		\node[state] (q_0) [xshift=-2.5cm, yshift=0cm, label=left:$0$] {$a_0$}; 
		\node[state] (q_1) [xshift=0cm, yshift=0cm, label=right:$0$] {$a_1$}; 
		\node[state] (q_2) [xshift=2.5cm,yshift = 0cm, label=right:$0$]{$a_2$}; 
		\node[state] (q_3) [xshift=0cm, yshift=-2.5cm,label=right:$-1$] {$a_3$};
		\node[state] (q_4) [xshift=2.5cm, yshift=-2.5cm,label=right:$-1$] {$a_4$};
    \path[use as bounding box] (-3.6,-3.6) rectangle (3.6,1.75);
    		\path[->] 
		(q_0) edge node {$1|1$} (q_1)
		edge [loop above] node[swap] {$0|0$} ()     
		(q_1) edge[in = 95, out =265 ] node[swap] {$0|\varepsilon$} (q_3)
		edge[in=135, out=320] node{$1|\varepsilon$} (q_4)
		(q_2) edge[in=45, out= 135] node[swap]{$0|0$} (q_0)
		edge[loop above] node{$1|1$} ()   
		(q_3) edge[in=300, out = 145] node {$0|10$} (q_0)
		edge[in=275, out = 85] node[swap] {$1|01$} (q_1)
		(q_4) edge node[swap] {$1|11$} (q_2)
		edge[in =240, out = 230] node {$0|00$} (q_0);
		\end{tikzpicture}
		\caption{An annotated element of $\Ln{2}$.}
		\label{fig:annotationexample}
	\end{center}
\end{figure}

\begin{predef}[Equivalence of annotations]
	Let $L \in \Nn{n}$, $l = |L|$,  $\{ q_1, \ldots, q_l\}$ and  $\{p_1, \ldots, p_l\}$ be two orderings of the state set $Q_{L}$ and $\alpha_{q_1, i}$ and $\alpha_{p_1,j}$ be two annotations of $L$. Then $\alpha_{q_1, i}$ and $\alpha_{p_1,j}$ are equivalent if and only if there is a $k \in \mathbb{Z}$ such that $\alpha_{q_1,k+i} = \alpha_{q_1,(q_1)\alpha(p_1,j)} = \alpha_{p_1,j}$.
\end{predef}

\begin{prerk}\label{rem:equivofannotation}
	For a given $L \in \Nn{n}$ all annotations of $L$ are equivalent by Lemma~\ref{lem:orderingdoesnotmatter}.
\end{prerk}

In the definition below we pick a unique representative for the equivalence class of annotations of an element of $\Nn{n}$.

\begin{predef}[Canonical annotation]\label{def:canonicalannotation}
	Let $L \in \Nn{n}$ and let $k \in \mathbb{N}$ be the synchronizing level of $L$. For each state $q$ of $L$ let $\Gamma_q \in X_n^{k}$ be the smallest word, according to the lexicographic ordering, of length $k$ such that the state of $L$ forced by $\Gamma_q$ is $q$. Order the set $\{\Gamma_q | q \in Q_L \}$ according to the lexicographic ordering. Now label the states of $L$, $q_1, q_2, \ldots q_{|L|}$ such that $\Gamma_{q_1} < \Gamma_{q_2} < \ldots  \Gamma_{q_{|L|}}$. The annotation $\alpha_{q_1,0}$ is called the \emph{canonical annotation} of $L$. All other annotations of $L$ are equal to $\alpha_{q_1,i}$ for some $i \in \mathbb{Z}$.
\end{predef}

We are now in a position to define an action of $\Nn{n}$ on $X_n^{\mathbb{Z}}$.

\begin{predef}\label{def:Lwithannotationisanendo}
	Let $L \in \Nn{n}$, let $\alpha_{q_1,0}$ be the canonical annotation of $L$ and let $k \in \mathbb{N}$ be the synchronizing level of $L$. Let $(L, \alpha_{q_1,0}) : X_n^{\Z} \to X_n^{\Z}$ be defined as follows: for $x \in X_n^{\Z}$ and $i \in \mathbb{Z}$ let $q_j$, $1\le j \le |L|$, be the state of $L$ forced by ${x_{i-k}\ldots x_{i-1}}$. Let $r = (q_j)\alpha_{q_1,0}$ and let $w = \lambda_{L}(x_i,q_j)$. Let $y \in X_n^{\Z}$ be defined by ${y_{i+r}y_{i+r+1}\ldots y_{i+r+|w|}} := w$. Set $(x)(L, \alpha_{q_{1}, i}) := y$.    
\end{predef}

\begin{prop} \label{prop:annotationsofLareinEnd(Xn,sigma)}
	Let $L \in \Nn{n}$, and let $\alpha_{q_1,0}$ be the canonical annotation of $L$. Then $(L,\alpha_{q_1,0})$ is an element of $\End(X_n^{\Z}, \shift{n})$.
\end{prop}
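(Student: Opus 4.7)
The plan is to verify in turn the three properties required for $(L,\alpha_{q_1,0})$ to be a shift endomorphism: (i) the local rule actually defines a map $\xnz\to\xnz$, (ii) this map is continuous, and (iii) it commutes with $\shift{n}$. The delicate point is (i), since the recipe assigns, for each index $i\in\Z$, a block of output symbols at positions beginning at $i+(q_j)\alpha_{q_1,0}$, and these varying blocks must fit together with no overlaps and no gaps to specify a single bi-infinite sequence $y$.

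For (i) I would prove by induction on $m\ge 1$ that the blocks assigned by the rule to consecutive indices $i,i+1,\ldots,i+m-1$ concatenate to form $\lambda_L(x_i x_{i+1}\cdots x_{i+m-1},q_j)$, placed so as to begin at position $i+(q_j)\alpha_{q_1,0}$, where $q_j$ is the state forced by $x_{i-k}\cdots x_{i-1}$. The base case is the definition. For the inductive step, by the annotation rule \eqref{eqn:annotationrule} the concatenated block terminates at position $i+m-1+(q_{\mathrm{end}})\alpha_{q_1,0}$, where $q_{\mathrm{end}}:=\pi_L(x_i\cdots x_{i+m-1},q_j)$; by the strong synchronizing property this state coincides with the state forced by $x_{i+m-k}\cdots x_{i+m-1}$, so the rule assigns the block $\lambda_L(x_{i+m},q_{\mathrm{end}})$ to index $i+m$ starting at position $(i+m)+(q_{\mathrm{end}})\alpha_{q_1,0}$, which is precisely the next position after the inductively constructed block ends. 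Hence consecutive blocks tile without overlap. Since $\alpha_{q_1,0}$ is bounded on the finite set $Q_L$, letting $m\to\infty$ shows the end position tends to $+\infty$; running indices backwards from $i$ analogously shows that positions arbitrarily far to the left are covered. Thus $y_p$ is uniquely defined at every $p\in\Z$.

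For (ii), observe that both $|(q)\alpha_{q_1,0}|$ and the single-letter output lengths $|\lambda_L(x,q)|$ are uniformly bounded over $q\in Q_L$ and $x\in X_n$, so the unique index $i$ whose block covers a given position $m$ lies in an interval $[m-C,m+C]$ for a constant $C$ depending only on $L$ and~$k$. For each such $i$, both the state $q_j$ forced by $x_{i-k}\cdots x_{i-1}$ and the output $\lambda_L(x_i,q_j)$ depend only on the coordinates $x_{i-k},\ldots,x_i$; hence $y_m$ is determined by a bounded window of $x$ around $m$, realising $(L,\alpha_{q_1,0})$ as a sliding block code and giving continuity by Theorem~\ref{t:hed1}. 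For (iii), set $x':=(x)\shift{n}$, so $x'_j=x_{j-1}$. The state forced by $x'_{(i+1)-k}\cdots x'_{i}$ equals the state $q_j$ forced by $x_{i-k}\cdots x_{i-1}$, the output block is $\lambda_L(x'_{i+1},q_j)=\lambda_L(x_i,q_j)$, and its starting position $(i+1)+(q_j)\alpha_{q_1,0}$ is precisely one more than the starting position $i+(q_j)\alpha_{q_1,0}$ of the corresponding block for $x$ at index $i$. Assembling over all $i$ gives $(x)(L,\alpha_{q_1,0})\shift{n}=(x\shift{n})(L,\alpha_{q_1,0})$, so $(L,\alpha_{q_1,0})$ commutes with the shift and is an element of $\End(\xnz,\shift{n})$.
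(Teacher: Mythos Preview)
Your proof is correct and follows the same approach as the paper's. The paper likewise focuses on well-definedness, verifying via the annotation rule \eqref{eqn:annotationrule} that the block assigned at index $i$ ends exactly one position before the block at index $i+1$ begins; it then dismisses continuity as ``straightforward from the definition'' and shift-commutation as ``almost identical'' to the proof of Proposition~\ref{prop:pntildeisinendo}, whereas you spell these out explicitly as a sliding-block-code argument and a direct index shift.
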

\begin{proof}
	We will prove only that $(L,\alpha_{q_1,0})$ is well-defined. Continuity follows straightforwardly  from Definition~\ref{def:Lwithannotationisanendo}  and the proof that $(L,\alpha_{q_1,0})$ commutes with the shift map is almost identical to the proof of  Proposition~\ref{prop:pntildeisinendo}.
	
	To show well-definedness we need to argue that for each $i \in \Z$ and $x \in X_n^{\Z}$, there is at most one value $u \in X_n$ such that $y:=(x)(L,\alpha_{q_1,0})$ has $y_i = u$.

	Let $k \in \mathbb{N}$ be the synchronizing level of $L$. Let $i \in \mathbb{Z}$ be arbitrary and consider the block $x_{i-k}\ldots x_{i-1}x_{i}$ of $x$. Let $q_j$ be the state of $L$ forced by ${x_{i-k}\ldots x_{i-1}}$, and let $r_1 = (q_j)\alpha_{q_1,0}$. Furthermore let $w = \lambda_{L}(x_i, q_j)$, and let $r_1' = |w| - 1$. Let $q_m = \pi(x_i,q_j)$, notice that the state of $L$ forced by ${x_{i-k+1}\ldots  x_{i-1} x_{i}}$ is $q_m$.
	
	By definition, $y_{i+r_1}y_{i+r_1+1}\ldots y_{i+r_1+ |w| -1}= w$. However $(q_m)\alpha_{q_1,0} = r_1 + r_1'$. Therefore ${\lambda_{L}(x_{i+1}, q_m)}$ begins at the index: 
	\[
	i+1 + (r_1 + r_1' ) = i+1 + r_1 + |w| -1  = i + r_1 + |w|
	\]
which is the index to the left of index $i+r_1+|w|-1$. Therefore for any block $x_{i-k} \ldots x_{i-1} x_{i}$ the outputs corresponding to $x_{i-k+1} \ldots x_{i} x_{i+1}$ and $x_{i-k}\ldots x_{i-1}x_{i}$ are beside each other and do not overlap. Thus we conclude that for every index $i \in \mathbb{Z}$, $y_i$ is unique.   
\end{proof}

\begin{cor}\label{cor:annotationsinthesamecoset}
	Let $L \in \Nn{n}$, $i \in \Z$ and $\alpha_{q_1, i}$ be an annotation of $L$. Then $(L, \alpha_{q_1,i}) = \shift{n}^{i}(L, \alpha_{q_1,0})$.
\end{cor}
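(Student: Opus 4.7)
The plan is to reduce the statement to two easy observations: first, that the annotations $\alpha_{q_1,0}$ and $\alpha_{q_1,i}$ differ by the constant $i$ on every state, and second, that adding a constant to the annotation translates the output word by that constant on each position of the input.

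First I would verify that if $\alpha$ is any annotation of $L$ and $c\in\Z$, then the map $\alpha+c\colon Q_L\to\Z$ defined by $(q)(\alpha+c)=(q)\alpha+c$ is again an annotation, because the defining rule \eqref{eqn:annotationrule} only involves differences $(\pi_L(\Gamma,q))\alpha-(q)\alpha$ and so is insensitive to adding a constant. Since an annotation is uniquely determined by its value on a single state (by Definition~\ref{def:annotatingLn} applied to paths connecting that state to every other one, together with Lemma~\ref{lem:annotationsdonotdependonchoiceofsynchwords}), the annotation $\alpha_{q_1,i}$, which assigns $i$ to $q_1$, must equal $\alpha_{q_1,0}+i$. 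In particular $(q)\alpha_{q_1,i}=(q)\alpha_{q_1,0}+i$ for every state $q$ of $L$.

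Next I would compare the two maps pointwise using Definition~\ref{def:Lwithannotationisanendo}. Fix $x\in\xnz$ and any index $m\in\Z$, and let $q_j$ be the state of $L$ forced by $x_{m-k}\ldots x_{m-1}$, where $k$ is the synchronizing level. Write $w=\lambda_L(x_m,q_j)$ and $r=(q_j)\alpha_{q_1,0}$. By definition of the action, if $y=(x)(L,\alpha_{q_1,0})$ then $y_{m+r}\,y_{m+r+1}\,\ldots\,y_{m+r+|w|-1}=w$, whereas if $y'=(x)(L,\alpha_{q_1,i})$ then, using the first paragraph, the annotation value at $q_j$ is $r+i$, so $y'_{m+r+i}\,y'_{m+r+i+1}\,\ldots\,y'_{m+r+i+|w|-1}=w$. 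Since this holds for every $m$, we conclude $y'(p)=y(p-i)$ for all $p\in\Z$, i.e.\ $y'=(y)\shift{n}^{i}$.

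Finally I would combine these observations: we have shown $(x)(L,\alpha_{q_1,i})=((x)(L,\alpha_{q_1,0}))\shift{n}^{i}$ for every $x$, so in the right-action convention $(L,\alpha_{q_1,i})=(L,\alpha_{q_1,0})\shift{n}^{i}$. Because $(L,\alpha_{q_1,0})\in\End(\xnz,\shift{n})$ by Proposition~\ref{prop:annotationsofLareinEnd(Xn,sigma)} and hence commutes with $\shift{n}$, this is equal to $\shift{n}^{i}(L,\alpha_{q_1,0})$, which is the desired identity. The only delicate point is the index bookkeeping in the second paragraph, and this is handled by the non-overlap of consecutive output blocks established inside the proof of Proposition~\ref{prop:annotationsofLareinEnd(Xn,sigma)}, which guarantees that the positions $m+r$ (or $m+r+i$) unambiguously determine $y$ (respectively $y'$).
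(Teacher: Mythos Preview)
Your proof is correct and follows essentially the same approach as the paper: first observe $\alpha_{q_1,i}=i+\alpha_{q_1,0}$, then read off the result from Definition~\ref{def:Lwithannotationisanendo}. The paper's proof is the two-sentence version of what you wrote; your additional index bookkeeping and the final appeal to commutativity with $\shift{n}$ (to move the shift from the right to the left) simply make explicit what the paper leaves to the reader.
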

\begin{proof}
	Observe that by Definition \ref{def:annotatingLn}, $\alpha_{q_1,i} = i+ \alpha_{q_1,0}$. The result now follows from Definition~\ref{def:Lwithannotationisanendo}. 
\end{proof}

It is the case that there exist distinct elements  $L, M \in \Nn{n}$ with respective annotations $\alpha_{q_1, i}$ and $\beta_{p_1, j}$ such that the maps $(L, \alpha_{q_{1}, i})$ and $(M, \beta_{p_1, j})$ are equal. The following result shows that the same phenomenon does not occur in $\SLn{n}$.

\begin{prop}\label{prop:uniquenessofmaps}
	Let $L, M \in \SLn{n}$ with respective annotations $\alpha_{q_{1}, i}$ and $\beta_{p_1, j}$. Then $(L, \alpha_{q_{1}, i}) = (M, \beta_{p_1, j})$ if and only if $L =M$ and $\alpha_{q_{1},i} = \beta_{p_1, j}$ or  $L$ and $M$ both equal $Z_{a}$ for some $a \in \xn$.
\end{prop}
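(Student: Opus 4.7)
The \emph{if} direction is immediate and I will dispatch it at once: if $L=M$ and $\alpha_{q_1,i}=\beta_{p_1,j}$, the two maps are built by the identical block-writing rule of Definition~\ref{def:Lwithannotationisanendo}; if $L=M=Z_a$, both maps send every $x\in\xnz$ to the constant sequence $a^{\Z}$, regardless of the annotation chosen.

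For the \emph{only if} direction, I would set $f:=(L,\alpha_{q_1,i})=(M,\beta_{p_1,j})$ and first handle the degenerate case. If $L=Z_a$ for some $a\in\xn$, then $f$ is the constant map onto $a^{\Z}$, so for every state $p\in Q_M$ and every $\mu\in\xnn$ one must have $\lambda_M(\mu,p)=a^{\N}$; weak minimality forces $|Q_M|=1$, and then the Lipschitz condition~\ref{Lipshitzconstraint} applied to the single-state loop forces the unique output $\lambda_M(x,p)$ to have length one, so $M=Z_a$. The case $M=Z_a$ is handled symmetrically. After this I may assume $|L|,|M|\ge 2$, so by minimality neither transducer has a state of incomplete response.

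The crux of the argument is the following claim: if $k$ is a common synchronizing level and $q\in Q_L$, $p\in Q_M$ are the states forced by some $w\in\xn^k$, then $(q)\alpha_{q_1,i}=(p)\beta_{p_1,j}$ and the initial transducers $L_q$ and $M_p$ are $\omega$-equivalent. To prove it I would pick any bi-infinite $x$ with $x_{-k}\ldots x_{-1}=w$, write $y:=xf$, and set $r:=(q)\alpha_{q_1,i}$, $s:=(p)\beta_{p_1,j}$. The annotation identity~\eqref{eqn:annotationrule} then shows that the output block contributed by reading $x_m$ has start position $m+(\cdot)\alpha_{q_1,i}$, that consecutive blocks abut, and that the $m=0$ block starts at exactly position $r$; hence the $y_t$ with $t<r$ depend only on the past of $x$, while $y_r y_{r+1}\cdots = \lambda_L(x_0 x_1 \cdots,q)$. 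The analogous statements hold for the $M$-picture with $s$ replacing $r$. Assuming WLOG $r\ge s$, the symbols $y_s,\ldots,y_{r-1}$ would then be simultaneously (i) independent of the future of $x$ and (ii) equal to the first $r-s$ symbols of $\lambda_M(x_0 x_1\cdots,p)$; this yields $|\Lambda(\varepsilon,p)|\ge r-s$, and since $p$ has no incomplete response, $r=s$. With $r=s$ in hand, equating the tails gives $\lambda_L(x_0 x_1\cdots,q)=\lambda_M(x_0 x_1\cdots,p)$ for every future, whence $L_q$ and $M_p$ are $\omega$-equivalent.

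To finish, I would define $\phi\colon Q_L\to Q_M$ by $\phi(q):=p$ whenever some $w\in\xn^k$ forces $q$ in $L$ and $p$ in $M$. Well-definedness and bijectivity follow from weak minimality of $L$ and $M$ combined with the $\omega$-equivalences $L_q\sim M_{\phi(q)}$ (two windows force the same state of $L$ iff their associated $M$-states coincide, by uniqueness of the $\omega$-equivalent minimal representative); extending windows by one letter gives $\phi(\pi_L(x,q))=\pi_M(x,\phi(q))$, and the $\omega$-equivalences force $\lambda_L(x,q)=\lambda_M(x,\phi(q))$, so $L$ and $M$ are $\omega$-equal in $\SLn{n}$. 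The pointwise identities $(q)\alpha_{q_1,i}=(\phi(q))\beta_{p_1,j}$ established for every synchronizing window then collectively say $\alpha_{q_1,i}=\beta_{p_1,j}$ under the identification $\phi$. I expect the main obstacle to be the abutting-blocks bookkeeping that decouples past-dependent and future-dependent output positions (complicated by the possibility that some blocks $\lambda_L(x_m,\cdot)$ have length $0$ or greater than $1$); once that decoupling is available, the rest is a clean chase through the definitions.
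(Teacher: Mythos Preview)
Your proof is correct. The degenerate $Z_a$ case is handled cleanly, and your central ``past/future decoupling'' argument is sound: once you know that the $L$-blocks for $m<0$ fill exactly positions $<r$ and the $M$-blocks for $m\ge 0$ fill exactly positions $\ge s$, varying the future while holding the past fixed forces the first $r-s$ symbols of $\lambda_M(\cdot,p)$ to be constant, contradicting $\Lambda(\varepsilon,p)=\varepsilon$ unless $r=s$. The construction of $\phi$ and the verification that it is a bijective $\omega$-equivalence are also fine.

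Your route differs from the paper's in an interesting way. The paper works with one carefully chosen bi-infinite sequence: it takes the state $q$ with $\pi_L(0,q)=q$, sets $x$ to be $\ldots 000\,\Gamma\rho$ where $\Gamma$ is chosen so that $\lambda_L(\Gamma,q)$ begins with a letter $\bar b\ne b=\lambda_L(0,q)$, and reads off from the explicit output $z=\ldots bbb\,\bar b\Delta\delta$ both that $M$ cannot have incomplete response and that the corresponding $M$-annotation value must be $0$. You instead prove the matching of annotation values and the $\omega$-equivalence simultaneously for \emph{every} synchronizing window, by an abstract ``what depends on the past vs.\ the future'' argument. The paper's approach is more hands-on and gets by with a single well-chosen test input; yours is more uniform, avoids constructing a marker, and makes it transparent why the result holds at every state at once rather than just at the $0$-loop state (from which the paper then propagates via strong synchronization and minimality). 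Both arguments ultimately hinge on the same fact---that absence of incomplete response pins down exactly where the ``future-determined'' portion of the output begins---but you invoke it directly, whereas the paper encodes it in the choice of $\bar b\ne b$.
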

\begin{proof}
	Note that by Corollary~\ref{cor:annotationsinthesamecoset} we may assume that $\alpha_{q_1,i}$ is in fact $\alpha_{q_1,0}$ the canonical annotation of $L$. 
	
	Since $L \in \SLn{n}$, either $L  =  Z_{a}$ for some $a \in  \xn$, or else $L$ has no states of incomplete response (elements of $\SLn{n}$ are minimal). If $L = Z_{a}$ and $M = Z_{b}$ for some $a,b \in \xn$, then, as  the image of $(L, \alpha)$, for any annotation of $L$, is the set containing  the point $\ldots aaa \ldots \in \xnz$, we deduce that $ a= b$. Therefore, we may assume that $L$ or $M$ has no states of incomplete response. 
	
	Without loss of generality suppose that $L$ has no states of incomplete response. Let $0,b \in \xn$  and $q \in Q_{L}$ be such that $\pi_{L}(0, q) = q$ and $\lambda_{L}(0, q) = b$. Since $\Lambda(\varepsilon,q) = \varepsilon$, there is a word $\Gamma \in \xnp$ such that $\lambda_{L}(\Gamma, q) = \bar{b}\Delta$ for some element $\bar{b}$ of $\xn$ not equal to  $b$. Let $q' = \pi_{L}(\Gamma,q)$. As $(L, \alpha_{q_{1}, i}) = (M, \beta_{p_1, j})$ for annotations $\alpha_{q_{1}, i}, \beta_{p_1, j}$ of $L$ and $M$, then the corresponding state $p \in Q_{M}$ for which $\pi_{M}(0,p) = p$ also satisfies $\lambda_{M}(a, p) = b$. Notice that in the ordering of states arising from the canonical annotation of $L$ we have $q = q_1$.
	
	Let $\rho \in \xn^{N}$ be any infinite word and $\delta = \lambda_{L}(\rho, q')$. Let $x \in \xnz$ be defined such that $x_{i} = 0$ for all $i \le 0$ and $x_{1}x_{2}\ldots = \Gamma \rho$.  Consider the output $z = (x)(L,\alpha_{q_1,0})$. By definition of $(L, \alpha_{q_1, 0})$ we have $z_{i} = b$ for $i \le 0$ and $z_{1}z_{2} \ldots  =  \bar{b}\Delta \delta$. Notice that there is no $i \in \Z$ such that $(z)\shift{n}^{i} = z$ since $\bar{b} \ne b$. Therefore, since $(x)(M, \beta_{(p_1, j)})= z$ it must be the case that $M$ also has no states of incomplete response. 
	
	From the equality $(x)(M, \beta_{(p_1, j)})= z$, and since $M$ has no states of incomplete response, we also deduce that $(p)\beta_{(p_1,j)} = 0$, as $\bar{b} \ne b$. Now since $(p)\beta_{p_1, j} = 0$, it must be the case that $\lambda_{M}(\Gamma\rho, p) = \bar{b}\Delta\delta$. Moreover as $\Lambda(\varepsilon,q') = \varepsilon$, and using again that $M$ has no states of incomplete response, it must be the case that $\lambda_{M}(\Gamma, p) = \bar{b}\Delta$. Since $\rho$ was chosen arbitrarily, we therefore have that for the states $q'$ and $p':= \pi_{M}(\Gamma, p)$, $L_{q'}$ is $\omega$-equivalent to $M_{p'}$. Minimality and the strong synchronizing property of $L$ and $M$ therefore guarantees that $L = M$. Furthermore, as $(p)\beta_{(p_1,j)} = 0$, Lemma~\ref{lem:orderingdoesnotmatter} implies that $\beta_{(p_1, j)}$ is in fact equal to the canonical annotation of  $M$. Thus, $L=M$ and $\alpha_{q_1, 0} = \beta_{(p_1, j)}$ as required.   
\end{proof} 

\paragraph{Convention} Let $a \in \xn$; since all annotations of $Z_{a}$ yield the same map, we write $(Z_{a}, \infty)$ for the map $(Z_{a}, \alpha)$ for any annotation $\alpha$ of $Z_{a}$ . It shall be convenient to regard the symbol $\infty$ as the `annotation' of $Z_{a}$ that assigns the value infinity to the state of $Z_{a}$. This makes certain results easier to state without having to make exceptions for the transducers $Z_{a}$. Formally, the pair $(Z_a, \infty)$ cannot be interpreted as in Definition~\ref{def:Lwithannotationisanendo}. We also adopt the convention that $\infty + \infty = \infty$ and  $\infty + i = i+ \infty = \infty$ for any $i \in \Z$.

\subsection{Compatibility of annotations with minimising and with taking products}
Given an element $P \in \spn{n}$ with $|P| >1$ we saw that removing incomplete response and identifying $\omega$-equivalent states results in an element $[P] \in \SLn{n}$. It turns out that the extent of incomplete response in the states of $P$ together with the canonical annotation of $P$ yields in a natural way an annotation of the resulting element $[P]$. We require some preliminary observations in order to establish this fact.

\begin{lemma}\label{lem:identifyinequivstatesmakesnodiff}
	Let $T$ be a transducer with no states of incomplete response and let $p,q$ be a pair of $\omega$-equivalent states of $T$. Then  for any word $\Gamma \in \xns$, $\lambda_{T}(\Gamma,p) = \lambda_{T}(\Gamma, q)$ and $\pi_{T}(\Gamma,p)$ is $\omega$-equivalent to $\pi_{T}(\Gamma, q)$.
\end{lemma}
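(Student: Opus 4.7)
The plan is to exploit the hypothesis that no state has incomplete response to identify the finite output $\lambda_{T}(\Gamma,r)$ with the greatest common prefix $\Lambda(\Gamma,r)$, and then use that $\omega$-equivalence equates these greatest common prefixes at $p$ and $q$.

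More precisely, I would first verify the following observation: for any state $r$ of $T$ and any $\Gamma\in\xns$, one has $\Lambda(\Gamma,r)=\lambda_{T}(\Gamma,r)$. Indeed, for every $x\in\xnn$, writing $r'=\pi_T(\Gamma,r)$, we have $\lambda_T(\Gamma x,r)=\lambda_T(\Gamma,r)\,\lambda_T(x,r')$, so $\lambda_T(\Gamma,r)$ is a common prefix of all $\lambda_T(\Gamma x,r)$ and hence a prefix of $\Lambda(\Gamma,r)$. On the other hand $\Lambda(\Gamma,r)=\lambda_T(\Gamma,r)\,\Lambda(\varepsilon,r')$, and since $T$ has no states of incomplete response, $\Lambda(\varepsilon,r')=\varepsilon$; so $\Lambda(\Gamma,r)=\lambda_T(\Gamma,r)$.

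Now for the equality of outputs, note that $\omega$-equivalence of the initial transducers $T_p$ and $T_q$ means that they induce the same map on $\xnn$, so in particular the sets $\{\lambda_T(\Gamma x,p):x\in\xnn\}$ and $\{\lambda_T(\Gamma x,q):x\in\xnn\}$ coincide, which gives $\Lambda(\Gamma,p)=\Lambda(\Gamma,q)$. Combined with the previous paragraph this yields $\lambda_T(\Gamma,p)=\lambda_T(\Gamma,q)$, which is the first claim.

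For the second claim, set $p'=\pi_T(\Gamma,p)$ and $q'=\pi_T(\Gamma,q)$. For any $x\in\xnn$,
\[
\lambda_T(\Gamma,p)\,\lambda_T(x,p')=\lambda_T(\Gamma x,p)=\lambda_T(\Gamma x,q)=\lambda_T(\Gamma,q)\,\lambda_T(x,q'),
\]
and cancelling the common finite prefix $\lambda_T(\Gamma,p)=\lambda_T(\Gamma,q)$ gives $\lambda_T(x,p')=\lambda_T(x,q')$ for every $x\in\xnn$, i.e.\ $p'$ is $\omega$-equivalent to $q'$. The only delicate point in the whole argument is the no-incomplete-response reduction $\Lambda(\Gamma,r)=\lambda_T(\Gamma,r)$; without it one would only be able to conclude that $\lambda_T(\Gamma,p)$ and $\lambda_T(\Gamma,q)$ share a common completion, not that they are equal.
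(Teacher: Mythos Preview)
Your proof is correct and is precisely a careful unpacking of the paper's one-line remark that ``the lemma follows straightforwardly from the definitions of $\omega$-equivalence and incomplete response.'' The key identity $\Lambda(\Gamma,r)=\lambda_T(\Gamma,r)\Lambda(\varepsilon,\pi_T(\Gamma,r))$ together with the no-incomplete-response hypothesis is exactly the intended mechanism, and your derivation of the second claim by prefix cancellation is the natural route.
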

\begin{proof}
	The lemma follows straightforwardly  from the definitions of $\omega$-equivalence and incomplete response. 
\end{proof}

\begin{lemma}\label{lem:lagrespectsequivalence}
	Let $T \in \Nn{n}$ be such that there is  a state  of $T$ with finite extent of incomplete response. (Note that all states of $T$ consequently have finite extent of incomplete response). Let $\alpha_{p_1,0}$ be the canonical annotation of $T$, $p$ be any state of $T$, and $T'_{p_{-1}}$ be the transducer obtained from  $T_{p}$ as in Proposition~\ref{prop:algorithmforremovingincompleteresponse}. Let $q_1, q_2$ be any distinct states of $T$ such that  $q_1$ and $q_2$ are $\omega$-equivalent states of $T'_{p-1}$. (Recall that by construction $Q_{T'} = \{p_{-1}\} \sqcup Q_{T}$). Then, $\Lambda(\varepsilon, q_1) + (q_1)\alpha_{p_1,0} = \Lambda(\varepsilon, q_2) + (q_2)\alpha_{p_1,0}$.
\end{lemma}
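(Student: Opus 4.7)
The plan is to reduce the desired identity to a length bookkeeping argument that combines the $\omega$-equivalence of $q_1,q_2$ inside the incomplete-response-free transducer $T'$ with the annotation relation~\eqref{eqn:annotationrule} and the strong synchronization of $T$. The essential inputs are Lemma~\ref{lem:identifyinequivstatesmakesnodiff}, the output-rewriting equation \eqref{redifining output} of Proposition~\ref{prop:algorithmforremovingincompleteresponse}, and the elementary factorisation $\Lambda(w,q)=\lambda_T(w,q)\,\Lambda(\varepsilon,\pi_T(w,q))$, valid for every $q\in Q_T$ and $w\in\xns$. (Throughout, the notation in the statement of the lemma is read as $|\Lambda(\varepsilon,q_i)|$, the extent of incomplete response at $q_i$.)

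First I would invoke Lemma~\ref{lem:identifyinequivstatesmakesnodiff} for $T'$: since $T'$ has no states of incomplete response by Proposition~\ref{prop:algorithmforremovingincompleteresponse} and $q_1,q_2$ are $\omega$-equivalent states of $T'$, one gets $\lambda'_T(w,q_1)=\lambda'_T(w,q_2)$ for every finite word $w$. Extending equation \eqref{redifining output} from single letters to arbitrary $w$ by a straightforward induction on $|w|$ (using that $\Lambda(\varepsilon,q)$ is always a prefix of $\Lambda(w,q)$) gives $\lambda'_T(w,q)=\Lambda(w,q)-\Lambda(\varepsilon,q)$ for each $q\in Q_T$. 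Substituting the factorisation of $\Lambda(w,q)$ and passing to lengths, the equality of outputs from $q_1$ and $q_2$ becomes
\begin{equation*}
|\lambda_T(w,q_1)|+|\Lambda(\varepsilon,\pi_T(w,q_1))|-|\Lambda(\varepsilon,q_1)| \,=\, |\lambda_T(w,q_2)|+|\Lambda(\varepsilon,\pi_T(w,q_2))|-|\Lambda(\varepsilon,q_2)|.
\end{equation*}

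Next I would specialise $w$ to have length at least the synchronizing level of $T$, so that strong synchronization forces $\pi_T(w,q_1)=\pi_T(w,q_2)=:r$ and the two $|\Lambda(\varepsilon,\pi_T(w,q_i))|$ terms cancel, leaving $|\lambda_T(w,q_1)|-|\Lambda(\varepsilon,q_1)|=|\lambda_T(w,q_2)|-|\Lambda(\varepsilon,q_2)|$. Finally, applying the annotation rule~\eqref{eqn:annotationrule} to each of $(q_1,w,r)$ and $(q_2,w,r)$ yields $(r)\alpha_{p_1,0}=(q_i)\alpha_{p_1,0}+|\lambda_T(w,q_i)|-|w|$ for $i=1,2$; subtracting the two equalities gives $|\lambda_T(w,q_1)|-|\lambda_T(w,q_2)|=(q_2)\alpha_{p_1,0}-(q_1)\alpha_{p_1,0}$, and combining with the previous display produces exactly the required identity $|\Lambda(\varepsilon,q_1)|+(q_1)\alpha_{p_1,0}=|\Lambda(\varepsilon,q_2)|+(q_2)\alpha_{p_1,0}$. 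The only step that requires a little care is the word-level extension of equation \eqref{redifining output}, which is a routine induction once one observes that $\Lambda(\varepsilon,q)$ is a prefix of $\Lambda(x,q)=\lambda_T(x,q)\,\Lambda(\varepsilon,\pi_T(x,q))$ for every $x\in\xn$; everything else is length accounting.
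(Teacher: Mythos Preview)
Your argument is correct. Both proofs rest on the same two ingredients---equality of $T'$-outputs from $\omega$-equivalent states and a length identity coming from the annotation/Lipschitz structure---but you organise them differently from the paper. The paper routes everything through the base state $p_1$ of the canonical annotation: it picks synchronizing words $\Gamma_0,\Gamma_1,\Gamma_2$ forcing $p_1,q_1,q_2$, applies the Lipschitz condition \ref{Lipshitzconstraint} explicitly to the circuits $\Gamma_a\Gamma_0$ based at $p_1$, and then reads off $(q_a)\alpha_{p_1,0}=|\Delta_{a,0}|-k$ from the definition of the canonical annotation. You instead pick a single long word $w$ sending both $q_1,q_2$ to a common state $r$ by strong synchronization, and invoke the annotation rule~\eqref{eqn:annotationrule} abstractly at $r$; this sidesteps any reference to $p_1$ and packages the Lipschitz input implicitly inside the already-established well-definedness of the annotation. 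Your route is a little more streamlined; the paper's is more explicit about where each length cancellation originates.
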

\begin{proof}
	Let $k \ge 1$ be a synchronizing level of $T$, set $q_0 :=  p_1$ and  let $\Gamma_0$, $\Gamma_{1}, \Gamma_{2}$ be words of length $k$ which respectively force the states $q_0$, $q_1$ and $q_2$ of $T$. For $(a,b) \in \{ (0,1), (0,2), (1,0), (2,0) \}$ let $\Delta_{a,b} = \lambda_{T}(\Gamma_{a}, q_b)$. We may chose $k$ long enough so that, for valid $a,b$, $|\Delta_{a,b}|$ is longer than the extent of incomplete response of all of the states $q_0, q_1$, and  $q_2$. Let  $\overline{\Delta}_{a,b} \in \xns$ satisfy $\Delta_{a,b} = \Lambda(\varepsilon, q_b) \overline{\Delta}_{a,b}$.
	
	Since $T \in \Nn{n}$, it follows that for $a \in \{1,2\}$, $$2k = |\Delta_{a,0}\Delta_{0,a}| = |\overline{\Delta}_{a,0}\Lambda(\varepsilon, q_a)\overline{\Delta}_{0,a}\Lambda(\varepsilon, q_0)|.$$ Moreover, as $q_1$ and $q_2$ are $\omega$-equivalent states of $P'$, as $\lambda_{T'}(\Gamma_0, q_a) = \overline{\Delta}_{0,a}\Lambda(\varepsilon, q_0)$ for $a \in \{1,2\}$, and since $T'$ has no states of incomplete response, we have $\overline{\Delta}_{0,1}\Lambda(\varepsilon, q_0) = \overline{\Delta}_{0,2}\Lambda(\varepsilon, q_0)$. Thus the equality
	$$ |\overline{\Delta}_{1,0}\Lambda(\varepsilon, q_1)\overline{\Delta}_{0,1}\Lambda(\varepsilon, q_0)| = |\overline{\Delta}_{2,0}\Lambda(\varepsilon, q_2)\overline{\Delta}_{0,2}\Lambda(\varepsilon, q_0)|$$
	means that $|\overline{\Delta}_{1,0}\Lambda(\varepsilon, q_1)| = |\overline{\Delta}_{2,0}\Lambda(\varepsilon, q_2)|$ holds.  We are now done, since, as $\lambda_{T}(\Gamma_{a}, q_0) =\Delta_{a,0} = \Lambda(\varepsilon, q_0)\overline{\Delta}_{a,0}$ for $a \in \{1,2\}$ and, by definition of the canonical annotation, $(q_a)\alpha_{p_1,0} = |\Delta_{a,0}| -k$,  we have 
	\begin{IEEEeqnarray*}{rCl}
	 |\Lambda(\varepsilon, q_0)\overline{\Delta}_{1,0}| + |\Lambda(\varepsilon, q_1)| - k &=& |\Lambda(\varepsilon, q_0)|+ |\overline{\Delta}_{1,0}| + |\Lambda(\varepsilon, q_1)| - k   \\ 
	  &=& |\Lambda(\varepsilon, q_0)|+ |\overline{\Delta}_{2,0}| + |\Lambda(\varepsilon, q_2)| - k   \\
	   &=&|\Lambda(\varepsilon, q_0)\overline{\Delta}_{2,0}| + |\Lambda(\varepsilon, q_2)| - k
	 \end{IEEEeqnarray*} 
 as required. 
\end{proof}

\begin{prerk}
	Let $P \in \Nn{n}$ be synchronizing at level $k \ge 1$ and let $p$ be any state of $P$. Suppose first of all that all states of $P$ have finite extent of incomplete response. Let $L_{t}$ be the  minimal transducer representing the transducer $P'_{p_{-1}}$ obtained from $P_{p}$ as in Proposition~\ref{prop:algorithmforremovingincompleteresponse}. By construction, $P'_{p_{-1}}$ and $L_{t}$ are also synchronizing at level $k$. Furthermore, for any state $p \in P$, $p$ also denotes a state of $\core(P'_{p_{-1}})$. By definition, $\core(L_{t}) = [P]$ and the  states of $L_{t}$ are in one-to-one  correspondence with  $\omega$-equivalence classes of states of $P_{p_{-1}}$. We say that \emph{a state $q$ of $P$ corresponds to a state $s$ of $[P]$} if $s$ corresponds to the $\omega$-equivalence class of $q$ in $\core(P'_{p_{-1}})$. We denote by $(q)\kappa$ the state $s$ of $[P]$ in correspondence with the equivalence class of a state  $q \in Q_{P'}$. In this way we define a map $\kappa: Q_{P} = Q_{\core(P'_{p_{-1}})} \to Q_{[P]}$.  If there is a state of $P$ whose extent of incomplete response is infinite, then $[P] = Z_{a}$ for some $a \in \xn$  and $\kappa$ is the map which sends every state of $P$ to the single state of $Z_{a}$.
	
	Let $q_1, q_2, \ldots, q_{|P|}$ be the ordering of the states of $P$ as in Definition~\ref{def:canonicalannotation}, then this ordering induces, naturally, an ordering of the states of $[P]$, whereby a state $s$ of $[P]$ precedes another state $t$ if there are states $q_{i}, q_{j} \in Q_{P}$ with $i < j$ such that $(q_i)\kappa = s$ and $(q_j)\kappa = t$.
\end{prerk}

\begin{cor}\label{cor:identifyinequivstatesmakenodiff}
	Let $T \in \Nn{n}$ be a transducer without states of incomplete response and $\alpha_{p_1,j}$ be any annotation of $T$. Let $[T]$ be the transducer obtained from $T$ by identifying equivalent states. For a state $q \in Q_{T}$ let $[q]$ denote its $\omega$-equivalence class in $Q_{T}$, and identify $[q]$ with the corresponding state of $Q_{[T]}$. The map $\alpha'_{[p_1],j}: Q_{[T]} \to \Z$ by $[q] \mapsto (q)\alpha_{p_1,j}$ is an annotation of $[T]$. Moreover the maps $(T, \alpha_{p_1,j}): \xnz \to \xnz$ and $([T], \alpha'_{[p_1],j}): \xnz \to \xnz$ are equal.
\end{cor}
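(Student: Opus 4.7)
The plan is to verify three things in order: that the assignment $[q]\mapsto (q)\alpha_{p_1,j}$ is well-defined on $Q_{[T]}$, that the resulting map $\alpha'_{[p_1],j}$ satisfies the annotation rule \eqref{eqn:annotationrule} on $[T]$, and that the induced maps $(T,\alpha_{p_1,j})$ and $([T],\alpha'_{[p_1],j})$ on $\xnz$ coincide.

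For well-definedness I will exploit the hypothesis that $T$ has no states of incomplete response, which means $\Lambda(\varepsilon,q)=\varepsilon$ for every $q\in Q_T$. Under this condition the transducer $T'_{p_{-1}}$ produced by Proposition \ref{prop:algorithmforremovingincompleteresponse} agrees with $T$ on transition and output functions on the original states (the new initial state $p_{-1}$ is simply a duplicate of $p_1$), so two states of $T$ are $\omega$-equivalent in $T$ if and only if they are $\omega$-equivalent in $T'_{p_{-1}}$. Applying Lemma \ref{lem:lagrespectsequivalence} to such an equivalent pair $q_1,q_2$ and using $\Lambda(\varepsilon,q_1)=\Lambda(\varepsilon,q_2)=\varepsilon$, the conclusion collapses to $(q_1)\alpha_{p_1,0}=(q_2)\alpha_{p_1,0}$. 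Since the rule \eqref{eqn:annotationrule} forces any two annotations of $T$ to differ by a single additive integer, this equality persists for $\alpha_{p_1,j}$, and the assignment $\alpha'_{[p_1],j}$ is unambiguously defined.

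Checking the annotation rule is then essentially formal. Lemma \ref{lem:identifyinequivstatesmakesnodiff} shows that $\omega$-equivalent states of $T$ produce identical outputs on every input and transit to $\omega$-equivalent states, so the transition and output functions descend cleanly to the quotient: $\pi_{[T]}(\Gamma,[q])=[\pi_T(\Gamma,q)]$ and $\lambda_{[T]}(\Gamma,[q])=\lambda_T(\Gamma,q)$ for every $\Gamma\in\xns$ and $q\in Q_T$. Substituting these equalities into \eqref{eqn:annotationrule} for $\alpha_{p_1,j}$ on $T$ yields the corresponding identity for $\alpha'_{[p_1],j}$ on $[T]$.

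Finally, for equality of the induced maps I will fix a common synchronizing level $k$ of $T$ and $[T]$ (the quotient plainly inherits a synchronizing map at the same level), and then unwind Definition \ref{def:Lwithannotationisanendo} on both sides. For any $x\in\xnz$ and $i\in\Z$, if $x_{i-k}\ldots x_{i-1}$ forces the state $q$ in $T$, then it forces $[q]$ in $[T]$; by the equalities from the previous paragraph, the same block $\lambda_T(x_i,q)=\lambda_{[T]}(x_i,[q])$ is written at the same offset $(q)\alpha_{p_1,j}=([q])\alpha'_{[p_1],j}$ relative to index $i$, so the two definitions deliver the same bi-infinite sequence. The main obstacle sits in the first step, namely correctly translating $\omega$-equivalence in $T$ into $\omega$-equivalence in $T'_{p_{-1}}$ so that Lemma \ref{lem:lagrespectsequivalence} can be applied; the remaining steps are routine unpackings of Lemma \ref{lem:identifyinequivstatesmakesnodiff} and the definitions.
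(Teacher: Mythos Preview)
Your proof is correct and follows essentially the same approach as the paper: reduce to the canonical annotation (equivalently, note that all annotations differ by an additive constant), invoke Lemma~\ref{lem:lagrespectsequivalence} with $\Lambda(\varepsilon,q)=\varepsilon$ to obtain well-definedness, and use Lemma~\ref{lem:identifyinequivstatesmakesnodiff} to see that outputs and transitions descend to the quotient so that both the annotation rule and the induced map on $\xnz$ are preserved. Your write-up is more explicit than the paper's terse argument, but the underlying steps are identical.
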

\begin{proof}
	It suffices to prove the lemma for the canonical annotation $\alpha_{q_1,0}$ of $T$ since any other annotation is simply $j + \alpha_{q_1,0}$ for some $j \in \Z$. However, this is now a straightforward application of Lemmas~\ref{lem:lagrespectsequivalence} and \ref{lem:identifyinequivstatesmakesnodiff}, since for any state $q \in Q_{T}$ we have $\Lambda(\varepsilon, q) = 0$. 
	
	The ``moreover'' part of the corollary is a consequence of the following observation. For a state $[q] \in Q_{[T]}$, any state $q \in Q_{T}$ which belongs to $[q]$ satisfies $\lambda_{T}(\Gamma,q) = \lambda_{[T]}(\Gamma, [q])$ for any word $\Gamma$, and $([q])\alpha'_{[p_1],0} = (q)\alpha_{p_1,0}$. 
\end{proof}

\begin{prop}\label{prop:incompleteresponsecorrespondstolagofelementofln}
	Let $P$ be an element of $\Nn{n}$. Let $ \beta_{q_1, 0}$ be the canonical annotation of $P$ and let $[P]$ be the unique  minimal transducer representing $P$. Then the map  $\alpha_{(q_1)\kappa,i}: Q_{[P]} \to \Z$ given by  $(q_j)\kappa \mapsto \Lambda(\varepsilon, q_j) + (q_j)\beta_{q_1,0}$, for $q_j \in Q_{P}$, and $i = \Lambda(\varepsilon, q_1)$ is a well-defined annotation of $[P]$. Moreover, the maps $([P],\alpha_{(q_1)\kappa,i}): \xnz \to  \xnz$ and $(P,\beta_{q_1, 0}): \xnz \to \xnz$ are equal.
\end{prop}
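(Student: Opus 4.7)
The plan is to dispose of the degenerate case first, then establish both the well-definedness and the annotation axiom for $\alpha_{(q_1)\kappa,i}$, and finally identify the two induced maps on $\xnz$. If some, and hence every, state of $P$ has infinite extent of incomplete response, then $P = Z_x$ for some $x \in \xnp$ and $[P]$ is a single-state transducer $Z_a$, so the convention on the symbol $\infty$ reduces both conclusions to trivialities. So assume every state of $P$ has finite extent of incomplete response, which is precisely the hypothesis needed to invoke Proposition~\ref{prop:algorithmforremovingincompleteresponse} and Lemma~\ref{lem:lagrespectsequivalence}.

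For well-definedness, suppose $q_j, q_{j'} \in Q_P$ satisfy $(q_j)\kappa = (q_{j'})\kappa$; by construction, $q_j$ and $q_{j'}$ are then $\omega$-equivalent states of $\core(P'_{p_{-1}})$ for a choice of starting state $p$, and Lemma~\ref{lem:lagrespectsequivalence} delivers the required equality $|\Lambda(\varepsilon,q_j)| + (q_j)\beta_{q_1,0} = |\Lambda(\varepsilon,q_{j'})| + (q_{j'})\beta_{q_1,0}$. To verify \eqref{eqn:annotationrule} for $\alpha_{(q_1)\kappa,i}$, fix $s = (q)\kappa$ and $\Gamma \in \xns$, and set $q' = \pi_P(\Gamma,q)$, so that $(q')\kappa = \pi_{[P]}(\Gamma,s)$. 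The identity $\Lambda(\Gamma,q) = \lambda_P(\Gamma,q)\Lambda(\varepsilon,q')$ is immediate from the greatest-common-prefix definition of $\Lambda$; combined with the formula $\lambda'_P(\Gamma,q) = \Lambda(\Gamma,q) - \Lambda(\varepsilon,q)$ of Proposition~\ref{prop:algorithmforremovingincompleteresponse} and Lemma~\ref{lem:identifyinequivstatesmakesnodiff} (to see that $|\lambda_{[P]}(\Gamma,s)|$ is independent of the choice of representative $q$), this yields $|\lambda_{[P]}(\Gamma,s)| = |\lambda_P(\Gamma,q)| + |\Lambda(\varepsilon,q')| - |\Lambda(\varepsilon,q)|$. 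Substituting this, together with the annotation rule for $\beta_{q_1,0}$ on $P$, reduces \eqref{eqn:annotationrule} to a cancellation identity.

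For the equality of maps, fix $x \in \xnz$ and let $(q_i)_{i\in\Z}$ be the sequence of $P$-states forced by $x$, so $q_{i+1} = \pi_P(x_i,q_i)$. Write $y = (x)(P,\beta_{q_1,0})$ and $y' = (x)([P],\alpha_{(q_1)\kappa,i})$. Definition~\ref{def:Lwithannotationisanendo} combined with Corollary~\ref{cor:annotationsinthesamecoset} (to handle a nonzero anchor) shows that the $y$-block contributed by index $i$ is $\lambda_P(x_i,q_i)$ written starting at position $i + (q_i)\beta_{q_1,0}$, whereas the $y'$-block of index $i$ is $\Lambda(x_i,q_i) - \Lambda(\varepsilon,q_i)$ written starting at $i + (q_i)\beta_{q_1,0} + |\Lambda(\varepsilon,q_i)|$. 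A short check using \eqref{eqn:annotationrule} shows that consecutive $y'$-blocks abut, and so tile $\Z$; hence it suffices to prove that on each $y'$-block the sequence $y$ reads the same word. The key observation is that, for every index $m$, the concatenation $\lambda_P(x_m,q_m)\lambda_P(x_{m+1},q_{m+1})\cdots$ admits $\Lambda(\varepsilon,q_m)$ as a prefix, immediately from the greatest-common-prefix definition. Applying this at $m = i$ and at $m = i+1$ and chaining with \eqref{eqn:annotationrule} to align positions pins $y$ down as $\lambda_P(x_i,q_i)\Lambda(\varepsilon,q_{i+1}) = \Lambda(x_i,q_i)$ on the range $[i + (q_i)\beta_{q_1,0},\, i + (q_i)\beta_{q_1,0} + |\Lambda(x_i,q_i)| - 1]$; stripping the initial $\Lambda(\varepsilon,q_i)$-prefix then leaves precisely the word $y'$ writes on the block for index $i$. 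I expect the main obstacle to be bookkeeping rather than anything conceptual: one must juggle $|\lambda_P(x_i,q_i)|$, $|\Lambda(\varepsilon,q_i)|$, $|\Lambda(\varepsilon,q_{i+1})|$, $|\Lambda(x_i,q_i)|$ and the position indices, while the essential content of the argument is simply that the algorithm of Proposition~\ref{prop:algorithmforremovingincompleteresponse} rearranges when symbols are emitted without altering which symbols are emitted.
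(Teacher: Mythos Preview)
Your proof is correct and follows essentially the same route as the paper's. Both dispose of the degenerate case, invoke Lemma~\ref{lem:lagrespectsequivalence} for well-definedness, and rely on the identity $\lambda'_P(\Gamma,q) = \Lambda(\Gamma,q) - \Lambda(\varepsilon,q)$ from Proposition~\ref{prop:algorithmforremovingincompleteresponse} together with $\Lambda(\Gamma,q) = \lambda_P(\Gamma,q)\Lambda(\varepsilon,\pi_P(\Gamma,q))$ for both the annotation axiom and the equality of maps. The only cosmetic differences are that the paper first reduces from $[P]$ to $\core(P')$ via Corollary~\ref{cor:identifyinequivstatesmakenodiff} and then verifies the annotation by computing from the added state $q_{-1}$, whereas you verify \eqref{eqn:annotationrule} directly on $[P]$; and for the equality of maps the paper argues that the two outputs agree on each right-infinite tail while you check agreement block-by-block. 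Neither difference is substantive.
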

\begin{proof}
	Note that if a  state of $P$ has infinite extent of incomplete response, then all states of $P$ have infinite extent of incomplete response. In this case  $[P] = Z_{a}$ for some $a \in \xn$ and $(P, \beta_{q_{1}, 0}) = (Z_a, \infty)$. Since all states of $P$ have infinite extent  of incomplete response  we see that $\alpha_{(q_1)\kappa,i}$ defines the `annotation' $\infty$ of $Z_{a}$. Thus we may assume that all states of $P$ have finite extent of incomplete response.
	
	First observe that the map $\alpha_{(q_1)\kappa,i}$ is well-defined by Lemma~\ref{lem:lagrespectsequivalence}. Therefore it suffices to show that it defines an annotation of $[P]$. We make a further reduction.
    
    Let $q$ be a state of $P$ and let  $P'_{q_{-1}}$ be the  transducer obtained after performing the algorithm of Proposition \ref{prop:algorithmforremovingincompleteresponse} on the initial transducer $P_{q}$ (hence $[P]$ is the transducer obtained from identifying the equivalent states of $\core(P'_{q_{-1}})$). By Corollary~\ref{cor:identifyinequivstatesmakenodiff}, it suffices to show that the map, from $Q_{\core(P')} \to \Z$ given by $q_{j} \mapsto \Lambda(\varepsilon, q_j) + (q_j)\beta_{q_1, 0}$ is the annotation  $\alpha_{q_{1}, \Lambda(\varepsilon, q_1)}$ of $\core(P')$.  Since $Q_{\core(P')} = Q_{P}$, we shall use a prime above the states of $\core(P')$ to distinguish them from states of $P$. In keeping with this we have $\alpha_{q'_{1},\Lambda(\varepsilon, q_1)} := \alpha_{q_{1}, r}$. 
	
	Let $r = \Lambda(\varepsilon,q_{1})$, and let $\Gamma_{1} \in X_n^{\ast}$ be a word such that $\pi_{P}(\Gamma_{1}, q) = q_1$. Let $s$ be any state of $P$, and let $\Gamma_{s} \in \xn^{\ast}$ be such that $\pi_{P}(\Gamma_{s}, q_{1}) =s$. Note that  $\pi_{P'}(\Gamma_{s}, q'_{1}) = s'$ and $\pi_{P'}(\Gamma_{1}, q_{-1}) = q'_{1}$ must also  hold in $P'$. Consider $\lambda_{P'}(\Gamma_{1}\Gamma_{s}, q_{-1})$. By Proposition \ref{prop:algorithmforremovingincompleteresponse} we have:
	\[
	\lambda_{P'}(\Gamma_{1}\Gamma_{s}, q_{-1}) = \lambda_{P'}(\Gamma_{1}, q_{-1})\lambda_{P'}(\Gamma_{s}, q'_{1}) = \Lambda(\Gamma_{1}\Gamma_{s}, q_{1}) = \lambda_{P}(\Gamma_{1}\Gamma_{s},q)\Lambda(\varepsilon,s).
	\]
	
	By the same result we know that $$\lambda_{P'}(\Gamma_{1}, q_{-1}) = \Lambda(\Gamma_1,q) = \lambda_{P}(\Gamma_1,q))\Lambda(\varepsilon,q_{1}).$$ So
	\[
	\lambda_{P'}(\Gamma_{s}, q'_{1}) = \lambda_{P}(\Gamma_{1}\Gamma_{s},q)\Lambda(\varepsilon,s) - \lambda_{P}(\Gamma_1,q)\Lambda(\varepsilon,q_{1}).
	\]
	
	Therefore we have:
	\begin{IEEEeqnarray*}{rCl}
		r &+& |\lambda_{P}(\Gamma_{1}\Gamma_{s},q)\Lambda(\varepsilon,s) - \lambda_{P}(\Gamma_1,q)\Lambda(\varepsilon,q_{1})| - |\Gamma_{s}| = \\|\Lambda(\varepsilon,q_{1})|& + & |\lambda_{P}(\Gamma_1,q)|  + |\lambda_{P}(\Gamma_s,q_{1})| + |\Lambda(\varepsilon,s)|   
		- |\lambda_{P}(\Gamma_1,q)|\\
		&-& |\Lambda(\varepsilon,q_{1})|   - |\Gamma_s|
		=  |\Lambda(\varepsilon,s)| + |\lambda_{P}(\Gamma_s,q_{1})|  - |\Gamma_s,| \\
		&=& |\Lambda(\varepsilon,s)| + (s)\beta_{q_{1}, 0} .
	\end{IEEEeqnarray*}
	
	Hence we have in $\core(P')$ that $r+|\lambda_{P'}(\Gamma_{s}, q'_{1})| - |\Gamma_{s}| = |\Lambda(\varepsilon,s)| + (s)\beta_{q_{1}, 0}$. Therefore $(s')\alpha_{q'_{1},r} = |\Lambda(\varepsilon,s)| + (s)\beta_{q_{1}, 0}$ as required.
	
	In order to see that $(P, \beta_{q_{1},0}) =([P], \alpha_{q_{1},r})$, by Corollary~\ref{cor:identifyinequivstatesmakenodiff}, it suffices to show that the maps $(P, \beta_{q_1, 0})$ and $(\core(P'), \alpha_{q_{1},r}))$ are equal.  
	
   Let $k$ be a synchronizing level of $P$ (and so of $P'$), let $x \in X_n^{\Z}$, and $i \in \Z$ be arbitrary.  Consider the right infinite string $x_{i}x_{i+1}\ldots $  corresponding to the points of $x$ to the right of $x_{i-1}$ . Let $u'$ be the state of $\core(P')$ forced by the block ${x_{i-k}\ldots x_{i-1}}$ of $x$. Notice that $u$ is the state of $P$ forced by ${x_{i-k}\ldots x_{i-1}}$. Let $t$ be the state of $P$ forced by ${x_{i-k-1} \ldots x_{i-2}}$ noting that $\pi_{P}(x_{i-1},t) = u$  and so $\pi_{P'}(x_{i-1},t') = u'$.  Let $j = (t)\beta_{q_1, 0}$. 
	
	By Proposition \ref{prop:algorithmforremovingincompleteresponse}, choosing the state $q$ above to be the state $t$ and setting $q_{-1}:= t_{-1}$,  we have,  $z:= y_1 y_2 \ldots :=\lambda_{P'}({x_{i-1}x_{i}x_{i+1}\ldots }, t_{-1}) = \lambda_{P}({x_{i-1}x_{i}x_{i+1}\ldots }, t)$. Since $j = (t)\beta_{q_{1}, 0}$, the string ${z}$ is written in the output $(x)(P, \beta_{q_{1}, 0})$ starting at index $i-1+j$. For convenience, and to match up with this shift, index the sequence $z$ as starting at $i-1+j$ so that $z =  z_{i-1+j}z_{i+j}\ldots = y_{1}y_{2}\ldots$. 
	
	Noting the $'$ above $P$, consider where  the output $\lambda_{P'}({x_{i}x_{i+1}\ldots }, u')$ is written in the sequence $z$. Since, $\lambda(x_{i-1}, t_{-1}) = \Lambda(x_{i-1}, t) = \lambda(x_{i-1},t)\Lambda(\varepsilon,u) $, it follows that $|\lambda(x_{i-1}, t_{-1})| - |x_{i-1}|  = |\Lambda(\varepsilon,u)| + |\lambda(x_{i-1}, t)| - |x_{i-1}|$. Therefore, $\lambda_{P'}(x_{i}x_{i+1}\ldots, u')$  is written in sequence $z$ beginning  at index $i+(|\Lambda(\varepsilon, u) + |\lambda(x_{i+1}, t)| - |x_{i+1}| + j)$. The equality $|\lambda(x_{i+1}, t)| - |x_{i+1}| + j = (u)\beta_{q,0}$, means that $|\Lambda(\varepsilon, u)|+ |\lambda(x_{i+1}, t)| - |x_{i+1}| + j  = (u')\alpha_{q'_{1},r}$. Therefore, $\lambda_{P'}({ x_{i}x_{i+1}\ldots}, u')$ is equal to ${ z_{i+(u')\alpha_{q'_{1},r}}z_{i+(u')\alpha_{q'_{1},r}+1}}\ldots$ and from index $i+(u')\alpha_{q'_{1},r}$ rightward $(x)(P, \beta_{q_{1}, 0})$ is precisely $$ z_{i+(u')\alpha_{q'_{1},r}}z_{i+(u')\alpha_{q'_{1},r}+1}\ldots .$$ However, from index $i+(u')\alpha_{q'_{1},r}$ rightward,  $(x)(\core(P'), \alpha_{q'_{1},r})$ is precisely $$ z_{i+(u')\alpha_{q'_{1},r}}z_{i+(u')\alpha_{q'_{1},r}+1} \ldots .$$ Thus, we see that from index $i+(u')\alpha_{q'_{1},r}$ rightward, $(x)(\core(P'), \alpha_{q'_{1},r})$  and $(x)(P, \beta_{q_{1},0})$ coincide.  Since $i$ was arbitrarily chosen and since  the set $\{ (u')\alpha_{q'_{1},r} \mid u \in Q_{\core(P')} \}$ is finite, we conclude that $$(x)(\core(P), \alpha_{q'_{1},r}) =(x)(P, \beta_{q_{1},0}).$$  
\end{proof}

\begin{cor}
	Let $P \in \Nn{n}$, $\beta_{q_1, i}$ be any annotation of $P$ and  $[P]$ be the unique  minimal transducer representing $P$. Then, there is an annotation $\alpha_{(q_1)\kappa,l}$ of $[P]$ such that for all $q_j \in Q_{P}$ we have  $((q_j)\kappa)\alpha_{(q_1)\kappa,l} = |\Lambda(\varepsilon, q_j)| + (q_j)\beta_{q_1,i}$ . Moreover $([P],\alpha_{(q_1)\kappa,i}) = (P,\beta_{q_1, 0})$.
\end{cor}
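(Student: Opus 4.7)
The plan is to reduce the statement to Proposition~\ref{prop:incompleteresponsecorrespondstolagofelementofln} by exploiting the fact that any annotation of $P$ differs from the canonical one by a global integer constant. By Remark~\ref{rem:equivofannotation} together with Definition~\ref{def:annotatingLn}, an annotation of $P$ is determined by its value at a single state, and so $\beta_{q_1,i} = i + \beta_{q_1,0}$ pointwise, where $\beta_{q_1,0}$ denotes the canonical annotation of $P$. The candidate value of $l$ is forced: evaluating the desired identity at $q_j = q_1$ gives $l = ((q_1)\kappa)\alpha_{(q_1)\kappa,l} = |\Lambda(\varepsilon,q_1)| + i$.

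Next, I would apply Proposition~\ref{prop:incompleteresponsecorrespondstolagofelementofln} to the canonical annotation $\beta_{q_1,0}$ to obtain an annotation $\alpha^{(0)} := \alpha_{(q_1)\kappa,|\Lambda(\varepsilon,q_1)|}$ of $[P]$ satisfying
\[
((q_j)\kappa)\alpha^{(0)} \;=\; |\Lambda(\varepsilon,q_j)| + (q_j)\beta_{q_1,0}
\qquad \text{and} \qquad ([P],\alpha^{(0)}) = (P,\beta_{q_1,0}).
\]
I then define $\alpha_{(q_1)\kappa,l} := i + \alpha^{(0)}$. Since adding a constant to any annotation yields another annotation (this is immediate from the rule \eqref{eqn:annotationrule}, whose right-hand side is unchanged under a constant shift of $\alpha$), this is indeed an annotation of $[P]$; it is well defined on $\omega$-equivalence classes of states precisely by Lemma~\ref{lem:lagrespectsequivalence}, which was already the key ingredient in Proposition~\ref{prop:incompleteresponsecorrespondstolagofelementofln}. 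Using $\beta_{q_1,i} = i + \beta_{q_1,0}$, the required identity reads
\[
((q_j)\kappa)\alpha_{(q_1)\kappa,l} = i + |\Lambda(\varepsilon,q_j)| + (q_j)\beta_{q_1,0} = |\Lambda(\varepsilon,q_j)| + (q_j)\beta_{q_1,i},
\]
which is exactly what was asked for.

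For the ``moreover'' clause (interpreting $\alpha_{(q_1)\kappa,i}$ and $\beta_{q_1,0}$ as $\alpha_{(q_1)\kappa,l}$ and $\beta_{q_1,i}$, matching the flow of the corollary), I would chain Corollary~\ref{cor:annotationsinthesamecoset}, applied to both $P$ and $[P]$, with Proposition~\ref{prop:incompleteresponsecorrespondstolagofelementofln}:
\[
(P,\beta_{q_1,i}) \;=\; \shift{n}^{i}(P,\beta_{q_1,0}) \;=\; \shift{n}^{i}([P],\alpha^{(0)}) \;=\; ([P], i + \alpha^{(0)}) \;=\; ([P],\alpha_{(q_1)\kappa,l}).
\]
The degenerate case where some, hence every, state of $P$ has infinite extent of incomplete response is handled by the conventions for $Z_a$ and $\infty$ fixed just after Proposition~\ref{prop:uniquenessofmaps}: both $(P,\beta_{q_1,i})$ and $([P],\alpha_{(q_1)\kappa,l})$ collapse to $(Z_a,\infty)$ for the appropriate $a\in\xn$, and every formula involving $\infty$ holds by convention. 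There is no substantive new ingredient beyond bookkeeping, so I do not anticipate any genuine obstacle; the only care required is in tracking the shift by $i$ consistently across the reduction.
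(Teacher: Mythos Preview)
Your proposal is correct and follows essentially the same route as the paper: reduce to Proposition~\ref{prop:incompleteresponsecorrespondstolagofelementofln} via the observation that any annotation differs from the canonical one by an additive constant, then propagate that constant using Corollary~\ref{cor:annotationsinthesamecoset}. The only cosmetic difference is that the paper writes $\beta_{q_1,i} = j + \beta_{p_1,0}$ with $p_1$ explicitly denoting the canonical first state (and $j$ some integer, not necessarily $i$), whereas you tacitly identify $q_1$ with the canonical first state; this does not affect the argument.
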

\begin{proof}
	If a state of $P$ has infinite extent of incomplete response the result holds (regarding $\infty$ as an annotation). Thus we may assume that all states of $P$ have finite extent of incomplete response.
	
	By Remark~\ref{rem:equivofannotation} we can write $\beta_{q_1, i} = j+\beta_{p_1,0}$ for some $j \in \mathbb{Z}$ and where $\beta_{p_1,0}$ is the canonical annotation of $P$. By Proposition~\ref{prop:incompleteresponsecorrespondstolagofelementofln}, $(P, \beta_{p_1,0}) = ([P], \alpha_{(p_1)\kappa,r})$  for $r = |\Lambda(\varepsilon, p_1)|$. 
	
	Now Corollary~\ref{cor:annotationsinthesamecoset} means that $(P, \beta_{q_1, i}) = \shift{n}^{j}(P,\beta_{p_1,0}),$ thus $$\shift{n}^{j}\beta_{p_1,0} = \shift{n}^{j} ([P], \alpha_{(p_1)\kappa,r}) = ([P],  \alpha_{(p_1)\kappa,j+r}).$$  Therefore, as $\alpha_{(p_1)\kappa,j+r}$ is an annotation of $[P]$ satisfying the conditions of the corollary, we are done. 
\end{proof}

\begin{cor} \label{cor:incompleteresponsecreateslaginpn}
	Let $P \in \spn{n}$ and  let $[P]$ be the unique minimal transducer representing $P$. Then there is an annotation $\alpha_{(q_1)\kappa,i}$ of $[P]$ such that for all $q_j \in Q_{P}$, we have  $((q_j)\kappa)\alpha_{(q_1)\kappa,i} = \Lambda(\varepsilon, q_j)$. Moreover $([P],\alpha_{(q_1)\kappa,i}) = f_{P}$.
\end{cor}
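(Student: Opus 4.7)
The plan is to read this corollary off the preceding one, once one observes that for a synchronous transducer the canonical annotation vanishes identically. First I would note that $\spn{n} \subseteq \Nn{n}$: by definition elements of $\spn{n}$ are strongly synchronizing and core, and synchronicity gives $|\lambda_P(w,q)| = |w|$ for every $w \in \xns$ and every $q \in Q_P$, which is much stronger than the Lipschitz constraint \ref{Lipshitzconstraint}. Hence the notions of annotation, canonical annotation, and induced map $(P,\beta)$ developed for $\Nn{n}$ all apply to $P$.

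Next, I would compute the canonical annotation $\beta_{q_1,0}$ of $P$. By Definition~\ref{def:canonicalannotation}, for each state $q_j$ we have $(q_j)\beta_{q_1,0} = |\lambda_P(\Gamma_{q_j}, q_1)| - |\Gamma_{q_j}|$, where $\Gamma_{q_j}$ forces $q_j$. Synchronicity of $P$ immediately gives $|\lambda_P(\Gamma_{q_j}, q_1)| = |\Gamma_{q_j}|$, so $(q_j)\beta_{q_1,0} = 0$ for every $j$. I would then verify $(P,\beta_{q_1,0}) = f_P$ by direct comparison of Definition~\ref{def:Lwithannotationisanendo} with the formula for $f_P$ recorded immediately before Proposition~\ref{prop:pntildeisinendo}: with $r = (q_j)\beta_{q_1,0} = 0$ and $|w| = |\lambda_P(x_i,q_j)| = 1$ at every step, Definition~\ref{def:Lwithannotationisanendo} writes $\lambda_P(x_i, q_j)$ at position $i$, where $q_j$ is the state forced by $x_{i-k}\ldots x_{i-1}$, which is exactly the defining formula for $(x)f_P$.

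Finally, applying the preceding corollary to $P$ equipped with $\beta_{q_1,0}$ produces an annotation $\alpha_{(q_1)\kappa,i}$ of $[P]$, with $i = |\Lambda(\varepsilon,q_1)|$, satisfying
\[
((q_j)\kappa)\alpha_{(q_1)\kappa,i} \;=\; |\Lambda(\varepsilon,q_j)| + (q_j)\beta_{q_1,0} \;=\; |\Lambda(\varepsilon,q_j)|
\]
for every $q_j \in Q_P$, and $([P],\alpha_{(q_1)\kappa,i}) = (P,\beta_{q_1,0}) = f_P$, as required. (The $\Lambda(\varepsilon,q_j)$ appearing in the statement is read as its length $|\Lambda(\varepsilon,q_j)|$, matching the preceding corollary.) There is no real obstacle; the only mild subtlety is the degenerate possibility that some (hence every) state of $P$ has infinite extent of incomplete response, which forces $P$ to coincide with a single-state $Z_a$, $a \in X_n$. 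In that case $[P] = Z_a$ and both sides of the asserted equality reduce to $(Z_a,\infty)$ by the convention already adopted, so the statement still holds.
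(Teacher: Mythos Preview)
Your proof is correct and follows essentially the same approach as the paper's. The paper's proof is a one-sentence appeal to Proposition~\ref{prop:incompleteresponsecorrespondstolagofelementofln} together with the observation that synchronicity forces the canonical annotation of $P$ to vanish; you invoke the intermediate corollary instead (which, at $\beta_{q_1,0}$, coincides with that proposition) and spell out explicitly both the vanishing computation and the identification $(P,\beta_{q_1,0}) = f_P$, as well as the degenerate $Z_a$ case, all of which the paper leaves implicit.
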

\begin{proof}
	This is a direct consequence of Proposition \ref{prop:incompleteresponsecorrespondstolagofelementofln} and the observation that the canonical annotation of an element of $\spn{n}$ assigns the number zero to every state since elements of $\spn{n}$ are synchronous. 
\end{proof}

Figure~\ref{fig:elementofP2correspondingtoL} depicts an element $P$ of $\pn{n}$; the number beside a state corresponds to the extent of incomplete response of that state. The minimal representative  $[P]$ of $P$ is the transducer depicted in Figure~\ref{fig:annotationexample}. The map $\kappa: Q_{P} \to Q_{[P]}$ is given by $d_0,d_5 \mapsto a_0$, $d_1 \mapsto a_1$, $d_3 \mapsto a_3$, $d_2 \mapsto a_4$, $d_4 \mapsto a_2$.  It is immediate that the map $\alpha: Q_{[P]} \to \Z$ that assigns a state $(q)\kappa$  of $[P]$, for $q \in Q_{P}$, the value $\Lambda(\varepsilon, q)$ is an annotation of $[P]$. In fact the annotation $\alpha$ is the result of adding $2$ to the canonical annotation of $[P]$.

\begin{figure}[t]
	\begin{center} 
		\begin{tikzpicture}[shorten >=0.5pt,node distance=3cm,on grid,auto] 
		\tikzstyle{every label}=[blue]
		\node[state] (q_0) [xshift=-2.5cm, yshift=0cm, label=left:$2$] {$d_0$}; 
		\node[state] (q_1) [xshift=2.5cm, yshift=0cm, label=right:$2$] {$d_1$}; 
		\node[state] (q_2) [xshift=0cm,yshift = -1.5cm, label=left:$1$]{$d_2$}; 
		\node[state] (q_3) [xshift=4.5cm, yshift=-2cm, label=left:$1$] {$d_3$};
		\node[state] (q_4) [xshift=0cm,yshift=-3cm, label=right:$2$] {$d_4$};
		\node[state] (q_5) [xshift=4.5cm,yshift=-4cm,label=left:$2$] {$d_5$};
    \path[use as bounding box] (-3.5,-5.5) rectangle (6,1.5);
		\path[->] 
		(q_0) edge node {$1|0$} (q_1)
		edge [loop above] node[swap] {$0|0$} ()     
		(q_1) edge node {$1|0$} (q_2)
		edge[in=130, out=325] node{$0|0$} (q_3)
		(q_2) edge node{$0|1$} (q_0)
		edge node{$1|1$} (q_4)   
		(q_3) edge node {$0|1$} (q_5)
		edge[in=310, out=145] node {$1|1$} (q_1)
		(q_4) edge[loop left] node[swap] {$1|1$} ()
		edge node {$0|1$} (q_5)
		(q_5) edge[in=255, out=235] node {$0|1$} (q_0)
		edge[in= 30, out=10] node[swap] {$1|1$} (q_1);
		\end{tikzpicture}
		\caption{An element $P$ of $\pn{n}$ annotated by extent of incomplete response.}
		\label{fig:elementofP2correspondingtoL}
	\end{center}
\end{figure}

The results above show that an element of $\spn{n}$ corresponds in a natural way to a map $(L, \alpha_{q_1,i})$ for $L \in \SLn{n}$ and $\alpha_{q_1, i}$ an annotation of $L$. Moreover, these results demonstrate that the `damage' caused by minimising a transducer is `fixed' by accounting for the extent of incomplete response of each state. 

In what follows we construct an associative product  on the set of maps $(L, \alpha_{q_{1}, i})$ for $L \in \SLn{n}$ and an annotation $\alpha_{q_{1}, i}$ of $L$. Specifically, we show that given two elements $L, M \in \SLn{n}$ and annotations $\alpha_{q_1,i}$ and $\beta_{p_1, j}$ of $L$ and $M$ respectively, then the composition of the resulting maps $(L, \alpha_{q_1, i})$ and $(M, \beta_{p_1,j})$ is equivalent to a map obtained by annotating the  minimal transducer $LM$ representing the core of the product of $L$ and $M$. Moreover, the annotation of $LM$ is obtained from the annotations of $L$ and $M$  and the extent of incomplete response in the states of $\core(L \ast M)$.

We begin with the following lemma.

\begin{lemma}\label{lem:annotationofproductissumofannoation}
	Let $L$ and $M$ be elements of $\SLn{n}$ with respective annotations $\alpha_{q_1, i}$ and $\beta_{p_1,j}$. Let $(LM) = \core(L* M)$, noting that $Q_{(LM)} \subseteq Q_{L}  \times Q_{M}$. Define a map $\alpha_{q_1, i} + \beta_{p_1,j}: Q_{(LM)} \to \Z$ by $(s,t) \mapsto (s)\alpha_{q_1, i} + (t)\beta_{p_1, j}$. Then
	\begin{enumerate}[label = (\alph*)]
		\item $\alpha_{q_1, i} + \beta_{p_1,j}$ is an annotation of $(LM)$, and,
		\item $((LM), \alpha_{q_1, i} + \beta_{p_1,j}) = (L, \alpha_{q_1,i})(M,\alpha_{p_1,j})$.
	\end{enumerate}  
\end{lemma}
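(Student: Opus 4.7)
The plan is to prove part (a) by a direct computation using the defining equation of an annotation together with the definitions of the product transducer's transition and output functions, and to prove part (b) by chasing an arbitrary symbol of $x \in \xnz$ through both constructions and verifying that the contributions are placed at the same index with the same value.

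For part (a), I first need to check that $(LM) = \core(L * M)$ lies in $\Nn{n}$ so that annotations are defined on it. Strong synchronization follows from Proposition \ref{p:synchlengthsadd} and the fact that the core of a strongly synchronizing transducer remains strongly synchronizing. For the Lipschitz condition \ref{Lipshitzconstraint}, a circuit $\Gamma$ based at $(s,t) \in Q_{LM}$ projects to a circuit $\Gamma$ in $L$ based at $s$, so by \ref{Lipshitzconstraint} applied to $L$ we have $|\lambda_L(\Gamma, s)| = |\Gamma|$; this word $\lambda_L(\Gamma, s)$ in turn traces a circuit in $M$ based at $t$, so applying \ref{Lipshitzconstraint} to $M$ gives $|\lambda_M(\lambda_L(\Gamma, s), t)| = |\lambda_L(\Gamma, s)| = |\Gamma|$. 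Given this, for any $\Gamma \in \xns$ and $(s,t) \in Q_{LM}$, I compute
\begin{align*}
(\pi_{LM}(\Gamma,(s,t)))(\alpha_{q_1,i} + \beta_{p_1,j})
&= (\pi_L(\Gamma,s))\alpha_{q_1,i} + (\pi_M(\lambda_L(\Gamma,s),t))\beta_{p_1,j} \\
&= (s)\alpha_{q_1,i} + |\lambda_L(\Gamma,s)| - |\Gamma| + (t)\beta_{p_1,j} + |\lambda_M(\lambda_L(\Gamma,s),t)| - |\lambda_L(\Gamma,s)| \\
&= (s,t)(\alpha_{q_1,i}+\beta_{p_1,j}) + |\lambda_{LM}(\Gamma,(s,t))| - |\Gamma|,
\end{align*}
which is exactly the defining equation \eqref{eqn:annotationrule} of an annotation.

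For part (b), I fix $x \in \xnz$ and an index $i \in \Z$, and trace the contribution from position $i$ through both maps. Let $K$ be large enough to exceed the synchronizing levels of $L$, $M$ and $LM$ and, via the Lipschitz condition together with finiteness of annotations, large enough that $\lambda_L(x_{i-K}\ldots x_{i-1}, \cdot)$ has length exceeding the synchronizing level of $M$. Let $(s,t)$ be the state of $LM$ forced by $x_{i-K}\ldots x_{i-1}$; by the product transducer definition, $s$ is forced in $L$ by this window, and $t$ is the state of $M$ forced by $\lambda_L(x_{i-K}\ldots x_{i-1}, \cdot)$. For the map $((LM), \alpha+\beta)$, the word $\lambda_M(\lambda_L(x_i,s),t)$ is placed into the output starting at index $i + (s)\alpha + (t)\beta$. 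For the composition $(L,\alpha)(M,\beta)$, first $(L,\alpha)$ writes $\lambda_L(x_i,s)$ into $y := (x)(L,\alpha)$ starting at index $i + (s)\alpha$; then I need to show that, when $(M,\beta)$ processes $y$, the state of $M$ forced at index $i+(s)\alpha$ of $y$ is exactly $t$, and then by an inductive use of the annotation identity for $\beta$ (showing that successive blocks $\lambda_M(\lambda_L(x_i,s)_k, t_k)$ abut without gap or overlap, which is the same mechanism as in the proof of Proposition~\ref{prop:annotationsofLareinEnd(Xn,sigma)}), the cumulative output $\lambda_M(\lambda_L(x_i,s),t)$ is placed in $z:= (y)(M,\beta)$ starting at index $i + (s)\alpha + (t)\beta$, matching $((LM), \alpha+\beta)$.

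The main obstacle is the bookkeeping in the second half: verifying that the state of $M$ seen by $(M,\beta)$ at position $i+(s)\alpha$ in $y$ coincides with the second coordinate $t$ determined by $LM$. This requires knowing that the portion of $y$ ending just before index $i+(s)\alpha$ is exactly $\lambda_L(x_{-\infty}\ldots x_{i-1}, \cdot)$ up to an appropriate suffix, so that the $M$-synchronizing window in $y$ matches the output word that defined $t$ in the product transducer. The argument hinges on the fact, used already in Proposition~\ref{prop:annotationsofLareinEnd(Xn,sigma)}, that the annotation $\alpha$ is set up precisely so that consecutive outputs of $L$ tile $y$ without gap or overlap, together with choosing $K$ large enough that $|\lambda_L(x_{i-K}\ldots x_{i-1}, \cdot)|$ exceeds the synchronizing level of $M$ so that $M$ is genuinely synchronized at the right position.
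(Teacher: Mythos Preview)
Your proposal is correct and follows essentially the same approach as the paper's proof. For part (a) both you and the paper verify the annotation identity by the same telescoping computation $|\lambda_{LM}(\Gamma,(s,t))|-|\Gamma| = (|\lambda_M(\Delta,t)|-|\Delta|)+(|\Delta|-|\Gamma|)$ with $\Delta=\lambda_L(\Gamma,s)$; you additionally make explicit that $(LM)\in\Nn{n}$, which the paper leaves implicit. For part (b) both arguments fix $x$ and an index, choose a window long enough that $L$ is synchronized and the $L$-output over that window is long enough to synchronize $M$, identify the forced state $(s,t)$, and then verify that the output of $M$ on $\lambda_L(x_i x_{i+1}\ldots,s)$ lands in $z$ at index $i+(s)\alpha+(t)\beta$ in both computations; your justification that such a window exists via the annotation identity and boundedness of $\alpha$ is a clean variant of the paper's direct appeal to the existence of such an $m$.
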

\begin{proof}
	 Let $(s_1, t_1)$ and $(s_2, t_2)$ be arbitrary states of $(LM)$, $a = (s_1)\alpha_{q_1, i}$ and $b = (t_1)\beta_{p_1, j}$. As $(LM) \in \SLn{n}$, let $\Gamma \in X_n^{+}$ be arbitrary such that $\pi_{(LM)}(\Gamma, (s_1,t_1)) = (s_2, t_2)$. Set $\Delta := \lambda_{L}(\Gamma, s_1)$. By definition of the product transducer we have,
	\begin{IEEEeqnarray*}{rCl}
		\pi_{L}(\Gamma, s_1) &=& s_2 \\
		\pi_{M}(\Delta, t_1)  &=& t_2.
	\end{IEEEeqnarray*}
	
	Set  $\Xi := \lambda_{M}(\Delta, t_1)$ and observe that:
	\[
	|\Xi| - |\Gamma| = (|\Xi| - |\Delta|) + (|\Delta| - |\Gamma|).
	\]
	
	 Therefore,
	\[
	a + b + |\Xi| - |\Gamma| = a + (|\Delta| - |\Gamma|) + b + (|\Xi| - |\Delta|)   =  (s_2)\alpha_{q_1, i} +  (t_2)\beta_{p_1, j}
	\]
	
	and so   $\alpha_{q_1, i} + \beta_{p_1,j}$  is an annotation of $(LM)$.
	
	We now show that $((LM), \alpha_{q_1, i} + \beta_{p_1,j}) = (L,\alpha_{q_1, i})(M,\beta_{p_1,j})$. 
	
	Let $x \in X_n^{\Z}$ and $l \in \Z$ be arbitrary; let $k_1, k_2 \in \N$ be  synchronizing levels of $L$ and $M$ respectively and let $m \in  \N$ be  minimal such that $|\lambda_{L}(\Gamma, q)| \ge k_2$ for all $\Gamma \in X_n^{m}$ and $q \in Q_{L}$. Also set $y := (x)(L,\alpha_{q_1, i})$, $z := (y)(M,\beta_{p_1,j})$ and $z' := (x)((LM), \alpha_{q_1, i} + \beta_{p_1,j})$. 
	
	Consider the word $x_{l-k_1-m}x_{l-k_1-m+1}\ldots x_l$. Let $s$ be the state of $L$ forced by ${x_{l-k_1}x_{l-k_1+1}\ldots x_{l-1}}$,  $s'$ be the state of $L$ forced by ${x_{l-k_1-m} x_{l-k_1-m+1}\ldots x_{l-m-1}}$ and $t$  the state of $M$ forced by  $\lambda_{L}({x_{l-m}x_{l-m+1}\ldots x_{l-1}}, s')$. Then $(s,t)$ is the state of $(LM)$ forced by $x_{l-k_1-m}x_{l-k_1-m+1}\ldots x_{l-1}$.
	
	By definition of the map $(L,\alpha_{q_{1}, i})$, the right infinite string ${\lambda_{L}( {x_{l}x_{l+1}\ldots}, s)}$ is written in the output $y$  at index $l+ (s)\alpha_{q_1, i}$ and so is equal  to $$ y_{l+ (s)\alpha_{q_1, i}}y_{l+(s)\alpha_{q_1, i}+1}\ldots.$$ Moreover, by the proof of Proposition~\ref{prop:annotationsofLareinEnd(Xn,sigma)}, the equality $${y_{l+(s)\alpha_{q_1, i} - m} y_{l+(s)\alpha_{q_1, i} - m +1} \ldots  y_{l+ (s)\alpha_{q_1, i}-1}} = \lambda_{L}({x_{l-m} x_{l-m+1} \ldots x_{l-1}}, s')$$ is valid. 
	
	Now, as the state of $M$ forced by $\lambda_{L}({x_{l-m} x_{l-m+1} \ldots x_{l-1}}, s')$ is $t$,  the right infinite string ${\lambda_{M}({y_{l+ (s)\alpha_{q_1, i} }y_{l- (s)\alpha_{q_1, i}+1}\ldots}, t )}$ is written in $z$ at index $l+ (s)\alpha_{q_1, i} + (t)\beta_{p_1, j}$ and is precisely $ z_{l+ (s)\alpha_{q_1, i}+ (t)\beta_{p_1, j}}z_{l+(s)\alpha_{q_1, i}+ (t)\beta_{p_1, j}+1}\ldots$. However, this string is also equal to   ${\lambda_{(LM)}({x_{l} x_{l+1}\ldots}, (s,t))}$. Since  $(s,t)(\alpha_{q_1, i}+\beta_{p_1, j}) = (s)\alpha_{q_1, i}+(t)\beta_{p_1, j}$, and, as noted above, $(s,t)$ is the state of $(LM)$ forced by ${x_{l-k_1 - m}x_{l-k_1 -m +1}\ldots x_{l-1}}$, then $z'$ coincides with $z$ from index $l+ (s)\alpha_{q_1, i}+ (t)\beta_{p_1, j}$ onwards. Since $l$ was chosen arbitrarily and the image of the map $\alpha_{q_1, i}+\beta_{p_1, j}$ is finite, $z' = z$.
\end{proof}

The following notation is useful for the next result.

\begin{ntn}
	Let $L$ and $M$ be elements of $\SLn{n}$ with respective annotations $\alpha_{q_1, i}$ and $\beta_{p_1,j}$, and let $(LM) = \core(L\ast M)$. If $(LM)$ has a state with infinite extent of incomplete response then $LM$ (the product in $\SLn{n}$ of $L$ and $M$) is  equal to $Z_{a}$ for some $a \in \xn$. Set $\overline{\alpha_{q_1, i} + \beta_{p_1, j}} = \infty$. Otherwise, all states of $(LM)$ have finite extent of incomplete response. In this case, let $(LM)'$ be the core of the transducer obtained by applying the algorithm in Proposition~\ref{prop:algorithmforremovingincompleteresponse} to the transducer $(LM)_{q}$ for any state $q \in  Q_{(LM)}$.  By definition, $LM$  is the transducer obtained by identifying equivalent states of $(LM)'$.  Recall that the map $\kappa: Q_{(LM)} \to Q_{LM}$ is defined such that for a state $(s,t)$ of $Q_{(LM)}$, $((s,t))\kappa$  corresponds to the $\omega$-equivalence class of the state $(s,t)$ in $(LM)'$. Define a map $\overline{\alpha_{q_1, i} + \beta_{p_1, j}}: Q_{LM} \to \Z$ by  $((s, t))\kappa \mapsto (s)  \alpha_{q_{1}, i} + (t)\beta_{p_1, j} + \Lambda(\varepsilon, (s, t))$ for all $(s,t) \in  Q_{(LM)}$.
\end{ntn}

\begin{prop}\label{prop:annotationsrepectsproductwithsomefudge}
	Let $L$ and $M$ be elements of $\SLn{n}$ with respective annotations $\alpha_{q_1, i}$ and $\beta_{p_1,j}$. Then $(L, \alpha_{q_1,i})(M,\alpha_{p_1,j}) = (LM, \overline{\alpha_{q_1, i} + \beta_{p_1, j}})$ where $LM$ is the product in $\SLn{n}$ of $L$ and $M$. 
\end{prop}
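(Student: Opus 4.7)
The plan is to combine Lemma~\ref{lem:annotationofproductissumofannoation} with the minimisation procedure of Proposition~\ref{prop:incompleteresponsecorrespondstolagofelementofln} applied to the unreduced product $\core(L\ast M)$. Writing $(LM)_{c} := \core(L\ast M)$, the transducer $LM$ is by definition of the product in $\SLn{n}$ the minimal representative of $(LM)_{c}$. Lemma~\ref{lem:annotationofproductissumofannoation}(b) gives the first reduction
\[
(L, \alpha_{q_1, i})(M, \beta_{p_1, j}) = ((LM)_c, \alpha_{q_1, i} + \beta_{p_1, j}),
\]
so it suffices to show that $((LM)_c, \alpha_{q_1, i} + \beta_{p_1, j})$ and $(LM, \overline{\alpha_{q_1, i} + \beta_{p_1, j}})$ induce the same map on $\xnz$.

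Before invoking Proposition~\ref{prop:incompleteresponsecorrespondstolagofelementofln}, I would first check that $(LM)_c$ lies in $\Nn{n}$: it is strongly synchronising and core by the construction of the product in $\Mn{n}$, so only condition~\ref{Lipshitzconstraint} requires attention. If $(s,t) \in Q_{(LM)_c}$ and $\Gamma \in \xns$ satisfy $\pi_{(LM)_c}(\Gamma, (s,t)) = (s,t)$, then $\pi_L(\Gamma, s) = s$, whence $|\Delta| = |\Gamma|$ for $\Delta := \lambda_L(\Gamma, s)$ since $L \in \SLn{n}$, and $\pi_M(\Delta, t) = t$, whence $|\lambda_M(\Delta, t)| = |\Delta|$ since $M \in \SLn{n}$; chaining these gives $|\lambda_{(LM)_c}(\Gamma, (s,t))| = |\Gamma|$, as required.

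The argument then splits on the extent of incomplete response of the states of $(LM)_c$. If some state has infinite extent of incomplete response then, because $\SLn{n}$ is a submonoid of $\Mn{n}$, the product $LM$ must equal $Z_a$ for some $a \in \xn$; by convention $\overline{\alpha_{q_1, i} + \beta_{p_1, j}} = \infty$, and both $((LM)_c, \alpha_{q_1, i} + \beta_{p_1, j})$ and $(LM, \overline{\alpha_{q_1, i} + \beta_{p_1, j}}) = (Z_a, \infty)$ are the constant map to $\ldots aaa\ldots \in \xnz$. Otherwise every state of $(LM)_c$ has finite extent of incomplete response, and the corollary to Proposition~\ref{prop:incompleteresponsecorrespondstolagofelementofln} applied to $(LM)_c$ with annotation $\alpha_{q_1, i} + \beta_{p_1, j}$ produces an annotation of the minimal representative $LM$ whose value at each $((s,t))\kappa$ is
\[
(s)\alpha_{q_1, i} + (t)\beta_{p_1, j} + |\Lambda(\varepsilon, (s,t))|,
\]
which exactly matches the defining formula of $\overline{\alpha_{q_1, i} + \beta_{p_1, j}}$ and guarantees the equality $((LM)_c, \alpha_{q_1, i} + \beta_{p_1, j}) = (LM, \overline{\alpha_{q_1, i} + \beta_{p_1, j}})$ of maps on $\xnz$. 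The only substantive step is the Lipschitz verification above; everything else is a matter of invoking prior results and aligning the annotation formulas, which is where the main bookkeeping lies.
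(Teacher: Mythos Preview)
Your proof is correct and follows essentially the same route as the paper's: apply Lemma~\ref{lem:annotationofproductissumofannoation} to reduce to the unminimised product, then invoke Proposition~\ref{prop:incompleteresponsecorrespondstolagofelementofln} (or rather its corollary for arbitrary annotations) to pass to the minimal representative and identify the resulting annotation with $\overline{\alpha_{q_1,i}+\beta_{p_1,j}}$. Your explicit verification that $(LM)_c\in\Nn{n}$ is a useful addition, since Proposition~\ref{prop:incompleteresponsecorrespondstolagofelementofln} requires this hypothesis and the paper's proof leaves it implicit.
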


\begin{proof}
	 Let $(LM) = \core(L\ast M))$. By Lemma \ref{lem:annotationofproductissumofannoation} we have, $$(L,\alpha_{q_1, i})(M,\beta_{p_1,j}) = ((LM),\alpha_{q_1, i}+ \beta_{p_1,j}).$$ By Proposition \ref{prop:incompleteresponsecorrespondstolagofelementofln},
	\[
	((LM),\alpha_{q_1, i}+ \beta_{p_1,j}) = (LM, \gamma_{((q_1,p_1))\kappa,r})
	\]
	for $r =  \Lambda(\varepsilon, (q_1,p_1))$  the extent of incomplete response of the state $(q_1, p_1)$ of $(LM)$, and $\gamma_{((q_1,p_1))\kappa,r}$ an annotation of $LM$ . Moreover the annotation $\gamma_{((q_1,p_1))\kappa,r}$ of $LM$ satisfies:
	\[
	(((p,q))\kappa)\gamma_{((q_1,p_1))\kappa,r} = \Lambda(\varepsilon, (p,q)) + ((p,q))(\alpha_{q_1, i}+ \beta_{p_1,j})
	\]
	for $(p,q) \in Q_{(LM)}$. Thus the equality $\gamma_{((q_1,p_1))\kappa,r} = \overline{\alpha_{q_1, i}+ \beta_{p_1,j}}$ is valid. 
\end{proof}

\begin{predef}
	Let $ \ASLn{n}$ be the set of pairs $(L, \alpha)$ such that either $L$ is an element of $\SLn{n}$ with no states of incomplete response and $\alpha$ is an annotation of $L$,  or $L = Z_{a}$ for some $a \in \xn$ and $\alpha = \infty$. Let $\ALn{n}$ be the subset of $\ASLn{n}$ consisting of those elements $(L, \alpha)$ such that $L \in \Ln{n}$. 
\end{predef}

One should think of the symbols  $\ASLn{n}$ as meaning ``annotated $\SLn{n}$'' and, analogously, $\ALn{n}$ as meaning ``annotated $\Ln{n}$''.

\begin{prop}\label{Prop:Associativeproduct}
	The map from $\ASLn{n} \times \ASLn{n} \to  \ASLn{n}$ given by $$((L,\alpha),(M, \beta)) \mapsto (LM, \overline{\alpha+\beta})$$  defines an associative product on $\ASLn{n}$. The set $\ALn{n}$ is a subgroup of $\ASLn{n}$.
\end{prop}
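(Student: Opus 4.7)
I would first verify that the operation is well-defined: given $(L,\alpha), (M,\beta) \in \ASLn{n}$, the element $LM$ lies in $\SLn{n}$ since $\SLn{n}$ is a submonoid of $\Mn{n}$. If some state of $\core(L\ast M)$ has infinite extent of incomplete response then $LM = Z_c$ for some $c \in \xn$ and $\overline{\alpha+\beta}$ is declared to be $\infty$, so $(LM, \overline{\alpha+\beta}) \in \ASLn{n}$. Otherwise $LM$ has no states of incomplete response, and the notation preceding the statement defines $\overline{\alpha+\beta}$ as a map $Q_{LM} \to \Z$; using condition \ref{Lipshitzconstraint} together with Lemma~\ref{lem:lagrespectsequivalence}, one checks that this map is independent of the choice of representative of the $\omega$-equivalence class and satisfies the annotation rule \eqref{eqn:annotationrule}. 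Both verifications are essentially contained in the proofs of Lemma~\ref{lem:annotationofproductissumofannoation} and Proposition~\ref{prop:incompleteresponsecorrespondstolagofelementofln}, so I would invoke them rather than repeat the calculations.

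For associativity I would leverage Proposition~\ref{prop:annotationsrepectsproductwithsomefudge}, which identifies the formal product $(LM, \overline{\alpha+\beta})$ with the composition $(L,\alpha)(M,\beta)$ of the induced maps on $\xnz$ (under the convention that $(Z_a,\infty)$ denotes the constant map onto $\ldots aaa \ldots$). Applying this equality twice in each bracketing of a triple $(L,\alpha),(M,\beta),(N,\gamma) \in \ASLn{n}$ gives
\begin{IEEEeqnarray*}{rCl}
\bigl((LM)N,\, \overline{\overline{\alpha+\beta}+\gamma}\bigr) &=& (L,\alpha)(M,\beta)(N,\gamma) \\
&=& \bigl(L(MN),\, \overline{\alpha+\overline{\beta+\gamma}}\bigr)
\end{IEEEeqnarray*}
as maps $\xnz \to \xnz$. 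Since $\SLn{n}$ is already known to be a monoid, $(LM)N = L(MN)$ as transducers, and Proposition~\ref{prop:uniquenessofmaps} then forces the two annotations to coincide in the generic case; in the degenerate case both values are $\infty$ by the convention $\infty + i = \infty$.

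For the subgroup assertion, the identity is $(\id,\mathbf{0})$ where $\mathbf{0}$ is the zero annotation of $\id \in \Ln{n}$, and closure under multiplication is immediate because $\Ln{n}$ is a group, so $LM \in \Ln{n}$ whenever $L, M \in \Ln{n}$. For inverses I would first observe that $(L,\alpha)$ with $L \in \Ln{n}$ induces an automorphism of $(\xnz,\shift{n})$: by definition of $\Ln{n}$ \emph{some} annotation of $L$ yields an automorphism, and any two annotations of $L$ differ by an integer constant, so by Corollary~\ref{cor:annotationsinthesamecoset} they differ by a power of $\shift{n}$ and therefore all annotations of $L$ give automorphisms. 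Picking any annotation $\beta_0$ of $L^{-1} \in \Ln{n}$ we obtain $(L,\alpha)(L^{-1},\beta_0) = (\id, k)$ for some $k \in \Z$; since $\beta := \beta_0 - k$ is again a valid annotation of $L^{-1}$ (adding a constant preserves \eqref{eqn:annotationrule}), the pair $(L^{-1},\beta)$ is a right inverse of $(L,\alpha)$. It is automatically a two-sided inverse because $(L,\alpha)$ is a bijection on $\xnz$, so the formal product $(L^{-1},\beta)(L,\alpha) = (\id, \overline{\beta+\alpha})$ must also induce the identity map, forcing $\overline{\beta+\alpha} = \mathbf{0}$ by Proposition~\ref{prop:uniquenessofmaps}. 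The main delicacy throughout is shepherding the $Z_a$ boundary cases, but the $\infty$-conventions absorb them cleanly.
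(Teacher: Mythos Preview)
Your proof is correct and follows essentially the same route as the paper's own argument: both deduce associativity by applying Proposition~\ref{prop:annotationsrepectsproductwithsomefudge} to identify the formal product with composition of maps on $\xnz$, then invoke Proposition~\ref{prop:uniquenessofmaps} to recover equality of the annotations; and both derive the subgroup claim from the fact that $\Ln{n}$ is already a group, adjusting the annotation of $L^{-1}$ by a constant to obtain the identity pair. One small imprecision: where you write ``by definition of $\Ln{n}$ \emph{some} annotation of $L$ yields an automorphism,'' note that in the body of the paper $\Ln{n}$ is defined as $\On{n}\cap\SLn{n}$, not via annotations, so this is not literally the definition---but the fact you need follows immediately anyway, since $(L,\alpha)(L^{-1},\beta_0)=(\id,k)$ and $(L^{-1},\beta_0)(L,\alpha)=(\id,k')$ exhibit $(L,\alpha)$ as two-sidedly invertible in $\End(\xnz,\shift{n})$.
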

\begin{proof}
	Let $L,M,P \in \SLn{n}$ with respective annotations $\alpha, \beta, \gamma$.  By Proposition~\ref{prop:annotationsrepectsproductwithsomefudge} and using the fact that composition of functions is associative, the following equalities hold,
	
	\begin{IEEEeqnarray*}{rCl}
	 ((LM)P,\overline{\overline{\alpha + \beta}+ \gamma})& =& ((L,\alpha)(M,\beta))(P,\gamma) = (L,\alpha)((M,\beta)(P,\gamma))\\
	  &=& (L(MP),\overline{\alpha+\overline{\beta+\gamma}}).
	\end{IEEEeqnarray*}
	
	Now Proposition~\ref{prop:uniquenessofmaps} implies that $L(MP) = (LM)P$ (although this fact already follow from associativity of the binary operation on $\SLn{n}$)  and $\overline{\overline{\alpha + \beta}+ \gamma} = \overline{\alpha+\overline{\beta+\gamma}}$. The pair $(\id, 0)$ (where $\id$ is the single state identity transducer and $0$ is the annotation that assigns the value $0$ to the state of $\id$) is the identity element of $\ASLn{n}$.
	
    Observe that since $\Ln{n}$ is a group, then the restriction of the multiplication on $\ASLn{n}$ to $\ALn{n}$ does in fact yield a  binary operation on $\ALn{n}$. Let $(L,\alpha) \in \ALn{n}$ observe that there is an element $M \in \Ln{n}$ such that $LM = \id$, the single state identity transducer, thus there is an annotation $\beta$ of $M$ such that $(L,\alpha)(M,\beta) = (M,\beta)(L,\alpha) = (\id, 0)$.  
\end{proof}

Thus $\ASLn{n}$ together with the binary operation defined above forms a monoid. We observe that the set of elements $\{(Z_{a}, \infty) \mid a \in \xn\}$ is a set of right zeroes of this monoid.

\subsection{$\End(\xnz, \shift{n}$) is isomorphic to $\ASLn{n}$}

Let $\varphi: \End(\xnz, \shift{n}) \to \ASLn{n}$ by $\shift{n}^{i}f \mapsto ([P], \alpha)$ where $i \in \Z$, $f \in F_{\infty}(\xn)$, $P \in \spn{n}$ is such that $f_{P} = f$, $[P]$ is the minimal representative of $P$, and ${\alpha}$ is the annotation defined by $(q)\kappa \mapsto i + \Lambda(\varepsilon, q)$. Note that the map $\alpha$ is in fact an annotation by Corollary~\ref{cor:incompleteresponsecreateslaginpn} and \ref{cor:annotationsinthesamecoset}.

\begin{theorem}\label{thm:EndisotoASLn}
	  The map $\varphi: \End(\xnz, \shift{n}) \to \ASLn{n}$ is an isomorphism of monoids. The restriction of $\varphi$ to the subgroup $\Aut(\xnz, \shift{n})$ yields an isomorphism to the group~$\ALn{n}$.
\end{theorem}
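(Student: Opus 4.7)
The plan is to construct the inverse $\Psi\colon \ASLn{n} \to \End(\xnz, \shift{n})$ of $\varphi$ as the evaluation map sending a pair $(L,\alpha)$ to its induced dynamical map (Definition~\ref{def:Lwithannotationisanendo}), and then to verify that $\varphi$ and $\Psi$ are mutually inverse monoid homomorphisms. Proposition~\ref{prop:annotationsofLareinEnd(Xn,sigma)} (augmented with our convention that $(Z_a,\infty)$ denotes the constant map $x \mapsto \ldots aaa\ldots$) shows that $\Psi$ takes values in $\End(\xnz,\shift{n})$; Proposition~\ref{prop:annotationsrepectsproductwithsomefudge} shows that $\Psi$ respects the multiplication of $\ASLn{n}$; and Proposition~\ref{prop:uniquenessofmaps} gives the injectivity of $\Psi$, since the convention of assigning the annotation $\infty$ to every $Z_a$ resolves the exceptional case of that proposition.

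Next I would show that $\Psi$ is surjective and that $\varphi = \Psi^{-1}$. Given $\psi \in \End(\xnz,\shift{n})$, Theorem~\ref{t:hed2} writes $\psi = \shift{n}^i f$ for some $i \in \Z$ and $f \in F_\infty(X_n)$, and Corollary~\ref{cor:FinftyisoPn} produces the unique $P \in \spn{n}$ with $f_P = f$. Corollary~\ref{cor:incompleteresponsecreateslaginpn} supplies an annotation $\beta$ of $[P]$ with $\Psi([P],\beta) = f_P$ and $\beta((q)\kappa) = |\Lambda(\varepsilon,q)|$, whence Corollary~\ref{cor:annotationsinthesamecoset} yields $\Psi([P], i + \beta) = \shift{n}^i f_P = \psi$; this places $\psi$ in the image of $\Psi$, and the annotation $i + \beta$ is precisely the annotation $(q)\kappa \mapsto i + |\Lambda(\varepsilon,q)|$ specified in the definition of $\varphi(\psi)$. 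Injectivity of $\Psi$ then forces $\varphi$ to be well-defined on $\End(\xnz, \shift{n})$, since any two decompositions $\shift{n}^i f_P = \shift{n}^j f_Q$ of a single endomorphism yield pairs inducing the same map and therefore equal in $\ASLn{n}$, and gives $\varphi = \Psi^{-1}$. Hence $\varphi$ is a bijective monoid homomorphism.

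For the restriction to automorphism groups I would identify the units of $\ASLn{n}$ with $\ALn{n}$. If $(L,\alpha) \in \ASLn{n}$ has a two-sided inverse $(M,\gamma)$, then $LM = ML = \id$ in $\SLn{n}$; since $\Ln{n}$ is by definition the largest inverse-closed subset of $\SLn{n}$, this forces $L \in \Ln{n}$ and hence $(L,\alpha) \in \ALn{n}$. The converse is established in the proof of Proposition~\ref{Prop:Associativeproduct}. Because any monoid isomorphism restricts to a group isomorphism on groups of units, $\varphi$ restricts to the claimed isomorphism $\Aut(\xnz,\shift{n}) \to \ALn{n}$.

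The main obstacle is the well-definedness of $\varphi$: a priori one endomorphism $\psi$ admits many decompositions $\shift{n}^i f$, and it is not obvious from the definition alone that they all lead to the same pair in $\ASLn{n}$. I would sidestep this by constructing $\Psi$ first and transferring uniqueness through Proposition~\ref{prop:uniquenessofmaps}; the technical groundwork concerning the interaction of annotations with $\omega$-equivalence, minimization, and products has already been established in Propositions~\ref{prop:incompleteresponsecorrespondstolagofelementofln} and \ref{prop:annotationsrepectsproductwithsomefudge}.
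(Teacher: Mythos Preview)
Your proposal is correct and takes essentially the same approach as the paper, which simply cites Corollaries~\ref{cor:FinftyisoPn}, \ref{cor:incompleteresponsecreateslaginpn}, \ref{cor:annotationsinthesamecoset} and Propositions~\ref{prop:uniquenessofmaps}, \ref{prop:annotationsrepectsproductwithsomefudge} without spelling out the logical structure. Your explicit construction of the inverse map $\Psi$ first, and your handling of the well-definedness of $\varphi$ via injectivity of $\Psi$, make the argument more transparent than the paper's terse citation list, but the underlying ingredients are identical.
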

\begin{proof}
	This is a consequence of  Corollaries~\ref{cor:FinftyisoPn}, \ref{cor:incompleteresponsecreateslaginpn}, and \ref{cor:annotationsinthesamecoset}, and, Propositions~\ref{prop:uniquenessofmaps} and \ref{prop:annotationsrepectsproductwithsomefudge}. 
\end{proof}

\begin{theorem}
	The map from $\ASLn{n} \to \SLn{n}$ given by $(L, \alpha) \mapsto L$ is a monoid homomorphism with kernel the congruence generated by the pairs $\{ ((\id, i),(\id,0)) \mid i \in \Z\}$.
\end{theorem}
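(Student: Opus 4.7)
The plan is to handle the homomorphism assertion by direct computation and then identify the kernel congruence via a sandwich argument. Writing $\pi : \ASLn{n} \to \SLn{n}$ for the projection $(L,\alpha) \mapsto L$, Proposition~\ref{Prop:Associativeproduct} gives $\pi((L,\alpha)(M,\beta)) = \pi(LM, \overline{\alpha+\beta}) = LM = \pi(L,\alpha)\pi(M,\beta)$ and $\pi(\id,0) = \id$, so $\pi$ is a monoid homomorphism and its kernel congruence is $K := \{((L,\alpha),(M,\beta)) : L = M\}$. Letting $C$ denote the congruence generated by the pairs $\{((\id, i),(\id,0)) \mid i \in \Z\}$, the inclusion $C \subseteq K$ is immediate, as each generator has $\id$ on both sides.

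The interesting direction is $K \subseteq C$. Given $(L, \alpha), (L, \beta)$ with a common first coordinate, I would first dispose of the degenerate case $L = Z_a$, in which the only allowed annotation is $\infty$ and so $(L, \alpha) = (L, \beta)$ trivially. Otherwise $L$ has no state of incomplete response, and Remark~\ref{rem:equivofannotation} furnishes an integer $k$ with $\beta = \alpha + k$. The key identity I would then establish is
$$
(L,\alpha)(\id, k) = (L, \alpha + k) = (L, \beta), \qquad (L, \alpha)(\id, 0) = (L, \alpha).
$$
Applying compatibility of $C$ to the generator $((\id, k), (\id, 0))$ then yields
$$
(L, \beta) = (L, \alpha)(\id, k) \mathrel{C} (L, \alpha)(\id, 0) = (L, \alpha),
$$
so $((L, \alpha), (L, \beta)) \in C$, as required.

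The one place where actual computation is needed, and the main (mild) obstacle, is the product identity $(L, \alpha)(\id, k) = (L, \alpha + k)$. To verify it I would unfold the $\ASLn{n}$-product: the transducer product $L * \id$ is naturally isomorphic to $L$ via $(s, \ast) \mapsto s$ (where $\ast$ denotes the unique state of $\id$), is already core, and introduces no new states of incomplete response, so $L \cdot \id = L$ in $\SLn{n}$. Furthermore the constant map $k_{\id} : Q_{\id} \to \Z$ sending $\ast$ to $k$ is an annotation of $\id$ (vacuously, since $\id$ is synchronous and the annotation rule imposes no constraint across a length-preserving loop), and the combined annotation $\overline{\alpha + k_{\id}}$ therefore sends each $q \in Q_L$ to $(q)\alpha + k$, which is precisely the annotation $\alpha + k$. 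With this identity established, the remainder of the proof is purely formal congruence-chasing.
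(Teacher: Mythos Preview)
Your proof is correct and follows essentially the same approach as the paper: identify the kernel as pairs with equal first coordinate, note that annotations of a fixed $L$ differ by an integer, and connect this to the generating set. The paper's proof is considerably terser---it simply asserts that $L=M$ forces $\beta = \alpha + i$ and leaves the congruence verification implicit---whereas you have carefully unpacked the identity $(L,\alpha)(\id,k) = (L,\alpha+k)$ and handled the degenerate $Z_a$ case explicitly, which is a welcome addition of rigour.
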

\begin{proof}
	Note that two element $(L,\alpha)$ and $(M, \beta)$ have the same image under this map precisely when $L=M$, and so $\beta = \alpha +i$ for some $i \in \Z$. By definition of the multiplication on $\ASLn{n}$ the map is a homomorphism.  
\end{proof}

\begin{prerk}
	Notice that the restriction to the second coordinate of the binary operation on $\ASLn{n}$ does not yield a homomorphism unto an abelian group. This is because although, by Lemma~\ref{lem:annotationofproductissumofannoation} annotations are additive under un-minimised products, the minimisation process introduces some obfuscation arising from states of incomplete response.
\end{prerk}

\begin{cor}\label{cor:mainresult}
	$\End(\xnz, \shift{n})/\gen{(\shift{n}^{i}, \shift{n}^0 \mid i \in \Z} \cong \SLn{n}$ and $\Aut(\xnz, \shift{n})/\gen{\shift{n}} \cong \Ln{n}$.
\end{cor}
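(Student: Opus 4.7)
The plan is to derive the corollary directly from the two preceding theorems: Theorem \ref{thm:EndisotoASLn}, which gives the isomorphism $\varphi: \End(\xnz,\shift{n})\to\ASLn{n}$, and the monoid epimorphism $\pi:\ASLn{n}\to\SLn{n}$ given by $(L,\alpha)\mapsto L$, whose kernel is the congruence on $\ASLn{n}$ generated by the set $\{((\id,i),(\id,0))\mid i\in\Z\}$. The general third isomorphism-type argument then gives a composite isomorphism
$$\End(\xnz,\shift{n})/\ker(\pi\circ\varphi)\;\cong\;\SLn{n},$$
so the task reduces to identifying $\ker(\pi\circ\varphi)$ with the congruence $\gen{\shift{n}}$ on $\End(\xnz,\shift{n})$.

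First I would trace how $\varphi$ acts on powers of the shift. Under the definition of $\varphi$, an element $\shift{n}^{i}f$ with $f\in F_{\infty}(\xn)$ is sent to $([P],\alpha)$ where $P\in\spn{n}$ satisfies $f_{P}=f$, and $\alpha:(q)\kappa\mapsto i+\Lambda(\varepsilon,q)$. Specialising to $f=\mathrm{id}_{\xnz}$, the corresponding transducer $P$ is $\omega$-equivalent to the identity transducer $\id$, which has no states of incomplete response, so $\Lambda(\varepsilon,q)=0$ for every state $q$. Consequently $\varphi(\shift{n}^{i})=(\id,i)$, and in particular $\varphi(\shift{n})=(\id,1)$. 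It follows that the congruence on $\End(\xnz,\shift{n})$ generated by the pairs $\{(\shift{n}^{i},\shift{n}^{0})\mid i\in\Z\}$ is carried by $\varphi$ precisely to the congruence on $\ASLn{n}$ generated by $\{((\id,i),(\id,0))\mid i\in\Z\}$, which is the kernel of $\pi$ by the second theorem above.

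Combining these observations, $\varphi$ descends to an isomorphism of the quotient monoids
$$\End(\xnz,\shift{n})/\gen{(\shift{n}^{i},\shift{n}^{0})\mid i\in\Z}\;\xrightarrow{\ \sim\ }\;\ASLn{n}/\ker(\pi)\;\xrightarrow{\ \sim\ }\;\SLn{n},$$
establishing the first half of the corollary. For the automorphism case, I would restrict the whole picture to the group-theoretic part: by Theorem \ref{thm:EndisotoASLn}, $\varphi$ restricts to an isomorphism $\Aut(\xnz,\shift{n})\cong\ALn{n}$, and the projection $\pi$ restricts to a surjective homomorphism $\ALn{n}\twoheadrightarrow\Ln{n}$ whose kernel is again generated by the pairs $((\id,i),(\id,0))$ (since $(\id,i)\in\ALn{n}$ for every $i\in\Z$). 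Pushing the congruence $\gen{\shift{n}}$ through $\varphi$ exactly as before yields $\Aut(\xnz,\shift{n})/\gen{\shift{n}}\cong\Ln{n}$.

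The only potentially subtle step is the identification $\varphi(\shift{n}^{i})=(\id,i)$; everything else is a clean application of the two preceding theorems and the universal property of quotients by a congruence. Once this identification is in hand the result is essentially immediate, so I do not anticipate a serious obstacle beyond a careful unwinding of the definition of $\varphi$ on the shift.
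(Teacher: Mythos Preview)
Your proposal is correct and is precisely the argument the paper has in mind: the corollary is stated without proof, being an immediate consequence of Theorem~\ref{thm:EndisotoASLn} and the preceding theorem identifying the kernel of $(L,\alpha)\mapsto L$, together with the computation $\varphi(\shift{n}^{i})=(\id,i)$ that you carry out. There is nothing to add.
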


\section{Algebra of central extensions: 2-cocycles, 2-coboundaries and splittings}\label{Sec:cocyclesAndCoboundaries}

In this Section, we explore the exact sequence 

\begin{equation} \label{Eqn:exactsequence}
 1 \to \gen{\shift{n}} \to \aut{\xnz,\shift{n}} \to \Ln{n} \to 1.    
\end{equation}

Specifically, we determine exactly when \ref{Eqn:exactsequence} splits.  We have already seen that we can use annotations to take products in the extension (even when it is not split).  It follows that we should be able to describe the 2-cocycles and 2-coboundaries for the extension using annotations.  We explain how to do this in the later part of the section.

Before that, we shall first give an argument using the dimension representation of $\aut{\xnz,\shift{n}}$ to characterise exactly when the sequence \ref{Eqn:exactsequence} splits.  

The  \emph{dimension representation} is introduced by Kreiger in \cite{KriegerDimension}. For the group $\aut{\xnz, \shift{n}}$ the dimension representation  is a homomorphism $\dimr: \aut{\xnz, \shift{n}} \to \mathcal{G}(n)$ where $\mathcal{G}(n)$ is the multiplicative subgroup of $\Q\backslash\{0\}$ generated by the prime divisors of $n$.  The article \cite{BoyleMarcuTrow} provides an in-depth discussion on the dimension representation.

\begin{theorem}{\label{Thm:splittingtheorem-Belk}}
 The short exact sequence $1 \to \gen{\shift{n}} \to \aut{\xnz,\shift{n}} \to \Ln{n} \to 1$ splits if and only if $n$ is not an integral power of a smaller integer. 
\end{theorem}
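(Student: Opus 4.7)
The plan is to prove the two directions separately, following the strategy sketched in the introduction: use the dimension representation to construct a complement in one direction, and use the existence of proper roots of the shift to obstruct splitting in the other. Recall that $\mathcal{G}(n)$ is the multiplicative subgroup of $\Q^{*}$ generated by the primes dividing $n$; if $n=p_1^{a_1}\cdots p_r^{a_r}$, then $\mathcal{G}(n)$ is free abelian of rank $r$, and the image $\dimr(\shift{n})=n$ corresponds to the vector $(a_1,\ldots,a_r)\in\Z^r$. Also, $\dimr$ is injective on $\gen{\shift{n}}$ since $n\ne 1$, so $\gen{\shift{n}}$ embeds as the infinite cyclic subgroup $\gen{n}$ of $\mathcal{G}(n)$.

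For the direction "$n$ is not a proper power $\Rightarrow$ the sequence splits," suppose $\gcd(a_1,\ldots,a_r)=1$. Then $(a_1,\ldots,a_r)$ is a primitive vector of $\Z^r$ and extends to a $\Z$-basis, so there is a direct complement $\gen{n}^c\le\mathcal{G}(n)$ with $\mathcal{G}(n)=\gen{n}\times\gen{n}^c$. I would define $H:=\dimr^{-1}(\gen{n}^c)$, which is a normal subgroup since $\mathcal{G}(n)$ is abelian. Two short verifications finish the argument:
\begin{enumerate}
\item $H\cap\gen{\shift{n}}=1$: any element of the intersection maps into $\gen{n}\cap\gen{n}^c=1$, and $\dimr$ is injective on $\gen{\shift{n}}$.
\item $H\cdot\gen{\shift{n}}=\aut{\xnz,\shift{n}}$: for any $\phi$, write $\dimr(\phi)=n^k\cdot g$ with $g\in\gen{n}^c$; then $\shift{n}^{-k}\phi\in H$, so $\phi\in\gen{\shift{n}}\cdot H$.
\end{enumerate}
Because $\gen{\shift{n}}$ is central, the internal decomposition $\aut{\xnz,\shift{n}}=\gen{\shift{n}}\times H$ follows, giving $H\cong\aut{\xnz,\shift{n}}/\gen{\shift{n}}\cong\Ln{n}$ by Corollary~\ref{cor:mainresult}, which exhibits the desired splitting.

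For the converse, "splits $\Rightarrow$ $n$ is not a proper power," I would argue by contrapositive. Assume $n=m^k$ with $k\ge 2$. The standard bijection $X_n\leftrightarrow X_m^{k}$ induces a homeomorphism $\xnz\cong X_m^{\Z}$ under which $\shift{n}$ corresponds to $\shift{m}^k$. Hence $\tau:=\shift{m}$ defines an element of $\aut{\xnz,\shift{n}}$ satisfying $\tau^k=\shift{n}$. Suppose, toward a contradiction, that the sequence splits via some $\sigma:\Ln{n}\hookrightarrow\aut{\xnz,\shift{n}}$. Then every element has a unique expression $\shift{n}^{a}\sigma(\ell)$ with $(a,\ell)\in\Z\times\Ln{n}$, and the multiplication is coordinate-wise because $\gen{\shift{n}}$ is central. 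Writing $\tau=\shift{n}^{a}\sigma(\ell)$, raising to the $k$-th power gives $\shift{n}^{ak}\sigma(\ell^k)=\shift{n}^{1}\sigma(1)$, forcing $ak=1$ in $\Z$, which is impossible for $k\ge 2$.

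The main obstacle, which I expect to be purely expositional rather than substantive, is to properly cite (or briefly verify) the two background ingredients: that $\dimr$ sends $\shift{n}$ to $n$ (standard from Krieger~\cite{KriegerDimension} or \cite{BoyleMarcuTrow}), and that the regrouping $X_n\leftrightarrow X_m^k$ really yields a valid element of $\aut{\xnz,\shift{n}}$ satisfying $\tau^k=\shift{n}$. Once these are in hand, the proof is essentially formal, with all the content concentrated in the free-abelian-group structure of $\mathcal{G}(n)$ and the elementary observation that $ak=1$ has no solutions in $\Z$ for $|k|\ge 2$.
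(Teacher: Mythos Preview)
Your proof is correct and follows essentially the same approach as the paper's own proof: both directions rely on the dimension representation, with the splitting direction coming from a complement $\gen{n}^c$ in the free abelian group $\mathcal{G}(n)$ and the non-splitting direction coming from the existence of a proper root of $\shift{n}$ when $n$ is a proper power. Your write-up is in fact more detailed than the paper's (which leaves the verifications $H\cap\gen{\shift{n}}=1$, $H\gen{\shift{n}}=\aut{\xnz,\shift{n}}$, and the ``$ak=1$'' obstruction implicit), but the underlying argument is the same.
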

\begin{proof}
    If $n$ is not a power of a smaller integer, then $n$ has no proper roots in $\mathcal{G}(n)$. This means, as $\mathcal{G}(n)$ is a free abelian group, that $\mathcal{G}(n)$ is the internal direct product of $\gen{n}$ and some complementary subgroup $\gen{n}^{c}$. Using the well known fact that $\dimr(\shift{n})  =n$, we deduce that $\aut{\xnz, \shift{n}}$ is the internal direct product of $\shift{n}$ and $\dimr^{-1}(\gen{n}^{c})$. It follows that $\dimr^{-1}(\gen{n}^{c}) \cong \Ln{n}$.
    
    On the other hand, if $n$ is a power of some of a strictly smaller integer, then, it is a standard (folkloric) exercise to construct proper roots of the shift map $\shift{n}$ in $\aut{\xnz, \shift{n}}$. From this it is immediate that the short exact sequence $1 \to \gen{\shift{n}} \to \aut{\xnz,\shift{n}} \to \Ln{n} \to 1$ does not split.
\end{proof}

Now we characterise the 2-cocycles and 2-coboundaries, by using annotations.

First we fix a section of the group $\gen{\shift{n}}$ in $\aut{\xnz, \shift{z}}$.  Recall that a section for $\gen{\shift{n}}$ in $\aut{\xnz, \shift{z}}$ is a choice of unique coset representative for each coset of $\gen{\shift{n}}$ in $\aut{\xnz, \shift{z}}$.

Define a set $S$ as follows:
\[
S\seteq \left\{(T,\alpha_{T,0})\, |\, T\in\Ln{n}\right\},
    \]
where for $T \in \Ln{n}$, the notation $\alpha_{T,0}$ represents the unique annotation of the transducer $T$
that assigns $0$ to the unique state $q_0$ of $T$ such that $\pi_{T}(0, q_0) = q_0$.

By 
Theorem~\ref{thm:EndisotoASLn}  (stating that for any element $\psi \in \aut{\xnz, \shift{n}}$ there is a unique pair
$(U, \gamma)$, for $U \in \Ln{n}$ and $\gamma$ an annotation of $U$, inducing $\psi$) $S$ is a section of $\gen{\shift{n}}$ in
$\aut{\xnz, \shift{n}}$. We observe that in $S$ the representative for the
group $\gen{\shift{n}}$ is actually the pair $(\id,0)$.

Define a map $c: \Ln{n} \times \Ln{n} \to \Z$ as follows:

$$(T,U)c = {|\Lambda(\varepsilon,(q_0,p_0))|}$$ where $q_0 \in Q_{T}$ and $p_0 \in Q_{U}$ satisfy $\pi_{T}(0, q_0) = q_0$ and $\pi_{T}(0, p_0) = p_0$. We now show that the map $c$ represents the element of quotient group $H^2(\Ln{n}, \Z)$ corresponding to the central extension \eqref{Eqn:exactsequence}.

To see this we observe that for two elements $(T, \alpha_{T,0}), (U, \alpha_{U,0}) \in S$, we have the equality:

$$ (T, \alpha_{T,0})(U, \alpha_{U,0}) = (TU, \alpha_{TU,0}) \shift{n}^{|\Lambda(\varepsilon,(q_0,p_0))|}.$$
This follows from the fact that, by definition, $\overline{\alpha_{T,0} + \alpha_{U,0}}$ is the annotation of $TU$, which assigns the state $(q_0,p_0)$ the value $(q_0) \alpha_{T,0} + (p_0) \alpha_{U,0} + |\Lambda(\varepsilon,(q_0,p_0))|$ and Corollary~\ref{cor:annotationsinthesamecoset}. That $c$ satisfies the co-cycle identity is a consequence of associativity of multiplication in $\aut{\xnz, \shift{n}}$.

Suppose we had another section $S'$ of $\gen{\shift{n}}$ in $\aut{\xnz, \shift{n}}$. We derive the 2-coboundary relating the associated 2-cocyle $c'$ of the section $S'$ to the  2-cocyle $c$ derived above. 

Set $$S' := \{ (T,\beta_{T}) : T \in \Ln{n}, \beta_{T} \mbox{ an annotation of } T\}.$$ Then $c'$ is the map defined as follows. For $T,U \in \Ln{n}$, $$(T,U)c' = (q_0)\beta_{T} + (p_0) \beta_{U} + |\Lambda(\varepsilon, (q_0, p_0))| - (t_0) \beta_{TU}$$ where $q_0 \in Q_T$, $p_0 \in Q_{U}$ and $t_0 \in Q_{TU}$, $\pi_{T}(0, q_0) = 0$, $\pi_{U}(0, p_0) = 0$ and $\pi_{TU}(0, t_0) = 0$. This follows from the observation that $(T, \beta_{T})(U, \beta_{U}) = (TU, \overline{\alpha_{T}+ \beta_{U}})$, the fact that by definition: $ (t_0)\overline{\alpha_{T}+ \beta_{U}} = (q_0)\beta_{T} + (p_0) \beta_{U} + \Lambda(\varepsilon, (q_0, p_0)$ and Corollary~\ref{cor:annotationsinthesamecoset}.

Define a map $d: \Ln{n} \to \Z$ by $(T)d \mapsto (q_0) \beta_{T}$ where $q_0 \in Q_T$ satisfies $\pi_{T}(0, q_0) = q_0$. Then for $T,U \in \Ln{n}$,

\begin{IEEEeqnarray*}{rCl}
(T,U)c' - (T,U)c &=& (q_0)\beta_{T} + (p_0) \beta_{U} + |\Lambda(\varepsilon, (q_0, p_0))| - (t_0) \beta_{TU} - |\Lambda(\varepsilon, (q_0, p_0))| \nonumber \\ 
&=& (q_0)\beta_{T} + (p_0) \beta_{U} - (t_0) \beta_{TU}  \nonumber \\
&=& (T)d + (U)d - (TU)d.    
\end{IEEEeqnarray*}

Thus $d$ is the 2-coboundary relating the 2-cocycles $c$ and $c'$.

\section{Applications}\label{Section:applications}

In this section we develop some consequences of Corollary~\ref{cor:mainresult}. This will largely fall into two categories: in the first instance we use information about the groups $\On{n}$ and $\Ln{n}$ to deduce consequences for $\Aut(\xnz, \shift{n})$, in the second instance we go in the other direction. We begin with the former.

\subsection{Applications to $\aut{\xnz, \shift{n}}$}
The paper \cite{OlukoyaAutTnr} describes a map $\rsig: \On{n} \to \Un{n-1}$ (here $\Un{n-1}$ denotes the group of units of $\Z_{n-1}$). For $1 \le r \le n-1$ let $\Un{n-1}^{r}$ be the subgroup of those elements $j \in \Un{n-1}$ such that $jr \equiv r \pmod{n-1}$. We note that for all $1 \le r \le n-1$, $\On{n,r} \le \On{n}$ and  $\On{n,n-1} = \On{n}$. In \cite{OlukoyaAutTnr} the following result is proved:

\begin{theorem}\label{thm:rsig}
	The following statements are valid:
\begin{enumerate}[label= (\arabic*)]
	\item $\rsig$ is a homomorphism; \label{rsig:hom}
	\item an element $T \in \On{n}$ is an element of $\On{n,r}$ if and only if $(T)\rsig \in \Un{n-1}^{r}$; \label{rsig:stabliser}
	\item $(\Ln{n})\rsig$ contains $\Un{n-1,n}:=\gen{ d \in \Z_{n-1} \mid d \mbox{ is a divisor  of } n}$, the subgroup of $\Un{n-1}$ generated by the divisors of $n$. Note that this means $(\On{n-1})\rsig$ contains $\Un{n-1,n}$ as well.\label{rsig:divisorsofn}
\end{enumerate}

\end{theorem}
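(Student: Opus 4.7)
The plan is to first adopt the definition of $\rsig$ from \cite{OlukoyaAutTnr} and then address the three clauses in sequence. For $T \in \On{n}$, each state $q$ of $T$ induces a homeomorphism onto a clopen subset $\im q \subseteq \xnn$, whose unique minimal antichain has some size $s_q \in \N$; since refining an antichain replaces a single word with its $n$ descendants, the residue $s_q \bmod (n-1)$ is a well-defined invariant of the clopen set, and invertibility of $T$ in $\On{n}$ forces this residue to be coprime to $n-1$. The map $\rsig$ encodes the multiplicative effect of $T$ on such residues, landing in $\Un{n-1}$.

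For part~(1), I would prove the homomorphism property by analysing the product transducer $T \ast U$: a state $(q,p)$ of $TU$ has image $\im{(q,p)}$ obtained by applying $U_p$ pointwise to $\im q$, so the corresponding antichain-size factorises as $s_q \cdot s_p$ modulo $n-1$, yielding $(TU)\rsig = (T)\rsig \cdot (U)\rsig$. For part~(2), recall that $\On{n,r}$ consists of those $T \in \On{n}$ whose induced outer automorphism of $G_{n,n-1}$ descends to $\out{G_{n,r}}$; equivalently, the $T$ that preserve the class of antichains of size $\equiv r \pmod{n-1}$. Since $(T)\rsig$ is by construction the multiplier $T$ applies to antichain-sizes modulo $n-1$, such preservation amounts to $(T)\rsig \cdot r \equiv r \pmod{n-1}$, i.e., $(T)\rsig \in \Un{n-1}^{r}$.

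For part~(3), I would construct, for each divisor $d$ of $n$, an explicit element $T_d \in \Ln{n}$ with $(T_d)\rsig = d$. Writing $n = d \cdot e$ and partitioning $\xn$ into $e$ blocks of $d$ consecutive symbols each, $T_d$ should be a small strongly synchronizing transducer whose states encode block-position data and whose outputs satisfy \ref{Lipshitzconstraint} together with the bi-synchronizing property of $\Ln{n}$, arranged so that the minimal antichain decomposition of each image contributes the factor $d$ modulo $n-1$. Generating $\Un{n-1,n}$ then follows from part~(1) applied to products of the $T_d$.

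The main obstacle will be part~(3). Parts~(1) and~(2) are structural consequences of the definition of $\rsig$, but (3) requires exhibiting concrete transducers in $\Ln{n}$ realising each prescribed signature value. These candidates must simultaneously achieve strong synchronization, the Lipschitz condition \ref{Lipshitzconstraint}, bi-synchronization, and the correct evaluation $(T_d)\rsig = d$, which demands careful bookkeeping of how the antichain sizes transform through $T_d$ and how states and outputs interact under minimisation.
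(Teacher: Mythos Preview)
The paper does not actually prove this theorem: it is stated immediately after the sentence ``In \cite{OlukoyaAutTnr} the following result is proved,'' and no argument is given here. All three parts are imported wholesale from that reference, so there is no in-paper proof to compare your proposal against.

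That said, your outline is broadly in the right spirit for what such a proof would look like. For part~(1), the factorisation $s_{(q,p)} \equiv s_q \cdot s_p \pmod{n-1}$ is the correct mechanism, though you should be explicit that this computation is done on $\core(T\ast U)$ before removing incomplete response (the paper's Remark following the definition of $\rsig$ confirms this is legitimate). For part~(2), your description of $\On{n,r}$ as the stabiliser of the residue class $r$ under the multiplicative action is exactly the characterisation used in \cite{OlukoyaAutTnr}. For part~(3), you correctly identify that the content lies in constructing explicit witnesses $T_d \in \Ln{n}$ with $(T_d)\rsig = d$; your proposed block-partition construction is plausible but would need to be written out and checked carefully, since bi-synchronization is the delicate condition to verify. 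Note also that the present paper goes further and proves the \emph{reverse} containment $(\Ln{n})\rsig \subseteq \Un{n-1,n}$ in its Theorem~\ref{thm:Lndivisorsofn}, which is a separate argument not covered by your proposal.
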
 
 From points ~\ref{rsig:hom} and \ref{rsig:stabliser} of Theorem~\ref{thm:rsig}, one immediately deduces that groups $\On{n,r}$ for all $1 \le r < n-1$ are all normal subgroups of $\On{n,n-1}$. As $\Ln{n}$ is a subgroup of $\On{n}$, the previous sentence holds when $\On{n}$ is replaces with $\Ln{n}$. We deduce the immediate corollary for $\ALn{n} \cong \aut{\xnz, \shift{n}}$.

\begin{cor}
	For $1 \le r \le n-1$, the set  $\ALn{n,r}:= \{ (\alpha, L) \mid L \in \Ln{n,r}  \} \subset \ALn{n}$ is a normal subgroup of $\ALn{n}$.
\end{cor}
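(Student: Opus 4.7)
The plan is to realise $\ALn{n,r}$ as the preimage of $\Ln{n,r}$ under the natural projection $\pi\colon \ALn{n} \to \Ln{n}$ given by $(L,\alpha) \mapsto L$, and then invoke the standard fact that the preimage of a normal subgroup under a group homomorphism is normal. First I would observe that the monoid homomorphism $\ASLn{n} \to \SLn{n}$ from the preceding section restricts to a group homomorphism on $\ALn{n}$, with image $\Ln{n}$; under the identification $\ALn{n} \cong \aut{\xnz,\shift{n}}$ of Theorem~\ref{thm:EndisotoASLn} this is nothing other than the quotient map $\aut{\xnz,\shift{n}} \to \aut{\xnz,\shift{n}}/\gen{\shift{n}}$ of Corollary~\ref{cor:mainresult}. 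Directly from the definition of $\ALn{n,r}$, we have the set-theoretic equality $\ALn{n,r} = \pi^{-1}(\Ln{n,r})$.

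Next, I would verify that $\Ln{n,r}$ is normal in $\Ln{n}$. The discussion immediately preceding the corollary combines parts \ref{rsig:hom} and \ref{rsig:stabliser} of Theorem~\ref{thm:rsig} to exhibit $\On{n,r}$ as the preimage $\rsig^{-1}(\Un{n-1}^{r})$ of a subgroup of the abelian group $\Un{n-1}$. Since every subgroup of an abelian group is normal, $\On{n,r}$ is the kernel of a homomorphism from $\On{n}$ into $\Un{n-1}/\Un{n-1}^{r}$, and in particular $\On{n,r} \trianglelefteq \On{n}$. Intersecting with the subgroup $\Ln{n} \le \On{n}$ yields $\Ln{n,r} = \On{n,r} \cap \Ln{n} \trianglelefteq \Ln{n}$, as the intersection of a normal subgroup with any subgroup is normal in that subgroup.

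Finally, since $\pi$ is a group homomorphism and $\Ln{n,r}$ is normal in $\Ln{n}$, the preimage $\ALn{n,r} = \pi^{-1}(\Ln{n,r})$ is normal in $\ALn{n}$, which is what we wanted. I do not anticipate any substantive obstacle: the statement is essentially a transport of the known normality of $\Ln{n,r}$ in $\Ln{n}$ across the central extension encoded by annotations, and both ingredients (the projection $\pi$ and the normality via $\rsig$) have already been developed.
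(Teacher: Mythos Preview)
Your proposal is correct and follows precisely the approach the paper sketches in the paragraph preceding the corollary: use parts \ref{rsig:hom} and \ref{rsig:stabliser} of Theorem~\ref{thm:rsig} to see that $\On{n,r}$ (and hence $\Ln{n,r} = \On{n,r}\cap\Ln{n}$) is normal, then pull back along the projection $\ALn{n}\to\Ln{n}$. The paper treats this as an immediate corollary without a written proof, and your argument supplies exactly the details it leaves implicit.
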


We now show that $(\Ln{n})\rsig = \Un{n-1,n}$. We start by defining the map $\rsig$ as in \cite{OlukoyaAutTnr}.

\begin{predef}
Let $T$ be a non-initial transducer. For  each $q \in Q_{T}$ let $B(q)$ be the smallest subset of $\xns$ such that for $\nu \in B(q)$  $U_{\nu} \subset \im{q}$ and $\bigcup_{\nu \in B(q)} U_{\nu} =  \im{q}$ and set $m_{q} := |B(q)|$.
\end{predef}

The following result is from \cite{OlukoyaAutTnr}:

\begin{prop}
	Let $T \in \SOn{n}$ then for any pair $p,q \in Q_{T}$, $m_{p} \equiv m_{q} \pmod{n-1}$.
\end{prop}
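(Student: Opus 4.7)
My plan is to exploit the recursion that the image at a state inherits from its outgoing transitions, and then iterate up to the synchronizing length so that the resulting expression no longer depends on the starting state.

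First I would fix $q \in Q_T$ and observe that reading an input $xu \in X_n^{\mathbb{N}}$ from $q$ produces output $\lambda_T(x,q)\cdot h_{\pi_T(x,q)}(u)$, so
\[
\im{q} \;=\; \bigcup_{x \in X_n} \lambda_T(x,q)\cdot \im{\pi_T(x,q)}.
\]
Injectivity of $h_q$ (condition \textbf{S1}) forces this to be a \emph{disjoint} union: if $\lambda_T(x,q)\cdot h_{\pi_T(x,q)}(u)=\lambda_T(x',q)\cdot h_{\pi_T(x',q)}(u')$, then $h_q(xu)=h_q(x'u')$ and so $x=x'$. For each $x$, concatenating $\lambda_T(x,q)$ onto the elements of $B(\pi_T(x,q))$ produces a family of cylinders that partitions $\lambda_T(x,q)\cdot \im{\pi_T(x,q)}$; taking the union over $x$ yields a pairwise disjoint cover of $\im{q}$ by exactly $\sum_{x \in X_n} m_{\pi_T(x,q)}$ cylinders.

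Next I would invoke the standard fact that any two finite pairwise disjoint cylinder covers of the same clopen subset of $X_n^{\mathbb{N}}$ have sizes in the same residue class modulo $n-1$: both covers can be refined to the common cover by all cylinders of some fixed large depth, and every single refinement move $U_\nu \leftrightarrow \{U_{\nu a}:a \in X_n\}$ changes the count by exactly $n-1$. Comparing the cover above against a minimum cover of $\im{q}$ (of size $m_q$) yields the recursion
\[
m_q \;\equiv\; \sum_{x \in X_n} m_{\pi_T(x,q)} \pmod{n-1}.
\]

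A straightforward induction on $k \ge 0$ then gives
\[
m_q \;\equiv\; \sum_{w \in X_n^k} m_{\pi_T(w,q)} \pmod{n-1}.
\]
Taking $k$ to be a synchronizing level of $T$, the state $\pi_T(w,q) = \mathfrak{s}_k(w)$ depends only on $w$, so the right-hand side is independent of $q$. Hence $m_p \equiv m_q \pmod{n-1}$ for all $p,q \in Q_T$. The main obstacle is the standard cylinder-cover fact in the second step; I would prove it by refining two arbitrary disjoint covers to their common refinement at a sufficiently large depth and tracking how each elementary refinement shifts the count by exactly $n-1$. Everything else is a direct unwinding of \textbf{S1}, \textbf{S2}, and the synchronizing property.
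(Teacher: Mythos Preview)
Your argument is correct. Note that the paper does not actually prove this proposition; it simply imports it from \cite{OlukoyaAutTnr}, so there is no proof here to compare against. What you have written is exactly the natural proof: decompose $\im{q}$ as the disjoint union $\bigcup_{x\in X_n}\lambda_T(x,q)\cdot\im{\pi_T(x,q)}$ using \textbf{S1}, count cylinders to get the one-step recursion $m_q\equiv\sum_{x\in X_n}m_{\pi_T(x,q)}\pmod{n-1}$ via the standard ``any two complete antichains covering the same clopen set have sizes congruent mod $n-1$'' fact, iterate to length $k$, and then invoke strong synchronization so that the sum $\sum_{w\in X_n^k}m_{\mathfrak{s}_k(w)}$ is visibly independent of the initial state. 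Each step is sound; the only place that deserves a sentence of care is confirming that the cylinders $\lambda_T(x,q)\cdot U_\nu$ (ranging over $x\in X_n$ and $\nu\in B(\pi_T(x,q))$) really are pairwise disjoint, which you get from injectivity of $h_q$ together with the fact that each $B(\pi_T(x,q))$ is already an antichain.
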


\begin{predef}
	Let $T \in \On{n}$, then set $(T)\rsig:= m_q \pmod{n-1}$ for $q \in Q_{T}$.
\end{predef}

\begin{prerk}\label{rem:rsigcomputedfromunminimised}
Let $T$ be a core,  strongly synchronizing transducer all of whose states induce continuous maps with clopen image. Let $T_{q_{-1}}$ be the transducer obtained from $T$ by applying the algorithm of Proposition~\ref{prop:algorithmforremovingincompleteresponse} to $T_{q}$ (for $q \in Q_{T}$). Recall that the same set of symbols represents the states of $\core(T')$ and $T$. To avoid confusion denote by $p'$, for $p \in Q_{T}$, the corresponding state of $T'$. Observe that by equation~\ref{redifining output} of Proposition~\ref{prop:algorithmforremovingincompleteresponse},  $\im{p'} = \im{p} - \Lambda(\varepsilon, p) = \{ \delta - \Lambda(\varepsilon, p) \mid \delta \in \im{p} \}$. Thus $m_{p}$ and $m_{p'}$ are the same for all $p \in Q_{T}$. Clearly, $\omega$-equivalent states have the same $m_{q}$ values. This means that, in order to compute $(T)\rsig$ one need not work with a minimal representative  of $T$.
\end{prerk}

\begin{theorem}\label{thm:Lndivisorsofn}
	The map $\rsig: \Ln{n} \to  \Un{n-1}$ has image $\Un{n-1,n}$.
\end{theorem}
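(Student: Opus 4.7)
Plan. Theorem~\ref{thm:rsig}\ref{rsig:divisorsofn} already provides $\Un{n-1,n} \subseteq (\Ln{n})\rsig$, so I need only establish the reverse inclusion $(\Ln{n})\rsig \subseteq \Un{n-1,n}$. The plan is to realize $\rsig$ restricted to $\Ln{n}$ as the modulo-$(n-1)$ reduction of the Kreiger dimension representation $\dimr: \aut{\xnz, \shift{n}} \to \mathcal{G}(n)$. Since the image of $\dimr$ lies in $\mathcal{G}(n)$, its reduction mod $n-1$ will land automatically in $\Un{n-1,n}$.

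First I would set up the algebraic framework. Any prime divisor $p$ of $n$ is coprime to $n-1$ (else $p \mid \gcd(n,n-1) = 1$), so reduction mod $n-1$ extends to a well-defined group homomorphism $\rho: \mathcal{G}(n) \to \Un{n-1}$ whose image is exactly $\Un{n-1,n}$ by construction. Because $n \equiv 1 \pmod{n-1}$, the composite $\rho \circ \dimr$ annihilates $\gen{\shift{n}}$ and, by Corollary~\ref{cor:mainresult}, descends to a homomorphism $\phi: \Ln{n} \to \Un{n-1,n}$. It therefore suffices to show $\phi = \rsig$ as homomorphisms $\Ln{n} \to \Un{n-1}$, which will yield $(\Ln{n})\rsig = (\Ln{n})\phi \subseteq \Un{n-1,n}$.

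To compare $\phi$ and $\rsig$ on $T \in \Ln{n}$, I would take a synchronous $P \in \pn{n}$ with $[P] = T$ (Corollary~\ref{cor:incompleteresponsecreateslaginpn}) and study the invariant $c := \mu(\im{q})$, where $\mu$ is the Bernoulli probability measure on $\xnn$ assigning $n^{-|\nu|}$ to each cone $U_\nu$ and $q$ is any state of $P$. The plan is to establish two facts: (i) $c \equiv (T)\rsig \pmod{n-1}$; and (ii) $c = \dimr(f_P)^{-1}$, placing $c$ in $\mathcal{G}(n)$ and making its mod-$(n-1)$ reduction equal $\phi(T)$. For (i), the disjoint decomposition $\im{q} = \bigsqcup_{x \in \xn} \lambda_P(x,q)\,\im{\pi_P(x,q)}$ (disjoint by injectivity of $P_q$, with $|\lambda_P(x,q)| = 1$ by synchronicity) gives the row-stochastic linear system $v_q = \tfrac{1}{n}\sum_x v_{\pi_P(x,q)}$; since $P$ is core and hence strongly connected as a digraph, Perron--Frobenius forces $v_q \equiv c$ constant, and expanding $c = \sum_i n^{-|\nu_i|}$ together with $n \equiv 1 \pmod{n-1}$ and Remark~\ref{rem:rsigcomputedfromunminimised} yields $c \equiv m_q \equiv (T)\rsig \pmod{n-1}$.

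The hardest part will be (ii). The claim is that $\im{q}$ represents the class $c \in \Z[1/n]$ in the Kreiger dimension group of the full $n$-shift and that $f_P$ acts on this group by multiplication by $c^{-1}$. The landmark cases $P = \id$ (giving $c = 1 = \dimr(\id)^{-1}$), $P = \Shift{n}^j$ (giving $c = n^{-j} = \dimr(\shift{n}^j)^{-1}$), and single-state permutation transducers (giving $c = 1 = \dimr^{-1}$) are easy sanity checks, and the general formula should follow by multiplicativity of $c$ under the transducer product on $\pn{n}$ (which follows from $\im{(p,q)} = P'_q(\im{p})$ and the substitution rule $\mu(P'_q(U_\nu)) = n^{-|\nu|}c_{P'}$) combined with the known multiplicativity of $\dimr$. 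Once (ii) is established, $c \in \mathcal{G}(n)$ gives $(T)\rsig \equiv c \pmod{n-1} \in \Un{n-1,n}$ by (i), closing out the theorem.
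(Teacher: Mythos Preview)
Your overall strategy---compute an invariant attached to the states of a synchronous lift $P\in\pn{n}$, show it reduces mod $n-1$ to $(T)\rsig$, and show it lies in $\mathcal{G}(n)$---is sound and close in spirit to the paper's.  Step~(i) is fine: the harmonic equation $v_q=\tfrac{1}{n}\sum_x v_{\pi_P(x,q)}$ on a strongly connected graph forces $v_q\equiv c$ constant, and the reduction $\Z[1/n]\to\Z/(n-1)$ sends $c=\sum n^{-|\nu_i|}$ to $m_q$.

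The gap is in (ii).  Your proposed proof of $c=\dimr(f_P)^{-1}$ via ``landmark cases plus multiplicativity'' does not work, because the identity, powers of $\Shift{n}$, and single-state permutation transducers do not generate $\pn{n}$ (nor $\aut{\xnz,\shift{n}}$).  Agreeing on those elements and being multiplicative is far from enough to pin down a homomorphism on all of $\Ln{n}$.  The identity $c=\dimr(f_P)^{-1}$ is in fact true (and the paper alludes to this connection in the remark just after the corollaries to Theorem~\ref{thm:Lndivisorsofn}), but establishing it requires an honest computation of the induced action on the dimension group $\Z[1/n]$, not a generating-set argument.

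However, you do not need (ii) at all.  Your multiplicativity computation (that for synchronous $P,R$ one has $\mu(\im{(p,r)})=c_P\,c_R$, since $h_{R_r}$ scales measure by $c_R$ on cones) already gives everything: take $R\in\pn{n}$ with $[R]=T^{-1}$, so $[P\spnprod{n}R]=\id$ and hence $P\spnprod{n}R$ induces $\shift{n}^{j}$ for some $j\geq 0$, whence $c_P\,c_R=c_{\Shift{n}^{j}}=n^{-j}$.  This forces $c_P\in\mathcal{G}(n)$ directly, and then (i) finishes.  This is exactly what the paper does, phrased combinatorially rather than measure-theoretically: it computes $|(\xn^{l+j+k})(p,t)|$ in two ways (once as $\mathsf{m}_p\,n^k M_R$ via the product structure, once as $n^{l+j+k-|\varphi|}$ via $[P\spnprod{n}R]=\id$) and reads off that $\mathsf{m}_p$ is a product of prime divisors of $n$.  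Your measure-theoretic packaging is cleaner and the Perron--Frobenius step is a nice replacement for the paper's explicit cardinality bookkeeping, but the detour through $\dimr$ should be dropped.
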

\begin{proof}
	By results in \cite{OlukoyaAutTnr} we need only show that $(\Ln{n})\rsig \le \Un{n-1,n}$. 
	
	Let $T \in \Ln{n}$, by Theorem~\ref{t:hed2}, Proposition~\ref{prop:homomorphismfromPntoFinfinitiy} and Corollary~\ref{cor:incompleteresponsecreateslaginpn}, there is an element $P \in \pn{n}$ such that $[P]$, the minimal transducer representing $P$, is equal to $T$. Let $U$ be the inverse of $T$ in $\Ln{n}$, and let  $R \in \pn{n}$ be such that $[R] = U$. By Remark~\ref{rem:rsigcomputedfromunminimised}, it suffices to show that for any state $p \in Q_{P}$, $m_{p}$ is congruent  modulo $n-1$ to an element of $\Un{n-1,n}$. Let $m_{P} = m_{p} \pmod{n-1}$ and $m_{R} = m_{q}\pmod{n-1}$ for states $p$ and $q$ of $P$ and $R$ respectively.  
	
	Theorem~\ref{thm:EndisotoASLn} indicates that $[P \spnprod{n} R]$ the minimal representative of $P\spnprod{n}R$, is equal to $\id$, the single state identity transducer. This means that for a state $q$ of $P \spnprod{n}R$, $\im{q} = \{\Lambda(\varepsilon, q)\rho \mid \rho \in \xnn\}$. Recall that $P \spnprod{n}R$ is equal to $\core(P \ast R)$ with $\omega$-equivalent states identified. We therefore focus our analysis on the transducer $\core(P\ast R)$. Furthermore, as $\core(P \ast R)$ will occur frequently in our argument, we abuse notation and set $P \ast R := \core(P \ast R)$.
	
	Let $j:= \max\{ |\nu| \mid \nu \in B(q), q \in Q_{R} \}$ and let $k$ be the minimal synchronizing level of $R$. For $q \in Q_{R}$ let $\mathsf{B}(q):= \{ \lambda_{R}(\nu, q) \mid \nu \in \xn^{j} \}$. Observe that as $R$ is synchronous, then, for any $q \in Q_{R}$, the size of all elements of $\mathsf{B}(q)$ is $j$ . Further notice that by choice of $j$,  for any $q \in Q_{R}$, $\bigcup_{\mu \in \mathsf{B}(q)} U_{\mu} = \im{q}$. These two facts imply that $\mathsf{m}_{q}:=|\mathsf{B}(q)| \equiv m_{q} \pmod{n-1}$. 
	
	Let $\Gamma_0, \Gamma_1, \ldots, \Gamma_{n^{k}}$ be the elements of $\xn^{k}$ ordered lexicographically, and   $q_0, q_1, \ldots, q_{n^{k}}$ be the sequence of states of $R$ such that $q_i$  is the state forced by $\Gamma_i$ for all $1 \le i \le n^{k}$. Set $M_{R}:= \sum_{i=1}^{n^{k}} \mathsf{m}_{q_i}$, noting that as $\mathsf{m}_{q} \equiv m_{R} \pmod{n-1}$, then $M_{R} \equiv m_{R} \pmod{n-1}$ also. 
	
	For each state $q \in Q_{R}$, set $(\xn^{j+k})q:= \{ \lambda_{R}(\Gamma,q) \mid \Gamma \in \xn^{j+k} \}$ and notice that $|(\xn^{k+j})q|$ is precisely $n^{k}M_{R}$. This is because if, for a fixed state $q \in Q_{R}$, there were two distinct words $\Delta_{1}, \Delta_{2} \in \xn^{k}$ for which $\lambda_{R}(\Delta_1, q) = \lambda_{R}(\Delta_2, q)$, then, injectivity of the map $R_{q}$ implies that for any pair  $\nu, \mu \in \xn^{j}$, $\tau:= \lambda_{R}(\nu, \pi_{R}(\Delta_1,q)) \ne  \lambda_{R}(\mu, \pi_{R}(\Delta_2, q))=: \eta$, since $U_{\tau} \subset \im{\pi_{R}(\Delta_1,q)}$ and $U_{\eta} \subset \im{\pi_{R}(\Delta_2, q)}$. Therefore, as the value $M_{R}$ depends on $T$ only, we see that for $q \in Q_{R}$, the value $|(\xn^{k+j})q|$ is the same for all $q$. Note also that for a given $q \in Q$, by choice of $j$, $\bigcup_{\nu \in (\xn^{j+k})q} U_{\nu} = \im{q}$.
	
	Fix a state $(p,t)$ of $P \ast R$ and set $\varphi:= \Lambda(\varepsilon, (p,t))$. Since $P \ast R$ has minimal representative $\id$, it must be the case that $\im{(p,t)} = U_{\varphi}$. Let $l':= \max\{ |\nu\| \nu \in B(s), s \in Q_{P}\}$ and set $l = \max\{l, |\varphi|\}$. As before, set $\mathsf{B}(p):= \{ \lambda_{P}(\nu, p) \mid \nu \in \xn^{l} \}$ and let $\mathsf{m}_{p}:= |\mathsf{B}_{p}|$, noting, as before, that $\mathsf{m}_{p} \equiv m_{p} \pmod{n-1}$. 
	
	Let $\Xi_1, \Xi_2, \ldots, \Xi_{\mathsf{m}_{p}}$ be an ordering of the elements of $\mathsf{B}(p)$ and let $t_1, t_2, \ldots,$ $ t_{\mathsf{m}_{p}}$ be defined such that $\pi_{R}(\Xi_i, t) = t_i$ for all $1 \le i \le \mathsf{m}_{p}$. Since, for all $1 \le i \le \mathsf{m}_{p}$, $U_{\Xi_{i}} \subset \im{p}$, and since $P$ is synchronous, it follows, setting $(\xn^{l+j+k})p := \{ \lambda_{P}(\nu, p) \mid \nu \in \xn^{l+j+k} \}$, that  $\{\Xi_i \nu \mid \nu \in \xn^{j+k} \} \subseteq (\xn^{l+j+k})p$ for all $1 \le i \le \mathsf{m}_{p}$,. From this, we deduce, setting $(\xn^{l+j+k})(p,t) := \{ \lambda_{P\ast R}(\nu, (p,t)) \mid \nu \in \xn^{l+j+k} \}$, that $|(\xn^{l+j+k})(p,t)| = \mathsf{m}_{p}n^{k}M_{R}$. This is because if there were $1 \le i_1, i_2 \le \mathsf{m}_{p}$ such that $\lambda_{R}(\Xi_{i_1}, t)= \lambda_{R}(\Xi_{i_2}, t)$, then $(\xn^{j+k})t_{i_1} \cap (\xn^{j+k})t_{i_2} = \emptyset$ as otherwise $R_{t}$ is not injective. 
	
	Now observe that the equality $\im{(p,t)} = U_{\varphi}$, implies, by choice of $l$ and as $P\ast R$ is synchronous, that $|(\xn^{l+j+k})(p,t)| = n^{l+j+k - |\varphi|}$. Therefore $\mathsf{m}_{p}n^{k}M_R = n^{l+j+k - |\varphi|}$. Since $\mathsf{m}_{p} \equiv m_{P} \pmod{n-1}$ and $M_{R} \equiv m_{R} \pmod{n-1}$, we therefore see that, by the fundamental theorem of arithmetic, modulo $n-1$, $m_{P}$ and $m_{R}$ are both products of divisors of $n$ as required.  
\end{proof}

\begin{cor}
	Let $n$ be a prime number. Then $\Ln{n} = \Ln{n,1}$.
\end{cor}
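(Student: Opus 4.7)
The plan is to read off the statement as an immediate consequence of Theorem~\ref{thm:Lndivisorsofn} combined with the characterisation in Theorem~\ref{thm:rsig}\ref{rsig:stabliser} of which elements of $\On{n}$ lie in $\On{n,r}$.

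First I would unpack what $\Ln{n,1}$ means concretely. By definition $\Ln{n,1} = \Ln{n}\cap\On{n,1}$, and by Theorem~\ref{thm:rsig}\ref{rsig:stabliser}, membership in $\On{n,1}$ is exactly the condition $(T)\rsig \in \Un{n-1}^{1}$. Since $\Un{n-1}^{1} = \{j \in \Un{n-1} : j\cdot 1 \equiv 1 \pmod{n-1}\} = \{1\}$, an element $T\in\Ln{n}$ lies in $\Ln{n,1}$ if and only if $(T)\rsig = 1$ in $\Un{n-1}$.

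Next I would compute $\Un{n-1,n}$ in the prime case. By Theorem~\ref{thm:Lndivisorsofn}, $(\Ln{n})\rsig = \Un{n-1,n}$, the subgroup of $\Un{n-1}$ generated by divisors of $n$ that are coprime to $n-1$. For $n$ prime the only positive divisors of $n$ are $1$ and $n$ itself; reducing modulo $n-1$ gives $1$ and $n \equiv 1 \pmod{n-1}$. Hence $\Un{n-1,n} = \gen{1} = \{1\}$, so $(\Ln{n})\rsig$ is trivial.

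Putting the two observations together, every $T\in\Ln{n}$ satisfies $(T)\rsig = 1$, and therefore $T\in\Ln{n,1}$. This gives $\Ln{n}\subseteq\Ln{n,1}$; the reverse inclusion is immediate from the definition $\Ln{n,1} = \Ln{n}\cap\On{n,1}$. I do not anticipate any genuine obstacle here, as the corollary is essentially the observation that for prime $n$ the subgroup $\Un{n-1,n}$ degenerates to the trivial group.
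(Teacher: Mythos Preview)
Your proof is correct and is exactly the intended argument: the paper states this corollary immediately after Theorem~\ref{thm:Lndivisorsofn} without explicit proof, precisely because for prime $n$ the only divisors of $n$ are $1$ and $n\equiv 1\pmod{n-1}$, so $\Un{n-1,n}$ is trivial and Theorem~\ref{thm:rsig}\ref{rsig:stabliser} forces $\Ln{n}\subseteq\On{n,1}$.
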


\begin{ntn}
	For $1 \le r \le n-1$ set $\Un{n-1,n}^{r}:= \Un{n-1,n} \cap \Un{n-1}^{r}$. That is, $\Un{n-1,n}^{r}$ is the stabiliser of $r$ in the action of the subgroup $\Un{n,n-1}$ on $\Z_{n-1}$ by multiplication (identifying $0$ and $n-1$).
\end{ntn}

\begin{cor}
	Let $1 \le r, s \le  n-1$. Then $\Ln{n,r} \subset \Ln{n,s}$ if and only if $\Un{n-1,n}^{r} \subset \Un{n-1,n}^{s}$.
\end{cor}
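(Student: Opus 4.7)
The plan is to reduce the corollary to a statement purely about the image of $\rsig$, leveraging Theorems~\ref{thm:rsig} and~\ref{thm:Lndivisorsofn}. The key claim I would establish first is:
\[
(\Ln{n,r})\rsig \;=\; \Un{n-1,n}^{r} \quad \text{for all } 1 \le r \le n-1.
\]
The inclusion $(\Ln{n,r})\rsig \subseteq \Un{n-1,n}^{r}$ is immediate: if $T \in \Ln{n,r} = \Ln{n}\cap \On{n,r}$, then $(T)\rsig \in \Un{n-1,n}$ by Theorem~\ref{thm:Lndivisorsofn}, and $(T)\rsig \in \Un{n-1}^r$ by Theorem~\ref{thm:rsig}\ref{rsig:stabliser}. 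For the reverse inclusion, take $j \in \Un{n-1,n}^{r}$. Since $j \in \Un{n-1,n} = (\Ln{n})\rsig$, there is $T \in \Ln{n}$ with $(T)\rsig = j$. Because $j \in \Un{n-1}^{r}$, Theorem~\ref{thm:rsig}\ref{rsig:stabliser} yields $T \in \On{n,r}$, whence $T \in \Ln{n}\cap \On{n,r} = \Ln{n,r}$.

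Given this identification, the forward direction of the corollary is immediate: if $\Ln{n,r} \subseteq \Ln{n,s}$, then applying the homomorphism $\rsig$ to both sides and using the identification above gives $\Un{n-1,n}^{r} \subseteq \Un{n-1,n}^{s}$.

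For the backward direction, assume $\Un{n-1,n}^{r} \subseteq \Un{n-1,n}^{s}$ and let $T \in \Ln{n,r}$. Then $(T)\rsig \in \Un{n-1,n}^{r} \subseteq \Un{n-1,n}^{s} \subseteq \Un{n-1}^{s}$, so by Theorem~\ref{thm:rsig}\ref{rsig:stabliser} we have $T \in \On{n,s}$; combined with $T \in \Ln{n}$ this gives $T \in \Ln{n,s}$, as required.

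I do not anticipate a real obstacle here; the work has already been done in establishing Theorems~\ref{thm:rsig} and~\ref{thm:Lndivisorsofn}. The only subtle point worth highlighting is the surjectivity half of the identification $(\Ln{n,r})\rsig = \Un{n-1,n}^{r}$, which relies critically on Theorem~\ref{thm:Lndivisorsofn} (the image of $\rsig|_{\Ln{n}}$ is exactly $\Un{n-1,n}$, not merely contained in it) rather than the weaker containment $(\Ln{n})\rsig \subseteq \Un{n-1,n}$.
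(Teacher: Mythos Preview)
Your proposal is correct and follows essentially the same approach as the paper, which simply cites point~\ref{rsig:stabliser} of Theorem~\ref{thm:rsig} together with Theorem~\ref{thm:Lndivisorsofn}. You have spelled out the details that the paper leaves implicit, including the useful intermediate identification $(\Ln{n,r})\rsig = \Un{n-1,n}^{r}$, but the underlying ingredients are identical.
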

\begin{proof}
	This is a consequence of point~\ref{rsig:stabliser} of Theorem~\ref{thm:rsig} and Theorem~\ref{thm:Lndivisorsofn}. 
\end{proof}

\begin{prerk}
	Let $i,j, r \in \{1,2,\ldots, n-1\}$. Suppose that $i$ is minimal such that $ir \equiv i\pmod{n-1}$. If $j$ also satisfies $jr \equiv j \pmod{n-1}$, then $i \mid j$. Thus, for any  $1 \le r \le n-1$, since $\Un{n-1,n}^{r} \subset \Un{n,n-1}^{s}$, the minimal  $1 \le s \le n-1$ for which $\Un{n-1,n}^{r} = \Un{n-1,n}^{s}$ is a divisor of $n-1$.
\end{prerk}

For a subgroup $G \le \Un{n-1,n}$, set $\Ln{n,G}:= \{ L \in \Ln{n} \mid (L)\rsig \in G\}$. The following result is clear.

\begin{cor}
	Let $G,H \le \Un{n-1,n}$, then the following hold:
	\begin{enumerate}[label=(\alph*)]
		\item $\Ln{n,H} \cap \Ln{n,G} = \Ln{n, H \cap G} $,
		\item $\gen{\Ln{n,H}, \Ln{n,G}} = \Ln{n, \gen{H, G}}$, and,
		\item if $H \le G$, then $\Ln{n,H} \unlhd \Ln{n,G}$ and $G/H \cong \Ln{n,H} / \Ln{n,G}$.
	\end{enumerate}	
\end{cor}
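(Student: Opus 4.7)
The plan is to observe that all three parts follow from the fact that, by definition, $\Ln{n,G} = (\rsig|_{\Ln{n}})^{-1}(G)$, together with Theorem~\ref{thm:Lndivisorsofn} which guarantees that the restriction $\rsig|_{\Ln{n}}$ is a surjective group homomorphism onto $\Un{n-1,n}$. The commutativity of $\Un{n-1,n}$ will take care of the normality claim in (c).

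For part (a), I would simply chase the definitions: an element $L \in \Ln{n}$ lies in $\Ln{n,H} \cap \Ln{n,G}$ if and only if $(L)\rsig \in H$ and $(L)\rsig \in G$, equivalently $(L)\rsig \in H \cap G$, which by definition is equivalent to $L \in \Ln{n, H \cap G}$.

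For part (b), the inclusion $\gen{\Ln{n,H}, \Ln{n,G}} \le \Ln{n, \gen{H, G}}$ is immediate from $\Ln{n,H}, \Ln{n,G} \le \Ln{n, \gen{H, G}}$ and the fact that $\Ln{n, \gen{H,G}}$ is a subgroup of $\Ln{n}$ (a preimage of a subgroup under a homomorphism). For the reverse inclusion, given $L \in \Ln{n, \gen{H,G}}$, write $(L)\rsig$ as a product of elements of $H \cup G$; using surjectivity from Theorem~\ref{thm:Lndivisorsofn}, lift each factor to an element of $\Ln{n,H}$ or $\Ln{n,G}$ accordingly; the resulting product differs from $L$ by an element of the kernel $\ker(\rsig|_{\Ln{n}}) = \Ln{n,\{1\}} \le \Ln{n,H}$, and hence $L$ lies in $\gen{\Ln{n,H}, \Ln{n,G}}$.

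For part (c) (which should read $\Ln{n,G}/\Ln{n,H} \cong G/H$), assume $H \le G$. Consider the composite homomorphism $\Ln{n,G} \xrightarrow{\rsig} G \twoheadrightarrow G/H$. By Theorem~\ref{thm:Lndivisorsofn}, $\rsig$ restricted to $\Ln{n,G}$ surjects onto $G$, so the composite surjects onto $G/H$; its kernel is precisely $\{ L \in \Ln{n,G} \mid (L)\rsig \in H \} = \Ln{n,H}$. The first isomorphism theorem then yields both the normality of $\Ln{n,H}$ in $\Ln{n,G}$ and the isomorphism $\Ln{n,G}/\Ln{n,H} \cong G/H$. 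No step should present any real obstacle; the entire corollary is a formal consequence of Theorem~\ref{thm:Lndivisorsofn} once one recognizes $\Ln{n,G}$ as a preimage under the homomorphism $\rsig|_{\Ln{n}}$.
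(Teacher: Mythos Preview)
Your proposal is correct and is precisely the argument the paper has in mind; the paper itself offers no proof beyond the sentence ``The following result is clear,'' and your verification via preimages under the surjective homomorphism $\rsig|_{\Ln{n}}$ (Theorem~\ref{thm:Lndivisorsofn}) is the intended one. Your observation that part~(c) contains a typo --- it should read $\Ln{n,G}/\Ln{n,H} \cong G/H$ rather than $\Ln{n,H}/\Ln{n,G}$ --- is also correct.
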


In a sequel paper we consider and develop consequences of an extension of the map $\rsig$ which is closely related to the dimension representation $\dimr$.

\subsection{Applications to $\On{n}$}

We now consider consequences of Corollary~\ref{cor:mainresult} for $\On{n} \cong \out{G_{n,n-1}}$. We first require a few definitions.

For $k \in \N$ let $\wn^{k}$ be the set of prime words of length $k$ over the alphabet $\xn$ and let $\wnl{\ast}$ be the set of all prime words over the alphabet $\xn$.

Let $n \in \N$, for two words $\Gamma = \gamma_0\gamma_1\ldots \gamma_{n-1}$ and $\Delta = \delta_0\delta_1\ldots \delta_{n-1}$ in  $\xn^{n}$, we say that $\Delta$ is \emph{rotationally equivalent} to $\Gamma$ (written $\Delta \rot \Gamma$) if there is an $i \in \Z$ such that for $x,y \in \xnz$ with $x_{j} = \delta_{j\pmod{n}}$ and $y_{j } =\gamma_{j \pmod{n}}$, then $(x)\shift{n}^{i} = y$. Equivalently $\Gamma \rot \Delta$ if and only if there is a number $0 \le i \le n-1$ such that $\delta_i\delta_{i+1}\ldots\delta_{n-1}\delta_{0}\ldots\delta_{i-1} = \Gamma$.

Set $\rwnl{\ast} = \wns/ \rot$ and for $k \in \N$, set $\rwnl{k} = \wn^{k}/ \rot$. For a word $\Gamma \in \wns$ write $[\Gamma]_{\rot}$ for the rotational equivalence class of $\Gamma$.

Let $L \in \Ln{n}$ and $k \in \N$. Define a map $(L)\Pi_{k}: \rwnl{k} \to \rwnl{k}$ by $$(\rotc{\Gamma})(L)\Pi_{k} \mapsto \rotc{\lambda_{L}(\Gamma, q_{\Gamma})}$$ where  $\Gamma \in \rotc{\Gamma}$ and $q_{\Gamma}$ is the unique state of $L$ such that $\pi_{L}(\Gamma, q_{\Gamma})= q_{\Gamma}$ (for $k=0$ we take $(\varepsilon)(L)\Pi_{0} = \varepsilon$). Note that there is a unique circuit labelled by $\Gamma$ in $L$.  Furthermore, for $\Gamma = \gamma_0\gamma_1\ldots\gamma_{k}\in \wn^{k}$, setting $x\in \xnz$ such that $x_{i} = \gamma_{ (k-1 -i) \pmod{k}}$, and choosing any annotation  $\alpha$ of $L$ so that $(L,\alpha) \in \ALn{n}$, then for $y \in \xnz$ satisfying $(x)(L,\alpha) = y$, we have $y_0y_1\ldots y_{k-1} \in  (\rotc{\Gamma})(L)\Pi_{k}$, by definition of $(L,\alpha)$, and, since $(L, \alpha)$ is an element of $\aut{\xnz, \shift{n}}$, $y_0\ldots y_{k-1} \in \wnl{k}$ as well. Thus, we see that the map $(L)\Pi_{k}$ is a well-defined bijection for all $k \in \N$ (although one may also deduce this from results in \cite{AutGnr}). Write $(L)\Pi = ((L)\Pi_{k})_{k \in \N}$ for the element of $\prod_{k \in \N} \sym{\xn^{k}}$.  

Notice that elements of $\ALn{n}$ with the same image in $\Ln{n}$ induce the same map. The map $\Pi: \Ln{n} \to \prod_{k \in \N} \sym{\rwnl{k}}$ is the \emph{periodic orbit representation} of the paper \cite{BoyleLindRudolph88}. The following results, restated in the language of this manuscript, are from  \cite{BoyleLindRudolph88}: the first corresponds to Proposition 7.5  and the second to Theorem 7.2.

\begin{prop}\label{prop:faithfulrep}
	The map $\Pi: \Ln{n} \to \prod_{k \in \N} \sym{\rwnl{k}}$ is a monomorphism.
\end{prop}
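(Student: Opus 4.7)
The plan is to establish two things: that $\Pi$ is a monoid homomorphism, and that it is injective. Both should follow from identifying $\Pi$ with the classical periodic orbit representation of $\aut{\xnz, \shift{n}}/\gen{\shift{n}}$ via the isomorphism of Corollary~\ref{cor:mainresult}, and then invoking the cited faithfulness result of Boyle, Lind and Rudolph.

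First I would check that $\Pi$ is a homomorphism. Fix $L, M \in \Ln{n}$, a prime word $\Gamma$ of length $k$, and the unique circuit states $q_\Gamma \in Q_L$ and $p_\Delta \in Q_M$, where $\Delta := \lambda_L(\Gamma, q_\Gamma)$. By the Lipschitz condition \ref{Lipshitzconstraint}, $|\Delta|=k$, and the fact that $(L)\Pi_k$ is a well-defined bijection on $\rwnl{k}$ forces $\rotc{\Delta} \in \rwnl{k}$. In the product $L \ast M$, the pair $(q_\Gamma, p_\Delta)$ is a circuit state at $\Gamma$, and its output on $\Gamma$ is $\lambda_M(\Delta, p_\Delta)$. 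Passing from $\core(L \ast M)$ to the minimal representative $LM$ removes incomplete response and identifies $\omega$-equivalent states; as observed in Proposition~\ref{prop:incompleteresponsecorrespondstolagofelementofln}, these operations alter the output along a circuit only by a cyclic rotation, which is invisible at the level of $\rwnl{k}$. Hence $(\rotc{\Gamma})(LM)\Pi_k = \rotc{\lambda_M(\Delta, p_\Delta)} = (\rotc{\Gamma})(L)\Pi_k (M)\Pi_k$, as required.

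Next I would identify $\Pi$ with the periodic orbit representation. A prime cyclic word $\rotc{\Gamma} \in \rwnl{k}$ corresponds to a shift orbit of a periodic point $x \in \xnz$ of minimal period~$k$, and conversely. Given any annotation $\alpha$ of $L$, unpacking Definition~\ref{def:Lwithannotationisanendo} and using that the long history of $x$ forces exactly the circuit state $q_\Gamma$, one sees that $(x)(L,\alpha)$ is again periodic of period $k$, with pattern in the class $\rotc{\lambda_L(\Gamma, q_\Gamma)} = (\rotc{\Gamma})(L)\Pi_k$. Different annotations of the same $L$ alter $(L,\alpha)$ only by a power of $\shift{n}$, which preserves the shift orbit. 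Thus $(L)\Pi$ is exactly the permutation of periodic orbits induced by the class of $(L,\alpha)$ in $\aut{\xnz, \shift{n}}/\gen{\shift{n}}$, which by Corollary~\ref{cor:mainresult} is identified with $L$ itself.

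For injectivity, suppose $L \in \Ln{n}$ lies in the kernel of $\Pi$. Then the corresponding element of $\aut{\xnz, \shift{n}}/\gen{\shift{n}}$ fixes every periodic orbit of every period, i.e.\ it lies in the kernel of the periodic orbit representation. The classical theorem of Boyle--Lind--Rudolph (Theorem~7.2 of \cite{BoyleLindRudolph88}) states precisely that this representation is faithful on $\aut{\xnz, \shift{n}}/\gen{\shift{n}}$, so $L = \id$. The main obstacle is the first paragraph: one has to keep careful track of how passing from $L \ast M$ to $LM$ (removing incomplete response and then minimising) perturbs the output along a circuit at a prime word, and verify that this perturbation is always a pure cyclic rotation, which is where the Lipschitz condition and the strong synchronizing property play essential roles.
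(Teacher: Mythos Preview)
Your approach is correct and matches the paper's treatment: the paper does not supply its own proof of this proposition but simply attributes it to \cite{BoyleLindRudolph88}, stating that it is a restatement of Proposition~7.5 there (note: not Theorem~7.2, which the paper instead cites as the source for Theorem~\ref{thm:faithfulrepdense}). Your explicit identification of $\Pi$ with the periodic orbit representation via Corollary~\ref{cor:mainresult}, together with the homomorphism verification, makes precise what the paper only indicates in the paragraph preceding the proposition; the homomorphism argument you sketch is essentially the restriction to $\Ln{n}$ of the full proof the paper later gives for $\Pi\colon \Mn{n} \to \tran{\rwnl{\ast}}$.
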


\begin{theorem}\label{thm:faithfulrepdense}
	Let $k \ge 1$ and $\rotc{\Gamma}, \rotc{\Delta} \in \rwnl{k}$. Then there is an element $L \in \Ln{n}$ such that $(L)\Pi_{l}$ is the identity map for all $1 \le l <k$, and $(L)\Pi_{k}$  is the transposition swapping $\rotc{\Gamma}$ with $\rotc{\Delta}$
\end{theorem}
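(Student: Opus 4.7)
The plan is to exhibit $L \in \Ln{n}$ by first constructing a sliding block code $f_\infty$ on $\xnz$ that induces the desired permutation of periodic orbits, then passing via the correspondence $\aut{\xnz,\shift{n}} \cong \ALn{n}$ of Theorem~\ref{thm:EndisotoASLn} and Corollary~\ref{cor:mainresult} to obtain the required element of $\Ln{n}$. The action $\Pi$ of the resulting $L$ can then be read off by inspecting how the block code acts on each periodic point.

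Fix representatives $\Gamma = \gamma_0\ldots\gamma_{k-1}$ and $\Delta = \delta_0\ldots\delta_{k-1}$ of $\rotc{\Gamma}$ and $\rotc{\Delta}$, and choose a window radius $W$ much larger than $k$. A first attempt at a block map $g\colon X_n^{2W+1}\to X_n$ is: given a window $a_{-W}\ldots a_W$, test whether it is a factor of $\Gamma^\infty$; since $\Gamma$ is prime and $2W+1 > k$, the phase $c \in \Z/k\Z$ satisfying $a_i = \gamma_{(i+c)\bmod k}$, if it exists, is unique, and we output $\delta_{c\bmod k}$. Symmetrically, if the window is a factor of $\Delta^\infty$, output the corresponding $\gamma$-symbol; otherwise output $a_0$. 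By Theorem~\ref{t:hed1}, the induced map $f_\infty$ commutes with $\shift{n}$ and is continuous. A Fine--Wilf argument shows that for any prime word $w$ of length $l \le k$ with $\rotc{w} \notin \{\rotc{\Gamma}, \rotc{\Delta}\}$, no window of radius $W$ drawn from $w^\infty$ is a factor of $\Gamma^\infty$ or $\Delta^\infty$ once $W$ is sufficiently large, so $f_\infty$ fixes the orbit of $w^\infty$ pointwise. Meanwhile, every window of $\Gamma^\infty$ matches the $\Gamma$-pattern, so $f_\infty(\Gamma^\infty)$ is a shift of $\Delta^\infty$, and symmetrically for $\Delta^\infty$. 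Reading off the action of the corresponding $L \in \Ln{n}$ on prime cyclic words then yields precisely the required transposition in $\Pi_k$ with the identity in every $\Pi_l$ for $l < k$.

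The principal obstacle is that the naive block code $g$ just described is not in general invertible on all of $\xnz$: if $x$ has a long but finite stretch of $\Gamma$-pattern, then $f_\infty(x)$ exhibits a hybrid stretch near the boundary of that run that is neither a factor of $\Gamma^\infty$ nor of $\Delta^\infty$, so re-applying $g$ does not recover $x$. The standard remedy is the classical marker construction: choose a marker word $M$ of large length with no nontrivial self-overlap and that is not a factor of any $w^\infty$ with $w$ prime of length at most $k$ (such $M$ exist by a routine counting argument), and modify $g$ so that the $\Gamma^s \leftrightarrow \Delta^s$ swap is performed only on maximal $\Gamma^s$- or $\Delta^s$-blocks that are \emph{bracketed on both sides} by occurrences of $M$. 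On any sequence containing a copy of $M$, this modified code is a sliding block involution because the marker itself is preserved by the swap. The orbits of $\Gamma^\infty$ and $\Delta^\infty$, which contain no copy of $M$, are still swapped by the full-window-matching rule above, and the two rules cannot conflict since the full-window condition and the presence of $M$ in the window are mutually exclusive. The resulting block code is a genuine homeomorphism of $\xnz$ commuting with $\shift{n}$, and applying Theorem~\ref{thm:EndisotoASLn} to the element it defines in $\aut{\xnz,\shift{n}}$ delivers the required $L \in \Ln{n}$.
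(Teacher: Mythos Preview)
The paper does not prove this theorem; it is quoted directly from Boyle--Lind--Rudolph \cite{BoyleLindRudolph88} (their Theorem~7.2), so there is no ``paper's own proof'' to compare against. Your overall framework---build a shift-commuting homeomorphism of $\xnz$ realising the desired action on periodic orbits, then transport it to $\Ln{n}$ via Theorem~\ref{thm:EndisotoASLn}---is exactly right, and invoking markers is indeed the classical device.

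That said, the marker construction you sketch has a genuine gap. Take a bi-infinite $x$ containing a long but finite run of the $\Gamma$-pattern (say a thousand copies of $\Gamma$) flanked on both sides by blocks that are neither $\Gamma$-pattern, nor $\Delta$-pattern, nor contain $M$. At positions deep inside this run your full-window rule fires and outputs the corresponding $\Delta$-symbol; at positions within $W$ of either end of the run the window sees the flanking material, so the full-window test fails, and since the run is not bracketed by $M$ your marker rule does not fire either---so those positions output the unchanged $\Gamma$-symbol. The image of $x$ therefore carries a $\Delta$-run bordered on each side by a short residual $\Gamma$-segment, and applying your map a second time does not recover $x$ (the $\Delta$-run shrinks by roughly $2W$ each pass). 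Your assertion that ``on any sequence containing a copy of $M$, this modified code is a sliding block involution'' is thus unjustified: one occurrence of $M$ somewhere in the sequence does not force every long $\Gamma$-run to be bracketed by $M$, and for any fixed word $M$ there are plenty of aperiodic sequences avoiding $M$ altogether. The Boyle--Lind--Rudolph argument arranges the marker and the swap rule more carefully so that either the marker structure alone controls the output or the sequence is genuinely one of the target periodic points; you would need to reproduce that analysis, or alternatively prove directly that your map is injective (which suffices on a full shift, by entropy or the Garden-of-Eden theorem).
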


We now extend the map $\Pi$ to the monoid $\Mn{n}$. First we extend the notation $\rotc{\Gamma}$ to non-prime words. For $\Gamma \in \xns$ we write $\rotc{\Gamma}$, by a slight abuse of notation, for the the set $\rotc{\gamma}$ where $\gamma \in \wn$ and $\Gamma = \gamma^{r}$ for some $r \in \N$. Let $T \in \Mn{n}$ then, as before, define a map $(T)\Pi: \rwnl{\ast} \to \rwnl{\ast}$ by $\rotc{\Gamma} \mapsto \rotc{\lambda_{T}(\Gamma, q_{\Gamma})}$ where $\Gamma \in \rotc{\Gamma}$ and $q_{\Gamma}$ is the unique state of $T$ such that $\pi_{T}(\Gamma, q_{\Gamma}) = q_{\Gamma}$ (we take $(\varepsilon)(L)\Pi_{0} = \varepsilon$). Lemmas 8.1 and 8.2 of \cite{AutGnr} indicate that for all $T \in \On{n}$ the map $(T)\Pi$ is well-defined and is an element of $\sym{\rwnl{\ast}}$. For a general element $T \in \Mn{n}$, $(T)\Pi$ is well-defined transformation of the set $\rwnl{\ast}$, since for any prime word $\gamma \in \wn$ there is a unique state $q_{\Gamma} \in Q_{T}$ such that $\pi_{T}(\gamma, q_{\gamma}) = q_{\gamma}$.
\begin{ntn}
	Let $\Gamma \in \xnp$, and $T \in \Mn{n}$, we write $q_{\Gamma}$ for the unique state of $T$ such that $\pi_{T}(\Gamma, q_{\Gamma}) = q_{\Gamma}$.
\end{ntn}

\begin{ntn}
	For $X$ a set, we denote by $\tran{X}$, the monoid of all maps from $X$ to itself.
\end{ntn}

The following lemma is useful for the next result.

\begin{lemma}\label{lem:monolemma}
	Let $A \in \Mn{n}$ be a transducer with no states of incomplete response and let $B$ be a strongly synchronizing transducer. Let $p,q$ be any pair of states of $Q_{A}$ and $Q_{B}$ respectively. Suppose that for any word $c \in \xnp$ such that $\pi_{A}(c, p) = p$ and $\pi_{B}(c, q)= q$, $\lambda_{A}(c, p) = \lambda_{B}(c,q)$. Then the minimal representative of $B$ is equal to $A$.
\end{lemma}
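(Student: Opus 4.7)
The plan is to prove the lemma in two stages. First, establish that the initial transducers $A_p$ and $B_q$ are $\omega$-equivalent as maps from $\xnn$ to itself; second, propagate this equivalence through strong synchronization to identify $A$ with the minimal representative of $B$.

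For the first stage, form the parallel product automaton $C$ with state set $Q_A \times Q_B$ and transitions $\pi_C(a, (r, s)) := (\pi_A(a, r), \pi_B(a, s))$. The hypothesis translates to: for every circuit $c$ in $C$ based at $(p, q)$, $\lambda_A(c, p) = \lambda_B(c, q)$. Given $x \in \xnn$, I would show that $\lambda_A(x, p)$ and $\lambda_B(x, q)$ agree on arbitrarily long prefixes by approximating $x$ with infinite periodic words $c^{\omega}$, where $c$ is a simultaneous circuit at $(p, q)$. The feasibility of this approximation exploits that $A$ is strongly connected, so any prefix can be closed in the first coordinate, while strong synchronization of $B$ constrains the second coordinate to cooperate; the absence of states of incomplete response in $A$ ensures that $\lambda_A(u, p)$ is determined rigidly by all continuations, forcing the sequence read by $B_q$ to match it.

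Once $A_p$ and $B_q$ are $\omega$-equivalent, factoring outputs along any word $w$ yields that $A_{\pi_A(w, p)}$ and $B_{\pi_B(w, q)}$ are $\omega$-equivalent as well. Using a common synchronizing level $k$ for both $A$ and $B$, the rule sending the state of $A$ forced by a length-$k$ word to the $\omega$-equivalence class of the state of $B$ forced by that same word defines a map $Q_A \to Q_{\core(B)}/{\sim_{\omega}}$ that is surjective (by strong connectedness of $A$), well defined (by synchronization), and compatible with transitions and output functions. Since $A$ is minimal and has no states of incomplete response, identifying $\omega$-equivalent states of $\core(B)$ produces precisely $A$, giving the conclusion.

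The main obstacle is the approximation step in the first stage: for an arbitrary prefix $u$, producing a closing word $v$ such that $uv$ is a circuit at $(p, q)$ in $C$. This amounts to showing that states of $C$ reachable from $(p, q)$ can themselves reach $(p, q)$, i.e.\ that $(p, q)$ lies in a terminal strongly connected component of $C$. Establishing this requires combining the structural hypotheses on $A$ (minimality, strong connectedness, no incomplete response) with the strong synchronization of $B$ in an essential way; I would likely first analyse the strongly connected components of $C$ containing $(p, q)$, then use the rigidity coming from $A$'s lack of incomplete response to propagate the output equalities furnished by the circuit hypothesis outward to all reachable pairs in $C$.
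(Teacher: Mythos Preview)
Your two-stage plan is correct and matches the paper's approach, but you are overcomplicating the closing-up step. The observation you are missing is that \emph{both} $A$ and $B$ are strongly synchronizing (membership of $A$ in $\Mn{n}$ already entails this for $A$), so the parallel product automaton $C$ is itself strongly synchronizing, say at level $k$, and $\core(C)$ is strongly connected. The hypothesis of the lemma is nonvacuous only when some common circuit $c$ at $(p,q)$ exists; iterating $c$ to length at least $k$ produces a word forcing $(p,q)$, so $(p,q)\in\core(C)$. Closing up is then immediate: given any prefix $u$, append a word of length $\ge k$ forcing $(p,q)$. No appeal to minimality or to the absence of incomplete response in $A$ is needed here, and the paper dispatches this in one line (``by the strongly synchronizing hypothesis'').

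The hypothesis that $A$ has no states of incomplete response enters only at the final identification step: it guarantees that $A$ is genuinely minimal as an element of $\Mn{n}$ (ruling out the degenerate $Z_x$ case), so that once $A_p$ and $B_q$ are shown to be $\omega$-equivalent, the unique minimal representative of $B_q$ \emph{is} $A_p$, and taking cores yields $A$ as the minimal representative of $B$. Your second stage already has this right.
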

\begin{proof}
	Let $x \in \xnp$ be such that $\lambda_{A}(x, p) \ne \varepsilon$.  By the strongly synchronizing hypothesis, for any $\rho \in \xnp$ there is a suffix $y \in \xnp$,  such that $\pi_{A}(x\rho y, p) = p$ and $\pi_{B}(x \rho y, q) = q$. The hypothesis of the lemma now implies that  the maps $h_{A_{p}}$ and $h_{B_{q}}$ coincide on the clopen set $U_{x}$. However, the strong connectivity of the transducers $A$ and $B$ together with the minimality of $A$, therefore implies that the minimal representative of $B$ is $A$. 
\end{proof}

\begin{prop}
	The map $\Pi: \Mn{n} \to \tran{\rwnl{\ast}}$ is a monomorphism.
\end{prop}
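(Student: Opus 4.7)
The proof splits into two parts: first, that $\Pi$ is a monoid homomorphism; and second, that $\Pi$ is injective.

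For the homomorphism claim, fix $T,U \in \Mn{n}$ and a prime word $\Gamma$, and write $\Delta := \lambda_T(\Gamma, q_\Gamma)$, where I use the abbreviation $q_\Gamma$ for $q_\Gamma^T$ in $T$ and $q_\Delta$ for $q_\Delta^U$ in $U$. The state $(q_\Gamma, q_\Delta)$ in $T \ast U$ is fixed by reading $\Gamma$ (since both coordinates are fixed in their respective factors) and so lies in $\core(T \ast U)$ as its unique cycle state for $\Gamma$, with cycle output $\lambda_U(\Delta, q_\Delta)$. Passing from $\core(T \ast U)$ to the minimal representative $TU$ preserves the rotation class of this cycle output: identifying $\omega$-equivalent states leaves cycle outputs at a fixed state unchanged, while the incomplete-response-removal of Proposition~\ref{prop:algorithmforremovingincompleteresponse}, applied at a state $q$ with cycle output $M$ and incomplete response $L = \Lambda(\varepsilon, q)$, replaces $M$ by the equal-length string obtained by stripping $L$ from the front of $ML$, which is a cyclic rotation of $M$. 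Therefore $(\rotc{\Gamma})(TU)\Pi = \rotc{\lambda_U(\Delta, q_\Delta)} = (\rotc{\Delta})(U)\Pi = ((\rotc{\Gamma})(T)\Pi)(U)\Pi$, confirming that $\Pi$ is a homomorphism.

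For injectivity, suppose $(T)\Pi = (U)\Pi$. If $T = Z_x$ for some minimal prime word $x$, then $(T)\Pi$ collapses every non-empty rotation class to $\rotc{x}$; a multi-state $U \in \Mn{n}$ is strongly connected and free of states of incomplete response, and I would argue this rules out every cycle output having the common prime root $x$, so $U$ must itself be $Z_y$ for some $y$. Then $(Z_x)\Pi = (Z_y)\Pi$ forces $y = x$, yielding $T = U$. Otherwise, neither $T$ nor $U$ has incomplete response, and the plan is to apply Lemma~\ref{lem:monolemma}. Fix $t \in Q_T$ and a prime word $\gamma$ of length exceeding the synchronizing levels of both $T$ and $U$ with $q_\gamma^T = t$; set $u := q_\gamma^U$. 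By tracking iterates of a prime $\delta$ and invoking uniqueness of cycle states, any circuit word $c$ at both $t$ (in $T$) and $u$ (in $U$) is a power $\delta^r$ of a prime $\delta$ with $q_\delta^T = t$ and $q_\delta^U = u$, so the lemma's hypothesis reduces to establishing $\lambda_T(\delta, t) = \lambda_U(\delta, u)$ for all such primes $\delta$.

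The crux --- and main obstacle --- is that $(T)\Pi = (U)\Pi$ gives only the rotational equivalence $\rotc{\lambda_T(\delta, t)} = \rotc{\lambda_U(\delta, u)}$, not strict equality. To upgrade to equality, I would consider concatenations of distinct primes $\delta_1, \delta_2$ with common cycle pair $(t, u)$: the product $\delta_1 \delta_2$ is also a circuit at both $t$ and $u$, and the compatibility condition $\rotc{\lambda_T(\delta_1,t)\lambda_T(\delta_2,t)} = \rotc{\lambda_U(\delta_1,u)\lambda_U(\delta_2,u)}$ constrains the individual rotational offsets between $\lambda_T(\delta_i, t)$ and $\lambda_U(\delta_i, u)$. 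For a sufficiently generic choice of such $\delta_1, \delta_2$ --- which exist in abundance by the strong connectivity and strong synchronizing property of $T$ --- these combinatorial constraints force all offsets to vanish, giving the required equality. Once the hypothesis of Lemma~\ref{lem:monolemma} is verified, we conclude that the minimal representative of $U$ equals $T$, and hence $T = U$ in $\Mn{n}$, completing the proof.
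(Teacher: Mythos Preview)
Your homomorphism argument matches the paper's, and your overall injectivity framework (reduce to Lemma~\ref{lem:monolemma}, worry about rotation offsets on cycle outputs, handle the $Z_x$ case separately) is the same strategy the paper employs. The disposition of the $Z_x$ case is also essentially the paper's.

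However, two genuine gaps remain in your crux paragraph. First, the hypothesis $(T)\Pi = (U)\Pi$ gives only $\rotc{\lambda_T(\delta,t)} = \rotc{\lambda_U(\delta,u)}$ in the paper's convention, where $\rotc{\Gamma}$ for non-prime $\Gamma$ denotes the rotation class of its \emph{prime root}. Thus equality of rotation classes does not a priori give $|\lambda_T(\delta,t)| = |\lambda_U(\delta,u)|$; the two outputs could be different powers of rotation-equivalent primes. The paper devotes a separate argument to this: assuming the powers differ, it manufactures a prime word $a^j b$ whose image under $(T)\Pi$ is a prime strictly longer than its image under $(U)\Pi$, contradicting equality. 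You need this step before you can speak of a single rotation ``offset''.

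Second, your sketch that ``for a sufficiently generic choice of $\delta_1,\delta_2$ \ldots\ these combinatorial constraints force all offsets to vanish'' is where the real work lies, and the paper's argument here is long and delicate --- roughly two pages of carefully engineered words. The difficulty is that even knowing $A' = \mathrm{rot}_s(A)$ and $B' = \mathrm{rot}_t(B)$, the constraint that $A'B'$ is a rotation of $AB$ does \emph{not} straightforwardly force $s = t = 0$; cancellations and periodicity can conspire. The paper first builds an arbitrarily long prime $\Theta$ with a controlled, nonzero rotation offset between its $T$- and $U$-images, then selects words $b,c,d$ so that the resulting outputs $\Delta,\Delta'$ have carefully arranged incomparabilities with segments of $\Theta$, and finally examines $\Theta^j\Delta$ versus $(\Theta')^j\Delta'$ for large $j$, ruling out every possible nontrivial rotation by a case analysis. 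Your appeal to genericity does not capture this; the argument needs explicit constructions with explicit incomparability guarantees, not an existence claim.
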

\begin{proof}
   We begin by showing that $\Pi$ is a homomorphism.
	
	Let $T, U \in \Mn{n}$ and $\Gamma \in \wnl{+}$. Let $\Delta = \lambda_{T}(\Gamma, q_{\Gamma})$, noting that $\rotc{\Delta} = (\rotc{\Gamma})(T)\Pi$ and let $p_{\Delta}$ be the state of $U$ such that $\pi_{U}(\Delta, p_{\Delta}) = p_{\Delta}$. Observe that $(q_{\Gamma}, p_{\Delta})$ is the unique state of $\core(T \ast U)$ satisfying $$\pi_{T \ast U}(\Gamma, (q_{\Gamma}, p_{\Delta})) = (q_{\Gamma}, p_{\Delta}).$$ Let $\Theta:= \lambda_{T\ast U}(\Gamma, (q_{\Gamma}, p_{\Delta}))$, noting that $\rotc{\Theta} = (\rotc{\Delta})(U)\Pi$.
	
	If $TU = Z_{x}$ for some prime word $x \in \wn$ so that $Z_{x}$ is minimal, then it must be the case that all states of $\core(T \ast U)$ have infinite extent of incomplete response. In particular for any state $(q,p)$ of $\core(T \ast U)$ $(T\ast U)_{(q,p)}$ is $\omega$-equivalent to $Z_{x'}$ for some rotation $x'$ of $x$. Therefore we see that $\rotc{\Theta} = \rotc{x}$ and so $(\rotc{\Gamma})(TU)\Pi = ((\rotc{\Gamma})(T)\Pi)(U)\Pi$.
 	
	If $TU \ne Z_{x}$ for some prime word $x \in \wn$ then all states of $TU$ have finite extent of incomplete response. In this case, the element $TU$ of $\Mn{n}$ is obtained by taking the core of the minimal representative of the initial transducer $\core(T \ast U)_{(q_{\Gamma}, p_{\Delta})}$. Let $(TU)'_{q_{-1}}$ be the transducer obtained from $\core(T \ast U)_{(q_{\Gamma}, p_{\Delta})}$ by applying the algorithm of Proposition~\ref{prop:algorithmforremovingincompleteresponse}. Using `$'$' above the elements of $Q_{(TU)'}$ which also appear in $Q_{\core(T\ast U)}$, we observe that $$\pi_{(TU)'}(\Gamma, (q_{\Gamma},p_{\Delta})') = (q_{\Gamma},p_{\Delta})' \mbox{ and, }$$  $$\lambda_{(TU)'}(\Gamma, (q_{\Gamma},p_{\Delta})') = \Theta \Lambda(\varepsilon,(q_{\Gamma},p_{\Delta})') - \Lambda(\varepsilon,(q_{\Gamma},p_{\Delta})').$$ Now, since $\Lambda(\varepsilon,(q_{\Gamma},p_{\Delta})')$ is a prefix of some power of $\Theta$, it follows that $\Theta \Lambda(\varepsilon,(q_{\Gamma},p_{\Delta})') - \Lambda(\varepsilon,(q_{\Gamma},p_{\Delta})')$  is an element of $\rotc{\Theta}$. By Lemma~\ref{lem:identifyinequivstatesmakesnodiff}, it follows that the unique state of $t$ of $TU$ for which $\pi_{TU}(\Gamma, t) = t$ satisfies $\lambda_{t}(\Gamma, t) = \Theta \Lambda(\varepsilon,(q_{\Gamma},p_{\Delta})') - \Lambda(\varepsilon,(q_{\Gamma},p_{\Delta})')$. Thus $((\rotc{\Gamma})(T)\Pi)(U)\Pi = \rotc{\Theta} = (\rotc{\Gamma})(TU)\Pi$ as required.
	
	We now show that $\Pi$ is injective. Let $T, U \in \Mn{n}$ such that $(T)\Pi = (U)\Pi$ 
	
	Suppose that $T = Z_{x}$ for some $x \in \wn$, noting that by definition $$\im{(T)\Pi} = \{ \rotc{x}\}.$$ If $U$ has no states of incomplete response, then $\im{(U)\Pi}$ has size at least $2$. For let $a \in \xn$ $q_{a} \in Q_{U}$ satisfy $\pi_{U}(a, q_a) = q_a$. Let $\Gamma = \lambda_{U}(a, q_a)$. There is a word $b \in \xnp$ such that $\pi_{U}(b, q_a) = q_a$ and $\Delta = \lambda_{U}(b, q_a)$ has no prefix in common with $\Gamma$. Let $\gamma \in \wn$ such that $\Gamma$ is a power of $\gamma$. Then we observe that $\Gamma \Delta$ cannot be a power of $\gamma$ and so $\rotc{\Gamma} \ne \rotc{\Gamma \Delta}$, furthermore $\rotc{\Gamma \Delta}$ is in the image of $(U)\Pi$. Thus if $(T)\Pi = (U)\Pi$, then $T = U = Z_{x}$ by minimality of elements of $\Mn{n}$.
	
	Therefore we may assume that both $T$ and $U$ have no states of incomplete response. 
	
	We begin with the following observation. Let $a \in \wn$ be a prime word and let $p_{a} \in Q_{T}$ and $q_{a} \in Q_{U}$ be the unique states for which $\pi_{T}(a, p_a) = p_a$ and $\pi_{U}(a, q_{a}) = q_a$. Let $\Gamma =\lambda_{T}(a, p_a)$ and $\Gamma' = \lambda_{U}(a, q_{a}) = q_a$. Then it must be the case that $|\Gamma| = |\Gamma'|$. For suppose $|\Gamma | \ne |\Gamma'|$. let $\gamma \in \rotc{\Gamma}$, $\gamma' \in \rotc{\Gamma'}$ and $r,r' \in \N$ be such that $\gamma^r = \Gamma$ and ${\gamma'}^{r'}= \Gamma'$. We assume (swapping $T$ and $U$ if necessary) that $r >  r'$. Since $T$ has no states of incomplete response, there is a word $b \in \xnp$, such that $\pi_{T}(b, p_{a}) = p_{a}$, and $\Delta = \lambda_{T}(b, p_{a})$ has no initial prefix in common with $\Gamma$. By prepending, if necessary, a large enough power of $a$ to $b$, we may further assume that $\pi_{U}(b, q_{a}) = q_a$. Let $\Delta' = \lambda_{U}(b, q_{a})$. Let $j \in \N$, $j \ge 2$, be such that $|\gamma^{j}|> r'j|\gamma|+|\Delta'|$. Note that, choosing a longer $j$ if necessary, since $\gamma$ is a prime word, and $\Delta$ has no prefix in common with any power of $\gamma$, $\gamma^{rj}\Delta$ is a prime word. This immediately forces that $a^{j}b$ is also a prime word since $T$ is strongly synchronizing and non-degenerate. Thus we see that $(\rotc{a^{j}b})(T)\Pi = \rotc{\gamma^{rj}\Delta}$  is not equal to $\rotc{\gamma^{r'j}\Delta'} = (\rotc{a^{j}b})(U)\Pi$ as $\gamma^{rj}\Delta$ is a prime word strictly longer than $\gamma^{r'j}\Delta'$. This yields the desired contradiction Therefore we conclude that $r = r'$.
	
	Suppose that there is a prime word $a \in \wn$ such that for $p_{a} \in Q_{T}$ and $q_{a} \in Q_{U}$ satisfying $\pi_{T}(a, p_a) = p_a$ and $\pi_{U}(a, q_{a}) = q_a$, $\lambda_{T}(a, p_a) \ne \lambda_{U}(a, q_a)$. Since $(T)\Pi = (U)\Pi$ and  $|\lambda_{T}(a, p_a)| = |\lambda_{U}(a, q_a)|$, it is the case that $\lambda_{U}(a, q_a)$ is a non-trivial rotation of $\lambda_{T}(a, p_a)$. Let $\gamma, \gamma' \in \wn$ and $r \in \N$ be such that $\Gamma:=\gamma^r = \lambda_{T}(a, p_a)$ and $\Gamma':={\gamma'}^{r} = \lambda_{U}(a, q_a)$. Let $\gamma_1 \gamma_2 \ldots \gamma_{t} \in \xn$ and $k \in \N$ with $1 < k \le t$ be such that $\gamma = \gamma_1 \gamma_2 \ldots \gamma_t$ and $\gamma' = \gamma_{k}\gamma_{k+1} \ldots\gamma_{t}\gamma_{1}\ldots\gamma_{k-1}$.
	
	We now argue that for any $N \in \N$, there is a prime word $\Theta = \Theta_1 \ldots \Theta_m$ of length $m$ larger than $N$ which is equal to $\lambda_{T}(\Omega, p_a)$ for some (necessarily prime) word $\Omega$ satisfying $\pi_{T}(\Omega, p_a) = p_a$ and $\pi_{U}(\Omega, q_a) = q_a$. Moreover, if $\Theta' = \Theta_{i}\ldots \Theta_{m}\Theta_{1}\ldots\Theta_{i-1}$ is the word $\lambda_{U}(\Omega, q_a)$, then, $i >1$ and $m-i+1 \ge N$.
	
	Let $N \in \N$, and let $c \in \xnp$ be a word such that, as above, $\pi_{T}(c, p_a) = p_a$, $\pi_{U}(c, q_a) = q_a$, $\nu' = \lambda_{U}(c, q_a)$ has no initial prefix in common with $\Gamma'$, and $|\nu'| > N$. There is a $j \in \N$, such that $|\Gamma'|j > |\nu'|$ and ${\Gamma'}^{j}\nu'$ is a prime word since $\nu'$ has no initial prefix in common with $\Gamma'$. Let $\nu = \lambda_{T}(c, p_a)$, noting that $\nu$, is a rotation of $\nu'$. Consider the words $\Gamma'^{j}\nu' = \lambda_{U}(a^{j}c,q_a)$ and $\Gamma^{j} \nu = \lambda_{T}(a^{j}c,p_a)$ which are rotations of each other. Suppose $\nu= \nu_1 \nu_2$ such that $\nu_2$ is non-empty and $\nu_2\Gamma^{j} \nu_1= \Gamma'^{j}\nu'$. Since $\gamma$ is prime and $\gamma'$ is a non-trivial rotation of $\gamma$, it must be the case choosing a larger $j$ if necessary, that $\nu_2 = (\gamma')^{l}\gamma_{k}\ldots\gamma_{t}$. We may assume that $rj - l \ge 2$ so that $(\gamma)^{rj} = (\gamma)^{rj-l-1}\gamma_{1}\ldots\gamma_{k-1}\nu'$. However this forces that $\nu$ has a non-trivial prefix in common with $\gamma'$ yielding a contradiction. Therefore it must be the case that  $\Gamma^{j} = \mu_1 \mu_2$ for some non-empty suffix $\mu_2$, and $(\Gamma')^{j} \nu' = \mu_2\nu \mu_1$. Notice that since $\gamma$ is a prime word  and $\Gamma'$ is a non-trivial rotation of $\Gamma$, $\mu_1$ cannot be empty. Setting $\Theta = \Gamma^{j}\nu$ we are done.
	
	Fix $M,m \in \N$ be minimal such that for any word $x \in \xnp$ of length $M$, and any pair of states $p \in Q_{T}$ and $q \in Q_{U}$, $1 \le |\lambda_{T}(x,p)|,|\lambda_{U}(x,q)| \le m$. 
	
	By the above, there is a prime word $\Theta= \Theta_1 \Theta_2 \ldots \Theta_{s}$, $i \in \N$ such that $i >1$ and $s-i-1 \ge 2m$, and a prime word $\Omega \in \wn$ such that $\pi_{T}(\Omega, p_a) = p_a$, $\pi_{U}(\Omega, q_a) = q_a$, $\Theta = \lambda_{T}(\Omega, p_a)$ and $\Theta' = \Theta_{i}\ldots \Theta_{s}\Theta_{1}\ldots\Theta_{i-1} = \lambda_{U}(\Omega, q_a)$. Using the fact that $T$ has no states of incomplete response, let $b \in \xn^{M}$ be a word,  such that $\lambda_{T}(b,p_a)$ has no prefix in common with $\Theta$. In a similar way, and using the fact that $s-i-1 \ge 2m$, there is a word $c \in \xn^{M}$ such that $\lambda_{U}(bc, q_a)$ is incomparable with  $\Theta_{i} \ldots \Theta_{s}$. Further there is a word $d \in \xnp$ such that $\pi_{T}(bcd, p_a) = p_a$ and $\pi_{T}(bcd, q_a)= q_a$. Let $\Delta = \lambda_{T}(bcd, p_a)$ and $\Delta' = \lambda_{U}(bcd, q_a)$, noting that $\Delta'$ is a rotation of $\Delta$, $\Delta$ has no initial prefix in common with $\Theta$ and $\Delta'$ is incomparable with $\Theta_{i} \ldots \Theta_{s}$.
	
	As before there is a $j \in \N$ large enough, so that $\Theta^{j}\Delta$ is a prime word, as $\Theta$ is a prime word and $\Delta$ has no prefix in common with $\Theta$. Since $\Theta^{j}\Delta = \lambda_{T}(\Omega^{j} bcd, p_a)$ and $\Theta'^{j}\Delta' = \lambda_{U}(\Omega^{j} bcd, q_a)$, it has to be the case that $\Theta'^{j}\Delta'$ is a rotation of $\Theta^{j}\Delta$. We show that this yields a contradiction.
	
	First suppose that  $\Delta' = \mu_1 \mu_2$ for some non-empty suffix $\mu_2$ such that $\mu_2(\Theta')^{j}\mu_1 = \Theta^{j}\Delta$. since $\Theta'$ is a prime word, for sufficiently large $j$, it must be the case that $\mu_2 = \Theta^{l}\Theta_{1}\ldots\Theta_{i-1}$ for some $l< j$ (since the length of $\Delta$ is fixed we can choose $j$ such that $j-l$ is as large as we please). This then implies that $(\Theta')^{j-l-1}\Theta_i \ldots\Theta_{s} \Delta = (\Theta')^{j}\mu_1$. However, for sufficiently large $j$, this implies that $\Delta$ has a non-trivial prefix in common with $\Theta$ yielding the desired contradiction.
	
	Thus we may assume that $(\Theta')^{j} =\mu_1 \mu_2$ for some non-empty suffix $\mu_2$ such that $\mu_2\Delta'\mu_1 = \Theta^{j}\Delta$. Since $\Theta$ is a prime word, it must be the case that, for sufficiently large $j$, $\mu_2 = (\Theta)^{l}\Theta_{1}\ldots\Theta_{i-1}$ for some $1 \le l < j-1$. This then means that $\Theta_1 \ldots \Theta_{i-1} \Delta'$ is a prefix of $(\Theta)^{j-l}\Delta$, from which we deduce that $\Delta'$ is not incomparable to $\Theta_{i}\ldots\Theta_{s}$ yielding the desired contradiction.
	
	Therefore it must be the case that for any prime word $a \in \wn$ such that for $p_{a} \in Q_{T}$ and $q_{a} \in Q_{U}$ satisfying $\pi_{T}(a, p_a) = p_a$ and $\pi_{U}(a, q_{a}) = q_a$, $\lambda_{T}(a, p_a) = \lambda_{U}(a, q_a)$. However, in this case, by Lemma~\ref{lem:monolemma}, $T = U$ as required. 

	\end{proof}
\begin{cor}\label{cor:periodicperm}
	The restrictions $\Pi: \SOn{n} \to \tran{\rwnl{\ast}}$, and $\Pi: \On{n} \to \sym{\rwnl{\ast}}$  are monomorphisms.
\end{cor}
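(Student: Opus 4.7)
The plan is to derive both assertions as essentially formal consequences of the preceding proposition establishing that $\Pi: \Mn{n} \to \tran{\rwnl{\ast}}$ is a monomorphism. Since $\On{n} \subseteq \SOn{n} \subseteq \Mn{n}$, injectivity of both restrictions is automatic: any map which is a monoid monomorphism on a monoid remains a monoid monomorphism when restricted to any submonoid. Thus the first restriction $\Pi: \SOn{n} \to \tran{\rwnl{\ast}}$ is already a monomorphism of monoids, with no additional work required beyond noting the containment.

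For the second restriction, the only thing left to verify is that the image of $\Pi|_{\On{n}}$ actually lies in the symmetric group $\sym{\rwnl{\ast}}$ rather than merely in the transformation monoid. This is where I would use the fact that $\On{n}$ is a group. Given $T \in \On{n}$ with inverse $T^{-1} \in \On{n}$, the homomorphism property gives $(T)\Pi \circ (T^{-1})\Pi = (TT^{-1})\Pi = (\id)\Pi$ and similarly $(T^{-1})\Pi \circ (T)\Pi = (\id)\Pi$, so it suffices to check that $(\id)\Pi$ is the identity map on $\rwnl{\ast}$. This is immediate: the single state of $\id$ is fixed by every prime word $\gamma$ and outputs $\gamma$ itself, so $(\rotc{\gamma})(\id)\Pi = \rotc{\gamma}$. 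Consequently $(T)\Pi$ is a bijection with two-sided inverse $(T^{-1})\Pi$, placing it in $\sym{\rwnl{\ast}}$.

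Combining these two observations, $\Pi|_{\On{n}}$ is an injective monoid homomorphism whose image lies inside $\sym{\rwnl{\ast}}$, hence a group monomorphism into $\sym{\rwnl{\ast}}$. There is no genuine obstacle here; the real content has already been absorbed into the preceding proposition, and this corollary is purely a packaging step isolating the well-behaved restrictions.
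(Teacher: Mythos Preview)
Your proposal is correct and matches the paper's approach: the corollary is stated without separate proof, as it follows immediately from the preceding proposition that $\Pi: \Mn{n} \to \tran{\rwnl{\ast}}$ is a monomorphism, together with the already-noted fact (from \cite{AutGnr}) that $(T)\Pi \in \sym{\rwnl{\ast}}$ for $T \in \On{n}$. Your argument for the latter via the group structure of $\On{n}$ is a perfectly good alternative to citing \cite{AutGnr}.
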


\begin{prop}\label{prop:bijectioniffOn}
	Let $T \in \Mn{n}$. Then $(T)\Pi$ is bijective if and only if  $T \in \On{n}$.
\end{prop}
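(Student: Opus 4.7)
The forward direction is immediate from Corollary~\ref{cor:periodicperm}: if $T \in \On{n}$ then $(T)\Pi$ lies in $\sym{\rwnl{\ast}}$ and so is bijective.

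For the converse, I would assume $(T)\Pi$ is bijective and deduce $T \in \On{n}$ in three steps. First I would rule out $T = Z_{x}$ for a prime word $x \in \wn$: such a $T$ sends every element of $\rwnl{\ast}$ to $\rotc{x}$, which contradicts bijectivity since $|\rwnl{\ast}| > 1$ for $n \geq 2$. Hence no state of $T$ has infinite extent of incomplete response, and by minimality no state has incomplete response at all.

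Next I would show $T \in \SOn{n}$, that is, every state $t$ induces an injective map $h_t$ with clopen image. For injectivity, suppose for contradiction that some $h_q$ is not injective and pick distinct $\xi, \eta \in \xnn$ with $\lambda_T(\xi,q) = \lambda_T(\eta,q)$. Chasing through their longest common prefix and comparing the two one-letter continuations (one of whose outputs must be a prefix of the other), I obtain two distinct infinite inputs starting at a single state producing identical outputs. Using strong connectivity of $T$ to close this collision up into a pair of circuits, and padding by sufficiently long periodic loops to force both circuit labels to be prime words and to ensure their common output is also prime, I exhibit two inequivalent rotation classes of prime words whose $(T)\Pi$-images coincide, contradicting injectivity of $(T)\Pi$. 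For the clopen image condition, I would use the decomposition $\xnn = \bigsqcup_{w \in \xn^{k}} U_{w}$ at the synchronization level $k$, under which $h_t$ carries each $U_w$ to the set $\lambda_T(w,t)\cdot\im{q_w}$; injectivity of $h_t$ makes these images pairwise disjoint, and the same analysis applied to the finitely many states $q_w$ (combined with the fact that $h_t$ is a continuous injection from a compact space, so its image is closed) gives clopen image by a straightforward finite-descent argument as in \cite{AutGnr}.

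Finally, with $T \in \SOn{n}$ and $(T)\Pi$ bijective, I would invoke the construction from \cite{AutGnr} (where $\SOn{n}$ is shown to be a monoid and $\On{n}$ its group of units): the bijectivity of $(T)\Pi$ allows us to produce a candidate inverse $U \in \Mn{n}$ whose periodic orbit representation is $((T)\Pi)^{-1}$, and then the injectivity of $\Pi$ established in the previous proposition forces $TU = UT = \id$ in $\Mn{n}$. Hence $T$ lies in the inverse-closed subset $\On{n}$ of $\SOn{n}$. The main obstacle is the middle step: promoting a point-collision of $h_q$ on $\xnn$ into a collision of prime cyclic words under $(T)\Pi$, which requires careful bookkeeping with strong synchronization to guarantee that the circuits one constructs really label distinct prime rotation classes with a common prime image.
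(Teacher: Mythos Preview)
Your proposal has two genuine gaps.

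First, your argument for clopen image is circular. You decompose $\im{t}$ as a union of sets $\lambda_T(w,t)\cdot\im{q_w}$ over $w \in \xn^{k}$ and propose to apply ``the same analysis to the finitely many states $q_w$.'' But since $T$ is core and synchronizing at level $k$, \emph{every} state of $T$ arises as some $q_w$; the reduction returns you to exactly the problem you started with, and there is no descent. Closedness you get for free from compactness, but openness requires real work. The paper's argument is entirely different and makes essential use of \emph{surjectivity} of $(T)\Pi$: fixing a loop at $q$ with output $\mu$ and a point $\delta \in \im{q}$ with prefix $\delta_1$, one uses surjectivity of $(T)\Pi$ to produce preimages of long words of the shape $\mu^{M}\delta_1\nu\mu^{M}$ for arbitrary $\nu$, and a careful analysis of how such a preimage must factor through $q$ (using injectivity of the states, already established) forces $U_{\delta_1} \subset \im{q}$. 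Nothing like this appears in your sketch, and I do not see how to avoid it.

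Second, your final step is unjustified. You assert that bijectivity of $(T)\Pi$ ``allows us to produce a candidate inverse $U \in \Mn{n}$ whose periodic orbit representation is $((T)\Pi)^{-1}$,'' and then appeal to injectivity of $\Pi$ to conclude $TU = UT = \id$. But $\Pi$ has only been shown to be a monomorphism into $\tran{\rwnl{\ast}}$; there is no reason a given permutation of $\rwnl{\ast}$---in particular $((T)\Pi)^{-1}$---should lie in the image of $\Pi$. You would need an independent construction of $U$ as a strongly synchronizing transducer, and you give none. The paper avoids this entirely by working directly with the defining conditions for $\SOn{n}$ (injective states, clopen images) rather than attempting to manufacture an inverse out of the permutation data.

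Your sketch for injectivity of the states is along the right lines, and you correctly flag it as the main technical obstacle; the paper isolates this as a separate lemma (Lemma~\ref{lem:injectiveiffstatesinjective}) whose proof is considerably more delicate than your outline suggests---one must track output-length discrepancies along two infinite paths with common image, extract recurring states on both, and carefully match rotations, ultimately forcing a prime word to equal a nontrivial rotation of itself.
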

Before proving this proposition, we prove the following auxiliary result.
	\begin{lemma}\label{lem:injectiveiffstatesinjective}
		Let $T \in \Mn{n}$ and suppose that $(T)\Pi$ is injective. Then every state of $T$ is injective.
	\end{lemma}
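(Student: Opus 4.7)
The plan is to establish the contrapositive: assuming some state $q_0$ of $T$ is non-injective, I will construct two distinct prime cyclic words on which $(T)\Pi$ agrees. First I would dispose of the degenerate case: if $T = Z_x$, then $(T)\Pi$ collapses every prime cyclic word to $[x]_{\rot}$ and is already not injective; so it suffices to assume $T$ has no states of incomplete response. Then, given distinct $x, y \in \xnn$ with $\lambda_T(x, q_0) = \lambda_T(y, q_0) =: z$, I would replace $(x, y, q_0)$ by $(\shift{n}^i x, \shift{n}^i y, \pi_T(x_0 \ldots x_{i-1}, q_0))$ for the least index $i$ where they differ, reducing to the case $x_0 \neq y_0$.

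Next I would track, for each $n \geq 0$, the data $s_n := \pi_T(x_0 \ldots x_{n-1}, q_0)$, $t_n := \pi_T(y_0 \ldots y_{n-1}, q_0)$, and the output offset $d_n := |\lambda_T(x_0\ldots x_{n-1}, q_0)| - |\lambda_T(y_0 \ldots y_{n-1}, q_0)|$. The assumption that every infinite input produces infinite output gives a linear lower bound on output length, so $s_n$ and $t_n$ lie in the finite set $Q_T$ (strongly synchronized by the last $k$ letters of input) and $d_n$ changes by a bounded amount at each step. A pigeonhole argument on $Q_T \times Q_T$, combined with showing that the offsets $d_n$ admit a subsequence taking the same value (the key technical step, discussed below), yields indices $n_1 < n_2$ with $(s_{n_1}, t_{n_1}, d_{n_1}) = (s_{n_2}, t_{n_2}, d_{n_2}) =: (p, p', d)$. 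Setting $u := x_{n_1}\ldots x_{n_2-1}$ and $v := y_{n_1} \ldots y_{n_2-1}$ gives a loop at $p$ and at $p'$ respectively, whose outputs are substrings of $z$ of equal length beginning at respective positions $a_{n_1}$ and $b_{n_1} = a_{n_1} - d$.

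Iterating the loops to form $\tilde x = x_0 \ldots x_{n_1-1} u^\omega$ and $\tilde y = y_0 \ldots y_{n_1 - 1} v^\omega$, both map under $T_{q_0}$ to an eventually periodic string whose period divides the common output length of $u$ and $v$. Writing $u = u_0^a$ and $v = v_0^b$ with $u_0, v_0$ prime, one uses strong synchronization to show that any loop labelled by a power of the prime word $u_0$ at a state $p$ forces $p = q_{u_0}$, and analogously for $v_0$. Thus $\lambda_T(u_0, q_{u_0})^a$ and $\lambda_T(v_0, q_{v_0})^b$ are the two single-period outputs, and their agreement up to rotation (inherited from the periodic tail of $T_{q_0}(\tilde x) = T_{q_0}(\tilde y)$) implies both are powers of a single prime word $w$. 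Consequently $(T)\Pi(\rotc{u_0}) = \rotc{w} = (T)\Pi(\rotc{v_0})$, giving the sought contradiction provided $\rotc{u_0} \neq \rotc{v_0}$.

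The main obstacle will be twofold. First, the offsets $d_n$ need not be globally bounded, so finding $n_1 < n_2$ with $d_{n_1} = d_{n_2}$ requires a finer argument: when $|d_n|$ grows, one passes to the subsequence of indices $n'$ on the $y$-side where the $y$-output catches up to an earlier $x$-output position, effectively reducing to a bounded-offset regime via a secondary pigeonhole. Second, one must certify $\rotc{u_0} \neq \rotc{v_0}$; this is where the reduction $x_0 \neq y_0$ does its work, for by choosing $n_1$ to lie on the boundary of a maximal prefix where $(s_n, t_n)$ has already returned to $(p, p')$ with matching offset, the first letters of $u$ and $v$ are forced to be the distinguished letters $x_0$ and $y_0$, so that $u_0$ and $v_0$ cannot be rotations of each other.
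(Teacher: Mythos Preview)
Your outline has two genuine gaps.

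\textbf{The output tails need not agree.} You assert that the periodic tails of $T_{q_0}(\tilde x)$ and $T_{q_0}(\tilde y)$ are equal (or at least rotationally equivalent). But $T_{q_0}(\tilde x)$ has tail $\lambda_T(u,p)^\omega$, which is the repetition of the length-$L$ substring $z[a_{n_1},a_{n_2})$ of $z$, while $T_{q_0}(\tilde y)$ has tail $\lambda_T(v,p')^\omega$, the repetition of $z[b_{n_1},b_{n_2})$. Matching only the \emph{numerical} offset $d_{n_1}=d_{n_2}$ guarantees these two windows have the same length, but two equal-length substrings of a fixed infinite string are under no obligation to be rotations of one another. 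So from your data there is no reason that $(T)\Pi(\rotc{u_0})=(T)\Pi(\rotc{v_0})$.

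\textbf{Different first letters do not preclude rotation.} Even if you arranged $u$ and $v$ to begin with the distinguished letters $x_0\ne y_0$, that does not stop $u_0$ and $v_0$ from being rotations of each other (e.g.\ $u=012$, $v=120$). Your proposed certification of $\rotc{u_0}\ne\rotc{v_0}$ therefore does not go through.

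The paper's proof repairs the first gap by pigeonholing not on the offset \emph{value} but on the actual overhang \emph{word} $\lambda_T(\eta_a,q)-\lambda_T(\tau_a,q)$ (a word of bounded length, hence finitely many possibilities); matching this word forces the two loop-outputs to be honest rotations of one another. More strikingly, the paper then turns your second difficulty on its head: rather than trying to show the input loops are \emph{not} rotationally equivalent, it applies injectivity of $(T)\Pi$ to deduce that they \emph{are}, and then iterates this---decomposing the resulting circuit into prime sub-circuits based at $q$, applying $\Pi$-injectivity again, and so on---until the prime-word combinatorics collapse to a contradiction. This cascading use of the hypothesis is the missing idea in your plan.
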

	\begin{proof}
	Suppose there exists a state $q \in Q_T$ and distinct elements $\nu, \mu \in \xn^{\omega}$ such that $(\nu)h_{q} = (\mu)h_{q} = \delta = \delta_1\delta_2 \ldots$ for $\delta_i \in \xn$.
	
	Since $T$ has finitely many states there is a state $p \in Q_{T}$ and elements $\nu_i \in \xnp$, $i \in \N$  with $\nu_1  \nu_2  \ldots = \nu$, $\pi_{T}(\nu_1,  q)= p$ and $\pi_{T}(\nu_i, p) = p$. For $i \in \N$ set $\eta_i = \nu_1 \ldots \nu_i$.
	
	By our assumption that transducers always write finite strings on finite input,  there is a $j \in \N$ such that for any element $t \in Q_{T}$ and any $x \in \xn$, $|\lambda_{T}(x, t)| < j$. In particular, there is a constant $J \in \N$, such that for every $i \in \N$ we may find a proper prefix $\tau_i$ of $\mu$ satisfying  $\left | |\lambda_{T}(\tau_i, q)| - |\lambda_{T}(\eta_i, q)| \right | \le J$.  We note that the length of $\tau_i$ increases with $i$ since by assumption transducers always write infinite strings on infinite inputs. 
	
	Once more, using finiteness of the states of $T$, there is a state $t \in Q_{T}$ such that for infinitely many $i \in \N$, $\pi_{T}(\tau_i, q) = t$. We may further assume, swapping the roles of $p$ and $q$ if need be, that for infinitely many $i \in \N$, $| \lambda_{T}(\tau_i, q)| < |\lambda_{T}(\eta_i, q)|$. Let  $\mathscr{J}$ be the set of those  $ i \in \N$ such that
	\begin{itemize}
		\item $\tau_i$ is incomparable with $\eta_i$ (there are infinitely may such $i$ since $\mu \ne \nu$),
		\item $\pi_{T}(\tau_i, q) = t$,
		\item $| \lambda_{T}(\tau_i, q)| < |\lambda_{T}(\eta_i, q)|$,  and ,
		\item  for $a, b \in \mathscr{J}$ $\tau_a = \tau_{b}$ if and only if $a = b$.
	\end{itemize}
	   In particular we note that for $a, b \in \mathscr{J}$ with $a < b$, $\tau_{a}$ is a proper prefix of $\tau_{b}$. We note that $|\mathscr{J}| = |\N|$. Let $a_1< a_2< \ldots$ be an ordering of the elements of $\mathscr{J}$. Set  $\mu_{a_1} = \tau_{a_1}$ and for $1<b$ we set $\mu_{a_b} = \tau_{a_{b}} - \tau_{a_{b-1}}$. We note that $\mu_{a_{1}}\mu_{a_{2}}\ldots = \mu$ since the $\tau_{a}$ are longer and longer prefixes of $\mu$.
	
	By construction for all $a \in \mathscr{J}$, $0 \le |\lambda_{T}(\eta_{a}, q)| - |\lambda_{T}(\tau_{a}, q)| \le J$. Therefore, since $\lambda_{T}(\tau_{a}, q)$ is a prefix of $\lambda_{T}(\eta_{a}, q)$,  there is an $a \in \mathscr{J}$ such that for infinitely many $b \in \mathscr{J}$, $$\lambda_{T}(\eta_a, q) - \lambda_{T}(\tau_{a}, q) = \lambda_{T}(\eta_{b}, q) - \lambda_{T}(\tau_{b}, q).$$  Fix such  $a, b \in \mathscr{J}$ and set $r = |\lambda_{T}(\eta_a, q)| - |\lambda_{T}(\tau_{a}, q)|$. By choosing $b$ large enough, we may assume that $r$ is strictly less than the maximum of $|\lambda_{T}(\eta_{b} - \eta_a, p)|$ and $|\lambda_{T}(\tau_b - \tau_{a}, t)|$.
	
   Let $m \in \N$ be such that $\lambda_{T}(\eta_{a}, q) = \delta_1 \ldots \delta_{m} \delta_{m+1} \ldots \delta_{m+r}$ and set $\lambda_{T}(\eta_{b} - \eta_a, p)= \delta_{m+r+1} \ldots \delta_{m+r+l}$. We note that $l > r$ by assumption. It now follows that  $\lambda_{T}(\tau_{a}, q) =  \delta_{1} \ldots \delta_{m}$ and  $\lambda_{T}(\tau_{b} - \tau_{a}, t) =  \delta_{m+1} \ldots \delta_{m+r}\ldots \delta_{m+ l}$. Using the fact that $\lambda_{T}(\eta_a, q) - \lambda_{T}(\tau_{a}, q) = \lambda_{T}(\eta_{b}, q) - \lambda_{T}(\tau_{b}, q)$, we conclude that  $\delta_{m+l+1} \ldots \delta_{m+r+l} = \delta_{m+1} \ldots \delta_{m+r}$. In particular, $$\delta_{m+r+1} \ldots \delta_{m+l} \delta_{m+l+1} \ldots \delta_{m+r+l}$$ is a rotation of $$\delta_{m+1} \ldots \delta_{m+r} \delta_{m+r+1} \ldots \delta_{m+r+l}$$ and so  $\lambda_{T}(\eta_{b} - \eta_{a}, p)$ is a rotation of $\lambda_{T}(\tau_{b} - \tau_{a}, t)$. 
   
   As $(T)\Pi$ is injective, we conclude  that  $\tau_{b} - \tau_{a}$ is a rotation of $\eta_{b} - \eta_{a}$. In this case, there is  a word $w \in \xns$ such that $\pi_{T}(w, t) = p$ and $\lambda_{T}(w, t) = \delta_{m+1} \ldots \delta_{m+r}$.  We therefore have that $\lambda_{T}(\tau_{b}w, q) = \lambda_{T}(\eta_{b}, q)$ and $\pi_{T}(\eta_{b}, q) = \pi_{T}(\tau_{b} w, q) = p$. By the strong synchronizing condition we may find a word $\rho \in \xnp$  such that $\pi_{T}(\rho, p) = q$.  Using the fact that $(T)\Pi$ is injective we thus conclude that $\tau_{b}w\rho$ is a rotation of $\eta_{b}\rho$.
   
   Since $\tau_{b}w\rho$ and $\eta_{b}\rho$ are both circuits based at $q$, which are non-trivial rotations of each other ($\tau_{b}$ is incomparable with $\eta_{b}$ by assumption), it follows that there are distinct prime words $x_1, x_2, \ldots, x_{c} \in \xnp$, such that each $x_{i}$, $1 \le i \le c$, represents a circuit based at $q$, and so that the following condition is satisfied. There is a word $w_1 w_2 \ldots w_d \in \{ x_1, \ldots, x_c\}^{+}$  and $1 < i \le d$ such that $w_1 \ne w_{i}$, $\tau_{b}w\rho = w_1\ldots w_d$, $\eta_{b}\rho = w_{i} \ldots w_{d}w_{1}\ldots w_{i-1}$. Since $\lambda_{T}(x_e, q) \ne \varepsilon$ for any $1 \le e \le c$, it follows  that $\lambda_{T}(\tau_{b} w\rho, q) = \lambda_{T}(\eta_{b}\rho, q)$ is equal to a non-trivial rotation of itself. Therefore there is a prime word $\gamma \in \xnp$, such that $\lambda_{T}(\tau_{b} w, q)$ is a non-trivial power of $\gamma$. In particular  $\lambda_{T}(w_1\ldots, w_{i-1}, q)$ and $\lambda_{T}( w_{i} \ldots w_{d}, q)$ are both powers of $\gamma$. 
   
   Injectivity of  $(T)\Pi$ now means that there is a prime word $\xi$ such that $w_1 \ldots w_{i}$ is a power of $\xi$ and $w_{i+1}\ldots w_{d}$ is a power of a non-trivial  rotation $\xi'$ of $\xi$. Since $w_1 \ldots w_{i-1}$ and $w_{i}\ldots w_{d}$ are both circuits based at $q$ and $w_1 \ne w_{i}$, we may decompose $\xi$ as $\xi = \xi_1 \xi_2$ for non-empty words $\xi_1$ and $\xi_2$ where $\xi' = \xi_2 \xi_1$ and $\pi_{T}(\xi_1, q) = \pi_{T}(\xi_2, q) = q$.  Observe that $\lambda_{T}(\xi_1, q) $ and $\lambda_{T}(\xi_2, q)$ are both powers of $\gamma$, therefore,  injectivity of $(T)\Pi$ now forces that $\xi_1$ and $\xi_2$ are both powers of $\xi$. This yields the desired contradiction.
   %
\end{proof}

\begin{proof}[of Proposition~\ref{prop:bijectioniffOn}]
	Clearly if $T \in \On{n}$ then $(T)\Pi$ is bijective.  Therefore it suffices to show that if $T \in \Mn{n}$  and $(T)\Pi$ is a bijection, then $T \in \On{n}$.
	
	Let $T \in \Mn{n}$ and suppose that $(T)\Pi$ is bijective. By Lemma~\ref{lem:injectiveiffstatesinjective} all the states of $T$ are injective. It therefore suffices to show that the image of every state of $T$ is clopen.
   
   Let $q \in Q_{T}$. Since the map $h_{q}: \xnn \to \xnn$ is injective and continuous, the image of $q$ is closed. We now prove that the image of $q$ is an open subset of $\xnn$.
   
   Let  $\gamma \in \xnp$ be such that $\pi_{T}(\gamma, q) = q$ and set $\mu^{m} = \lambda_{T}(\gamma, q)$ for $\mu$ a prime word. Since elements of $\Mn{n}$ are minimal, and $(T)\Pi$ is a bijection, then  $T$ has no states of incomplete response. In particular we may fine $\delta \in  \im{q}$ such that $\delta$ has no prefix in common with $\mu$. Let $\rho \in  \xnn$ be such that $(\rho)h_{q} = \delta$.
   
   Since $q$ is injective and every state of $T$ has closed image, there is a $j \in \N$ and  proper prefixes $\delta_1$ of $\delta$ and  $\rho_1$ of $\rho$ with $\lambda_{T}(\rho_1, q) \ne \varepsilon$, such that:
   \begin{quote}
   	for any word $\psi$ with $\psi$ incomparable to $\gamma$, and  $\mu^{i}\delta_1 \le \lambda_{T}(\psi, q)$ for some $i \in \N$, then $i \le j$ and there is an $a \in \N$ such that $\gamma^{a} \rho_1 \le \psi$. 
   \end{quote}

   Since $(T)\Pi$ is injective, there is an $N$ in $\N$ such that for any word $\tau \in \xnp$ and any state $p \in Q_{T}$ with $\mu^{N}$ a prefix of $\lambda_{T}(\tau, p)$,  there is a prefix $\tau_1$ of $\tau$ such that $\tau_1 \gamma$ is also a prefix of $\tau$ and $\lambda_{T}(\tau_1 \gamma, p)$ is non-empty and a power of $\mu$.
  
   Consider the word $\mu^{M} \delta_1 \mu^{M}$ .  Let $\eta$ be such that $(\eta)(T)\Pi = \mu^{M} \delta_{1} \mu^{M}$. By increasing $M$ we may assume that $\eta$ is longer than the minimal synchronizing length of $T$. Let  $p \in Q_{T}$ be the state of $T$ forced by $\eta$. 
   
   By preceding observations we may decompose	$\eta$ as $\eta= \eta_1 \gamma \eta_2$, where $\lambda_{T}(\eta_1 \gamma, p)$ is a power of $\mu$. Therefore,  $\lambda_{T}(\eta_2,q) = \mu^{i}\delta_1 \mu^{M}$, and there is an $a \in \N$ such that $\gamma^{a}\rho_1$ is a prefix of $\eta_2$. Decompose  $\eta_2$ as $\gamma^{a}\rho_1 \eta_3$. Since $\lambda_{T}(\rho_1 \eta_3,q) = \delta_1  \mu^{M}$, as $\lambda_{T}(\rho_1,q)$ is a non-empty prefix of $\delta_1$, for sufficiently large $M$, we may write $\eta_3 = \eta_{3,1}\gamma^{l+1} \eta_{3,2}$ for some $l \in \N_{1}$. Thus, $\pi_{T}(\eta_{1}\gamma^{a+1}\eta_{3,2}, p) = p$ and $\lambda_{T}(\eta_1\gamma^{a+1}\eta_{3,2}, p)$ is a power of $\mu$. Therefore, there is  a prefix $\xi$ of $\eta_1$  such that $\pi_{T}( \xi,p) = q$ and $\lambda_{p}(\xi, p) = \varepsilon$. In particular we may assume that $p =q$ since, by replacing $\eta$ with $(\eta - \xi)\xi$ we have $\lambda_{T}(\eta, q) = \mu^{M} \delta_1  \mu^{M}$, and $\pi_{T}(\eta, q) = q$. In particular for $M$  sufficiently large, there is a word $\eta \in \xnp$ such that $\pi_{T}(\eta, q) = q$ and $\lambda_{T}(\eta, q) = \mu^{M}\delta_1 \mu^{M}$.
   
  Let $M \in \N$ be sufficiently large. By assumptions above, since $\lambda_{T}(\eta, q)$ has $\mu^{M} \delta_{1}$ as a prefix, we have $\gamma^{a}\rho_1$ is a prefix of $\eta$. Moreover, it must also be the case, as $\delta$ has no prefix in common with $\mu$, and $\lambda_{T}(\rho_1, q)$ is a prefix of $\delta$, that $\lambda_{T}(\gamma^{a}, q) = \mu^{am} = \mu^{M}$. However, this now means that for any sufficiently large $M \in \N$, $m |M$. This occurs if and only if $m =1$. In particular, $a = M$.  
  
  
  Let $\delta \in \im{q}$ and write $\delta = \mu^{b}  \tau$, where $\mu$ is not a prefix of $\tau$ and $b \in \N_{0}$. (Note that all elements of $\im{q}$ can be written in this way). Let $\rho \in \xnn$ such that $(\rho)h_{q} = \delta$. 
  
  As above, using injectivity of $q$ and the fact that every state of $T$ is closed, there is a $j \in \N$ and  proper prefixes $\delta_1$ of $\delta$ and  $\rho_1$ of $\rho$ with $\lambda_{T}(\rho_1, q) \ne \varepsilon$, such that:
  \begin{quote}
  	for any word $\psi$ with $\psi$ incomparable to $\gamma$, and  $\mu^{i}\delta_1 \le \lambda_{T}(\psi, q)$ for some $i \in \N$, then $i \le j$ and there is an $a \in \N$ such that $\gamma^{a} \rho_1 \le \psi$.  
  \end{quote}
We note that $a$ must in fact be equal to $i$.

Let $N \in \N$ be determined as before.
  
 Consider the word $\mu^{M} \delta_1 \nu \mu^{M}$ for an arbitrary word $\nu \in \xnp$ and $M \in \N_{N}$.  Let $\eta$ be such that $(\eta)(T)\Pi = \mu^{M} \rho_{1}\nu \mu^{M}$. By increasing $M$ we may assume that $\eta$ is longer than the minimal synchronizing length of $T$. Let  $p \in Q_{T}$ be the state of $T$ forced by $\eta$.  
 
 We may decompose	$\eta$ as $\eta= \eta_1 \gamma \eta_2$, where $\lambda_{T}(\eta_1 \gamma, p) = \mu^{m_1}$ for $m_1 \le M$. Therefore, $\lambda_{T}(\eta_2, q) = \mu^{m_2}\delta_1\nu \mu^{M}$, where $m_2+ m_1 = M$. Thus, we may decompose $\eta_2$ as $\gamma^{m_2}\rho_1 \eta_3$. It then follows that $\lambda_{T}(\rho_1 \eta_3,q) = \delta_1 \nu \mu^{M}$. Since $\nu \in \xnp$ was chosen arbitrarily, it follows that $U_{\delta_{1}} \subset \im{q}$. Since $\delta \in \im{q}$ was arbitrarily, we conclude that $\im{q}$ is open.
\end{proof}

Applying Proposition~\ref{prop:bijectioniffOn} to the submonoid $\SLn{n}$ of $\Mn{n}$, we obtain the following stronger result. 

\begin{cor}\label{prop:perodicperm}
	Let $T \in \SLn{n}$. Then the following are equivalent: 
	\begin{enumerate}[label=(\alph*)]
	    \item $(T)\Pi$ is injective.
	    \item $(T)\Pi$ is bijective.
	    \item $T \in \Ln{n}$.
	\end{enumerate}
\end{cor}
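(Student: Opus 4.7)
The plan is to establish the equivalences via the cycle (c) $\Leftrightarrow$ (b) $\Rightarrow$ (a) $\Rightarrow$ (b). The equivalence (b) $\Leftrightarrow$ (c) should fall out immediately from Proposition~\ref{prop:bijectioniffOn} applied to $T$ viewed as an element of $\Mn{n}$: the map $(T)\Pi$ is bijective if and only if $T \in \On{n}$, and since we already have $T \in \SLn{n}$, this is equivalent to $T \in \On{n} \cap \SLn{n} = \Ln{n}$. The implication (b) $\Rightarrow$ (a) is trivial.

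The only nontrivial step is (a) $\Rightarrow$ (b). Here I would exploit the Lipschitz condition \ref{Lipshitzconstraint} that distinguishes $\SLn{n}$ from $\Mn{n}$. For any prime word $\gamma \in \wnl{d}$ and the unique state $q_\gamma$ of $T$ with $\pi_T(\gamma, q_\gamma) = q_\gamma$, this condition forces $|\lambda_T(\gamma, q_\gamma)| = d$; hence $(\rotc{\gamma})(T)\Pi$ is represented by a word of length $d$, and therefore its prime root has length dividing $d$. Setting $V_k \seteq \bigcup_{d \mid k} \rwnl{d}$ for each $k \ge 1$, I would first verify from this length computation that $(T)\Pi$ carries $V_k$ into itself.

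The second step is to finish by a finiteness argument. Since $V_k$ is finite and $(T)\Pi$ is injective on all of $\rwnl{\ast}$ by hypothesis, its restriction $(T)\Pi|_{V_k}$ is a bijection of $V_k$. As every element of $\rwnl{\ast}$ lies in some $V_k$ (take $k$ to be the length of a prime representative), surjectivity of $(T)\Pi$ on $\rwnl{\ast}$ follows, which combined with injectivity gives (b).

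I do not expect any real obstacle here: the whole argument simply leverages Proposition~\ref{prop:bijectioniffOn} together with the Lipschitz condition, which is the only ingredient of $\SLn{n}$ beyond $\Mn{n}$. In effect, the corollary is a specialisation of the main proposition, strengthened by the observation that the Lipschitz condition provides a natural finite filtration of $\rwnl{\ast}$ stable under $(T)\Pi$, turning injectivity into bijectivity for free.
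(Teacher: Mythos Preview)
Your proposal is correct and follows essentially the same approach as the paper: both reduce (b)~$\Leftrightarrow$~(c) to Proposition~\ref{prop:bijectioniffOn} together with $\Ln{n}=\On{n}\cap\SLn{n}$, and both derive (a)~$\Rightarrow$~(b) from the Lipschitz condition via a finiteness argument. Your packaging with the finite $(T)\Pi$-stable sets $V_k=\bigcup_{d\mid k}\rwnl{d}$ is a clean variant of the paper's ``simple induction argument'' on $k$, but the underlying idea is identical.
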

\begin{proof}
	We note that for $T \in \SLn{n}$, $(T)\Pi$ is injective if and only if it is surjective, since the map   $(T)\Pi$ maps an element of $\rwnl{k}$ to an element of $\rwnl{l}$ for some $l \le k$, injectivity, a simple induction argument and the fact that $\rwnl{k}$ is finite for all $k \in \N$, then demonstrates that $(\rwnl{k})(T)\Pi = \rwnl{k}$. Therefore, $(T)\Pi$ is a bijection and the  follows from Proposition~\ref{prop:bijectioniffOn}.
\end{proof}

Corollary~\ref{prop:perodicperm} above prompts the following questions for an element $T \in \Mn{n}$. 
\begin{enumerate}[label= {\bfseries Q.\arabic*}]
  \item Is it true that $T \in \On{n}$ if and only if $(T)\Pi$ is injective? 
	\item Is it true that $T \in \SOn{n}$ if and only if $(T)\Pi$ is surjective?
\end{enumerate}

Notice that  the existence of elements of $\shn{n}\backslash \hn{n}$ indicates that there are elements $T$ of $\SLn{n} \backslash \Ln{n}$ for which $(T)\Pi$ is surjective but not injective. 

We note that Corollary~\ref{prop:perodicperm} has a straightforward topological proof. Every element of $\SLn{n}$ induces an element of $\End(\xnz, \shift{n})$. Since periodic points are dense in $\xnz$, $(T)\Pi$ is injective for an element $T \in \SLn{n}$ precisely when $(T, \alpha)$ is injective on periodic points for any annotation $\alpha$ of $T$. Now $(T, \alpha)$ is injective on periodic points if and only if it is a bijection on periodic points. In this case, by density, $(T, \alpha)$ is then a continuous bijection from $\xnz$ to itself and so it is a homeomorphism. Hence $T \in \Ln{n}$.  We note that, as there is no apparent well-defined action by homeomorphisms of $\On{n}$ on $\xnz$ or another `nice' space, we are unable to find a similar topological argument for $\On{n}$.

\begin{prop}
	The group $\On{n,r}$ is centreless for any $1 \le r \le n-1$
\end{prop}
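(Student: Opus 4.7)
The plan is to use the faithful periodic orbit representation $\Pi\colon \On{n,r}\hookrightarrow\sym{\rwnl{\ast}}$ provided by Corollary~\ref{cor:periodicperm}: if $T$ lies in the centre of $\On{n,r}$, then injectivity of $\Pi$ reduces the problem to showing that $(T)\Pi$ is the identity permutation on $\rwnl{\ast}$.

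First I would verify the containment $\Ln{n,1}\le\On{n,r}$. Since $j\cdot 1\equiv 1\pmod{n-1}$ forces $j=1$, the stabiliser $\Un{n-1}^{1}$ is trivial, so item~\ref{rsig:stabliser} of Theorem~\ref{thm:rsig} yields $\On{n,1}\le\On{n,s}$ for every admissible $s$, and in particular $\Ln{n,1}\le\Ln{n,r}\le\On{n,r}$. Consequently $(T)\Pi$ commutes with $(L)\Pi$ for every $L\in\Ln{n,1}$. Moreover, the Lipschitz constraint~\ref{Lipshitzconstraint} ensures that each such $(L)\Pi$ preserves the level decomposition $\rwnl{\ast}=\bigsqcup_{k\ge 1}\rwnl{k}$.

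The next step would be to upgrade Theorem~\ref{thm:faithfulrepdense} so that the transpositions it produces lie in $\Ln{n,1}$ rather than merely in $\Ln{n}$. This can be arranged either by inspecting the explicit transducer built in the proof of that theorem and computing its $\rsig$-value directly, or by composing with a fixed correction element in $\ker(\rsig)\cap\Ln{n}$ that acts trivially on $\rwnl{l}$ for every $l\le k$. With this in hand, for each $k\ge 1$ and each pair of distinct classes in $\rwnl{k}$ we have a witness $L\in\Ln{n,1}$ whose induced permutation is trivial below level $k$ and swaps the chosen pair on $\rwnl{k}$.

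Finally I would show $(T)\Pi$ is the identity by induction on the level $k$. The inductive step splits in two. First, I would prove that $(T)\Pi$ maps $\rwnl{k}$ into itself: if instead $(T)\Pi(\rotc{\Gamma})=\rotc{\Gamma'}\in\rwnl{m}$ with $m\ne k$, choosing $L\in\Ln{n,1}$ whose action is a transposition at level $k$ involving $\rotc{\Gamma}$ but is trivial at level $m$ (possible by selecting $k$ minimal and using the level-preservation just established for $L$) would violate $(L)\Pi\,(T)\Pi=(T)\Pi\,(L)\Pi$. Once $(T)\Pi$ is known to preserve each level, the transpositions supplied above generate $\sym{\rwnl{k}}$ whenever $|\rwnl{k}|\ge 2$, so $(T)\Pi$ restricted to $\rwnl{k}$ is central in $\sym{\rwnl{k}}$ and hence trivial (the small sporadic cases $|\rwnl{k}|\le 2$ being handled directly). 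The main obstacle I expect is the refinement of Theorem~\ref{thm:faithfulrepdense} to place the required transpositions inside $\Ln{n,1}\le\On{n,r}$; once this combinatorial point is settled the rest is standard centraliser bookkeeping.
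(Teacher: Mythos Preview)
Your overall plan---use the faithful representation $\Pi$ of Corollary~\ref{cor:periodicperm} together with the transpositions of Theorem~\ref{thm:faithfulrepdense} to force $(T)\Pi=\mathrm{id}$---is exactly the engine the paper uses, but two steps in your outline do not go through as written.

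First, your suggested route into $\On{n,r}$ is broken. Composing with an element of $\ker(\rsig)\cap\Ln{n}$ cannot correct the $\rsig$-value of the transposition $L$, since $\ker(\rsig)\cap\Ln{n}=\Ln{n,1}$ and multiplying by such an element leaves $\rsig$ unchanged; you would need a correction element with $\rsig$-value $((L)\rsig)^{-1}$ that is trivial on $\rwnl{l}$ for $l\le k$, and you have not shown such elements exist. The paper bypasses this entirely: it uses that $\On{n,r}$ has finite index $d_r$ in $\On{n}$, so for any $U\in\Ln{n}$ the power $U^{d_r}$ lies in $\Ln{n,r}\le\On{n,r}$. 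Since Theorem~\ref{thm:faithfulrepdense} (and products of its outputs) realise arbitrary permutations on $\rwnl{m}$ for $m$ large, one chooses $U$ so that $(U)\Pi_m$ has a $d_r$-th power with the prescribed behaviour (Claim~\ref{cla2}). This costs only the mild hypothesis $|\rwnl{m}|>\max\{2d_r,3\}$.

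Second, your level-preservation step has a genuine gap. By the inductive hypothesis and bijectivity of $(T)\Pi$, the troublesome case is $m>k$, not $m<k$; but Theorem~\ref{thm:faithfulrepdense} guarantees the transposition $L$ at level $k$ is trivial only on levels \emph{below} $k$, not at level $m$. The repair is to take the transposition at level $m$ instead: choose $L$ swapping $\rotc{\Gamma'}=(T)\Pi(\rotc{\Gamma})$ with some other $\rotc{\Delta'}\in\rwnl{m}$ and trivial on $\rwnl{l}$ for $l<m$; then $(LT)\Pi(\rotc{\Gamma})=(T)\Pi(\rotc{\Gamma})=\rotc{\Gamma'}$ while $(TL)\Pi(\rotc{\Gamma})=\rotc{\Delta'}$, a contradiction. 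The paper organises this differently: it first uses a pigeonhole/length argument (Claim~\ref{cla:centerless}) to find $\rotc{\Gamma},\rotc{\Delta}$ of equal length whose $T$-images have different lengths, then exhibits $L\in\Ln{n,r}$ sending $\rotc{\Gamma}$ to $\rotc{\Delta}$ while fixing $(T)\Pi(\rotc{\Gamma})$, so $(LT)\Pi$ and $(TL)\Pi$ disagree at $\rotc{\Gamma}$ by length alone. This directly shows the centre lies inside $\Ln{n,r}$, after which a separate moving-points count handles $\Ln{n,r}$.
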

\begin{proof}
	We first observe that a consequence of a famous result of Ryan \cite{Ryan2} is that the group $\Ln{n}$ is centerless (one may also deduce this fact from Theorem~\ref{thm:faithfulrepdense}). It therefore suffices to show that the center of $\On{n}$ is contained in the center of $\Ln{n}$.
	
	The result is a consequence of the following claims.
	
	\begin{cla}\label{cla:centerless}
		Let $T \in \On{n}\backslash \Ln{n}$ and let $i,j \in \N$ . Then there is an element $\Gamma \in \wnl{+}$ of length at least $i$ such that $|\lambda_{T}(\Gamma, q_{\Gamma})| - |\Gamma| \ge j$.
    \end{cla} 
    \begin{proof}
    	To see this observe that for $T \in \On{n}$, there is a word $\gamma \in \wnl{+}$ such that $|\gamma|< |\lambda_{T}(\gamma,q_{\gamma})$. Fix such a word $\gamma \in \wnl{+}$. 
    	
    	Let $k$ be the minimal synchronizing level of $T$. Since $T$ is finite, there is a (non-zero) number $D \in \N$ so that for any state $q \in Q_{T}$ and any word $\tau \in \xn^{k+1}$, $\vert |\lambda(\tau,q)| - |\tau|\vert \le D$. As $|\lambda_{T}(\gamma,q_{\gamma})| > |\gamma|$, there is number $m \in \N$ such that $m|\lambda_{T}(\gamma,q_{\gamma})| > m|\gamma| + D +k+1 +j$ and $m|\gamma| \ge \max\{i,1,k\}$. Let $x \in \xn$ be a letter distinct from the first letter of $\gamma$ and let $\delta$ be a word of length $k$ such that the state of $T$ forced by $\delta$ is $q_{\gamma}$. Since $\gamma$ is a prime word then, $\gamma^{m} x \delta$ is a prime word also. Moreover, $q_{\gamma^{m}x\delta} = q_{\gamma}$ by construction. Therefore,  setting $\Gamma = \gamma^{m}x \delta$, we have $|\Gamma| > i$ and $|\lambda_{T}(\Gamma, q_{\Gamma})| = |\lambda_{T}(\gamma,q_{\gamma})^{m} \lambda_{T}(x\Delta, q)| \ge |\Gamma| + j$.  
    \end{proof}

    \begin{cla}\label{cla2}
    	Let $d_r$ be the index of $\On{n,r}$ in $\On{n,n-1}$ and let $m \in \N$ be such that $|\rwnl{m}|  > \max\{2d_r,3\}$. Let  $\rotc{\Gamma},\rotc{\Delta}, \rotc{\psi}$ be distinct elements of $\rwnl{m}$. Then there is an element of $T \in \Ln{n,r}$ such that $(\rotc{\psi})(T)\Pi = \rotc{\psi}$, and  $(\rotc{\Gamma})(T)\Pi = \rotc{\Delta}$.
    \end{cla}
   \begin{proof}
   	 By Theorem~\ref{thm:faithfulrepdense} and hypothesis on the size of $\rwnl{m}$ there is an element $U \in \Ln{n}$ such that  $T:= U^{d_r} \in \Ln{n,r}$, $(\rotc{\psi})(T)\Pi = \rotc{\psi}$, and $(\rotc{\Gamma})(T)\Pi = \rotc{\Delta}$. 
   \end{proof}
    	
    	We may now conclude the proof of the proposition.
    	
    	Fix $T \in \On{n,r}\backslash \Ln{n}$, since $T^{-1} \in \On{n,r}\backslash \Ln{n}$ as well, then it follows, by Claim~\ref{cla:centerless}, that for every $i,j \in \N$, there is an element $\Gamma \in \wnl{+}$ of length at least $i$ such that $|\Gamma| - |\lambda_{T}(\Gamma,q)| \ge j$. Since the set of words $\wnl{i}$ increases in size as $i$ increases, by the pigeon-hole there is an $M \in \N$ and there are distinct words $\Gamma, \Delta \in \wnl{M}$ such that $|\lambda_{T}(\Gamma, q_{\Gamma})| \ne |\lambda_{T}(\Delta, q_{\Delta})|$. Writing $d_r$ for the index of $\On{n,r}$ in $\On{n,n-1}$, we may assume that $M > \max\{2d_r, 3\}$. Without loss of generality we may also assume that $|{\Gamma}| \ne |{\lambda_{T}(\Gamma, q_{\Gamma})}|$, so that $\rotc{\Gamma}, \rotc{{\lambda_{T}(\Gamma, q_{\Gamma})}}$ and $\rotc{\Delta}$ are distinct elements of $\rwnl{M}$.

	 By Claim~\ref{cla2} there is an element $L \in \Ln{n,r}$ which induces the permutation of the set $\rwnl{M}$ that maps $\rotc{\Gamma}$ to $\rotc{\Delta}$ and fixes $\rotc{{\lambda_{T}(\Gamma, q_{\Gamma})}}$. Consider the maps $(LT)\Pi$ and $(TL)\Pi$. We have $$(\rotc{\Gamma})(LT)\Pi = \rotc{\lambda_{T}(\Delta, q_{\Delta})}  \mbox{ and }$$  $$(\rotc{\Gamma})(TL)\Pi = (\rotc{\lambda_{T}(\Gamma, q_{\Gamma})})(L)\Pi = \rotc{\lambda_{T}(\Gamma, q_{\Gamma})}.$$ Since $|\lambda_{T}(\Gamma, q_{\Gamma})| \ne |\lambda_{T}(\Delta, q_{\Delta})|$, we conclude that $(\rotc{\Gamma})(LT)\Pi \ne  (\rotc{\Gamma})(TL)\Pi$. Therefore, as $T \in \On{n,r} \backslash \Ln{n,r}$ was arbitrarily chosen, we see that the centre of $\On{n,r}$ must be contained in the centre of $\Ln{n,r}$. 
	 
	 It is now a consequence of the following claim and  Claim~\ref{cla2}  that the centre of $\Ln{n,r}$ is trivial.
	 
	 \begin{cla}
	 	Let $L \in \Ln{n}$ be non-trivial. Then for every $i \in \N$ there is an $m \in \N$ such that $(L)\Pi: \rwnl{m} \to \rwnl{m}$ moves at least $i$ points.
	 \end{cla}
   \begin{proof}
   	 Since $L \in \Ln{n}$ is non-trivial, there is a state $q \in Q_{L}$ and a word $\Gamma \in \wnl{k}$ for some $k \in \N$, such that $\pi_{L}(\Gamma,q) = q$ and $\rotc{\lambda_{L}(\Gamma,q)} \ne \rotc{\Gamma}$.
    
    Since $L$ is minimal, there is a number $N \in \N$ and elements $\Delta_1, \ldots, \Delta_t \in \xn^{N}$, for $t \ge i$, such that  $\pi_{L}(\Delta_{j}, q) = q$ and $\lambda_{L}(\Delta_{j}, q)$ has no non-empty initial prefix in common with $\rotc{\lambda_{L}(\Gamma,q)}$ for all $j$ between $1$ and $t$. We note that $\Gamma$ is not a prefix of $\Delta_{j}$ for any $1 \le j \le t$. 

  Now using the fact that $\Gamma$ is a prime word, there is a number $M \in \N$, such that  $\Gamma^{M}\Delta_{j}$ is a prime word for all $1 \le j \le t$. Moreover, since $\Delta$ has no-initial prefix in common with $\Gamma$, we see that $|\{ \rotc{\Gamma^{M}\Delta_{j} }\mid 1 \le j \le t \}| = t$.
  	
  Set $\psi =\lambda_{L}(\Gamma, q)$ and set $\varphi_{j} = \lambda_{L}(\Delta_{j},q)$, for $1 \le j \le t$. Since $\rotc{\psi} \ne \rotc{\Gamma}$. We see that, for $M$ large enough, $\rotc{\psi^{M}\varphi_{j}}\ne \rotc{\Gamma^{M}\Delta_{j}}$ for any $1 \le j \le t$. Since $\Pi: \Ln{n} \to \Ln{n}$ is a bijection, this yields the claim. 
   \end{proof}
	 
\end{proof}

\begin{prop}
	For all $1 \le r \le n-1$, the index of $\Ln{n,r}$ in $\On{n,r}$ is infinite.
\end{prop}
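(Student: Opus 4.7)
My strategy is to extract from $\On{n,r}$ a left-coset invariant for $\Ln{n,r}$ that attains infinitely many values. For each $T \in \On{n,r}$, define the \emph{length function} $\ell_T \colon \rwnl{\ast} \to \N$ by $\ell_T(\rotc{\Gamma}) := |(\rotc{\Gamma})(T)\Pi|$. The Lipschitz constraint~\ref{Lipshitzconstraint} forces every $L \in \Ln{n,r}$ to have $(L)\Pi$ restrict to a permutation of each length class $\rwnl{k}$; in particular $(L)\Pi$ preserves the length of every prime cyclic word. Since $\Pi$ is a monoid homomorphism, for every $L \in \Ln{n,r}$ and every $\rotc{\Gamma}$ we have $\ell_{TL}(\rotc{\Gamma}) = |((\rotc{\Gamma})(T)\Pi)(L)\Pi| = |(\rotc{\Gamma})(T)\Pi| = \ell_T(\rotc{\Gamma})$. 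Hence $\ell_T$ depends only on the left coset $T\Ln{n,r}$.

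It then suffices to exhibit a sequence $\{T_m\}_{m\ge 2}\subseteq\On{n,r}$ with pairwise distinct length functions. For each $m$ I plan to construct $T_m\in\On{n,r}$ that acts as the identity on $\rwnl{k}$ for all $k<m$ and sends a distinguished prime cyclic word $\rotc{\Gamma_m} \in \rwnl{m}$ to a prime cyclic word whose length varies non-trivially with $m$. The construction is modelled on Theorem~\ref{thm:faithfulrepdense}, except that we permit length-changing action on the top affected length class; verification that $T_m \in \On{n,r}$ proceeds via Proposition~\ref{prop:bijectioniffOn} (bijectivity of $(T_m)\Pi$ on $\rwnl{\ast}$) together with a check that $\rsig(T_m) \in \Un{n-1}^{r}$ using Theorem~\ref{thm:rsig}.

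The main obstacle is the concrete realisation of each $T_m$: one must simultaneously enforce the strong synchronizing condition, injective states with clopen image, the correct value of $\rsig$, and the prescribed non-trivial length change on $\rotc{\Gamma_m}$. A more abstract alternative, available whenever the proposition is non-vacuous (i.e.\ whenever $\On{n,r}\setminus\Ln{n,r}\neq\emptyset$), is to fix any $\Psi\in\On{n,r}\setminus\Ln{n,r}$ and invoke Claim~\ref{cla:centerless} to extract circuits of $\Psi$ with arbitrarily large length excess. Composing $\Psi$ with suitably chosen elements of $\Ln{n,r}$ then yields an infinite family of transducers in $\On{n,r}$ whose length profiles are pairwise distinct, and hence infinitely many cosets.
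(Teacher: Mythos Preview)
Your coset invariant $\ell_T$ is exactly the right object, and it is precisely what the paper uses (implicitly): the paper produces, for each $k$, an explicit transducer $T_k\in\On{n,1}\le\On{n,r}$ whose loop at the state forced by $1$ reads $1$ and writes $0^{k}1$, so that $\ell_{T_k}(\rotc{1})=k+1$; these values are pairwise distinct, hence the left cosets $T_k\Ln{n,r}$ are pairwise distinct. So the framework you set up coincides with the paper's, and the paper resolves your ``main obstacle'' by a direct one-parameter family of small transducers rather than by any abstract argument. Note also that the paper sidesteps the $\rsig$ check entirely by building the examples inside $\On{n,1}$, which sits in every $\On{n,r}$.

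Your proposal, however, is not yet a proof: you identify the construction of the $T_m$ as the obstacle and do not carry it out, and your fallback argument has a genuine gap. Composing a fixed $\Psi\in\On{n,r}\setminus\Ln{n,r}$ with elements of $\Ln{n,r}$ cannot produce infinitely many distinct length profiles in the sense you need. On the right, $\ell_{\Psi L}=\ell_\Psi$ for every $L\in\Ln{n,r}$ by your own invariance observation, so all $\Psi L$ lie in the \emph{same} left coset. On the left, $\ell_{L\Psi}=\ell_\Psi\circ (L)\Pi$ is just a rearrangement of $\ell_\Psi$ within each length class $\rwnl{k}$; the entire set $\{L\Psi:L\in\Ln{n,r}\}$ is a single right coset, and nothing in Claim~\ref{cla:centerless} tells you it meets infinitely many left cosets. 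Claim~\ref{cla:centerless} only says that the single function $\ell_\Psi$ takes arbitrarily large values of $\ell_\Psi(\rotc{\Gamma})-|\Gamma|$; it gives you many interesting circuits of one element, not many elements with distinct profiles. Finally, the fallback also presupposes $\On{n,r}\setminus\Ln{n,r}\neq\emptyset$, which is part of what must be shown. The missing ingredient really is an explicit family realising distinct values of $\ell_{(\cdot)}$ on a fixed word, and that is exactly what the paper supplies.
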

\begin{proof}
	It suffices, for all $k \in \N$ to construct elements $T_{k}$ of $\On{n,1}$ such that $(\rotc{1})(T_{k})\Pi$ is equal to $\rotc{\delta}$ for some word $\delta \in \xn^{k}$. This is because $\On{n,1} \le \On{n,r}$ for all $1 \le r \le n-1$ and for all $L \in \Ln{n}$ the restriction of $(L)\Pi$ to the set $\xn^{k}$ is the map $(L)\Pi_{k}$.
	
	Figure~\ref{fig:infiniteindex} gives the required examples. The reader can verify that the element described  in that figure  is in fact an element of $\On{n,1}$ (the state $q_0$ and $q_5$ are the only homeomorphism states) and has order 2. 	
\end{proof}

	\begin{figure}[htbp]
	\begin{center} 
		\begin{tikzpicture}[shorten >=0.5pt,node distance=3cm,on grid,auto] 
		\tikzstyle{every label}=[blue]
		\node[state] (q_0) [xshift=3.5cm, yshift=3.5cm] {$q_0$}; 
		\node[state] (q_1) [xshift=3.5cm, yshift=0cm] {$q_1$}; 
		\node[state] (q_2) [xshift=5.5cm,yshift = 0cm]{$q_2$}; 
		\node        (q_3) [xshift=7.5cm, yshift=0cm] {$\ldots$};
		\node[state] (q_4) [xshift=9.5cm,yshift=0cm] {$q_4$};
		\node[state] (q_5) [xshift=9.5cm,yshift=3.5cm] {$q_5$};
		\path[->] 
		(q_0) edge node {$0|\varepsilon$} (q_1)
		edge [loop left] node[swap] {$1|0^{k}1$} ()     
		(q_1) edge[in=255, out=105] node{$1|01$} node[xshift=0cm, yshift=-.5cm] {$x|0x$}  (q_0)
		edge node{$0|\varepsilon$} (q_2)
		(q_2) edge node{$0|\varepsilon$} (q_3)
		      edge node[swap]{$1|001$} node[xshift=1.3cm, yshift=0.25cm]{$x|00x$} (q_0)   
		(q_3) edge node {$0|\varepsilon$} (q_4)
		(q_4) edge[in=345, out=135] node[swap] {$1|1$} node[xshift=1.7cm,yshift=0.25cm]{$x|0^{k}x$} (q_0)
		edge node[swap] {$0|0^{k+1}$} (q_5)
		(q_5) edge[loop right] node[swap] {$1|1$} ()
		edge node[swap] {$1|1$, $x|x$} (q_0);
		\end{tikzpicture}
		\caption{An element of $\On{n}$ with loop labelled 1 writing an output of length $k+1$. The symbol $x$ is an element of $\xn \backslash\{0,1\}$.}
		\label{fig:infiniteindex}
	\end{center}
\end{figure}

Clearly the problem of deciding when a product of elements of $\On{n}$ is the trivial element is solvable. The next result shows that the order problem is unsolvable in the group $\On{n}$.

\begin{theorem}
	Let $1 \le r \le n-1$. Then the group $\On{n,r}$ has unsolvable order problem.
\end{theorem}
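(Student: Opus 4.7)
The plan is to deduce the theorem through a chain of three reductions starting from the undecidability of the order problem in $\aut{\xnz, \shift{n}}$, which is the main result of Kari and Ollinger in \cite{KariOllinger}.

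First, I would reduce the order problem in $\aut{\xnz, \shift{n}}$ to the order problem in $\Ln{n} = \Ln{n,n-1}$. Given $\phi \in \aut{\xnz, \shift{n}}$, Theorem~\ref{thm:EndisotoASLn} effectively produces the corresponding pair $(L, \alpha) \in \ALn{n}$, where $L$ is the image of $\phi$ under the quotient of Corollary~\ref{cor:mainresult}; concretely, one passes through a sliding-block representative of $\phi$ and then applies Proposition~\ref{prop:algorithmforremovingincompleteresponse}. If $L$ has infinite order in $\Ln{n}$, then $\phi$ has infinite order. If $L$ has finite order $k$, then $\phi^{k} \in \gen{\shift{n}}$, and by computing the action of $\phi^{k}$ on any single sequence we recover the unique $m \in \Z$ with $\phi^{k} = \shift{n}^{m}$; now $\phi$ has finite order if and only if $m = 0$. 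Hence an oracle for the order problem in $\Ln{n}$ would yield an algorithm for the order problem in $\aut{\xnz, \shift{n}}$, so by \cite{KariOllinger} the order problem in $\Ln{n}$ is undecidable.

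Next I would transfer undecidability from $\Ln{n}$ to $\Ln{n,1}$. By Theorem~\ref{thm:rsig} and Theorem~\ref{thm:Lndivisorsofn}, the restriction of $\rsig$ to $\Ln{n}$ is a homomorphism onto the finite group $\Un{n-1,n}$ with kernel $\Ln{n,1}$. Setting $d \seteq |\Un{n-1,n}|$, we have $L^{d} \in \Ln{n,1}$ for every $L \in \Ln{n}$, and $L$ has finite order if and only if $L^{d}$ does. A decision procedure for order in $\Ln{n,1}$ therefore decides order in $\Ln{n}$, so the order problem in $\Ln{n,1}$ is undecidable.

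Finally, the inclusions $\Ln{n,1} \le \On{n,1} \le \On{n,r}$ (the latter from the chain $\out{G_{n,1}} \le \out{G_{n,r}}$ of \cite{AutGnr}), together with the fact that $\Ln{n,1}$ is a subgroup of $\On{n,r}$, show that for any $L \in \Ln{n,1}$ the order of $L$ in $\On{n,r}$ equals its order in $\Ln{n,1}$. An algorithm for the order problem in $\On{n,r}$ would therefore decide the order problem in $\Ln{n,1}$, contradicting the previous paragraph. The main obstacle is ensuring full effectiveness in the first reduction, particularly recovering the exponent $m$ when $\phi^{k}$ is known to lie in $\gen{\shift{n}}$; this is routine since the sliding-block radius of $\phi^{k}$ gives an \emph{a priori} bound on $|m|$, reducing the task to a finite comparison with the powers $\shift{n}^{m}$.
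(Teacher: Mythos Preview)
Your argument is correct and follows essentially the same route as the paper: reduce the order problem in $\aut{\xnz,\shift{n}}$ to that in $\Ln{n}$ via the quotient by $\gen{\shift{n}}$, then use a finite-index step to land inside $\On{n,r}$. The paper passes directly from $\Ln{n}$ to $\On{n}$ and then uses that $\On{n,r}$ has finite index in $\On{n}$, whereas you first drop to $\Ln{n,1}$ via $\rsig$ and then include into $\On{n,r}$; these are equivalent finite-index manoeuvres.

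One point to tighten: the Kari--Ollinger result gives undecidability of the order (periodicity) problem for automorphisms of the full shift on \emph{some} alphabet, not immediately for every $n \ge 2$. The paper invokes Kim--Roush (or Salo) to transfer undecidability across alphabet sizes. Your first reduction is fine, but as stated it assumes \cite{KariOllinger} already covers arbitrary $n$; you should insert the embedding step (e.g., $\aut{X_m^{\Z},\shift{m}} \hookrightarrow \aut{X_n^{\Z},\shift{n}}$ for suitable $m,n$) to make the conclusion hold for all $n$.
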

\begin{proof}
	We begin with the observation that if the order problem is unsolvable in $\ALn{n}$ then it is unsolvable in $\Ln{n}$. Suppose the order problem is solvable in $\Ln{n}$. This means, by Corollary~\ref{cor:mainresult}, that it is possible to decide in finite time if an element of $\ALn{n}$ is an element of the group $\{ (\id, k ) \mid k \in \Z \}$ or if it is an element of infinite order. Since any non-identity element of $\gen{\shift{n}}$ has infinite order, and it is decidable whether an element of $\ALn{n}$ is trivial, it is therefore possible to decide if an element of $\ALn{n}$ has finite order or not.
	
	By results in the papers \cite{KariOllinger}, the order problem in $\ALn{n}$ is undecidable for some $n \in \N$. Results of \cite{KimRoush} or \cite{VilleS18} now imply that the order problem in $\ALn{n}$ is undecidable for any $n \ge 2$. Thus the order problem is undecidable in $\Ln{n}$ and so is undecidable in $\On{n}$ as well. 
	To conclude we observe that for any $1 \le r \ne n-1$, the group $\On{n,r}$ has finite index in~$\On{n}$.  
\end{proof}


\bibliographystyle{amsplain}
\bibliography{ploiBib}

\def\cprime{$'$} \def\cprime{$'$} \def\cprime{$'$} \def\cprime{$'$}
\providecommand{\bysame}{\leavevmode\hbox to3em{\hrulefill}\thinspace}
\providecommand{\MR}{\relax\ifhmode\unskip\space\fi MR }
\providecommand{\MRhref}[2]{%
  \href{http://www.ams.org/mathscinet-getitem?mr=#1}{#2}
}
\providecommand{\href}[2]{#2}
\begin{thebibliography}{10}

\bibitem{ACS}
Jo\~{a}o Ara\'{u}jo, Peter~J. Cameron, and Benjamin Steinberg, \emph{Between
  primitive and 2-transitive: synchronization and its friends}, EMS Surv. Math.
  Sci. \textbf{4} (2017), no.~2, 101--184. \MR{3725240}

\bibitem{AutGnr}
Collin Bleak, Peter Cameron, Yonah Maissel, Andr{\' e}s Navas, and Feyishayo
  Olukoya, \emph{The further chameleon groups of {R}ichard {T}hompson and
  {G}raham {H}igman: {A}utomorphisms via dynamics for the {H}igman groups
  ${G}_{n,r}$}, submitted (2016), 1--89.

\bibitem{BleakCameronOlukoya1}
Collin Bleak, Peter Cameron, and Feyishayo Olukoya, \emph{Automorphisms of
  shift spaces and the {Higman--Thompson groups}: the one-sided case}, Discrete
  Anal. (2021), Paper No. 15, 35.

\bibitem{BoyleKrieger}
Mike Boyle and Wolfgang Krieger, \emph{Periodic points and automorphisms of the
  shift}, Trans. Amer. Math. Soc. \textbf{302} (1987), no.~1, 125--149.
  \MR{887501}

\bibitem{BoyleLindRudolph88}
Mike Boyle, Douglas Lind, and Daniel Rudolph, \emph{The automorphism group of a
  shift of finite type}, Trans. Amer. Math. Soc. \textbf{306} (1988), no.~1,
  71--114. \MR{927684 (89m:54051)}

\bibitem{BoyleMarcuTrow}
Mike Boyle, Brian Marcus, and Paul Trow, \emph{Resolving maps and the dimension
  group for shifts of finite type}, Mem. Amer. Math. Soc. \textbf{70} (1987),
  no.~377, vi+146. \MR{912638}

\bibitem{GNSenglish}
R.~I. Grigorchuk, V.~V. Nekrashevich, and V.~I. Sushchanski{\u\i},
  \emph{Automata, dynamical systems, and groups}, Proc. Steklov Inst. Math
  \textbf{231} (2000), 128--203.

\bibitem{Hedlund69}
G.~A. Hedlund, \emph{Endomorphisms and automorphisms of the shift dynamical
  system}, Math. Systems Theory \textbf{3} (1969), 320--375. \MR{0259881 (41
  \#4510)}

\bibitem{KariOllinger}
Jarkko Kari and Nicolas Ollinger, \emph{Periodicity and immortality in
  reversible computing}, Mathematical foundations of computer science 2008,
  Lecture Notes in Comput. Sci., vol. 5162, Springer, Berlin, 2008,
  pp.~419--430. \MR{2539389}

\bibitem{KimRoush}
K.~H. Kim and F.~W. Roush, \emph{On the automorphism groups of subshifts}, Pure
  Math. Appl. Ser. B \textbf{1} (1990), no.~4, 203--230 (1991). \MR{1137698}

\bibitem{KriegerDimension}
Wolfgang Krieger, \emph{On dimension functions and topological {M}arkov
  chains}, Invent. Math. \textbf{56} (1980), no.~3, 239--250. \MR{561973}

\bibitem{OlukoyaAutTnr}
Feyishayo Olukoya, \emph{Automorphisms of the generalised {T}hompson's group
  {$T_{n,r}$}}, submitted (2019), 1--43.

\bibitem{Ryan2}
J.~Patrick Ryan, \emph{The shift and commutivity. {II}}, Math. Systems Theory
  \textbf{8} (1974/75), no.~3, 249--250. \MR{0383384}

\bibitem{VilleS18}
Ville Salo, \emph{A note on subgroups of automorphism groups of full shifts},
  Ergodic Theory Dynam. Systems \textbf{38} (2018), no.~4, 1588--1600.
  \MR{3789178}

\bibitem{Volkov2008}
Mikhail~V. Volkov, \emph{Language and automata theory and applications},
  Springer-Verlag, Berlin, Heidelberg, 2008, pp.~11--27.

\end{thebibliography}

\affiliationone{
   James Belk\\
Department of Mathematics\\
Cornell University\\
Ithaca, NY 14853\\
USA
   \email{jmb226@cornell.edu}}
\affiliationtwo{
   Collin Bleak, Peter J. Cameron and Feyishayo Olukoya\\
   School of Mathematics and Statistics\\
   University of St Andrews\\
   North Haugh\\
   St Andrews, Scotland\\
   KY16 9SS\\
   UK
   \email{cb211@st-andrews.ac.uk\\
   pjc20@st-andrews.ac.uk\\
   fo55@st-andrews.ac.uk}}

\end{document}